\DeclareMathAlphabet\mathbfcal{OMS}{cmsy}{b}{n}
\def\@tocline#1#2#3#4#5#6#7{\relax
  \ifnum #1>\c@tocdepth 
  \else
    \par \addpenalty\@secpenalty\addvspace{#2}%
    \begingroup \hyphenpenalty\@M
    \@ifempty{#4}{%
      \@tempdima\csname r@tocindent\number#1\endcsname\relax
    }{%
      \@tempdima#4\relax
    }%
    \parindent\z@ \leftskip#3\relax \advance\leftskip\@tempdima\relax
    \rightskip\@pnumwidth plus4em \parfillskip-\@pnumwidth
    #5\leavevmode\hskip-\@tempdima
      \ifcase #1
       \or\or \hskip 1em \or \hskip 2em \else \hskip 3em \fi%
      #6\nobreak\relax
    \dotfill\hbox to\@pnumwidth{\@tocpagenum{#7}}\par
    \nobreak
    \endgroup
  \fi}
\numberwithin{equation}{section}
\theoremstyle{plain}{}
\newtheorem{question}{Question}
\newtheorem{theorem}{Theorem}[section]
\newtheorem{coro}[theorem]{Corollary}
\theoremstyle{plain}
\newtheorem{prop}[theorem]{Proposition}
\newtheorem{lemma}[theorem]{Lemma}
\theoremstyle{remark}
\newtheorem{rema}[theorem]{Remark}
\theoremstyle{plain}
\newtheorem{theointro}{Theorem}%
\newtheorem*{coro*}{Corollary}
\newtheorem{notation}[theorem]{Notation}
\def\Q{{\rm Q}}
\def\RR{\mathbf{R}}
\def\HTp{{\H_{\T^+}}}
\def\HT{{\H_{\T}}}
\def\HTpt{{\H_{\T^+, \theta}}}
\def\F{\rm F}
\def\F{{\rm F}}
\def\G{{\rm G}}
\def\I{{{\EuScript U}}}
\def\Iw{{\EuScript  B}}
\def\J{{\rm J}}
\def\KK{{\EuScript  K}}
\def\K{\KK}
\def\L{{ \rm L}}
\def\M{{\rm M}}
\def\N{{\rm N}}
\def\m{{ \mathfrak m}}
\def\mm{{\mathfrak m}}
\def\longw{\mathbf w}
\def\tw{{\widetilde {{\longw }}}}
\def\twm{{\widetilde {{\longw }_\Mf}}}
 \def\dist{{}^\Mf\Wf}
\def\O{\rm O}
\def\P{{\rm P}}
\def\root{\alpha}
\def\R{{\rm R}}
\def\SS{{ S}}
\def\T{{\rm T}}
\def\Triv{{\rm Triv}}
\def\St{{\rm St}}
\def\Ext{{\rm Ext}}
\def\sign{{\rm Sign}}
\def\V{{\rm V}}
\def\W{{\rm W}}
\def\X{{\rm X}}
\def\Z{{\mathbb Z}}
\def\Ap{\mathscr{A}}
\def\Bb{\EuScript{B}}
\def\B{{\rm B}}
\def\Cp{\mathscr{C}}
\def\Dp{\mathscr{D}}
\def\Hh{{ {\mathcal H}}}
\def\Pp{\EuScript{P}}
\def\Uu{\EuScript{U}}
\def\lp{\langle}
\def\rp{\rangle}
\def\Hom{{\rm Hom}}
\def\End{{\rm End}}
\def\Ker{{\rm Ker}}
\def\t{{\mathbf \tau}}
\def\Res{{\rm Res}}
\def\Ind{{\rm Ind}}
\def\ind{{\rm ind}}
\def\Rep{{\rm Rep}}
\def\1{{\mathbf 1}}
\def\RR{\EuScript{R}}
\def\XX{\mathbf{ X}}
\def\XXM{\mathbf{ X}_\M}
\def\Xf{{\rm  X}}
\def\XMf{{{\rm X}_{\Mf}}}
\def\UMf{{{\Uf}_{\Mf}}}
\def\IM{{{\I}_{\M}}}
\def\Mod{{\rm Mod}(\H)}
\def\Irr{{\rm Irr}(\G)}
\def\IrrH{{\rm Irr}(\H)}
\def\Irrf{{\rm Irr}(\Gf)}
\def\IrrHf{{\rm Irr}(\Hf)}
\def\IrrM{{\rm Irr}(\M)}
\def\Hf{{\rm H}}
\def\HMf{{{\rm H}_{\Mf}}}
\def\H{{\mathcal H}}
\def\HM{{{\mathcal H}_{\rm M}}}
\def\Tf{{\overline{\mathbf T} (\fq)}}
\def\U{{\rm U}}
\def\Pf{{\P}}
\def\ii{{\rm Ind}}
\def\cii{{\rm Coind}}
\def\jj
\def\rr
\def\r{{\rm Res}}
\def\ll{{\rm L}}
\def\II
\def\cII{{\rm Coind }}
\def\JJ
\def\RR
\def\R{{\rm R}}
\def\LL
\def\HM{{{\mathcal H}_\M}}
\def\HMp{{{\mathcal H}_{\M^{+}}}}
\def\HMpt{{{\mathcal H}_{\M^{+}, \theta}}}
\def\HMm{{{\mathcal H}_{\M^{-}}}}
\def\HMmt{{{\mathcal H}_{\M^{-}, \theta^*}}}
\newcommand{\Id}{\operatorname{Id}}
\def\val{\operatorname{\emph{val}}}
\def\Mod{{\rm Mod}}
\def\Wf{{\mathds W}}
\def\Mf{{\mathds M}}
\def\Pf{{\mathds P}}
\def\Bf{{\mathds  B}}
\def\Gf{{\mathds  G}}
\def\Tf{{\mathds T}}
\def\Uf{{\mathds  U}}
\def\Nf{{\mathds  N}}
\def\Zf{{\mathds Z}}
\def\Kf{{\mathds K}}
\def\nf{{\mathbf n}}
\def\k{R}
\def\Tp{{\rm T}}
\def\Zp{{\rm Z}}
\def\Gp{ { \rm G}}
\def\ssmod{\sigma}
\def\norm{{\mathscr N}}
\def\Np{{\norm_\G}}
\def\Nm{{{\rm N^{op}}}}
\def\Mp{{{\rm M}^{+}}}
\def\Mm{{{\rm M}^{-}}}
\def\WMp{{{\rm W}_\Mp}}
\def\WMm{{{\rm W}_\Mm}}
\def\triv{{\rm Triv}} 
 \theoremstyle{remark}%
  \newtheorem*{exa}{Example}%
\definecolor{webblue}{rgb}{0, 0.7, 0.5}
\definecolor{webred}{rgb}{0.2, 0.3, 0.6} 
\title{Parabolic induction in characteristic $p$}
\author{Rachel Ollivier and Marie-France Vign\' eras}
\address{University of British Columbia,  1984 Mathematics Road, Vancouver, BC V6T 1Z2, Canada}
\email{ollivier@math.ubc.ca}
\address{Institut de Math\' ematiques de Jussieu, UMR 7586,
175 Rue du Chevaleret
75013 Paris, France}
\email{marie-france.vigneras@imj-prg.fr}
\date{\today}
\keywords{}
\subjclass[2010]{11E95,  20G25, 20C08, 22E50}
\begin{document}

\maketitle
\begin{abstract} Let  $\F$  (resp. $\mathbb F$)  be a nonarchimedean locally compact field  with residue characteristic $p$ (resp. a finite field  with characteristic $p$). For $k=\F$ or $k=\mathbb F$,   let $\mathbf{G}$ be a connected reductive group over $k$ and $R$ be a commutative ring.  We denote by $\Rep( \mathbf G(k)) $  the category of  smooth $R$-representations of $ \mathbf G(k) $. To a  parabolic $k$-subgroup ${\mathbf P}=\mathbf{MN}$ of $\mathbf G$ corresponds the parabolic induction functor    $\Ind_{\mathbf P(k)}^{\mathbf G(k)}:\Rep( \mathbf M(k))  \to \Rep( \mathbf G(k))$.  This functor has a left and a right adjoint.

Let 
  $\I$ (resp. $\Uf$) be a pro-$p$ Iwahori (resp. a $p$-Sylow) subgroup of $ \mathbf G(k) $ compatible with ${\mathbf P}(k)$  when $k=\F$  (resp. $\mathbb F$). 
  Let  ${H_{ \mathbf G(k)}}$ denote   the pro-$p$ Iwahori (resp. unipotent) Hecke algebra of $ \mathbf G(k) $ over $R$ and  
   $\Mod({H_{ \mathbf G(k)}})$   the category of right modules over ${H_{ \mathbf G(k)}}$.
   There is  a functor  $\Ind_{{H_{ \mathbf M(k)}}}^{{H_{ \mathbf G(k)}}}: \Mod({H_{ \mathbf M(k)}}) \to \Mod({H_{ \mathbf G(k) }})$ called parabolic induction for Hecke modules; it has a left and a right adjoint. 

We prove  that the pro-$p$ Iwahori (resp. unipotent)  invariant functors
 commute with  the parabolic induction functors, namely that  
$\Ind_{\mathbf P(k)}^{\mathbf G(k)}$ and  $\Ind_{{H_{ \mathbf M(k)}}}^{{H_{ \mathbf G(k)}}}$ form a 
  commutative diagram   with    the  $\I$ and $\I \cap \mathbf M(\F)$ (resp.  $\Uf$ and $\Uf\cap \mathbf M(\mathbb F) $) invariant functors. 
  We prove   that the pro-$p$ Iwahori (resp. unipotent)  invariant functors
 also commute with  the right adjoints of the parabolic induction functors. However, they do not commute with the left adjoints of 
  the parabolic induction functors in general; they do if $p$ is invertible in $R$.

When $R$ is  an algebraically closed field  of characteristic $p$,
 we show that an irreducible admissible $R$-representation of $ \mathbf G(\F) $ is supercuspidal (or equivalently supersingular) if and only if the ${H_{ \mathbf G(\F)}}$-module $\m$ of  its $\I$-invariants admits a   supersingular   subquotient, if and only if $\m$ is  supersingular.
 \end{abstract}
\setcounter{tocdepth}{3} 

\tableofcontents

\section{Introduction} 

\medskip

1.1. Let  $\F$   be a nonarchimedean locally compact field  with residue characteristic $p$,  let $\mathbf{G}$ be a connected reductive group over $\F$.  We set $\G:=\mathbf{G}(\F)$  and call it a $p$-adic reductive group. 
Let $R$ be a commutative ring. We denote by $\Rep (\G)$  the category of smooth $R$-representations of $\G $.  
When $R$ is a field, we denote  by   $\Irr $ the set  of irreducible admissible  $R$-representations of $\G$ modulo isomorphism. For $A$  a ring we denote by $\Mod(A)$ the category of right $A$-modules.

When $R$ is the complex field $\mathbb C$,   the category $\Rep (\G)$    has been studied   because of its connection to the theory of automorphic forms. The  classification of $\Irr $  (known as the conjectures of Langlands and Arthur) is now understood  for many groups but not yet for a general $p$-adic reductive group $\G$. 

For an algebraically closed  field $R$ of characteristic $p$,  the description of $\Irr $  is   reduced to the classification of the  irreducible admissible  supercuspidal  representations (\cite{AHHV}), using  the parabolic induction  and  a process of reduction 
 which is similar  to the one established for complex representations of ${\rm GL}(n,\F)$   by Bernstein and Zelevinski (\cite{BZ}, \cite{Zel}). 
 
The study of the  smooth $R$-representations of $\G$ (called mod $\ell$ representations if    the characteristic of $R$ is a prime number $\ell$) for  a general  $R$, is motivated by   the congruences between  automorphic forms and by number theory.
The theory of Harish-Chandra to study complex representations  cannot be extended to the case  when $R$ is a field of characteristic $p$  because  $\G$ does not admit a  Haar measure with values in such a field. However, certain  tools such  as  the parabolic induction or  the Hecke algebras of  {types}     remain available for any  $R$. Note that when $R$ is a field of characteristic $p$,  there is only one natural {type}, namely the trivial $R$-representation of a pro-$p$ Iwahori subgroup $\I$ of $\G$. This is because 
any non zero smooth $R$-representation of $\G$ admits a non zero invariant vector by any pro-$p$ subgroup of $\G$. 

 \medskip

1.2.   We fix a minimal parabolic subgroup $\mathbf B$ of $\mathbf G$ with a Levi decomposition $\mathbf B=\mathbf Z  \mathbf U$  of  Levi subgroup  $\mathbf Z$  and unipotent radical  $\mathbf U$ and  a compatible   Iwahori  subgroup  of $\G$  of pro-$p$-Sylow $\I$.  
Let $R[\I\backslash \G]$ the natural smooth representation of $\G$ on the space of left $\I$-invariant functions $\G\rightarrow R$ with compact support.
Let  $\H$ be   the algebra of $R[\G]$-intertwiners of $R[\I\backslash \G]$. It is the pro-$p$ Iwahori Hecke $R$-algebra of $\G$. We denote by $\Mod(\H)$ the category of  right $\H$-modules and when $R$ is a field, by $\IrrH$ the set of simple right $\H$-modules modulo isomorphism. 
The subspace  $V^\I$ of $\I$-invariant vectors in  $V\in \Rep(\G)$  is a right $\H$-module and we consider the  functor $(-)^\I:V\mapsto V^\I=\Hom_\G(R[\I\backslash \G], V)$ from $\Rep(\G)$ to   $\Mod(\H)$. We call it the $\I$-invariant functor.  It has a left adjoint given by the tensor product functor $-\otimes_{\H}R[\I\backslash \G]$.

  When $R$ is a field, the $\I$-invariant functor induces a bijection from the set of $V\in \Irr$ with $V^\I\neq0$ onto $\IrrH$ if the characteristic of $R$ is $\ell\neq p$ (\cite[I.6.3]{Viglivre} applied to the global Hecke $R$-algebra of $\G$ which is the algebra of  compact and locally constant functions $\G\to R$ with the convolution product given by a left Haar measure on $\G$ with values in $R$) or if $\G={\rm GL}(2, \mathbb Q_p)$ (\cite{Breuil}, \cite{VigGL2}).    When $R$ is a field of characteristic $p$, the $\I$-invariant functor   does not always send a simple object in $\Rep( \G)$ onto a simple object in $\Mod(\H)$ (counter examples are built in \cite{BP} in the case of $\G={\rm GL}(2, \F)$ when $\F$ is a strict unramified extension of $\mathbb Q_p$).

\medskip
We refer to this setup as \emph{the $p$-adic case.}

\medskip
1.3. Replace $\F$  by a finite field $\mathbb F$ of characteristic $p$,   the $p$-adic reductive group $\G$ by the finite reductive group $\Gf={\mathbf G}(\mathbb F)$, the pro-$p$ Iwahori group $\I$ by the unipotent radical $\Uf$ of $\Bf=\Bf(\mathbb F)$, and the pro-$p$ Iwahori Hecke $R$-algebra by the  algebra $ \Hf$ of   all $\Gf$-intertwiners of $R[\Uf\backslash \Gf]$ (called the unipotent Hecke $R$-algebra). We consider the $\Uf$-invariant functor $(-)^\Uf:V\mapsto V^\Uf=\Hom_\Gf(R[\Uf\backslash \G], V)$  from $\Rep(\Gf)$ to   $\Mod(\Hf)$. Its left adjoint is the tensor product functor $-\otimes_{\Hf}R[\Uf\backslash \Gf]$.

Suppose that $R$ is a field and denote by $\Irrf$ the set of irreducible $R$-representations of $\Gf$ modulo isomorphism. If  $R$ has characteristic different from $p$,  the $\Uf$-invariant functor $(-)^\Uf$ induces a bijection between the set of $V\in \Irrf$ with $V^\Uf\neq0$ and the set $\IrrHf$  (\cite[I.6.3]{Viglivre} applied to the group algebra $R[\Gf]$). When $R$ is a field of characteristic  $p$,    under certain mild hypotheses on $R$ (see for example\cite[Thms. 6.10 and 6.12]{CE}), it yields a bijection between $\Irrf$  and $\IrrHf$.

\medskip
We refer to this setup as \emph{the finite case.}

\medskip

1.4.  We  recall the definition of the functor  of parabolic induction in the $p$-adic case. Consider  a parabolic subgroup $\mathbf P$ of $\mathbf G$ containing $\mathbf B$ with Levi decomposition $\mathbf P=\mathbf M\mathbf N$ where $\mathbf Z\subset \mathbf M$. We set  $\P:=\mathbf P(\F), \M:=\mathbf M(\F)$ and $ \N:=\mathbf N(\F)$.
The parabolic induction  functor $\Ind_{\P}^{\G}:\Rep(\M)\to \Rep(\G)$ sends $W\in \Rep(\M)$  to the representation of $\G$ by right translation on the $R$-module $\{f:\G\to W \ | f(mnxk)=mf(x)\: \forall\: (m,n, x,k)\in ( \M,\N,\G,K_f)\}$  where $K_f$ is some open subgroup of $\G$ depending on $f$.  
When $R$ is a field, the parabolic induction respects admissibility because $\P\backslash \G$ is compact; an admissible  irreducible $R$-representation of $\G$ is called supercuspidal when,  for any $\P\neq \G$ and any admissible  $W\in \IrrM$,  it is not  a subquotient of   $\Ind_{\P}^{\G}W$.

The functor  $\Ind_{\P}^{\G} $ is faithful. It has a left adjoint given by the  $\N$-coinvariant  functor $\LL_{\P}^{\G}=(-)_{\N}$. It also has a right adjoint denoted by $\RR_\P^\G$  \cite{Vigadjoint}. The functor $\RR_\P^\G$ has been computed by Casselman and Bernstein when $R=\mathbb C$ and by Dat when $p$ is invertible in $R$ for many groups $\G$ but  not yet for a general $p$-adic reductive group $\G$ (\cite{Dat}). 
When $R$ is a field of characteristic $ p$, the parabolic induction $\Ind_{\P}^{\G} $  is fully faithful and the right adjoint $\RR_\P^\G$  is  equal to Emerton's functor  on admissible representations  (\cite{Vigadjoint}) because $\RR_\P^\G$ respects admissibility (\cite{AHV}).

Before stating our first result, we recall that $\I_{\M}:=\I\cap \M$ is a pro-$p$ Iwahori subgroup of $\M$ compatible with ${\mathbf B}_{\mathbf M}= \mathbf B\cap \mathbf M$.  There exists a functor  $\Ind_{\H_\M}^\H:\Mod_{\H_\M}\to \Mod_{\H }$ defined in  \cite{Vig5} (in the current article, we refer to \S\ref{indcoind-prop} for precise statements and references).
We call  it parabolic induction for Hecke modules. It has a left adjoint  $\L_{\H_\M}^\H$ and a right adjoint  $\R_{\H_\M}^\H$.   
\begin{theointro}\label{thm:padic} The functor $\Ind_{\H_\M}^\H:\Mod_{\H_\M}\to \Mod_{\H }$ and its adjoints satisfy:
$$(- )^\I \circ \Ind_\P^\G    \cong \Ind_{\H_\M}^\H\circ (- )^{\I_\M}, \quad (-)^{\I_\M} \circ {\bf R}_{ \P}^\G\cong \R_{\H_\M}^\H\circ (- )^{\I}.$$
When $p$ is invertible in $R$, we also have $ (-)^{\I_\M} \circ {\bf L}_{ \P}^\G\cong {\L}_{\H_\M}^\H\circ (- )^{\I}$. But in general  there is no functor $F:\Mod_{\H_\M}\to \Mod_{\H }$ such that $ (-)^{\I_\M} \circ {\bf L}_{ \P}^\G\cong  F\circ (- )^{\I}$.   \end{theointro}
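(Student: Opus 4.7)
The plan is to prove the three claims in sequence: first the isomorphism for $\Ind$ by a direct Iwahori-factorization computation, then the isomorphism for the right adjoint by matching explicit formulas, and finally the positive and negative halves of the left-adjoint statement separately.

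For the first isomorphism $(-)^\I \circ \Ind_\P^\G \cong \Ind_{\H_\M}^\H\circ (-)^{\I_\M}$, I would construct an explicit natural transformation using the Iwahori factorization $\I=(\I\cap\N^-)\,\I_\M\,(\I\cap \N)$ relative to $\P=\M\N$ and its opposite, combined with the Bruhat-type decomposition $\G=\bigsqcup_{\mathbf w}\P\mathbf w\I$ over distinguished representatives $\mathbf w$. For $W\in\Rep(\M)$, this identifies $(\Ind_\P^\G W)^\I$ as an $R$-module with certain functions on these representatives valued in $W^{\I_\M}$. The substantive step is matching the $\H$-action: using the presentation of $\Ind_{\H_\M}^\H$ as $-\otimes_{\H_{\M^+},\theta}\H$, one verifies that the canonical action of the positive subalgebra $\H_{\M^+}$ on $W^{\I_\M}$ inherited from the $\M$-action coincides with the $\theta$-restriction of $\H_{\M^+}\hookrightarrow \H$, and then extends by $\H$-linearity via the Bernstein-type presentation of $\H$.

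For the right-adjoint identity $(-)^{\I_\M}\circ {\bf R}_\P^\G\cong \R_{\H_\M}^\H\circ(-)^\I$, a purely formal passage to right adjoints in the first isomorphism is not available because $(-)^\I$ is only left exact in characteristic $p$. Instead, I would argue directly: using the description $\R_{\H_\M}^\H N = \Hom_{\H_{\M^-},\theta^*}(\H,N)$ together with an Iwahori-theoretic description of $({\bf R}_\P^\G V)^{\I_\M}$ via the dynamics of a strongly positive central element of $\M$, construct a natural comparison map; then verify it is an isomorphism by combining the adjunctions $\Ind_\P^\G\dashv {\bf R}_\P^\G$ and $\Ind_{\H_\M}^\H\dashv \R_{\H_\M}^\H$ with the first step applied to test objects of the form $X\otimes_{\H_\M}R[\I_\M\backslash\M]$.

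For the left-adjoint case, when $p$ is invertible in $R$ the idempotent $e_\I=|\I|^{-1}\sum_{\iota\in\I}[\iota]$ exists in $R[\G]$ and $(-)^\I$ becomes projection onto a direct summand, hence preserves all colimits; passing to left adjoints in the first step yields $(-)^{\I_\M}\circ {\bf L}_\P^\G\cong \L_{\H_\M}^\H\circ(-)^\I$. To rule out the existence of such an $F$ in general, I would exploit the known failure of $(-)^\I$ to separate isomorphism classes of simple objects in characteristic $p$: in $\G=\GL_2(\F)$ with $\F$ a strict unramified extension of $\QQ_p$ and $R$ of characteristic $p$, the counterexamples of \cite{BP} provide non-isomorphic irreducible $V_1,V_2$ with $V_1^\I\cong V_2^\I$ as $\H$-modules, and one computes $(V_i)_\N^{\I_\M}=({\bf L}_\P^\G V_i)^{\I_\M}$ directly, distinguishing the resulting $\H_\M$-modules by their central character or their support on the pro-$p$ Iwahori-Weyl group. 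The hard part of the whole argument is the action-matching in the first step: translating the implicit monoid-positive action of $\M^+$ on $W^{\I_\M}$ into the $\theta$-twisted tensor product against $\H$ requires careful bookkeeping of positivity, the twist $\theta$, and lengths in the pro-$p$ Iwahori-Weyl group, after which the remaining parts follow relatively cleanly.
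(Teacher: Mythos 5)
Your proposal has several genuine gaps. First, your description of the right adjoint $\R_{\H_\M}^\H$ is wrong: you write $\R_{\H_\M}^\H N = \Hom_{\H_{\M^-},\theta^*}(\H,N)$, but that is the \emph{coinduction} $\cII_{\H_\M}^\H$, which is the right adjoint of restriction to $\H_{\M^-}$, not the right adjoint of $\Ind_{\H_\M}^\H$. The paper defines $\R_{\H_\M}^\H := \Hom_{\H}(\H_\M\otimes_{\H_{\M^+},\theta}\H, -)$, and distinguishes induction from coinduction precisely because in characteristic $p$ they differ. Second, your route to the positive part of the left-adjoint statement is not valid: passing to left adjoints of the isomorphism $(-)^\I\circ\Ind_\P^\G\cong\Ind_{\H_\M}^\H\circ(-)^{\I_\M}$ produces $\mathbf{L}_\P^\G\circ(-\otimes_\H\XX)\cong(-\otimes_{\H_\M}\XX_\M)\circ\L_{\H_\M}^\H$, which involves $-\otimes_\H\XX$ rather than $(-)^\I$. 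Since $-\otimes_\H\XX$ and $(-)^\I$ are not quasi-inverse equivalences even when $p$ is invertible, you cannot replace one by the other to obtain $(-)^{\I_\M}\circ\mathbf{L}_\P^\G\cong\L_{\H_\M}^\H\circ(-)^\I$ formally. The paper instead proves this via the explicit computation that $\theta^{*+}=\theta^-\delta_\P$ (Lemma \ref{lemma:thetacompar}), Abe's identification $\L_{\H_\M}^\H\cong -\otimes_{\H_{\M^+},\theta^*}\H_\M$, and the Jacquet-module formula of \cite[II.10.1]{Selecta}; this is not a formal consequence.

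Third, your counterexample strategy for the negative part is unlikely to close. The Breuil--Paskunas examples exhibit \emph{supersingular} irreducibles whose $\I$-invariants are not simple; they are not advertised as non-isomorphic $V_1,V_2$ with $V_1^\I\cong V_2^\I$, and even if such a pair were supersingular, both Jacquet modules $(V_i)_\N$ would vanish so your proposed distinction would collapse. The paper's actual counterexample is structurally different: it takes $\G=\GL_2(\QQ_p)$ and, using Colmez's computation $\dim_R\Ext^1_\G(\Triv_\G,\St_\G)>1$ against $\dim_R\Ext^1_\H(\Triv_\H,\operatorname{Sign}_\H)=1$, produces a non-split extension $\pi$ of $\Triv_\G$ by $\St_\G$ that is not generated by its $\I$-invariants. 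Then $\pi^\I=\St_\G^\I$ while $\pi_\U\neq 0$ and $(\St_\G)_\U=0$, forcing any putative $F$ to send the single module $\St_\G^\I$ to two different things. Finally, on the right-adjoint identity, you propose a direct dynamical argument with test objects; the paper instead proves the adjoint diagram $\Ind_\P^\G(-\otimes_{\H_\M}\XX_\M)\cong(-\otimes_\H\XX)\circ\Ind_{\H_\M}^\H$ by first establishing the universal-module isomorphism $\H_\M\otimes_{\H_{\M^+},\theta}\XX\cong\Ind_\P^\G(\XX_\M)$ (Prop.~\ref{prop:Q2univ}) and then tensoring using \cite[Lemma 7.7]{HVcomp}. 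Your first-step argument is essentially in line with the paper's Prop.~\ref{prop:Q1prop}, but the remaining three parts need substantial repair.
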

The theorem is proved in this
article by answering Questions \ref{qp1} and \ref{qp2} and \ref{qp3} in \S\ref{answersprop}.


\bigskip

 In the finite case, we let   $\Pf:=\mathbf P(\mathbb F), \Mf:=\mathbf M(\mathbb F), \Nf:=\mathbf N(\mathbb F)$ and 
 recall the definition of the parabolic induction  functor $\Ind_{\Pf}^{\Gf}:\Rep(\Mf)\to \Rep(\Gf)$. It sends $W\in \Rep(\Mf)$  to the representation of $\Gf$ by right translation on the $R$-module $\{f:\Gf\to W \ | f(mnx)=mf(x)\:\: \forall\: (m,n, x )\in ( \Mf,\Nf,\Gf)\}$.
 When $R$ is a field, a  representation $V\in \Irrf$ is finite dimensional   and  supercuspidality  is defined as in the $p$-adic case.
 The functor $\Ind_{\Pf}^{\Gf}$  is faithful  with   left adjoint  the  $\Nf$-coinvariant  functor $\LL_\Pf^\Gf:=(-)_{\Nf}$ and  right adjoint the  $\Nf$-invariant  functor $\RR_\Pf^\Gf:(-)^{\Nf}$ (see \S\ref{parabfinitegroup}).
The parabolic induction for Hecke modules  is defined to be  the functor  $\Ind_{\HMf}^\Hf= -\otimes _\HMf \Hf:\Mod_{\HMf}\to \Mod_{\Hf}$. Its right adjoint ${\rm R}_{\HMf}^\Hf$ is the natural restriction functor 
$\Mod(\Hf)\to \Mod(\HMf)$.  Its left adjoint is computed in Prop. \ref{prop:compafunc} (see also Remark \ref{L=res}).
 We prove an analog of Theorem \ref{thm:padic}:
 
 \begin{theointro}\label{thm:finite} The  functor 
 $\Ind_{\HMf}^\Hf:\Mod_{\HMf}\to \Mod_{\Hf}$ and its adjoints satisfy: $$ (- )^\Uf\circ \Ind_{\Pf }^{\Gf }\cong \Ind_{\HMf}^\Hf\circ (- )^{\Uf_\Mf}, \quad (-)^{\Uf_\Mf}\circ {\bf R}_{\Pf }^{\Gf }\cong {\rm R}_{\HMf}^\Hf\circ (- )^{\Uf}.$$
When $p$ is invertible in $R$, we also have  $ (-)^{\Uf_\Mf}\circ{\bf L}_{\Pf }^{\Gf }\cong {\rm L}_{\HMf}^\Hf\circ (- )^{\Uf}$. But in general  the functors $ (-)^{\Uf_\Mf}\circ {\bf L}_{\Pf }^{\Gf }$ and $ \L_{\HMf}^\Hf\circ (- )^{\Uf}$ are not isomorphic. \end{theointro}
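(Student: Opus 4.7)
The plan is to prove the two stated isomorphisms directly, in parallel with Theorem \ref{thm:padic}, and to dispatch the left-adjoint claim by an exactness argument when $p\in R^\times$ together with an explicit counterexample otherwise. Throughout, the main structural inputs are the factorization $\Uf=\Uf_\Mf\Nf$ (using $\Nf\subset\Uf$ and $\Uf_\Mf=\Uf\cap\Mf$), the Bruhat decomposition $\Gf=\bigsqcup_{w\in{}^\Mf W}\Pf w\Bf$ indexed by the distinguished representatives of $W_\Mf\bs W_\Gf$, and the standard presentation of $\Hf$ as a free $\HMf$-module with basis indexed by ${}^\Mf W$.

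\textbf{First isomorphism.} Since $\Gf$ is finite, $\Ind_\Pf^\Gf$ is simultaneously the left and right adjoint to restriction, and Frobenius reciprocity gives
\[
(\Ind_\Pf^\Gf W)^\Uf=\Hom_\Gf(R[\Uf\bs\Gf],\Ind_\Pf^\Gf W)\cong\Hom_\Pf\bigl(R[\Uf\bs\Gf]|_\Pf,W\bigr).
\]
Decomposing $\Uf\bs\Gf$ into right $\Pf$-orbits by Bruhat yields $R[\Uf\bs\Gf]|_\Pf\cong\bigoplus_{w\in{}^\Mf W}R[\Pf_w\bs\Pf]$ with $\Pf_w=w^{-1}\Uf w\cap\Pf$; for distinguished $w$, the Iwahori decomposition of $w^{-1}\Uf w$ implies that $\Pf_w$ projects onto $\Uf_\Mf$ under $\Pf\to\Mf$, whence $\Hom_\Pf(R[\Pf_w\bs\Pf],W)\cong W^{\Uf_\Mf}$. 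Correspondingly, $\Ind_\HMf^\Hf(W^{\Uf_\Mf})=W^{\Uf_\Mf}\otimes_\HMf\Hf$ decomposes as $\bigoplus_w W^{\Uf_\Mf}$ along the free basis of $\Hf$ over $\HMf$. The main task, which constitutes the bulk of the argument, is to verify that the canonical bijection between these two decompositions is $\Hf$-equivariant; this reduces to comparing the action of standard Hecke operators on functions supported in a single Bruhat cell with the multiplication in $\Hf$.

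\textbf{Second isomorphism.} The abelian-group identity is immediate: ${\bf R}_\Pf^\Gf V=V^\Nf$ together with $\Uf=\Uf_\Mf\Nf$ gives $(V^\Nf)^{\Uf_\Mf}=V^\Uf$. The content is the agreement of two $\HMf$-structures on this common space: averaging via the $\Mf$-action on $V^\Nf$ over $\Uf_\Mf$-double cosets, versus restricting the $\Hf$-action on $V^\Uf$ along the natural embedding $\HMf\hookrightarrow\Hf$. For a Hecke generator $T_{\Uf m\Uf}$ with $m\in\Mf$ and $v\in V^\Uf$, the action is $\sum_{u\in\Uf/(\Uf\cap m^{-1}\Uf m)} m u\cdot v$. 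Using $\Uf=\Uf_\Mf\Nf$ and that $m\in\Mf$ normalizes $\Nf$, one has $\Uf\cap m^{-1}\Uf m=(\Uf_\Mf\cap m^{-1}\Uf_\Mf m)\cdot\Nf$, so $\Uf/(\Uf\cap m^{-1}\Uf m)$ is canonically represented by $\Uf_\Mf/(\Uf_\Mf\cap m^{-1}\Uf_\Mf m)$; the sum then becomes $\sum_{u_M\in\Uf_\Mf/(\Uf_\Mf\cap m^{-1}\Uf_\Mf m)} m u_M\cdot v$, precisely the $\HMf$-action of $T^\Mf_{\Uf_\Mf m\Uf_\Mf}$. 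I expect the book-keeping around this cardinality-matching step, together with the compatible verification for the embedding $\HMf\hookrightarrow\Hf$, to be the principal technical content.

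\textbf{Left adjoints.} When $p\in R^\times$, all invariant functors by $p$-power subgroups are exact via averaging idempotents $|\Uf|^{-1}\sum_u u$, and $\Nf$-invariants agree with $\Nf$-coinvariants, giving ${\bf L}_\Pf^\Gf\cong{\bf R}_\Pf^\Gf$; combined with the analogous identification on the Hecke side, the left-adjoint isomorphism reduces to the right-adjoint isomorphism already established. In the modular case $p\mid|\Uf|$, I would produce a counterexample in the smallest non-trivial setting $\Gf=\GL_2(\mathbb F)$ with $\Pf=\Bf$, where the two functors can be computed in closed form on a simple test module and seen to yield $\HMf$-modules of different dimensions; constructing a sufficiently transparent counterexample is the main obstacle here.
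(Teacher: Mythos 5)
Your proposal is sound in substance but takes two genuinely different routes from the paper. For the first isomorphism, you use Frobenius reciprocity plus a Mackey decomposition of $R[\Uf\backslash\Gf]|_\Pf$, whereas the paper (Prop.\ \ref{prop:Q1}) writes down an explicit map $v\otimes h\mapsto f_{\Pf,v}h$ and proves it bijective using the basis of $\Uf$-invariant functions $f_{\Pf\hat d\Uf,v}$ from Lemma \ref{Ugene} and the freeness of $\Hf$ over $\HMf$; the two arguments have comparable weight, but watch your indexing, since if you take distinguished representatives $d\in\dist$ of $\Wf_\Mf\backslash\Wf$ then the stabilizer you need is $\hat d\Uf\hat d^{-1}\cap\Pf$ (Lemma \ref{PI}(1)(b)), not $d^{-1}\Uf d\cap\Pf$ --- so your $w$ should be $\hat d^{-1}$, and ``Iwahori decomposition'' is the wrong term in the finite setting. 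Your second isomorphism and the positive part of the left-adjoint claim run exactly along the paper's lines (semidirect product $\Uf=\UMf\ltimes\Nf$; averaging idempotent when $p\in R^\times$, then Remark \ref{L=res} to identify $\L^\Hf_\HMf$ with $\r^\Hf_\HMf$). The real divergence is the negative part: you propose an explicit counterexample in $\mathrm{GL}_2(\mathbb F)$, which is viable but, as you say yourself, the hard part. The paper instead argues abstractly: take $\Pf=\Bf$, note that $\L^\Hf_{\Hf_\Zf}$ and $(-)^\Uf$ are left exact, so commutativity of the diagram would force $(-)_\Uf$ to be left exact, contradicting Prop.\ \ref{prop:notexact} (which rests on $\Xf$ not being projective when $p$ divides $|\Uf|$, Lemma \ref{Xnotproj}). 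That exactness argument is considerably shorter and avoids any closed-form computation; if you pursue your route, you will want to make sure the test module is a $\Gf$-representation $V$ (not an $\Hf$-module) since both functors to be compared take $\Rep(\Gf)$ as source.
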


The  theorem is proved by answering Questions \ref{qf1}, \ref{qf2} and \ref{qf3} in \S\ref{answers}.


\medskip 

1.5.  A table comparing 
 the parabolic induction functors 
  $$\Ind_\HM^\H: \Mod(\HM)\rightarrow  \Mod(\H)\textrm{ and }
\Ind_\HMf^\Hf: \Mod(\HMf)\rightarrow  \Mod(\Hf)$$
 and  their respective  left and right adjoints $\L_\HM^\H$, $\R_\HM^\H$, $\L_\HMf^\Hf$ and $\R_\HMf^\Hf$
(when $R$ is an arbitrary ring and when  $p$ is invertible in $R$)
is provided in Section \ref{table}.

 \medskip

1.6.  
 In the $p$-adic case,  let $\mathbf T$ be the maximal split central subtorus of  $\mathbf Z$. Let  $\T:=\mathbf T(\F)$ and denote by   ${\rm X}^+_*(\T)$ the monoid of dominant cocharacters of $\T$ with respect to $\B$. The algebra $\H$ is a finitely generated module over its center $\mathcal Z$ and $\mathcal Z$  is a finitely generated module over a subalgebra $\mathcal Z_\T$ isomorphic to $R[\X_*^+(\T)]$ (\cite{Compa} when $\bf G$ is $\F$-split; generalized by \cite{Vig2}). 

We suppose that  $R$ is an algebraically closed field of characteristic $p$. The supercuspidality of  $V\in \Irr$  can be seen on the action of    $\mathcal Z$  on $V^\I$.    A $\mathcal Z$-module $\mathcal M$  is called supersingular if for all  $v\in \mathcal M$ and all non invertible $x\in \X_*^+(\T)$, corresponding to  $z_x\in \mathcal Z_\T$, there exists a positive integer $n$ such that $z_x^nv=0$. A right $\H$-module is called supersingular when it is supersingular as a $\mathcal Z$-module.  Abe described $\IrrH $  via parabolic induction from supersingular simple modules (\cite{Abeclassi}).
When $\G=\Zp $, all finite dimensional $\H$-modules are supersingular and all irreducible admissible $R$-representation of $\G$ are supercuspidal.   We establish the following:

\begin{theorem}\label{thm:ss}
Suppose that $R$ is an algebraically closed field of characteristic $p$. Let $V $  be an irreducible admissible $R$-representation of $\G$. Then the following are equivalent:
\begin{enumerate}
\item
 $V$ is supercuspidal.
 
 \item   the finite dimensional $\H$-module $V^{\I}$ is supersingular.
 \item   the finite dimensional $\H$-module $V^\I$ admits a supersingular subquotient.
 \end{enumerate}
 \end{theorem}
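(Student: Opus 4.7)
The plan is to establish the cycle $(2)\Rightarrow(3)\Rightarrow(1)\Rightarrow(2)$. The implication $(2)\Rightarrow(3)$ is immediate since a module is a subquotient of itself. The remaining two implications rest on Theorem~\ref{thm:padic} together with two classifications: the AHHV description \cite{AHHV} of $\Irr$ via parabolic induction from supercuspidal representations of Levi subgroups, and Abe's description \cite{Abeclassi} of $\IrrH$ via parabolic induction from supersingular simple modules.

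For $(3)\Rightarrow(1)$, I would argue by contrapositive. Assume $V$ is not supercuspidal, so $V$ is a subquotient of $\Ind_\P^\G W$ for some proper parabolic $\P=\M\N$ and some admissible irreducible $W\in\IrrM$. Applying the exact functor $(-)^\I$ and invoking the first isomorphism in Theorem~\ref{thm:padic} shows that $V^\I$ is a subquotient of $\Ind_{\H_\M}^\H W^{\I_\M}$. The Hecke-side input is that no simple subquotient of $\Ind_{\H_\M}^\H N$ is supersingular when $\M$ is a proper Levi and $N\in\Mod(\H_\M)$: by Abe's classification, every simple subquotient of such an induced module is parameterized by a Levi contained in $\M\subsetneq\G$, hence is not supersingular. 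Consequently $V^\I$ admits no supersingular subquotient, contradicting~$(3)$.

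For $(1)\Rightarrow(2)$, I would combine the AHHV description of the supercuspidal $V$ with the compatibility of $(-)^\I$ and parabolic induction to argue that every composition factor of $V^\I$ must be supersingular; together with the fact that supersingularity is closed under extensions among finite length $\H$-modules (since nilpotence of each $z_x\in\mathcal Z_\T$ on every composition factor forces nilpotence on the whole module, via the filtration), this yields that $V^\I$ itself is supersingular. The key intermediate step is to rule out non-supersingular composition factors: a non-supersingular simple subquotient of $V^\I$ would, through Abe's parameterization and an adjunction argument, witness $V$ as a subquotient of some proper parabolic induction, contradicting the supercuspidality of $V$.

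The main obstacle lies in this last step. Transferring information from the $\H$-module structure of $V^\I$ back to the $\G$-module structure of $V$ is delicate in characteristic $p$, because the $\I$-invariants functor is not fully faithful and simplicity is not preserved by it. Bridging the representation-theoretic and Hecke-algebraic notions of supercuspidality and supersingularity requires a careful synthesis of the AHHV and Abe parameterizations together with the compatibility statements of Theorem~\ref{thm:padic}, drawing on structural results of Abe and Vign\'eras concerning the center $\mathcal Z$ of $\H$ and its subalgebra $\mathcal Z_\T\cong R[\X_*^+(\T)]$.
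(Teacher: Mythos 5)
Your high-level plan is the same as the paper's: run the cycle $(2)\Rightarrow(3)\Rightarrow(1)\Rightarrow(2)$ using the AHHV and Abe classifications together with the compatibility of parabolic induction with the invariants functor (Theorem~\ref{thm:padic}/the diagrams of \S\ref{answersprop}). However, both nontrivial implications as you sketch them contain genuine gaps.

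For $(3)\Rightarrow(1)$: you invoke ``the exact functor $(-)^\I$'' to pass from \emph{$V$ is a subquotient of $\Ind_\P^\G W$} to \emph{$V^\I$ is a subquotient of $\Ind_{\H_\M}^\H W^{\I_\M}$}. But $(-)^\I$ is only \emph{left} exact in characteristic $p$ (it is the right adjoint of $-\otimes_\H\XX$; see \S\ref{prop-inv}). If $V=A/B$ with $B\subset A\subset \Ind_\P^\G W$, then $A^\I/B^\I\hookrightarrow V^\I$, but this inclusion is in general strict, so $V^\I$ need not be a subquotient of $(\Ind_\P^\G W)^\I$. The paper circumvents this by not using the bare definition of supercuspidality; it uses the AHHV parameterization $V\cong I_\G(\P,\sigma,\Q)$ and the sharper result from \cite{AHV} (recorded as Theorem~\ref{key}(i)) that $I_\G(\P,\sigma,\Q)^\I\cong I_\H(\P,\sigma^{\I_\M},\Q)$ \emph{on the nose}, then reads off from Remark~\ref{rema:supersing}(iii) that this Hecke module has no supersingular subquotient when $\P\neq\G$. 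That exact computation of $V^\I$ is the ingredient your argument is missing.

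For $(1)\Rightarrow(2)$: you claim that a nonsupersingular simple subquotient of $V^\I$ would, by adjunction, exhibit $V$ as a subquotient of a proper parabolic induction. This is how the paper handles the case $\Q\neq\G$ of the triple $(\P,\sigma,\Q)$ parameterizing the nonsupersingular simple \emph{submodule} (Remark~\ref{rema:supers}(e) lets one pass from subquotient to submodule). But the case $\Q=\G$ does not fit this template: there one has $e_\H(\sigma)\subset V^\I$, whence by adjunction $V$ is a quotient of $e_\H(\sigma)\otimes_\H\XX\cong e_\G(\sigma\otimes_{\H_\M}\XX_\M)$ --- which is not a parabolic induction but a representation on which $\N$ acts trivially. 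One then concludes $V_\N\neq 0$ and appeals to the fact that supercuspidal representations have vanishing $\N$-coinvariants (from \cite[II.7 Prop.]{AHHV}). Your sketch would need this separate $\N$-coinvariant argument to close the case $\Q=\G$; without it the proof is incomplete.
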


We choose a special parahoric subgroup $\K$ of pro-$p$ Sylow $\I$.  For an  irreducible smooth representation $W$ of  $\K$, the center $\mathcal Z(W)$ of the algebra  of  intertwiners of the compactly induced representation $\ind_\K^\G(W)$ and  the center of $\H$ have   a similar structure, and there is a similar  notion of a supersingular $\mathcal Z(W)$-module.  A representation $V\in \Irr$ is supersingular when the $\mathcal Z(W)$-module $\Hom_\K(W,V)$ is non zero and supersingular for some   $W$.  A corollary of the description of $  \Irr $  via parabolic induction from supersingular irreducible admissible representations \cite{AHHV}, is that  for $V\in \Irr$, supercuspidality is equivalent to supersingularity.  If $V^\I$ is supersingular, then $V$ is supersingular (see Remark \ref{21}). The   converse  (1) $\Rightarrow$(2) follows from the commutativity of Diagram \eqref{diag2'prop} (which is contained in  Theorem \ref{thm:padic}
by passing to left adjoints in the identity involving 
 $\R^\G_\P$ and  $(-)^\I$)
and  from the computation of $\mathcal M \otimes_{\H} R[\I \backslash \G]$ for  a simple $\H$-module $\mathcal M$ done  in  \cite{AHV}. Lastly, the argument to prove (3) $\Rightarrow$ (1)  
follows from the computation of $V^\I$ done in \cite{AHV}.   Our first proof  did not use \cite{AHV} and started with a reduction to the case where $\G$ is almost simple, simply connected and isotropic. But  this allowed us to prove 
$(3)\Rightarrow (1)$   only when   the index  $[\G:\G^{der}C]$ is finite, where $\mathbf G^{der}$ and $ \mathbf C$  are the derived group and the center of $\mathbf G$, $\G^{der}=\mathbf G^{der}(\F)$ and  $C =\mathbf G(\F)$.

 \bigskip {\bf Acknowledgements} We thank Noriyuki Abe for suggesting the counter example of Prop. \ref{cex}  and for generously  sharing his  recent results with us. We are also thankful to Guy  Henniart for his continuous interest and helpful remarks.  Our work was carried out at the  Institut de Mathematiques de Jussieu -- Paris 7,  the University of British Columbia and the Mathematical Sciences Research Institute. We would like to acknowledge the support of these institutions. The first author
 is partially funded by NSERC Discovery Grant.

\section{Notation and Preliminaries\label{nots}} 


Let  $\F$   be a nonarchimedean locally compact field  with residue characteristic $p$.    
 The  ring of integers of $\F$ is denoted by $\O_\F$. The residue field  of $\F$ is a finite field  $\mathbb F_q$ with $q$  elements, where $q$ is a power of  the prime number $p$.  
 We choose a uniformizer
$\varpi$ and denote by 
  $\val_{\F}$  the valuation  on   $\F$   normalized by 
  $\val_{\F}(\varpi)=1$.

We introduce the following objects and notation, keeping in mind that  throughout the article, the group of $\F$-points of an algebraic $\F$-group $\mathbf Y$ will be denoted by $\rm Y$.

 \begin{itemize}

\item[-]  $\mathbf{G}$  a connected reductive group over $\F$; 
\item[-]  $\mathbf{T}$  a maximal $\F$-split torus of $\mathbf{G}$, of $\mathbf G$-centralizer $\mathbf Z$ and  $\mathbf G$-normalizer   $\mathbf N_{\mathbf G}$; 
\item[-]  $\Phi $ the relative root system of $\mathbf{T}$ in $\mathbf{G}$. To $\alpha\in \Phi$ is associated the root subgroup  $\mathbf U_\alpha$. We have  $\mathbf U_\alpha \subset \mathbf U_{2\alpha}$ if $\alpha, 2\alpha \in \Phi$.
\item[-] $\mathbf B$ is a minimal parabolic $\F$-subgroup  of $\mathbf G$ with Levi decomposition $\mathbf B= \mathbf Z \mathbf U$;  we have $\B=\Zp \U$;  
\item[-] $\varphi= (\varphi_{\alpha})_{\alpha \in \Phi}$  a  valuation  of the root datum $(\Zp, (\U_{\alpha})_{\alpha \in \Phi})$  of $\Gp$ of type $\Phi$ generating $\Gp$  \cite[Def. (6.2.1)]{BTI} which is compatible with the valuation $\val_\F$  \cite[(5.1.22) formula  (2)]{BTII}, discrete
\cite[Def. (6.2.21)]{BTI} and special \cite[(6.2.13) Def.]{BTI}.
\end{itemize}

To $(\mathbf T, \mathbf B, \varphi)$ is associated a reduced root system $\Sigma$ with a basis $\Delta$, an apartment   $\Ap$  in the  semisimple  building  of $\Gp$,  a special vertex $x_0\in \Ap$    \cite[1.9]{Tit},  and an alcove $\Cp\subset \Ap$ of vertex $x_0$. 
 The  fixer of $x_0$, resp. $\Cp$, in the kernel of the Kottwitz homomorphism $\kappa_G$ \cite[7.1 to 7.4]{K}, is a special maximal parahoric group $\K$   of $\Gp$ of  pro-unipotent radical $\K^1$, resp. an Iwahori subgroup $\Iw$ of unique pro-$p$ Sylow subgroup $\I$.

\subsection{\label{products}  Root system and Weyl groups attached to $\G$}

\subsubsection{Relative root system}
The  relative  root system $\Phi $ of $\Tp$ in $\Gp$ is related to $\Sigma$ by a   map $e:\Phi\to \Nf_{>0}$   such that $\Sigma = \{e(\beta) \beta \ | \beta \in \Phi\}$. For a  root $\beta\in \Phi $ such that  $ \beta/2$ does not belong to $\Phi$, the image of the homomorphism $\U _\beta-\{1\}\xrightarrow{\varphi_\beta} \mathbb Q$ given by the  valuation $\varphi$ is $e(\beta)^{-1} \mathbb Z$  \cite[\S 3.5 (39)]{Vig1}. When $\bf G$ is $\F$-split, $\Phi=\Sigma$ is a reduced root system and $e(\beta)=1$ for all $\beta \in \Phi$. 
 The map $\beta \mapsto e(\beta) \beta$ induces a bijection from the set $\Phi_{red}$  of reduced roots of $\Phi$  onto $\Sigma$, and from $\Delta $ onto a basis $\Pi$ of  $\Phi$.

 To $\alpha \in \Sigma$  we associate the group  
 $\U _\alpha:=\U _\beta$   where $\beta$ is the unique reduced root of $\Phi$ such that $\alpha= e(\beta)\beta $. To   an affine root $(\alpha,r)\in \Sigma_{aff}:= \Sigma \times \mathbb Z$    corresponds a compact open subgroup 
$$\Uu _{(\alpha, r)}=\Uu _{(\beta,  e(\beta)^{-1}r)}{ =\{1\} \cup  \{ x\in \U _\beta - \{1\} \ | \  \varphi_\beta (x) \geq e(\beta)^{-1}r\}}.$$
We identify $\Sigma$ with the subset $\Sigma\times \{0\}$ of  $\Sigma_{aff}$.   In particular, we write $\Uu_{\alpha}$ instead of $\Uu_{\alpha,0} $.

\subsubsection{Iwahori decompositions}We have the inclusions $\K^1\subset \I \subset \Iw \subset \K$. These groups are generated by their intersections  with, respectively,  $\Zp$, $\U$, and $ \U^{op} : =\mathbf U^{op}(\F)$ where $\mathbf B^{op}=\mathbf Z \mathbf U^{op}$ is the opposite parabolic subgroup of  $\mathbf B$. We describe these intersections:\begin{itemize}
\item[-]  $\Zp\cap \K^1=\Zp\cap \I = \Zp^1$ is the unique pro-$p$ Sylow subgroup of the  unique parahoric sugroup 
 $\Zp^0:=\Ker\, \kappa_\Zp$ of $\Zp$. We have  $\Zp\cap \K=\Zp\cap \Iw =\Zp^0 $. The intersection   $ \Tp^0:= \Tp\cap \Zp^0$, resp.   $\Tp^1:=\Tp\cap\Zp^1$, is the maximal compact subgroup, resp. the 
  pro-$p$ Sylow subgroup of $\Tp^0$. The group $\Zp^0$ is an open  subgroup of the maximal compact subgroup of $\Zp$. 
 \item[-] $\U \cap \K^1$ (resp.  $ \Uu_0:= \U \cap \I=\U \cap \Iw= \U \cap \K$) is the image of $ \prod_{\alpha\in \Sigma^+} \Uu_{(\alpha, 1)}$ (resp. $ \prod_{\alpha\in \Sigma^+} \Uu_{\alpha }$) by the multiplication map.
   \item[-] $\U^{op}\cap \K^1=\U^{op}\cap \I=\U^{op}\cap \Iw$ (resp. $ \Uu_0^{op}:=\U^{op}\cap \K$) is the image of $ \prod_{\alpha\in \Sigma^-} \Uu_{(\alpha, 1)}$ (resp. $\prod_{\alpha\in \Sigma^-} \Uu_{ \alpha }$) by the multiplication map.
\end{itemize}
The products above are ordered in some arbitrary chosen way, $\Sigma^+\subset \Sigma$ is the subset of positive roots and $\Sigma^-=-\Sigma^+$.  The groups  $\K^1\subset \I \subset \Iw$ (but not $\K$) have an Iwahori decomposition: they are the product (in any order) of their intersections with $ \U^{op},\Zp,  \U$.  For the pro-$p$ Iwahori subgroup $\I \subset \G$ this means that the product map 
\begin{equation}\label{rootsubgroupeq}
    \prod_{\alpha\in \Sigma^-} \Uu_{(\alpha, 1)}\times  \Zp^1 \times  \prod_ {\alpha\in \Sigma^+} \Uu_{\alpha}\overset{\sim}\longrightarrow \I
\end{equation} is a bijection.

 The subgroups  $\Zp^0$ and $\Zp^1$ of $\Zp$   are normalized by  $\Np:=\mathbf N_{\mathbf G}(\F)$. The quotients $\W_0:=\Np/\Zp,$ $\W:= \Np/\Zp^0$,  and   $\W(1):=\Np/\Zp^1$ of  $\Np$ by $\Zp$,  $\Zp^0$ and   $\Zp^1$ respectively, are the finite, Iwahori, pro-$p$ Iwahori Weyl groups. 

The finite Weyl group $\W_0$, the Weyl group of the root system  $\Phi$  and  the Weyl group of the reduced root system $\Sigma$ are canonically isomorphic. The subgroup $\Lambda:=\Zp/\Zp^0$ of $\W$ is commutative and finitely generated,  the subgroup $\Lambda(1)=\Zp/\Zp^1$ of $\W(1)$ is  finitely generated but not always commutative.  The subgroup  $\Lambda_\T:=\T/\T^0$ of  $\Lambda $ has finite index and identifies with the group  ${\rm X}_*(\mathbf T)$ of algebraic  cocharacters 
of $\mathbf T$.

\begin{notation}\label{not'}
Let $\Gp' $ (resp. $\Gp'_{\alpha}$ for $\alpha \in \Phi$) denote the subgroup of $\G$ generated by $\U, \U^{op}$ (resp. $\U_{\alpha}, \U_{-\alpha}$). It  is normal  in $\G$ and $\Gp=\Zp \Gp'$. The group $\G_{aff}= \Zp^0\G'$  is  generated by all the parahoric subgroups of $G$. For any subset $X\subset \Gp$ we put $X':=X\cap \Gp'$. 
\end{notation}

The quotient $\W':=\Np'/(\Zp^0)'{=(\norm_\Gp\cap \G_{aff})/\Zp^0}  \subset \W$ is called the affine Iwahori Weyl group (Notation \ref{not'}). The quotient  ${}_1\W' :=\Np'/(\Zp^1)' \subset \W(1)$  is the pro-$p$-affine Iwahori Weyl group. It is  contained in  the inverse image $\W' (1){=(\Np\cap \G_{aff})/\Zp^1} $ of $\W'$ in $\W(1)$, and is usually different.

 \begin{notation}For a subset $X\subset \W$ we denote by $X(1)$  its preimage   in  $\W(1)$. \label{not(1)}
\end{notation}

\subsubsection{Lifts of the elements in the (pro-$p$) Iwahori Weyl group}
 For $w\in \W$,   we will sometimes introduce a lift $\tilde w\in \W(1)$ for $w$. 
 For $w$ in $\W$ or $\W(1)$, we will  sometimes introduce a lift $\hat w\in \Np$ for $w$.
 Given $w\in \W$, the notation $\hat {\tilde w}$ will  correspond to the choice of an element in $\norm_\G$ 
 with projection $\tilde w$ in $\W(1)$ and $w$ in $\W$. We represent this choice by the following diagram
\begin{equation}\label{picklift}\begin{array}{ccccc}  \Np&\rightarrow& \W(1)&\rightarrow &\W\cr \hat{\tilde w}&\mapsto & \tilde w&\mapsto &w\cr\end{array}\end{equation}

\subsubsection{\label{bruhatdec}Bruhat decompositions of $\G$ and $\G'$}

 Throughout the article, we will sometimes omit the notation $\hat w$, $\hat {\tilde w}$ or $\tilde w$  and we will just use $w$   in the formulas that do not depend on the choice of the lift. This is the case in the following statements.  As a  set, $\Gp$  is the disjoint union  of the
double cosets (see \cite[1.2.7, 4.2.2 (iiii)]{BTI},   \cite[Prop. 3.34,  Prop. 3.35]{Vig1}):
\begin{equation}\label{Bruhataffine}\Gp=\sqcup _{w\in\W}\Iw  w \Iw= \sqcup _{w\in \W(1)}\I  w \I,
\end{equation}
and  the group $\Gp'$ is the disjoint union of double cosets 
\begin{equation}\label{Gp'}\Gp'=\sqcup _{w\in\W'}\Iw'  w \Iw'= \sqcup _{w\in{}_1\W'}\I'  w \I'
\end{equation} where again we do not specify the choice of the lifts in $\norm_\G'$ of  $w$ in $\W'$ (resp.  $w\in {}_1\W'$) since the double cosets above do not depend on this choice. The equalities  \eqref{Gp'}
follow easily from the analogous equalities for $\G_{aff}= \Zp^0\G'$  proved in \cite[Prop. 3.34]{Vig1}.

\subsubsection{\label{affine+decomp}Affine roots and decompositions of the Iwahori-Weyl group $\W$} Choosing $x_0$ as its origin, the apartment  $\Ap$  identifies with the real vector space
 $\V:=  {\mathbb R}\otimes _{\mathbb Z}(\X_*(\mathbf T)/\X_*(\mathbf C)) $ where 
 ${\rm X}_*(\mathbf C)$ is the group of algebraic  cocharacters 
of  the connected center $\mathbf C$  of $\mathbf G$. 
We fix a $\W_0$-invariant scalar product $\langle\,.\,,.\rangle$ of $\V$.  Let $\mathfrak H$ denote the set of the affine hyperplanes $ \langle\,.\,,\alpha\rangle =-r$ associated to the affine roots $(\alpha,r)\in \Sigma_{aff}$.  The alcove $\Cp$ identifies with the alcove of $(\V,\mathfrak H)$ of vertex $0$ contained in the dominant Weyl chamber $\Dp$ determined by $\Delta$. 

 For any affine root $A=(\alpha,r)\in \Sigma_{aff}$, we have the orthogonal reflection $s_A $ at the associated affine hyperplane and the  subgroup $\Gp'_{s_A}\subset \Gp'$ generated by $\Uu _{A}$ and $\Uu _{-A}$. The Weyl group  $\W_0$ of $\Sigma$ is  generated by the set $S:= \{s_\alpha \ | \ \alpha\in \Delta\}$.  
 
  There is  a partial order on $\Sigma$ given by $\root\preceq \beta$ if and only if $\beta -\root$ is a linear combination with integral nonnegative coefficients of simple roots. 
{Let  $\Sigma_m$  be the set of   roots in $\Sigma $ that are minimal elements  for $\preceq$. The walls of the alcove $\Cp$ are the affine hyperplanes associated to the set   $\Delta_{aff}:= \Delta \cup\{(\alpha,1),\, \alpha\in\Sigma _m\}$ of   simple affine roots.}  The affine Weyl group $\W_{aff}$ of $\Sigma$ is generated by the set  $S_{aff} := \{ s_A , \:\: A \in \Delta_{aff}\}$. {The set of $\W_{aff}$-conjugates of  $S_{aff} $ is the set
$\mathfrak S := \{ s_A , \:\: A \in \Sigma_{aff}\}$  of reflections  of $\W_{aff}$. }

{Let $s=s_A\in  \SS_{aff}$ be a simple affine reflection, with $A=(\alpha, r)\in \Delta_{aff}$. We associate to $s$ a positive  root  $ \beta \in \Phi^+$  such that $\alpha = e(\beta)\beta$  . When the root system $\Phi$ is reduced, $\beta$ is determined by this relation. When  the root system $\Phi$ is not reduced, we choose $\beta$ to be  the unique root  such that,   either $\beta$   is reduced and not multipliable, or $\beta$ is multipliable and $\U_{A}\neq  \U_{2\beta,   e(2\beta)^{-1}r} \U_{ \alpha, r+1}$, or $\beta$ is not reduced and $\U_{A}= \U_{\beta,e(\beta)^{-1}r} \U_{ \alpha, r+1}$  \cite[\S 4.2]{Vig1}.   We then set $\beta_s:=\beta$, we denote by  $\G'_{s}:= \G'_{\beta_s}$ the subgroup generated by $\U_{\beta_s}\cup \U_{-\beta_s}$,  we let $ \mathscr U_{s}:=\U_{\beta_s, e(\beta _s)^{-1}r } $, $\mathscr U_{s}^{op}:=\U_{-\beta_s, - e(\beta_s )^{-1}r  } $ and consider  the compact open subgroup  of $\G'_{s}$
 \begin{align*}\mathscr{G}'_s \ \text{  generated by  } \ \mathscr U_{s}\cup \mathscr U_{s}^{op}.
  \end{align*}
  Later, we will also consider the groups $\mathscr U_{s,+}:=\U_{\beta_s, e(\beta _s)^{-1}(r +1)} $, $\mathscr U_{s,+}^{op}:=\U_{-\beta_s, - e(\beta_s )^{-1}(r -1) } $, 
  $\Uf_{s}:=\mathscr U_{s}/\mathscr U_{s,+}$ and $\Uf_{s}^{op}:=\mathscr U_{s}^{op}/\mathscr U_{s,+}^{op}$. The set  $\Pi$ of simple roots  of $\Phi$ is contained 
in the set $\Pi_{aff}:=\{ (\beta_s , e(\beta_s)^{-1}r) \ | \ s\in S_{aff}\}$    only if  $\U_{\beta ,0} \neq \U_{2\beta ,0}\U_{\beta ,e_\beta^{-1}}$ when $\beta\in \Pi$ is multipliable.   \\ To an element   $u\in \mathscr U_{s}-\{1\}$ is associated a lift of $s$ in ${\mathscr N}_\G$.  It is the single element $n_s(u)$ in the intersection ${\mathscr N}_\G \cap \U_{-\beta_s }u \U_{-\beta_s }$. As this intersection  is equal to  ${\mathscr N}_\G \cap \mathscr U_{s}^{op}u \mathscr U_{s}^{op}$  \cite[6.2.1 (V5)]{BTI} \cite[\S 3.3 (19)]{Vig1}, we have $ n_s(u)\in \mathscr{G}'_s $.  
 \begin{equation}\label{defiadmi}\textrm{The elements  $n_s(u) \in {\mathscr N}_\G\cap \mathscr{G}'_s$ for $u\in \mathscr U_{s}-\{1\}$, are called the admissible lifts of $s$. } \end{equation}
  \begin{lemma}\label{admissible}  When $s\in S$, we have 
   $\mathscr{G}'_s \subset  \G'_{s} \cap \K$.
\end{lemma}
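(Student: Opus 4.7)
The plan is to establish the two inclusions $\mathscr{G}'_s \subset \G'_s$ and $\mathscr{G}'_s \subset \K$ separately. For the first, I note that $\mathscr U_s = \U_{\beta_s, e(\beta_s)^{-1}r}$ and $\mathscr U_s^{op} = \U_{-\beta_s, -e(\beta_s)^{-1}r}$ are by construction subgroups of $\U_{\beta_s}$ and $\U_{-\beta_s}$, respectively; since $\mathscr{G}'_s$ is generated by $\mathscr U_s \cup \mathscr U_s^{op}$, this immediately yields $\mathscr{G}'_s \subset \G'_{\beta_s} = \G'_s$.

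For the inclusion $\mathscr{G}'_s \subset \K$, I will use the definition of $\K$ as the fixer of $x_0$ inside $\ker \kappa_G$. The decisive point is that $s \in S$, not merely $s \in S_{aff}$: this means $s = s_\alpha$ for some $\alpha \in \Delta$, so the associated simple affine root is $A = (\alpha, 0)$ and the parameter is $r = 0$. Hence $\mathscr U_s = \U_{\beta_s, 0}$ and $\mathscr U_s^{op} = \U_{-\beta_s, 0}$, and by the defining property of the valuation $\varphi$, each of these pointwise fixes the closed half-apartment $\{x \in \Ap : \pm \langle x - x_0, \beta_s\rangle \geq 0\}$. Both half-apartments contain $x_0$ on their common wall, so $\mathscr{G}'_s$ fixes $x_0$. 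To see that $\mathscr{G}'_s \subset \ker \kappa_G$, I observe that $\mathscr U_s$ and $\mathscr U_s^{op}$ are contained in the unipotent subgroups $\U_{\pm \beta_s} \subset \G' = \langle \U, \U^{op}\rangle$, and the Kottwitz homomorphism vanishes on unipotent elements, so $\G' \subset \ker \kappa_G$. Combining the two properties gives $\mathscr{G}'_s \subset \K$.

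There is no serious obstacle here beyond unpacking the definitions. The one point worth emphasizing is the role of the hypothesis $s \in S$ rather than $s \in S_{aff}\setminus S$: it forces $r = 0$, so that $x_0$ lies on the wall of $A$ and is fixed by both $\mathscr U_s$ and $\mathscr U_s^{op}$. For a simple affine reflection not already in $S$ one would have $r = 1$ and $\mathscr U_s^{op} = \U_{-\beta_s, -e(\beta_s)^{-1}}$ would in general fail to fix $x_0$, so the conclusion of the lemma would genuinely fail in that case.
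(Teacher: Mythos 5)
Your proof is correct and takes essentially the same approach as the paper's: the key point in both is that $s \in S$ forces $r = 0$, whence $\mathscr U_s = \U_{\beta_s,0}$ and $\mathscr U_s^{op} = \U_{-\beta_s,0}$ lie in $\G'_s \cap \K$. The paper states this in one line; you have simply unwound the definition of $\K$ as the fixer of $x_0$ inside $\ker\kappa_G$ (and for the Kottwitz part, one can equally cite that $\G' \subset \G_{aff}$, which by Notation~\ref{not'} is generated by the parahoric subgroups, all of which lie in $\ker\kappa_G$ by definition).
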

\begin{proof}  When $s\in S$ we have $r=0$; the groups  $\U_{\beta_s,0} $ and $\U_{-\beta_s,0}$, hence also $\mathscr{G}'_s$, are contained in $ \G'_{s} \cap \K$.
\end{proof}
 \begin{notation}  \label{qsnot}To  $s\in \SS_{aff}$ we associate  
\begin{itemize}
\item[-]  the facet   of the alcove $\Cp$ fixed by $s$, and the parahoric subgroup $\KK_s$ of $\G$ fixing  that facet that is to say the fixer of the facet in the kernel $\ker \kappa_G$ of the Kottwitz homomorphism; we have $\Zp\cap \KK_s=\Zp^0$ \cite[Prop. 3.15]{Vig1} and 
 $\mathscr{G}'_s \subset  \KK_s$ \cite[formulas (4.4), (4.9)]{Vig1}.
\item[-]  the pro-unipotent radical $\KK_s ^1$ of $\KK_s $ and  the finite reductive group $\Kf_s:=\KK_s/\KK_s ^1$;
\item[-] the order $q_s$  of the unipotent radical  of a proper parabolic subgroup of $\Kf_s$; it is a nontrivial power of   $q$;
\item[-] the subgroup $\Gf '_{s}$   of $\Kf_s$ generated by $\Uf_{s}\cup \Uf_{s}^{op}$ and the group $\Zf'_{s}:=\Gf '_{s} \cap \Zf$;
 \item[-] some admissible lift $n_s\in {\mathscr N}_\G \cap \mathscr{G}'_s $ of $s$.
 \end{itemize}
\end{notation}
 \begin{rema}\label{Zsk}   The group  ${\Zp}'_s={\Zp} \cap \mathscr G'_s$ is equal to  $\Zp^0  \cap \mathscr G'_s $ (Notation  \ref{qsnot}); the images  in   $\Kf_s=\KK_s/\KK_s ^1$ of  $\mathscr U_{s}, \mathscr U_{s}^{op} $ are respectively $\Uf_{s}, \Uf_{s}^{op}$ \cite[Prop. 3.23]{Vig1}; hence the image in  $\Kf_s$ of $\mathscr{G}'_s$  is $\Gf '_{s}$ and the image  in $\Kf_s$ of ${\Zp}'_s$  is contained in $ \Zf'_{s}$; 
 the group $ \Zf'_{s}$ is normalized by   the image of $n_s$ in $\Kf_s$. 
 \end{rema}

  \medskip An element $z\in \Zp$ acts by translation on $\V$ via the unique homomorphism $\Zp \xrightarrow{\nu}\V$ defined by
  \begin{equation}\label{normalization}
    \lp\nu(t),\, \chi\rp =-\val_{ \F}(\chi(t))  \qquad \textrm{for any } \chi\in \Phi \textrm{ and }t \in \T. 
\end{equation} 
 The action of $\W_0$, seen as the Weyl group of $\Sigma$,  and of $\Zp$ on  $\V$ combine to define an action, also denoted by $\nu$,  of $\Np$  on $\V$ by affine automorphisms.   
 The action of $\Np $  on $\V$   respects the set $\mathfrak H$ of   affine hyperplanes.  Being trivial on $\Zp_0$, it identifies with an action of the Iwahori Weyl group $\W$,  also denoted by $\nu$.  The  action of $\W'$ on $\V$ gives an isomorphism $\W' \simeq \W_{aff}$. This allows to identify $\W_0\subset \W_{aff}$ with the  subgroup $\W'_0\subset \W'$ of elements fixing $0$.   The subgroup   $\Omega \subset \W$ normalizing $\Cp$ is isomorphic to the image of the Kottwitz morphism $\kappa_\G$; it is a commutative finitely generated group.
 
\medskip   
 These considerations  imply that $\W$ admits two decompositions 
$\W=\W' \rtimes \Omega = \Lambda \rtimes \W'_0$ as a semidirect product. Therefore, 
\begin{equation}\label{Weylgroups}\W \simeq \W_{aff} \rtimes \Omega \simeq  \Lambda \rtimes \W_0.
\end{equation}
We inflate the 
   length function of  the Coxeter system $(\W_{aff}, S_{aff})$  to a map $\ell: \W\to \mathbb  N$, called the length of $\W$, such that  $\Omega \subset \W$ is the subset of elements of length $0$. 
   
    \medskip  
 The   positive roots $\Sigma^+$ and $\Phi^+$ take nonnegative values on the dominant Weyl chamber $\Dp$. The
 affine roots ${\Sigma_{aff}^+}:=\{(\alpha, r),\: \alpha\in\Sigma, \,r>0\}\cup  \Sigma^+ $ 
 take nonnegative value on $\Cp$. We set  ${\Sigma_{aff}^-}=-{\Sigma_{aff}^+} $.
  The action of  $w=w_0\lambda\in  \W=\W'_0\ltimes \Lambda$ on $\V$ induces an action 
\begin{equation}\label{actionLambda}
 (\alpha, r)\mapsto  (w_0(\root),r - \langle \nu(\lambda), \root\rangle)
\end{equation}
on $\Sigma_{aff}$.  For $w\in \W$, 
the group $\hat w \Uu _{(\alpha, r)}\hat w^{-1}$ does not depend on the lift $\hat w\in \Np$ and is equal to $\Uu _{w(\alpha, r)}$.
 The length  $\ell(w)$ of $w\in \W$  
 is the number of $A\in{\Sigma_{aff}^+}$ such that
$w(A)\in { \Sigma_{aff}^-}$ is negative. 
The length function $\ell:\W\rightarrow \mathbb N$ inflates to a length function $\ell:\W(1)\rightarrow \mathbb N$.

\subsection{\label{finitegroup}The finite reductive group  $\Gf$}

Let $\mathbf{G}_{x_0}$ and $\mathbf{G}_{\Cp}$ denote the connected Bruhat-Tits group schemes over $ \O_\F$ whose $\O_\F$-valued points are $\K$ and $\Iw$ respectively. 
   Their reductions over the residue field $\mathbb{F}_q$ are denoted by $\overline{\mathbf{G}}_{x_0}$ and $\overline{\mathbf{G}}_\Cp$. 
   Note that $\Gp= \mathbf{G}_{x_0}(F)=\mathbf{G}_{\Cp}(F)$.
   By \cite[3.4.2, 3.7 and 3.8]{Tit}, $\overline{\mathbf{G}}_{x_0}$ is a connected  reductive $\mathbb F_q$-group.  
   Let 
   $\overline{\mathbf{T}}$ and  $\overline{\mathbf{Z}}$  be the reduction over  $\mathbb{F}_q$ of  the connected  group schemes over $\O_\F$ whose $\O_\F$-valued points are $\Tp^0$ and $\Zp^0$ respectively. The group $\overline{\mathbf T}$  is a maximal
$\mathbb F_q$-split subtorus of $\overline{\mathbf{G}}_{x_0}$ of centralizer $\overline{\mathbf Z}$ which is a $\mathbb F_q$-torus. 
    
The quotient $\Gf:=\K/\K^1$ of $\K$  by  $\K^1$ is a finite reductive group  isomorphic to  $\overline{\mathbf G}_{x_0}(\mathbb F_q)$. The Iwahori subgroup $\Iw{\subseteq  \K}$  is the preimage in $\K$ of the minimal parabolic subgroup $\Bf$   of $\Gf$ of  unipotent radical  $\Uf :=(\K\cap \U)/ (\K^1\cap \U)$ and Levi decomposition $\Bf=\Zf \Uf$ where  
$\Zf=\Zp^0/\Zp^1\cong  \overline{\mathbf{Z}}(\mathbb F_q) $ contains the maximal split torus   $\Tf: =\Tp^0/\Tp^1\cong \overline{\mathbf{T}}(\mathbb F_q)$;  the group $\Zf$ is a maximal torus of $\Gf$. The pro-$p$ Iwahori subgroup $\I {\subseteq \K}$   is the preimage in $\K$  of $\Uf$; the $\Gf$-normalizer  of $\Tf$  is  denoted by $\norm_\Gf$. 
The relative root system of $\Gf $ attached to $\Tf$ identifies with the  set $ \Phi_{\Gf }:=\{ \beta_s \ | \ s\in S\}$ (notation in   \S \ref{affine+decomp})  of roots $\beta\in  \Phi$ which are not multipliable, \emph{i.e.} such that $2\beta \not\in \Phi$ or   such that $\Uu_{\beta}\neq \Uu_{2\beta} \Uu_{\beta + e(\beta)}$  if $2\beta\in \Phi$.  The unipotent subgroup $\Uf_{\beta_s} =\Uf_s \subset \Gf$ attached to $\beta_s \in \Phi_\Gf$ is $\Uu_{\beta_s}/ \Uu_{\beta_s+e_{\beta_s}}$.
   The set $ \Pi_{\Gf}=\{ \beta \in\Phi_{\Gf} \ | \  \beta/2 \not\in\Phi_{\Gf} , \ e_\beta \beta \in \Delta \}$  is a basis of  $ \Phi_{\Gf}$. The map sending  $\beta\in  \Pi_{\Gf}$ to the unique positive reduced root  $\alpha \in \Phi$ in $ \mathbb Q\beta  $ induces a bijection between $\Pi_\Gf$ and   the basis $\Pi$ of $\Phi$, giving  a bijection between the set $S_{\Gf}:=\{s_\beta \ | 
   \beta\in \Pi_{\Gf}\}$ of reflections in the Weyl group of $\Phi_\Gf $ and $S$. 
  The root systems $\Phi_\Gf $ and $ \Phi$ have isomorphic Weyl groups \cite[3.5.1]{Tit}.  
   
   The definition of a  strongly split $BN$-pair of characteristic $p$ is given in  \cite[Def. 2.20]{CE}.  By  \cite[\S 3.5 Prop. 3.25]{Vig1}, we have:
   \begin{equation}\label{ssBNp}
\text{\it$(\Gf, \Bf , \mathscr N_{\Gf}, S)$ and the decomposition $\Bf = \Zf \Uf$ form a strongly split $BN$-pair of characteristic $p$}.
\end{equation}
where $S$ is seen as a subset of  $\Wf=  \mathscr N_\Gf /\Zf $ (isomorphic to the Weyl group of $\Sigma$).  
 We have $\mathscr N_\Gf =({\mathscr N}_\G\cap \K)/({\mathscr N}_\G\cap \K^1) =({\mathscr N}_\G\cap \K)/\Zp^1$ and  the reduction map ${\mathscr N}_\G\cap \K \to \mathscr N_\Gf $ induces an isomorphism  $({\mathscr N}_\G\cap \K)/\Zp^0\overset{\simeq}\rightarrow \Wf $.  The  group $({\mathscr N}_\G\cap \K)/\Zp^0$ is contained in $\W'$  and its action on $V$ fixes  $0$. 
    We recall  that the  the fixer $\W'_0$   of $0$  in $\W$  defined after \eqref{normalization}   has the same order as $\W_0$, hence as $\Wf $ and as  $({\mathscr N}_\G\cap \K)/\Zp^0$. We deduce $\W'_0= ({\mathscr N}_\G\cap \K)/\Zp^0$.  The  natural   surjective map $\W\to \W_0$ induces an isomorphism $({\mathscr N}_\G\cap \K)/\Zp^0=\W'_0 \overset{\simeq}\rightarrow \W_0$.     The group $\W'_0(1)$ is the preimage of $\W'_0$ in $\W(1)$ (Notation \ref{not(1)}). In summary, we have
 \begin{equation}\label{isomorphismW} 
\Wf =  \mathscr N_\Gf/\Zf \cong \W_0={\mathscr N}_\G/\Zp   \cong ({\mathscr N}_\G\cap \K)/\Zp^0 =\W'_0 \subset \W ; \quad \mathscr N_\Gf = ({\mathscr N}_\G\cap \K)/\Zp^1= \W'_0(1)  \subset  \W(1).
\end{equation} 
   The action of $\Wf$ on $\Sigma$  via the identification $\Wf\cong \W_0'$ coincides with the natural action of $\Wf$ on $\Sigma$, and the length function on $\Wf$  coincides with the restriction to $\W_0'$ of $\ell:\W\rightarrow \mathbb N$. The length function on $\Wf$ is still denoted by $\ell$. It inflates to a map $\norm_\Gf\rightarrow \mathbb N$ which we also denote by $\ell$.

Because of \eqref{ssBNp}, we have the Bruhat decompositions $\Gf=\sqcup_{w\in \Wf} \Bf w \Bf= \sqcup_{n\in \mathscr N_\Gf} \Uf n \Uf$  \cite[Prop. 6.6]{CE}, corresponding to the disjoint unions $\K=\sqcup_{w\in\W'_0} \Iw  w \Iw= \sqcup_{w\in\W'_0(1)}\I  w \I$.  We do not specify the choice of a lift for $w\in \Wf$ in $\norm_\Gf$ since the double coset $ \Bf w \Bf$ does not depend on it (compare with the decompositions in \ref{bruhatdec}).

\subsection{\label{subsec:lifts}Lifts of the elements in the finite Weyl group}

\begin{rema}\label{liftsWf}
We will keep in mind the following diagram:

\begin{equation}\xymatrix{
 \norm_\G\cap\K      \ar[r]& ( \norm_\G\cap\K)/\Zp^0=\W_0'\subset \W\ar[d]_{\mod \K^1}^{\cong}      \\
&
      \norm_\Gf/\Zf=\Wf.   }\label{compinj}\end{equation}

      Given an element $w$  in $\Wf$, we may pick a lift for $w$ in $\norm_\Gf$. We may also identify $w$ with an element of $\W_0'$ and pick a lift in  $\norm_\G\cap \K$.  In \S\ref{par:dist} in particular, we will go back and forth between lifts  in $\norm_\Gf$ and in $\norm_\G\cap \K$ for elements in $\Wf$.

\end{rema}

For $s\in S$ choose  $n_s\in {\mathscr N}_\G\cap \mathcal G'_s $  an admissible lift of $s$. This yields  a map $s\mapsto n_s: S\to {\mathscr N}_\G\cap \K$ (Lemma \ref{admissible}).

 \begin{prop}\label{lift}   There is a unique extension of the map $s\mapsto n_s: S\to {\mathscr N}_\G\cap \K$ to a map $w\mapsto n_w:\Wf \to {\mathscr N}_\G\cap \K$
 such that $n_{ww'}=n_w n_{w'}$ for $w,w'\in \Wf$ such that $\ell(ww')=\ell(w)+\ell(w')$. 
 \end{prop}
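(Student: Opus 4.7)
The plan is a standard Matsumoto-style argument: define $n_w$ via any reduced expression for $w$, reduce well-definedness to the braid relations among the $n_s$, and verify those braid relations using the rank-two structure provided by \cite{BTI} and \cite{Vig1}.

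\emph{Uniqueness.} Assume such an extension $w\mapsto n_w$ exists. For $w\in\Wf$ of length $k$, pick a reduced expression $w=s_1s_2\cdots s_k$. Since $\ell(s_1\cdots s_{j+1})=\ell(s_1\cdots s_j)+\ell(s_{j+1})$ at each step, the hypothesis $n_{ww'}=n_wn_{w'}$ whenever $\ell(ww')=\ell(w)+\ell(w')$ forces, by a trivial induction, $n_w=n_{s_1}n_{s_2}\cdots n_{s_k}$. This determines $n_w$ uniquely.

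\emph{Existence.} I define $n_w:=n_{s_1}\cdots n_{s_k}$ using any fixed reduced decomposition of $w$. To see this is independent of the chosen decomposition, I invoke Matsumoto's theorem for the Coxeter system $(\Wf,S)$ (the finite Weyl group of $\Sigma$ via the identification $\Wf\cong \W_0$): any two reduced words representing the same element of $\Wf$ are connected by a sequence of braid moves $\underbrace{sts\cdots}_{m}\leftrightarrow\underbrace{tst\cdots}_{m}$, where $m=m_{s,t}$ is the order of $st$. Thus well-definedness reduces to proving, for every pair of distinct $s,t\in S$, the braid identity
\begin{equation*}
\underbrace{n_s n_t n_s \cdots}_{m_{s,t}\text{ factors}} \;=\; \underbrace{n_t n_s n_t \cdots}_{m_{s,t}\text{ factors}}
\end{equation*}
in ${\mathscr N}_\G\cap\K$. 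Granting this, the compatibility $n_{ww'}=n_wn_{w'}$ when $\ell(ww')=\ell(w)+\ell(w')$ follows at once by concatenating reduced expressions for $w$ and $w'$.

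\emph{Verification of braid relations.} For distinct $s,t\in S$, the dihedral subgroup $\langle s,t\rangle\subset\Wf$ acts on the rank-two root subsystem of $\Phi$ generated by $\beta_s,\beta_t$. By Lemma \ref{admissible}, the admissible lifts $n_s\in\mathscr{G}'_s$ and $n_t\in\mathscr{G}'_t$ both lie in the compact open subgroup of $\K$ generated by the root subgroups $\U_{\pm\beta_s},\U_{\pm\beta_t}$, which is the group of $\O_\F$-points of a Bruhat-Tits model of a semisimple rank-two subgroup of $\mathbf G$. In that rank-two setting, the braid relation for admissible lifts is the classical Tits computation carried out in \cite[\S 3.3, (4.4) and (4.9)]{Vig1}, relying on axiom (V5) of \cite[6.2.1]{BTI} (which was also invoked to define admissible lifts in \eqref{defiadmi}).

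\emph{Main obstacle.} The only nontrivial point is the braid relation, and what makes it nontrivial here is that the identity must hold \emph{exactly} in ${\mathscr N}_\G\cap\K$, not merely modulo $\K^1$ or $\Zp^1$; a purely finite-group computation in $\mathscr N_\Gf$ is therefore not sufficient. The reduction to rank-two subgroups and the appeal to (V5) are precisely what yield the identity on the nose, and this is where the particular choice of $\beta_s$ (adapted to whether $\Phi$ is reduced or not, as recalled in \S\ref{affine+decomp}) plays its role.
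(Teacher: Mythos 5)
Your skeleton matches the paper's exactly: uniqueness by reduced decomposition, existence via Matsumoto's lemma (the paper cites the equivalent statement \cite[IV.1.5, Prop. 5]{Bki-LA}), and reduction of everything to the braid relation for each pair $s,s'\in S$, to be checked in a rank-two subgroup. You also correctly identify the genuine obstacle, namely that the identity must hold on the nose in ${\mathscr N}_\G\cap\K$ and not merely after reduction mod $\K^1$.

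However, the final step — the actual verification of the braid relation — is where your proposal has a real gap, because the references you invoke do not prove it. Axiom (V5) of \cite[(6.2.1)]{BTI} and the formulas (4.4), (4.9) of \cite{Vig1} are used in this paper only to \emph{define} the admissible lifts $n_s(u)$ and to establish the rank-one containment $\mathscr G'_s\subset\KK_s$ (see Notation \ref{qsnot}); none of them is a statement about products of lifts for two distinct reflections, so they cannot yield $(n_sn_{s'})^r=(n_{s'}n_s)^r$ or its odd analogue. Moreover, the claim that the group generated by $\U_{\pm\beta_s},\U_{\pm\beta_{s'}}$ is ``the $\O_\F$-points of a Bruhat-Tits model of a semisimple rank-two subgroup'' is itself a nontrivial structural assertion that you assume without justification. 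The paper's proof handles both issues at once: it replaces $\mathbf G$ by $\mathbf M^{sc}$, the simply connected cover of the derived group of $\mathbf M=\Ker\alpha\cap\Ker\alpha'$ (where $\alpha,\alpha'\in\Pi$ correspond to $s,s'$), which is legitimately semisimple, simply connected, of relative rank $2$, and for which the admissible lifts are the same (cf. Remark \ref{compadmissible}); and then it invokes \cite[Prop.\ (6.1.8)\,(9)]{BTI}, which is precisely the rank-two braid identity for lifts built from a valued root datum. Without that reduction and that specific Bruhat--Tits input, your ``classical Tits computation'' is a name for a fact, not a proof of it.
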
 
 This will be useful in \S\ref{sec:frob}.
  \begin{proof} This result   is proved in  \cite{AHHV}[IV.6 Prop.] for the group $\Gf$. The  arguments in loc. cit. are valid for $\G$. The unicity follows from the reduced decomposition of $w\in \Wf$ as a product of  elements of $S$. The existence follows from  \cite[IV.1.5,Prop. 5]{Bki-LA} once we know that for $s,s'$ distinct in $S$, and $m$ the order of $ss'$, then $ (n_s n_{s'})^r=  (n_{s'}n_s )^r$ if $m=2r$ and $ (n_s n_{s'})^rn_s=  (n_{s'}n_s )^rn_{s'}$  $m=2r+1$. 
  That  follows from \cite[Prop. (6.1.8) (9)]{BTI}, because we may assume that $\bf G$ is semisimple simply connected of relative rank $2$, by replacing $\bf G$ by  the simply connected covering  $\bf M^{sc}$  of the derived group of  $\bf M=\Ker \alpha \cap \Ker \alpha'$ where $\alpha, \alpha' \in \Pi$ correspond to $s,s'$. \end{proof}

\begin{notation}\label{notnf}
For $w\in \Wf$, we denote by $\nf_w$ the image of $n_w$ in   $\Gf$ via the map $\K \to \K /\K^1= \Gf$.
We have 
$$ \nf_{ww'}=\nf_w \nf_{w'}\textrm{ for $w,w'\in \Wf$ such that $\ell(ww')=\ell(w)+\ell(w')$}.$$ 
\end{notation}

\begin{rema}  If $\longw$ is the longest element of  $ \Wf$ and $s\in S$,  then $ s':= \longw s \longw^{-1}$  is an element of $S$ and we have \begin{equation}\label{conjbynw}
n_{\longw} n_s = n_{s'} n_{\longw}\textrm{ and }\nf_{\longw} \nf_s = \nf_{s'} \nf_{\longw}.
 \end{equation} 
We have indeed  $\longw=   s' \longw s$ with $\ell(\longw  s)= \ell(\longw)-1$. So $n_{\longw}= n_{s'} n_{\longw s} $ and $n_{\longw}n_s= n_{s'} n_{\longw s}n_s=n_{s'}n_{\longw}$. Note that  \eqref{conjbynw} implies  $n_{\longw} n_w = n_{\longw  w \longw^{-1}} n_{\longw}$ and $n_w n_{\longw}  =n_{\longw} n_{\longw  w \longw^{-1}} $ for all $w\in \Wf$, by  induction  on the length of $w$  and using  $\longw^2=1$. 
\end{rema}

\begin{exa} For example, when $\mathbf G$ is $\F$-split, we fix an épinglage for $\Gp$ as in SGA3 Exp. XXIII, 1.1. In particular, to $\root\in\Phi$ is attached a central isogeny $\phi_\root: {\rm SL}_2(F)\rightarrow \Gp_{s_\root}$ where $\Gp_{s_\root}$ is the subgroup of $\Gp$ generated by $\Uu_\root$ and $\Uu_{-\root}$. 
  With the notation of \ref{affine+decomp}, $\Pi=\Delta$ and $ \Pi_m=\Delta_{aff}\setminus \Delta$.
     For  $\root\in\Pi $,   set $n_{s_{\root}}:= \phi_\alpha\begin{pmatrix}0&1\cr -1&0\end{pmatrix}$.  For  $\root\in\Pi_m$, set  $n_{s_{(\root, 1)}}:= \phi_\alpha\begin{pmatrix}0&-\varpi^{-1}\cr \varpi&0\end{pmatrix}$. We have  $\T=\Zp$ and $q_s=q $  for any $s\in S_{aff}$ (Notation \ref{qsnot}).
\end{exa}

\subsection{Parabolic  subgroups}

\subsubsection{Standard parabolic subgroups\label{standparab}}

Let $J\subseteq \Pi$  and $\Phi_J \subseteq \Phi$   the  subset of all linear combinations of elements in $J$. 
To $J$ we attach the following subgroups of $\mathbf G$: 
 the subtorus $\mathbf T_J$ of $\mathbf T$  with dimension 
$dim(\mathbf T)-\vert J\vert$ equal to the connected component of $\bigcap_{\root\in J}\ker \root\subseteq \mathbf T$  and the Levi subgroup $\mathbf M_J$ of $\mathbf G$ defined to be the centralizer of $\mathbf T_J$. 
The group $\mathbf M_J$  is  a reductive connected algebraic $\F$-group 
   of maximal $\F$-split torus $\mathbf T$ and minimal parabolic ${\mathbf B}_J:={\mathbf B} \cap  {\mathbf M_J} ={\mathbf Z} ({\mathbf U}\cap {\mathbf M_J})$; we have $\mathbf N_{\mathbf M_J} (\mathbf T)=\mathbf N_{\mathbf G} (\mathbf T)\cap   {\mathbf M_J} $.  
 The restriction $\varphi_J$ of $\varphi$ to the root datum $(\Zp, (\U_\root)_{\root\in  \Phi_J })$ of $\M_J$ associated to $\T$ is a special discrete valuation compatible with $\val_F$.

The same objects   as the ones we attached to $\G$ in  \S\ref{products} and \S\ref{finitegroup}  can be attached to   ${\M_J}$.   We introduce an index $J$ for the objects   attached to ${\M_J}$. When the set $J=\{\alpha\}$ contains a single element $\alpha$ we denote $\M_J$ by $\M_\alpha$.

In particular, we associate to $(\mathbf T, {\mathbf B}  _J,  \varphi_J)$  a reduced root subsystem $\Sigma_J$ with basis $\Delta_J$, a special maximal parahoric subgroup $\K_J$ of   $\M_J$ of pro-unipotent radical $\K_J^1$, an Iwahori subgroup $\Iw_J$ of unique pro-$p$ Sylow subgroup $\Uu_J$, a   finite,   Iwahori,  pro-$p$ Iwahori Weyl group of $\M_J$  denoted respectively by $\W_{J,0}$,  $\W_J$, and $\W_J(1)$, and a real vector space $\V_J$.  We note that $\M_J$ and  $\G$ have the same $\Zp$ and $\norm_{\M_J}= \norm_\G \cap \M_J$. We have:
\begin{itemize}
 
\item[-] $\Sigma_J=\{e_\alpha \alpha \ | \ \alpha \in \Phi_J\}\subset \Sigma, \quad \Delta_J=\{e_\alpha \alpha \ | \ \alpha \in J\}\subset \Delta$,   
 
\item[-]  $\K_J= \K\cap \M_J, \quad \K_J^1=  \K^1\cap \M_J, \quad \Iw_J= \Iw\cap \M_J, \quad \I_J= \I\cap \M_J$ (these equalities are justified in \cite[Proposition 4.2]{VigFunct}), 
  
\item[-] the pro-$p$ Iwahori Weyl group $\W_J(1)$ of $\M$ coincides with the preimage  of $\W_J$ in $\W(1)$ by the quotient map $\W(1)\to \W$. Therefore the notation $\W_J(1)$ is consistent with the one introduced in \ref{not(1)}.\end{itemize}

Note that   $\I_\emptyset=\Zp^1$ and $\V_\emptyset=\{0\}$. When $J\neq \Pi$,  the  real vector space $\V_J$ is only a strict quotient of $\V$ (see \S\ref{affine+decomp}). This difficulty arises  for the semisimple  Bruhat-Tits building but not for  the   extended  Bruhat-Tits buildings of $\G$ and of $\M_J$. We  put  on $\V_J$  the $\W_{J,0}$-scalar product  image of the $\W_0$-scalar product on $\V$ by the surjective linear map 
$$\V\xrightarrow{p_J} \V_J \quad \langle \alpha,v\rangle= \langle \alpha,p_J(v)\rangle \quad (v\in \V, \alpha \in \Sigma_J).$$ For $(\alpha, n)\in \Sigma_{J,aff}$, the inverse image of the affine hyperplane $\Ker _{V_J}(\alpha+n)\in \mathfrak H_J$ is $\Ker _{\V}(\alpha+n)\in \mathfrak H$ (defined after \eqref{Gp'}). The image $p_J(\Cp)$ of the  alcove $\Cp$  is contained in  the  alcove $\Cp_J$ of $(\V_J,\mathfrak H_J)$.  Recall that $\Sigma_{J,aff} $ is contained in $\Sigma_{aff} $.  For $A\in \Sigma_{J,aff} $, we attach to  the orthogonal reflection $s_{A,J}$ of $V_J$ with respect to $\Ker_{V_J}(A)$ the element $s_A$. This defines a map $\mathfrak S_{J}\to \mathfrak S$   which induces an injective homomorphism $\W_{J,aff} \hookrightarrow \W_{aff}$ (see the notations in \ref{affine+decomp},  those relative to $\M_J$ have an index $J$). 

The image  of $S_J=\{s_{\alpha,J}, \alpha\in \Delta_J\}$ is contained in $S$, but the image of $S_{J,aff}$  is not contained in $S_{aff}$. 
 The map $p_J$ is $\W_{J}$-equivariant  \cite[Lemma 4.1]{VigFunct}. We have 
 $$\W_{J} \simeq \Lambda \rtimes \W_{J,0} \simeq \W_{J,aff} \rtimes \Omega_J.$$  
\begin{rema}\label{compadmissible}For $s\in S_J$ (but not in  $S_{J,aff}$), the positive root $\beta_s\in \Phi_J$,  the groups $\mathscr U_s$ and $\mathscr G'_s$
associated to $s$,  and the admissible lifts of $s$ (as in \S \ref{affine+decomp}) for the group $\M_J$ are the same as those  for $\G$.  
\end{rema}
\medskip Let  $\N_J$ be the subgroup of $\G$ generated by all $\U_\root$ for $\root\in \Phi^+\setminus \Phi_J$
and $\N^{op}_J$ be the subgroup of $\G$ generated by all $\U_\root$ for $\root\in \Phi^-\setminus \Phi_J$.
Then $\P_J:=\M_J\N_J$ is a parabolic subgroup of $\G$. If $J=\emptyset$, then  $\P_\emptyset=\B$,  $\M_\emptyset=\Zp$ and $\N_\emptyset=\U $. A parabolic subgroup of the form $\P_J$ for  $J\subseteq \Pi$ is called \emph{standard}
and  $\P^{op}_J=\M_J\N^{op}_J$ is  its opposite parabolic subgroup.  

\medskip 

 As before in \S\ref{finitegroup}, $\Pi$ identifies canonically with the set $\Pi_{\Gf} $ of simple roots of $\Gf$ with respect to $ \Bf$  in the root system  $\Phi_{\Gf}$ of $\Gf$ with respect to $\Tf$. The subset $J\subset \Pi$ identifies with a subset $J_{\Gf}\subset \Pi_{\Gf}$ and the root system $\Phi_J$  generated by $J$ in $\Phi_{\Gf}$ with the the root system  $\Phi_{\Mf_J}$   generated by $J_{\Gf}$ in $\Phi_{\Gf}$.    
The standard parabolic subgroup   of $\Gf$ attached to $J_{\Gf}$    is  equal to  
the image $\Pf_J$  of $\P_J\cap\K$ in $\Gf $ \cite[Proposition 3.26]{Vig1} of Levi decomposition  $\Pf_J= \Mf_J\Nf_J$ equal to the image of $\P_J\cap\K = (\M_J\cap\K)(\N_J\cap \K)$.   If $J=\emptyset$, then $\Pf_\emptyset=\Bf$.
The $\Mf_J$-normalizer of  $\Tf$ is $\norm_{\Mf_J}=\Mf_J\cap \norm_\Gf$.

The set $S_{J_{\Gf}}$ corresponding to $J_{\Gf}$ in $S_{\Gf}$ identifies canonically with $S_J $  and the group $\norm_{\Mf_J}/\Zf$  with the subgroup  $\Wf_J$  of $\Wf$ generated by $S_J$. As  in \eqref{isomorphismW}, the finite Weyl group $\Wf_J$ identifies with a subgroup of $\W_J$, and $\norm_{\Mf_J}$ as a subgroup of $\W(1)$. As in \eqref{ssBNp},
  \begin{equation}\label{ssBNpJ}\text{\it$(\Mf_J, \Bf_J, \norm_{\Mf_J}, S_J)$ and he decomposition $\Bf _J= \Zf \Uf_J$  form a strongly split $BN$-pair of characteristic $p$ } 
\end{equation}
 where $\Bf_J=\Bf\cap \Mf_J$ and  $\Uf_J=\Uf\cap \Mf_J$. 
We have the decompositions $\Mf_J=\sqcup_{w\in \Wf_J} \Bf_J w \Bf_J= \sqcup_{n\in \mathscr N_{\Mf_J}} \Uf_J n \Uf_J$ and  $\Pf_J=\sqcup_{w\in\Wf_J} \Bf w \Bf$.

 \subsubsection{\label{posimonoid}Iwahori decomposition and positive (resp. negative) monoids}

 Let $J\subset \Pi$. We will write from now on  $\P$ instead of $\P_J$, $\M$ instead of $\M_J$ and  $\N$ instead of $\N_J$ and will otherwise replace the index $J$  for the objects introduced in \S\ref{standparab}  by the index $\M$.   In particular, the finite, Iwahori, and pro-$p$-Iwahori Weyl groups of $\M$ are now  respectively denoted  by $\W_{0,\M}$, $\W_\M$ and $\W_\M(1)$. 
 
 \medskip

  The  group $\Iw_\N=\Iw\cap \N=\I\cap \N=   \I_\N$ (resp. $\Iw_\Nm=\Iw\cap \Nm=\I\cap \Nm=\I_\Nm$)  is the image  by the multiplication map of 
 $ \prod_{\alpha\in \Sigma^+-\Sigma_\M^+} \Uu_{(\alpha, 0)}$ (resp. 
 $ \prod_{\alpha\in \Sigma^--\Sigma_\M^-} \Uu_{(\alpha, 1)}$), as in \S \ref{products}. 

 Similarly,   the group $\Iw_\M$, resp. $\I_ \M$ (which was denoted by $\Iw_J$, resp. $\I_\J$  in \S\ref{standparab}) is the image of $ \prod_{\alpha\in \Sigma_\M^-} \Uu_{(\alpha, 1)}\times\T^0\times \prod_{\alpha\in \Sigma_\M^+} \Uu_{(\alpha, 0)}$ (resp. $ \prod_{\alpha\in \Sigma_\M^-} \Uu_{(\alpha, 1)}\times\T^1\times \prod_{\alpha\in \Sigma_\M^+} \Uu_{(\alpha, 0)}$).   

\medskip

 The groups $\Iw$ and $\I$ have an Iwahori decomposition with respect to $\P$:   \begin{equation}\Iw=\Iw_\N\:\Iw_ \M\: \Iw_\Nm\textrm{ and } \I=\I_\N\: \I_\M\: \I_\Nm\label{f:iwadecomp}\end{equation} 
 with any  order of the products.      

   \medskip

An element  $m\in \M$  contracts $\I_{\N}$ and dilates $\I_{\Nm}$ if it satisfies the condition:
$m \I_{\N} m^{-1}\subseteq \I_{\N}$ and $ m^{-1} \I_{{\Nm}} m\subseteq \I_{{\Nm}}$ (compare with \cite{Selecta} II.4 or \cite[(6.5)]{BK}).  
 This property of an element  $m\in \M$  depends only on
the double coset $\I_\M m  \I_\M$.  
Such an element will be called $\M$-positive. Note that
if $m\in \K_\M$ then $m \I_\N  m^{-1}= \I_\N $  and  $m^{-1}\I_{\Nm } m=\I_{\Nm}. $

\medskip
The monoid  of $\M$-positive elements in $\M$ will be denoted by $\M^{+}$. 
 The monoid $(\M^{+})^{-1}\subseteq \M$ will be denoted by $\Mm$. Its elements 
 contract $\I_{\Nm}$ and dilate $\I_{\N}$ and  are called 
 $\M$-negative.

The elements $w$ in $\W_\M$ satisfying  $w(\Sigma^+-\Sigma_\M^+)\subset \Sigma_{aff}^+$  are called  $\M$-positive and form a  monoid denoted by $\WMp$. Denote by  $\WMm$ the monoid $(\WMp)^{-1}$.

\begin{rema} \label{lemma:Mp}We have
\begin{enumerate}
\item The monoid $\WMp$ is isomorphic to the semi-direct product $ \Wf_\M\ltimes \Lambda _{\M^+}$ where
$$\Lambda _{\M^+}:=\{\lambda\in \Lambda  \ | \  -\alpha(\nu(\lambda)) \geq 0  \ \textrm{ for all $\alpha\in\Sigma^+-\Sigma^+_\M$}\}.$$
\item  The set $\WMp$  is a system of representatives of the double cosets $\Iw_\M\backslash  \Mp  /\Iw_\M$. The preimage $ \W_{\M^+}(1)$ of $\WMp$ in $ \W(1)$  
is a system of representatives of the double cosets  $\IM\backslash \Mp/\IM$.
\item  The set $\WMm$  is a system of representatives of the double cosets $\Iw_\M\backslash  \Mm  /\Iw_\M$. The preimage $ \W_{\M^-}(1)$ of $\WMm$ in $ \W(1)$  
is a system of representatives of the double cosets  $\IM\backslash \Mm/\IM$.\end{enumerate} 
To justify the above, we refer to \cite[Lemma 5.21]{Compa} when $\G$ is split, but this is general: in particular for (1)  see \cite[\S 2.1]{Vig5}.

\end{rema}

\subsubsection{\label{par:dist}Distinguished cosets representatives in $\Wf$ and $\W_0'$} 
Let $J\subseteq \Pi$ and $\P:=\P_J $, $\Pf:=\Pf_J $ the corresponding standard parabolic subgroups of $\G$ and $\Gf$ respectively.
 The following lemma is proved for example in \cite[2.3.3]{Carter}. The reduced root system $\Sigma_\M$ attached to $\M$ was previously denoted by $\Sigma_J$ in \S\ref{standparab}, and the  Weyl group $\Wf_\Mf$  of $\Mf$ by $\Wf_J$ .

\begin{lemma} \label{lemmaDJ}The set $\dist$  of  elements $d\in \Wf$  satisfying $ d^{-1}  \Sigma_{\M}^+\subset \Sigma^+$ 
is a system of representatives of the right cosets $\Wf_{\Mf}\backslash \Wf$ . It satisfies
$\ell( wd) = \ell(w) + \ell(d)$
for any $w\in \Wf_\Mf $  and $d\in \dist$ . In particular, $d$ is the unique element with minimal length in $\Wf_\Mf d$.

\end{lemma}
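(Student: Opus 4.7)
The statement is the standard Coxeter-group fact about minimal length coset representatives for a parabolic subgroup, applied to $(\Wf, S)$ and the standard parabolic subgroup $\Wf_\Mf$ generated by $S_J$. I would follow the textbook strategy, using the interpretation of the length via the root system $\Sigma$: for $w \in \Wf$, set $N(w) := \{\alpha \in \Sigma^+ \ |\ w^{-1}\alpha \in \Sigma^-\}$, so that $\ell(w) = |N(w)|$, and recall that $\ell(sw) > \ell(w)$ (for $s \in S$ with associated simple root $\alpha_s$) is equivalent to $w^{-1}\alpha_s \in \Sigma^+$.

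The first input is the standard fact that every $w \in \Wf_\Mf$ stabilizes $\Sigma_\Mf$ and permutes $\Sigma^+ \setminus \Sigma_\Mf^+$ (since $\Sigma_\Mf$ is spanned by the simple roots in $J$ and $\Wf_\Mf$ is generated by the corresponding simple reflections). In particular $N(w) \subset \Sigma_\Mf^+$ for any $w \in \Wf_\Mf$. Dually, the hypothesis $d^{-1}\Sigma_\Mf^+ \subset \Sigma^+$ for $d \in \dist$ means exactly $N(d) \cap \Sigma_\Mf^+ = \emptyset$, i.e.\ $N(d) \subset \Sigma^+ \setminus \Sigma_\Mf^+$.

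Next I would prove existence of a representative in $\dist$ by picking $d$ of minimal length in a given right coset $\Wf_\Mf d$. Then $\ell(sd) > \ell(d)$ for every $s \in S_J$ (otherwise $sd$ is a shorter element of the same coset), which by the length criterion gives $d^{-1}\alpha_s \in \Sigma^+$ for every $s \in S_J$. Since any $\alpha \in \Sigma_\Mf^+$ is a non-negative integer combination of the $\alpha_s$ ($s \in S_J$), the image $d^{-1}\alpha$ is a non-negative combination of positive roots of $\Sigma$, hence positive, so $d \in \dist$. The length additivity $\ell(wd) = \ell(w)+\ell(d)$ for $w \in \Wf_\Mf$, $d \in \dist$ I would obtain by partitioning $N(wd)$ according to whether $\alpha \in \Sigma_\Mf^+$ or $\alpha \in \Sigma^+ \setminus \Sigma_\Mf^+$: in the first case $w^{-1}\alpha \in \Sigma_\Mf$, and by the definition of $\dist$ the condition $d^{-1}w^{-1}\alpha \in \Sigma^-$ reduces to $w^{-1}\alpha \in \Sigma_\Mf^-$, giving a bijection with $N(w)$; in the second case $w^{-1}$ permutes $\Sigma^+ \setminus \Sigma_\Mf^+$ and the condition reduces to $w^{-1}\alpha \in N(d)$, yielding $|N(d)|$ elements.

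Uniqueness follows formally from length additivity: if $d, d' \in \dist$ lie in the same coset and $d' = wd$ with $w \in \Wf_\Mf$, then $\ell(d') = \ell(w) + \ell(d) \geq \ell(d)$, and reversing the roles gives $\ell(w) = 0$, hence $w = 1$. The same identity shows that $d$ is the unique element of minimal length in $\Wf_\Mf d$. There is no real obstacle here — everything is routine Coxeter combinatorics — the only point requiring care is the case analysis in the length formula, so I would just cite \cite[2.3.3]{Carter} for this standard result rather than reproduce the argument in full.
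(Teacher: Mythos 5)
Your proof is correct, and it is precisely the standard Coxeter-group argument that the paper simply delegates to the citation \cite[2.3.3]{Carter} without reproducing it. Since the paper offers no proof of its own beyond that reference, you have taken the same route, just spelled out.
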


\begin{rema}  
\begin{enumerate}
\item Recall that $\Wf$ is naturally isomorphic to the subgroup $\W_0'$ of $\W$. 
The set $\dist$  may therefore be seen as the subset of  the elements in  $\W_0'$ satisfying 
 $d^{-1}  \Sigma_{\M}^+\subset \Sigma^+$.  
 
 \item For $d\in \Wf$ seen in  $\W_0'$, the condition 
 $ d^{-1}  \Sigma_{\M}^+\subset \Sigma^+$  is equivalent to
$d^{-1} (\I\cap \M) d \subset \I$ {(the latter condition  does not depend on the choice of a lift for $d$ in $\norm_\G$)}.  This follows immediately from the decomposition of $\IM= \I\cap \M$  given in \S\ref{posimonoid}. 
By reduction modulo $\K^1$, we also have  $d^{-1}( \Uf\cap \Mf) \, d\subset \Uf$  for any $d\in \dist$ (this  does not depend on the choice of a lift for $d$ in $\norm_\Gf$).
\item By \eqref{f:iwadecomp}, point (2)  implies that 
\begin{equation}\label{goodadd}\I\hat w \I\hat d\I=\I\hat w  \hat d\I \textrm{for $w\in \Wf_\Mf$, $d\in \dist$ seen in $\W$ and with respective lifts $\hat w$, $\hat d$ in  $\norm_\G\cap\K$ and}\end{equation}
\begin{equation}\label{goodaddfinite}\Uf \hat w \Uf \hat d\Uf=\Uf \hat w  \hat d\Uf \textrm{ for $w\in \Wf_\Mf$, $d\in \dist$ with respective lifts $\hat w$, $\hat d$ in  $\norm_\Gf$}.\end{equation}

\end{enumerate}
\label{rema:incluDJ}
\end{rema}

\begin{lemma}\label{lem:addlengths} For  $w\in \WMp$ and $d\in \dist$, we have
$\ell(w)+\ell(d)=\ell(wd)$.  In particular, for $\hat w, \hat d\in \norm_\G$  lifting $w$ and $d$ respectively, we have $\I \hat w \I \hat d\I= \I\hat w \hat d\I$. \end{lemma}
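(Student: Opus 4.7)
The plan is to establish the length equality $\ell(wd)=\ell(w)+\ell(d)$ by computing the inversion set $N(w):=\{A\in\Sigma_{aff}^+:w(A)\in\Sigma_{aff}^-\}$ and showing $d^{-1}N(w)\subset\Sigma_{aff}^+$; the double coset identity will then follow from the standard pro-$p$ Iwahori Hecke relations.

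First I will describe $N(w)$. Using the decomposition $\WMp=\Wf_\M\ltimes\Lambda_{\M^+}$ from Remark~\ref{lemma:Mp}(1), write $w=w_0\lambda$ with $w_0\in\Wf_\M$ and $\lambda\in\Lambda_{\M^+}$, so that formula~\eqref{actionLambda} gives
$$w(\alpha,r)=(w_0(\alpha),\,r-\langle\nu(\lambda),\alpha\rangle).$$
Since $w_0\in\Wf_\M$ permutes $\Sigma^+-\Sigma_\M^+$ (it is generated by reflections in roots of $\Sigma_\M$) and $\langle\nu(\lambda),\alpha\rangle\le 0$ for every $\alpha\in\Sigma^+-\Sigma_\M^+$ by definition of $\Lambda_{\M^+}$, no affine root $A=(\alpha,0)$ with $\alpha\in\Sigma^+-\Sigma_\M^+$ can lie in $N(w)$. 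Hence every $A=(\alpha,0)\in N(w)$ satisfies $\alpha\in\Sigma_\M^+$, while any other element of $N(w)$ is of the form $(\alpha,r)$ with $r>0$.

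Next, since $d\in\dist\subset\W_0'$ acts linearly on $\V$, we have $d^{-1}(\alpha,r)=(d^{-1}(\alpha),r)$. Combining this with the description of $N(w)$ above and the defining property $d^{-1}\Sigma_\M^+\subset\Sigma^+$ of elements of $\dist$, I obtain $d^{-1}N(w)\subset\Sigma_{aff}^+$. To conclude the length equality, I will partition $\Sigma_{aff}^+$ according to whether $A$ lies in $N(d)$: for $A\notin N(d)$, membership of $A$ in $N(wd)$ is equivalent to $d(A)\in N(w)$, i.e.\ $A\in d^{-1}N(w)$; for $A\in N(d)$, the only possible obstruction to $A\in N(wd)$ would be $-d(A)\in N(w)$, which would force $-A\in d^{-1}N(w)\subset\Sigma_{aff}^+$ contradicting $A\in\Sigma_{aff}^+$. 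The two subsets are manifestly disjoint (an element of $d^{-1}N(w)$ is sent by $d$ into $\Sigma_{aff}^+$), so $N(wd)=N(d)\sqcup d^{-1}N(w)$, and counting gives $\ell(wd)=\ell(d)+\ell(w)$. The double coset identity then follows from the well-known fact (pro-$p$ Iwahori Hecke relations, cf.\ the use in~\eqref{goodadd}) that $\I\hat{w}_1\I\hat{w}_2\I=\I\hat{w}_1\hat{w}_2\I$ whenever $\ell(w_1w_2)=\ell(w_1)+\ell(w_2)$.

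The main obstacle will be the bookkeeping in the first step: the positivity condition defining $\WMp$ acts on the second coordinate via $\langle\nu(\lambda),\alpha\rangle$ and must be coupled with the sign-preserving behavior of $w_0$ on $\Sigma^+-\Sigma_\M^+$ in order to confine $N(w)$ to the locus $\{r>0\}\cup(\Sigma_\M^+\times\{0\})$, which is precisely the locus on which the $\dist$ condition applies. Once $N(w)$ is pinned down this way, the rest is formal combinatorics.
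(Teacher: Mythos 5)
Your argument is correct and, unlike the paper's proof (which merely refers to \cite[Lemma 2.3]{Oparab} for $\G=\mathrm{GL}_n$ and to \cite[Lemma 2.22]{Vig5} for the general case), actually supplies a self-contained proof. The route you take --- controlling the inversion set $N(w)$ via the decomposition $w=w_0\lambda$ in $\Wf_\M\ltimes\Lambda_{\M^+}$, showing $d^{-1}N(w)\subset\Sigma_{aff}^+$ using the defining condition $d^{-1}\Sigma_\M^+\subset\Sigma^+$ of $\dist$, and then deducing $N(wd)=N(d)\sqcup d^{-1}N(w)$ --- is the standard affine-root-combinatorics method and is essentially the computation one finds in the cited sources. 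The two key verifications both check out: (a) for $\alpha\in\Sigma^+\setminus\Sigma_\M^+$ the affine root $(\alpha,0)$ is mapped by $w$ to $(w_0\alpha,\,-\langle\nu(\lambda),\alpha\rangle)$ with $w_0\alpha\in\Sigma^+\setminus\Sigma_\M^+$ (since $\Wf_\M$ stabilizes this set) and nonnegative second coordinate, so it cannot lie in $N(w)$, confining $N(w)$ to the locus $(\Sigma_\M^+\times\{0\})\cup\{r>0\}$; and (b) $d$ is linear (fixes $0$), preserves the second coordinate, and sends $\Sigma_\M^+$ into $\Sigma^+$ by inversion, so $d^{-1}N(w)\subset\Sigma_{aff}^+$. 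Your disjointness argument and the deduction of the double-coset identity from the braid relations \eqref{braid} are both standard and correct. The only thing I would flag as needing one more sentence in a final write-up is the (standard) fact that $\Wf_\M$ stabilizes $\Sigma^+\setminus\Sigma_\M^+$, which you state parenthetically but do not prove; since it follows by the usual one-line argument (a simple reflection $s_\beta$ with $\beta\in\Delta_\M$ preserves positivity of every root having a positive coefficient on some simple root outside $\Delta_\M$), this is harmless.
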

\begin{proof} This is proved for $\G={\rm GL}(n,  \F)$ in \cite[Lemma 2.3]{Oparab}. The proof is general as noted in \cite[Lemma 2.22]{Vig5}. 
\end{proof}

\subsubsection{Double and  simple cosets decompositions in $\Gf$ and $\G$} Let $J\subseteq \Pi$ and $\P:=\P_J $, $\Pf:=\Pf_J $ the corresponding standard parabolic subgroups of $\G$ and $\Gf$ respectively, with Levi decompositions $\P=\M \N$ and $\Pf=\Mf \Nf$. 
In part (1) of the lemma below, the lifts $\hat d$ of $d\in \dist$ are taken in $\Gf$. In part (2) they are taken in $\K$.

 \begin{lemma}\label{lemma:PGU} 
 \begin{enumerate}
 \item
 We have $\Gf=\sqcup_{d\in  \dist } \Pf \hat d\Uf$ and for any $d\in  \dist $:    \begin{enumerate}
\item  $(\hat d\Uf \hat d^{-1}\Nf )\cap   \Mf   \subseteq \Uf\cap \Mf  $,
\item $(\Pf \cap \hat d\Uf   \hat d^{-1})\Nf  = (\Uf\cap \Mf ) \Nf =\Uf.$
 \end{enumerate}
 \item 
  We have $\G=\sqcup_{d\in  \dist } \P  \I \hat d\I=\sqcup_{d\in  \dist } \P  \hat d\I$ and for any $d\in  \dist $:
 \begin{enumerate}
 \item $\hat d^{-1}(\I\cap \M ) \hat d\subseteq \I$.
\item  $\hat d\I \hat d^{-1}\cap   \P \I  =  \hat d\I  \hat d^{-1} \cap  \I $,
\item $(\P \cap \hat d\I  \hat d^{-1})\N  = (\I\cap\M ) \N .$
 \end{enumerate}
 \end{enumerate}
 \label{PI}
\end{lemma}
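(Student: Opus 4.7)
My plan is to prove part (1) directly via a root-subgroup analysis in the finite reductive group $\Gf$, and then to derive part (2) by combining the Iwasawa decomposition $\G=\P\K$ with reduction modulo $\K^1$, plus a short affine computation to handle the $\I_{\Nm}$-piece.

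For the decomposition $\Gf=\bigsqcup_{d\in\dist}\Pf\hat d\Uf$ in part (1), I would start from the Bruhat decomposition $\Gf=\bigsqcup_{w\in\Wf}\Bf\hat w\Bf$ provided by the strongly split $BN$-pair \eqref{ssBNp}. Writing $w=w_0 d$ uniquely with $w_0\in\Wf_\Mf$ and $d\in\dist$ (Lemma \ref{lemmaDJ}) and using the length additivity together with \eqref{goodaddfinite}, one obtains $\Bf\hat w\Bf=\Bf\hat{w_0}\hat d\Bf$, and collecting by $\Pf=\Bf\Wf_\Mf\Bf$ gives the decomposition; the equality $\Pf\hat d\Bf=\Pf\hat d\Uf$ uses $\hat d\Zf\hat d^{-1}=\Zf\subseteq\Pf$. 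For (a) and (b), I would analyze $\hat d\Uf\hat d^{-1}$ through its root-subgroup factorization $\prod_{\beta\in\Phi^+_\Gf}\Uf_{d(\beta)}$ and partition the roots $d(\beta)$ into those lying in $\Phi^+_\Mf$, in $\Phi^+\setminus\Phi^+_\Mf$, or in $\Phi^-$. The condition $d\in\dist$ means $d^{-1}(\Phi^+_\Mf)\subseteq\Phi^+$, equivalently $d(\Phi^+)\cap\Phi^-_\Mf=\emptyset$ and $\Phi^+_\Mf\subseteq d(\Phi^+)$. The first excludes any subgroup of $\hat d\Uf\hat d^{-1}$ from $-\Phi^+_\Mf$, so the third class sits inside $\Nf^{op}$; since $\Pf\cap\Nf^{op}=\{1\}$, any element of $\hat d\Uf\hat d^{-1}\cap\Pf$ has trivial $\Nf^{op}$-component and therefore lies in $(\Uf\cap\Mf)\Nf$. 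This yields (a) (extract the $\Mf$-component after canceling an arbitrary $n\in\Nf$) and, combined with $\Uf\cap\Mf\subseteq\hat d\Uf\hat d^{-1}$ from the second inclusion, also (b), since $(\Uf\cap\Mf)\Nf=\Uf$ is the standard root-subgroup factorization of $\Uf$.

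For part (2), the decomposition $\G=\bigsqcup_{d\in\dist}\P\hat d\I$ follows by lifting the finite-case decomposition through $\K/\K^1=\Gf$ and $\I/\K^1=\Uf$ to get $\K=\bigsqcup_d(\P\cap\K)\hat d\I$, then multiplying on the left by $\P$ and using Iwasawa $\G=\P\K$. To prove $\P\I\hat d\I=\P\hat d\I$, the Iwahori decomposition $\I=\I_{\Nm}\I_{\M}\I_{\N}$ and the inclusions $\I_{\M},\I_{\N}\subseteq\P$ reduce the claim to $\I_{\Nm}\hat d\subseteq\hat d\I$. Since $d\in\W_0'$ fixes $0\in\V$, formula \eqref{actionLambda} yields $\hat d^{-1}\Uu_{(\alpha,1)}\hat d=\Uu_{(d^{-1}\alpha,1)}$, and direct inspection of \eqref{rootsubgroupeq} shows that $\Uu_{(\alpha',1)}\subseteq\I$ for every root $\alpha'$ (for $\alpha'>0$ because $\Uu_{(\alpha',1)}\subseteq\Uu_{(\alpha',0)}\subseteq\I$, for $\alpha'<0$ directly), whence the inclusion. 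Item (2)(a) is precisely Remark \ref{rema:incluDJ}(2). For (2)(b), note that $\hat d\I\hat d^{-1}\subseteq\K$, so any $x\in\hat d\I\hat d^{-1}\cap\P\I$ lies in $(\P\cap\K)\cdot\I\subseteq\K$; reducing modulo $\K^1$ places $\bar x\in\bar{\hat d}\Uf\bar{\hat d}^{-1}\cap\Pf$, and the root-subgroup analysis of part (1) shows this intersection is contained in $(\Uf\cap\Mf)\Nf=\Uf$, so $\bar x\in\Uf$, i.e.\ $x\in\I$. For (2)(c), the inclusion $(\I\cap\M)\N\subseteq(\P\cap\hat d\I\hat d^{-1})\N$ is immediate from (2)(a); conversely, (2)(b) gives $\P\cap\hat d\I\hat d^{-1}\subseteq\I\cap\P=(\I\cap\M)(\I\cap\N)$, the last equality forced by the Iwahori decomposition of $\I$ (the $\I_{\Nm}$-component of an element of $\P=\M\N$ must be trivial), and multiplying by $\N$ yields the required inclusion.

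The main obstacle is the passage from the finite to the affine setting: the root-subgroup analysis transplants verbatim from $\Gf$ to $\G$ only because $d\in\dist$ is chosen to fix the special vertex $0\in\V$, which forces conjugation by $\hat d$ to preserve the level $r$ of affine roots. Without this, the short argument $\hat d^{-1}\I_{\Nm}\hat d\subseteq\I$ (and hence the equality $\P\hat d\I=\P\I\hat d\I$) would fail, and one would be left with a genuinely affine Bruhat-type computation; as it stands, every affine statement in part (2) is reduced either to part (1) via $\K/\K^1=\Gf$ or to Remark \ref{rema:incluDJ}(2).
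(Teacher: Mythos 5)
Your proof is correct and follows essentially the same route as the paper: Bruhat decomposition with length additivity and the factorization of $\hat d\Uf\hat d^{-1}$ into positive and negative parts for part (1), then Iwasawa decomposition combined with reduction modulo $\K^1$ for part (2). The only local variation is that you establish $\hat d^{-1}\I_{\Nm}\hat d\subseteq\I$ by an explicit affine-root computation, whereas the paper simply observes that $\I_{\Nm}\subseteq\K^1$ and $\K^1$ is normal in $\K\ni\hat d$.
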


\begin{proof}   
\begin{itemize}
\item[(1)]  That $\Gf$ is the union of all $\Pf \hat d \Bf=\Pf   \hat d\Uf$  for $d\in \dist $ follows from the Bruhat decomposition for $\Gf$ and  the decomposition of $\Pf $ as the disjoint union of all $\Bf  \hat w\Bf$ for $w\in \Wf _J$ (see \eqref{ssBNp} and \eqref{ssBNpJ}).   Let $d\in \dist$.
By  Remark \ref{rema:incluDJ}(2), we have $\Uf\cap \Mf  \subset \hat d \Uf \hat d ^{-1} $ and $\Uf^{op}\cap \Mf  \subset \hat d  \Uf^{op} \hat d^{-1}  $. 
Since $\hat d \Uf\hat d^{-1} \cap \Uf^{op} \cap \Pf\subset \hat d \Uf \hat d^{-1} \cap \Uf^{op} \cap \Mf  $,   it implies that $ \hat d \Uf \hat d^{-1} \cap \Uf^{op} \cap \Pf=\{1\}$.
Since  an element in $\Uf$ can be written uniquely as the product of an element in $\Uf\cap \hat d^{-1}\Uf\hat  d$ by an element in $\Uf\cap \hat d^{-1}\Uf^{op}\hat d$ (\cite[2.5.12]{Carter}), we deduce that  
$\hat d\Uf \hat d^{-1}\cap \Pf \subset \Uf$. Points (1)(a) and (1)(b) then follow easily (for (1)(b) also use $\Uf\cap \Mf  \subset \hat d \Uf\hat  d ^{-1} $).

\item[(2)]  Recall that here the lifts for $d\in \dist$ are in $\K$. The first identity 
 is proved for $\G={\rm GL}(n, \F)$ in \cite[6A1]{Oparab} and for $\G$ split in  \cite[Lemma 5.18]{Compa}.   \\ We review the arguments  to check that they generalize to the case of  an arbitrary reductive group.   
Recall that $\Wf\cong \W_0'$ indexes the doubles cosets of $\K$ mod $\Iw$, that $\dist $ is a system of representatives of  the right cosets $\Wf_\Mf \backslash \Wf$, and that for $w\in \Wf$ with lift $\hat w\in \K$, we have $\P  \I \hat w \I=\P \hat w \I$ since  $\hat w^{-1}\I_\Nm \hat w\subset \I$.
From the Iwasawa  decomposition $\G=\P \K$,  we get that $\G$ is the union of all $\P   \hat d \I$ for $d\in \dist $. It is a disjoint union because,  given  $d,d'\in \dist $,  if $\hat d'\in \P  d \I$ then $\hat d'\in  (\K\cap \P ) \hat d \I$.  Reducing mod $\K^1$ and using (1) gives $d'=d$ (we recall that  the image of $\K\cap \P$ in $\Gf$ is $\Pf$). For $d\in\dist$, we have  $\hat d^{-1}\I_\Nm \hat d\subset \K^1\subset \I$ since $\hat d$ lies in $\K$, so it is easy to see that $\P \I \hat d \I=\P \I_\Nm \hat d \I= \P \hat d \I$. \\
(a) This is Remark \ref{rema:incluDJ}(2).\\
(b) Note that $\hat d \I \hat d^{-1}$ is contained in $\K$.  An element in $\P  \I$ is of the form $ p u$ for $p\in \P $ and $u\in \I_\Nm$. Suppose that it lies in $\hat d\I \hat d^{-1}$.  Then $p\in \K\cap \P $ and the projection of $pu$  in $\K/\K^1=\Gf$ is equal to
$\bar p\in  \hat d\Uf\hat d^{-1}$ where $\bar p\in \Pf $ is the projection of $p$.  But $\Pf \cap  \hat d \Uf\hat  d^{-1}\subset \Uf$ by (1)(b) so $\bar p\in\Uf$ and $p\in \I$.
 We have checked  $\P  \I\cap\hat d\I\hat d^{-1}\subset \I$ which  gives (2)(b).\\
 (c) By (b),  we have $\P \cap\hat   d\I\hat d^{-1}\subset \P \cap \I$ which is contained in $(\M \cap \I)\N $. It gives one inclusion. The other inclusion comes from (a).
 \end{itemize}
\end{proof}

\begin{lemma}\label{lemma:cosets} 
Let $w\in {\W}_{\M^+}(1)$ with lift $\hat w\in \M^+$ and $d\in {}^ \Mf\Wf$ with lift  $\hat d \in \K$.

\begin{enumerate}
\item    Consider a  decomposition into simple cosets $\IM  \hat w \IM=\sqcup_{x }\IM   \hat wx$ for $x \in \hat w^{-1}\IM \hat w\cap \IM\backslash \IM$.
Then 
$$ \I  \hat w \I=\sqcup_{x}\I \hat wx \I_\Nm.$$ Furthermore,    we have $\P\I \hat w \I=\P\I$. For $u\in \I_\Nm$ and $x\in \IM$, we have $u\in  \P\I   \hat wx$ if and only if  $\I  \hat wx=\I \hat wxu$.

\item  Consider a decomposition into simple cosets  $\I \hat  d\I=\sqcup_{y}\I\hat  dy$ for  $y\in \I\cap \hat  d^{-1}\I \hat  d\backslash \I$.
Then  $$\P\hat  d\I= \P\I\hat  d \I=\sqcup_{y}\P\I\hat  dy.$$

\item We have $ \I  \hat w\hat  d  \I = \I   \hat w  \I  \hat  d  \I = \sqcup_{x, u_{x}, y }  \I   \hat w x  u_x {\hat  d } y$ for 
$$x\in \hat w^{-1} \IM  \hat w\cap  \IM \backslash  \IM, \ u_{x}\in (\hat wx)^{-1} \I  \hat wx\cap \I_\Nm\backslash \I_\Nm, \ y\in \I\cap \hat d^{-1}\I \hat d\backslash \I.$$

\end{enumerate}

\end{lemma}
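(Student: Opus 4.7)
All three parts decompose $\I$-double cosets using the Iwahori decomposition $\I=\I_\N\IM\I_\Nm$ (valid in any order), the positivity $\hat w\I_\N\hat w^{-1}\subseteq\I_\N$ and $\hat w^{-1}\I_\Nm\hat w\subseteq\I_\Nm$ of $\hat w\in\Mp$, together with Lemmas \ref{lemma:PGU} and \ref{lem:addlengths}.

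For (1), I would first derive $\I\hat w\I=\I_\N\IM\hat w\IM\I_\Nm$ by expanding $\I$ on both sides, then using positivity to absorb the extra $\I_\N$ and $\I_\Nm$ factors ($\I_\Nm\hat w\subseteq\hat w\I_\Nm$ on the left of $\hat w$ and $\hat w\I_\N\subseteq\I_\N\hat w$ on the right), together with the fact that $\IM$ normalizes both $\I_\N$ and $\I_\Nm$ inside $\I$. Substituting $\IM\hat w\IM=\sqcup_x\IM\hat w x$ gives $\I\hat w\I=\bigcup_x\I_\N\IM\hat w x\I_\Nm=\bigcup_x\I\hat w x\I_\Nm$, where the second equality uses that $\hat w x$ remains $\M$-positive. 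The identity $\P\I\hat w\I=\P\I$ is immediate from $\I\hat w\I\subseteq\I_\N\M\I_\Nm\subseteq\P\I_\Nm=\P\I$ combined with $\P\I\hat w\I\supseteq\P\hat w\I=\P\I$. For the iff, the condition $\I\hat w x=\I\hat w xu$ is tautologically equivalent to $u\in(\hat w x)^{-1}\I\hat w x$, and the Iwahori decomposition of this conjugate subgroup (whose factors lie in $\N$, $\M$, $\Nm$ respectively) gives $(\hat w x)^{-1}\I\hat w x\cap\I_\Nm=(\hat w x)^{-1}\I_\Nm\hat w x$. On the other hand $\P\I\hat w x=\P\cdot(\hat w x)^{-1}\I\hat w x$, and factoring $u=pk$ with $p\in\P$, $k\in(\hat w x)^{-1}\I\hat w x$, then decomposing $k$ via its Iwahori decomposition and using $\P\cap\Nm=\{1\}$, forces $p=1$ and $k\in(\hat w x)^{-1}\I_\Nm\hat w x$, giving the same condition. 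Disjointness of the sets $\I\hat w x\I_\Nm$ across different $x$ then follows from a collision argument: equality $i_1\hat w x_1u_1=i_2\hat w x_2u_2$ combined with the iff produces $\IM\hat w x_1=\IM\hat w x_2$.

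For (2), $\P\hat d\I=\P\I\hat d\I$ is a direct consequence of $\hat d\in\K$ and $\hat d^{-1}\I_\Nm\hat d\subseteq\K^1\subseteq\I$, as in the proof of Lemma \ref{lemma:PGU}(2). Disjointness of the $\P\I\hat dy$ uses Lemma \ref{lemma:PGU}(2)(b): an equality $\P\I\hat dy=\P\I\hat dy'$ forces $\hat dy(y')^{-1}\hat d^{-1}\in\P\I\cap\hat d\I\hat d^{-1}=\I\cap\hat d\I\hat d^{-1}$, hence $y(y')^{-1}\in\I\cap\hat d^{-1}\I\hat d$, meaning $\I\hat dy=\I\hat dy'$. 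For (3), Lemma \ref{lem:addlengths} yields $\I\hat w\hat d\I=\I\hat w\I\hat d\I$; the iff in (1) refines the decomposition further to $\I\hat w\I=\sqcup_{x,u_x}\I\hat w xu_x$ with $u_x$ running over a transversal of $(\hat w x)^{-1}\I\hat w x\cap\I_\Nm$ in $\I_\Nm$, and combining with $\I\hat d\I=\sqcup_y\I\hat dy$ produces the claimed tripartite decomposition.

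The main obstacle is the disjointness analysis in (1): the sets $\I\hat w x\I_\Nm$ are unions of left $\I$-cosets rather than $\I$-double cosets, so refining them into simple $\I$-cosets requires coordinating the Iwahori decomposition of $(\hat w x)^{-1}\I\hat w x$ (through positivity of $\hat w x$) with the original $\P$-Iwahori decomposition of $\I$, and leveraging the disjointness $\P\cap\Nm=\{1\}$ that underlies the iff.
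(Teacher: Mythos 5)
Your proposal follows the paper's overall strategy: the Iwahori decomposition $\I=\I_\N\IM\I_\Nm$, $\M$-positivity of $\hat w$, and Lemma \ref{lemma:PGU}, and parts (2) and (3) are correct. Your treatment of the disjointness in (2) via Lemma \ref{lemma:PGU}(2)(b) is if anything slightly more direct than the paper's, which rederives the needed inclusion by reducing mod $\K^1$.

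The one point that does not go through as written is the disjointness across distinct $x$ in part (1). You claim that a collision ``$i_1\hat w x_1u_1=i_2\hat w x_2u_2$ combined with the iff produces $\IM\hat w x_1=\IM\hat w x_2$,'' but the iff $u\in\P\I\hat wx\Leftrightarrow\I\hat wx=\I\hat wxu$ compares two simple left $\I$-cosets \emph{inside one} $\I\hat wx\I_\Nm$ and gives no way to relate $x_1$ to $x_2$. From a collision you obtain $\I\hat wx_1(u_1u_2^{-1})=\I\hat wx_2$ with $u_1u_2^{-1}\in\I_\Nm$, but you do not know a priori that $u_1u_2^{-1}\in\P\I\hat wx_1$, so the iff cannot be invoked. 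What actually forces $\IM\hat wx_1=\IM\hat wx_2$ — and what the paper says explicitly — is the injectivity of the product map $\N\times\M\times\Nm\to\G$: having written $\I\hat wx\I_\Nm=\I_\N\cdot\IM\hat wx\cdot\I_\Nm$, a common element has a unique $\N\M\Nm$-factorization whose $\M$-component pins down the coset $\IM\hat wx$. Your closing paragraph gestures at precisely this (``coordinating the Iwahori decomposition\ldots with the original $\P$-Iwahori decomposition of $\I$''), but it should be stated as the actual mechanism rather than attributed to the iff, since both the disjointness in (1) and the well-definedness of the tripartite decomposition in (3) depend on it.
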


\begin{proof} (1) The first fact  is proved in \cite[II.4]{Selecta}, but we give a proof for the convenience of the reader.  We have $\I \hat w \I=\I \hat w \IM\I_\Nm$ because $\I \hat w \I=\I \hat w\I_\N \IM \I_\Nm$ and $ \hat w \I_\N  \hat w^{-1}\subset \I_\N $ since $w$ is $\M$-positive. The    decomposition of  $ \I  \hat w \I$ is disjoint because for $x\in \I_\M$ we have  $\I \hat wx \I_\Nm= \I_\N  \I_\M  \hat  wx  \I_\Nm$ (since $w$ is $\M$-positive and $x$ normalizes $\I_\Nm$) and the decomposition in $\N\M\Nm$ is unique. We have  $\P\I  \hat  w \I=\P\I$ because   
$\P\I  \hat w \I=\P \I_\Nm \hat  w \I=\P \hat  w^{-1} \I_\Nm  \hat w \I$ and $ \hat w^{-1} \I_\Nm  \hat w\subset \I_\Nm$ since $w$ is $\M$-positive.
The last fact is drawn from 
 \cite[Proposition 7 p.77]{SS91} and we recall the argument.  
Let  $x\in \IM$. Note first that $1\in \P \I \hat w x=\P \hat w^{-1}\I_\Nm \hat w x $ since  $x\in \P$. So, given $u\in \I_\Nm$,  if $\I \hat wx=\I \hat wxu$ then $u\in \P\I \hat wx$.
Now let $u\in \I_{\Nm}$  and suppose that $u\in \P\I \hat w x=\P \hat w^{-1}\I_\Nm \hat w x$. Then there is $p\in \P$ and $k\in  \hat w^{-1}\I_\Nm \hat w\subseteq \I_\Nm$ such that $u = p k x=p x (x^{-1}k x)$. Since $x\in \IM$ normalizes $\I_\Nm$, it implies that  $px\in\I_\Nm\cap\P=\{1\}$. Therefore $x u x^{-1}\in  \hat w^{-1}\I_\Nm \hat w$ and $\I \hat wxu=\I \hat w x$.\\
(2)  The first equality $\P\hat  d\I= \P\I\hat  d \I$ is given by Lemma \ref{PI}(2). We have to  prove that for $y\in \I$, if 
 $\P \I\hat  d y\cap  \P \I\hat  d \neq \emptyset$ then  $ \I\hat  d  y=\I \hat  d$.
So let $y\in \I$ and suppose that $\P \I\hat  d y\cap  \P \I\hat  d \neq \emptyset$.  This means that  there is $p\in \P$, $u,v\in \I_\Nm$ such that $p v= u \hat d y\hat d^{-1}$ and therefore $p\in\K\cap \P$. Reducing mod $\K^1$, recalling $\I_\Nm\subset \K^1$, 
and  denoting respectively by $\bar p$, $\bar {\hat d}$  and $\bar y$ the images of $p$, $\hat d$ and $y$ in $\Gf $, we get  $\bar {\hat d}\bar y \bar {\hat d}^{-1}\in \Pf\cap \bar {\hat d}\Uf \bar {\hat d}^{-1}$ which implies $\bar {\hat d}\bar y\bar {\hat d}^{-1}\in \Uf$ by Lemma \eqref{PI}(1)(b). Therefore, $\hat d y\hat  d^{-1}\in \I$ and  $ \I\hat  d  y=\I \hat  d$.\\
(3) The first equality $ \I  \hat w\hat d  \I = \I   \hat w  \I  \hat d \I$ comes from Lemma \ref{lem:addlengths}.
From (1)   we deduce $\I \hat w \I \hat d \I =\cup_{x,y}\I \hat w x \I \hat dy= \cup_{x,y}\I \hat  w x \I_{\Nm} \hat dy=  \cup_{x, u_{x}, y }  \I  \hat w x  u_x {\hat d } y$ for $x,u_{x},y$ ranging over the sets described in the lemma. It remains to prove that this is a disjoint union: it is a disjoint  union  on $y$   by (2) because    $ \I  \hat w x u_x   \subseteq \I \hat w \I \subseteq \P\I $ by (1). Fixing $y$, it is a disjoint union on $x$ by (1). Fixing $y,x$,  the equality $ \I  \hat w x  u  {\hat d } y= \I  \hat w x  u' {\hat d } y$  
for $u,u'\in \I_\Nm$, is equivalent to $u'\in    (\hat w x)^{-1} \I  \hat w x  u$.
 \end{proof}

\subsection{Pro-$p$ Hecke algebras and universal representations}

\subsubsection{Presentation of the pro-$p$ Iwahori Hecke algebra of $\G$\label{prophecke}}
Let  $R$ be a commutative ring. The universal $R$-representation  $\XX  := \ind_\I^\Gp(1) $ of $G$  is the compact induction 
of the trivial character of $\I$ with values in $\k$. We see it as the space of $R$-valued functions with compact support in $\I\backslash \Gp
$, endowed with the action of $\Gp$ by right translation. The  pro-$p$ Iwahori Hecke $R$-algebra of $G$ is the  $\k$-algebra of the $\Gp$-equivariant endomorphisms of $\XX$. It  will be denoted by $\Hh$. It is naturally isomorphic  to the convolution algebra  $\k[\I \backslash \G/\I]$ of the $\I$-biinvariant functions with compact support in $\G$, with product  defined by 
$$  f \star f': \Gp\rightarrow \k, g\mapsto \sum_{u\in \I\backslash \Gp} f(gu^{-1})f'(u)$$ for $f,f'\in \k[\I \backslash \G/\I]$. We will identify $\Hh$ and $\k[\I \backslash \G/\I]$ without further notice.
 For $g\in \G$, we  denote by $\tau_g\in \Hh$ the characteristic function of $\I g \I$. For $w\in\W(1)$, the double coset $\I \hat w \I$ does not depend on the chosen lift $\hat w\in \norm_\G$ and we simply write $\tau_w$ for the corresponding element of $\H$.  
The set of all $\tau_{w}$ for $w\in  \W(1)$   is a basis for $\Hh$.   

For $s\in \SS_{aff}$, let   $n_s\in \mathscr N_G$ be an admissible lift of $s$ (see \eqref{defiadmi}) and   $\Zf'_{s}$ the subgroup of  $\Zf$ defined in Notation \ref{qsnot}.

The product in $\Hh$ is given by the following braid and, respectively, quadratic relations  \cite[thm.2.2]{Vig1}:   
\begin{equation} \label{braid} \textrm{$\t_{ww'}=\t_w \t_{w'}$ for $w, w'\in \W(1)$ satisfying $\ell(ww')=\ell(w)+ \ell(w')$,}\end{equation}
\begin{equation} \label{Q}   \textrm{
$\t_{n_s}^2=q_s  \t_{n_s^2} + c_{n_s}  \t_{n_s}$ for $s\in \SS_{aff}$, where $c_{n_s}=\sum_{z\in \Zf'_s }c_{n_s}(z) \tau_z$, \ }
  \end{equation}
 for positive integers $c_{n_s}(z) $ satisfying: $ c_{n_s}(z)= c_{n_s}(z^{-1})= c_{n_s}(z xs(x)^{-1})$ if $x\in \Zf$ and $z\in \Zf'_{s}$ (the precise definition of these integers is given in \cite[step 2 of Prop. 4.4]{Vig1}. If we set $c_{n_s} (z)=0$ for $z\in \Zf \setminus  \Zf'_{s}$, the latter relations remain valid for all $z\in \Zf$. Furthermore,  $c_{n_s}  \t_{n_s}=\t_{n_s}c_{n_s} $ and the sum over $z\in \Zf $ of all $c_{n_s}(z)$  is equal to $q_s-1$. 
 We identify the positive integers $q_s$  and $ c_{n_s}(z)$  with their natural image in $R$. \\
If $R$ has characteristic $p$,  then in $\H$ we have  $c_{n_s}= c_s$ where  
\begin{equation}c_s:= (q_s-1) |\Z'_{s}|^{-1}\sum_{z\in \Zf'_{s}} \tau_z\:\:\in\H.\label{simplify}\end{equation}

\begin{exa}  Following up the  example at the end of  \S\ref{subsec:lifts}, suppose $\mathbf G$ is $\F$-split. Then we have,   $q_s=q $ and $c_s=\sum_{z\in  \Tf'_{s}} z$ for any $s\in S_{aff}$. 
\end{exa}

\begin{rema}\label{Qgeneral} In  $\Hh$, the quadratic relation \eqref{Q} satisfied by  $\t_{n_s}$  extends to $\t_{\hat s}$ for an arbitrary lift $\hat s\in  \mathscr N_G$ of $ s\in \SS_{aff}$. The  quadratic relation satisfied by  $\t_{\hat s}$  is $\t_{\hat s}^2=q_s  \t_{\hat s ^2} + c_{\hat s}  \t_{\hat s}$ where $c_{\hat s} :=c_{n_s}\t_t $ and  $t:= n_s^{-1}\hat s  \in \Zf$, because
$$\t_{\hat s}^2  =\t_{n_s}\t_{t}\t_{n_s}\t_{t}=\t_{n_s}^2\t_{n_s^{-1} t n_s}\t_{t}=(q_s  \t_{n_s^2} + c_{n_s}  \t_{n_s})\t_{n_s^{-1} t n_s t}=q_s  \t_{\hat s ^2}+ c_{n_s}  \t_t \t_{\hat s}.$$
     When   $s_1, s_2\in S_{aff}$  admit  lifts $\tilde s_1, \tilde s_2 $ in $\W(1)$ which are conjugate $\tilde s_2=w\tilde s_1 w^{-1}$ by  $w\in \W(1)$, then for  arbitrary lifts  $\hat s_1,\hat s_2\in  \mathscr N_G$ of  $\tilde s_1, \tilde s_2$ we have $ c_{\hat s_2}=wc_{\hat s_1}w^{-1}$ (we note  $w (\sum _{z\in \mathds Z} c(z) \tau_z) w^{-1}:= \sum _{z\in \mathds Z} c(z) \tau_{wzw^{-1}}$ for $c(z)\in \mathbb Z$).  This follows from the equivalence of   the properties (A) and (B) in  \cite[Thm 4.7]{Vig1} applied to $R=\mathbb Z$. 
\end{rema}
 
\begin{rema}\label{inverse} Suppose that $p$ is invertible in $R$. The elements $\t_{w}$ for $w\in \W(1)$ are  invertible in  $\Hh$.
For $s\in S_{aff}$ the element $q_s \tau_{n_s^2}$  is indeed invertible with inverse $q_s^{-1} \tau_{n_s^{-2}}$
because $q_s$ is a power of $p$ (Notation \ref{qsnot}) and  $n_s^2\in 
\Zf$.  The  inverse of $\tau_{n_s}$  in $\H$ is $q_s^{-1} \tau_{n_s^{-2}}(\tau_{n_s}-c_{n_s} )$. \end{rema}

\begin{notation}\label{notationetoile}
 By \cite[Lemma 4.12, Prop. 4.13]{Vig1},  there is a   unique family  $(\t^*_w)_{w\in\W(1)}$  in $\Hh$   satisfying the relations   
\begin{itemize}
\item $\t^*_{w_1}\t^*_{w_2}=\t^*_{w_1w_2}$ if $\ell(w_1)+\ell(w_2)=\ell(w_1w_2)$,
\item $\t_u=\t^*_u$ if   $\ell(u)=0$,
\item $\t^*_{n_s}:=\t_{n_s}- c_{n_s}$ if  $s\in \SS_{aff}$.
\end{itemize}
The set of all $\t^*_w $ for $w\in  \W(1)$ form another basis  of  $\Hh$.
\end{notation}

 \subsubsection{Pro-$p$ Iwahori Hecke algebra attached to a Levi subgroup\label{LeviHecke}}  Let $J\subset \Delta$. We consider the associated standard parabolic subgroup $\P$ with Levi decomposition $\P=\M\N$.
Recall that  $\Iw_\M=\Iw\cap \M$ is an Iwahori subgroup in $\M$  with  pro-$p$ Sylow subgroup is $\IM=\I\cap \M$ (notation introduced in \S\ref{posimonoid}, see also \S\ref{standparab}).
The $R$-algebra of  the $\M$-endomorphisms of the universal $R$-representation $\XX_\M=\ind_\IM^\M(1)$  is the pro-$p$ Iwahori Hecke $R$-algebra $\H_\M$ of $\M$.  For $g\in \M$ (resp. $w\in\W_\M(1)$), we denote by  $\tau^\M_g$ (resp. $\tau^\M_w$) the characteristic function of $\IM g\IM$ (resp. $\IM \hat w\IM$). 
 A basis for $\H_\M$ is given by the set of all $\tau^\M_w$ for $w\in\W_\M(1)$.  
 Another basis is given by the set of all $\tau^{\M,*}_w$ for $w\in\W_\M(1)$, where $(\tau_{w}^{\M, *})_{w\in \W_\M(1)}$ is defined for $\Hh_\M$ as it was for $\H$ in Notation \ref{notationetoile}.

Denote by $\H_{\Mp}$  (resp. $\H_\Mm$) the subspace of $\H_\M$ with basis the set of all 
$\tau^\M_w$ for $w\in\W_\Mp(1)$ (resp. $w\in\W_\Mm(1)$) as defined in Remark \ref{lemma:Mp}.  The algebra $\H_\M$ does not inject in $\H$ in general, but $\H_\Mp$ (resp.  $\H_\Mm$) does:
the linear  maps 
 \begin{equation}\label{incluheckeposi}\H_{\M}\overset{\theta}\longrightarrow \H, \quad \tau^\M_m\longmapsto \tau_m   \ (m\in \M),  \quad \H_{\M}\overset{\theta^*} \longrightarrow\H, \quad  \tau_m^{\M,*}\longmapsto \tau_m^*   \ (m\in \M),\end{equation}
 restricted to $\H_{\Mp}$ or to  $\H_{\Mm}$ respect the products, and identify  $\H_\Mp$  and $\H_\Mm$   with  four subalgebras of $\H$.  The algebra $\HM$ is a localization of $\H_{\Mp}$  (resp. $\H_\Mm$) at a central element   (see \cite[II.4]{Selecta}, \cite[(6.12)]{BK}, \cite[Thm. 1.4]{Vig5}).
\begin{rema}\label{extension} When  $p$ is invertible in $R$,   the  homomorphisms $ \theta|_{\HMp}$, $ \theta|_{\HMm}$,   $ \theta^*|_{\HMp}  $ and   $ \theta^*|_{\HMm}  $ extend uniquely to four embeddings from $\H_{\M}$ into $\H$, denoted by $\theta^+,\theta^-,  \theta^{*+}$ and by $ \theta^{*-}$, determined by the formula
$$\theta^+(\tau^\M_m)=\tau_a^{-n}\tau_{a^nm}  ,\ \theta^-(\tau^\M_m)=\tau_{a^{-1}}^{-n}\tau_{a^{-n}m} ,\ \theta^{*+}(\tau^{\M,*}_m)=(\tau^*_a)^{-n}\tau^*_{a^nm}  ,\ \theta^{*-}(\tau^{\M,*}_m)=(\tau^*_{a^{-1}})^{-n}\tau^*_{a^{-n}m} $$
for  $m\in \M$, where $a\in C_\M$ is a strictly $\M$-positive  element (recall that $\mathbf{ C_M}$ is the connected center of $\mathbf M$):  $\cap _{n\in \mathbb N}a^n \I_{\N} a^{-n}=\{1\}$ and $\cap _{n\in \mathbb N} a^{-n} \I_{{\Nm}} a^n=\{1\}$), $ n\in \mathbb N$ such that $a^n m\in \Mp$ and $a^{-n} m\in \Mm$ (in particular $a$ is $\M$-positive). 
  This follows from  Remark \ref{inverse} and   from  
\cite[II.6)]{Selecta}. For the convenience of the reader, we give the proof for $ \theta^*|_{\HMp}$. The arguments are the same for the other homomorphisms. 

1. \ $\theta^{*+}(\tau^{\M,*}_m)$ is well defined: $(\tau^*_a)^{-n}\tau^*_{a^nm}$ does not depend on the choice  of the pair  $(a,n)$. We show
$(\tau^*_a)^{-n}\tau^*_{a^nm}=(\tau^*_b)^{-r}\tau^*_{b^rm}$ for any other  pair $ (b,r)$.  Using  that 
for $x,y\in \Mp$ such that  $ \tau^{\M,*}_x$ and $\tau^{\M,*}_y $ commute,   the elements $\tau^*_x$ and $\tau^*_y$ commute  as $\theta^*|_{\HMp}$ respects the product, the equality is equivalent to 
$(\tau^*_b)^{r}\tau^*_{a^nm}=(\tau^*_a)^{n} \tau^*_{b^rm}$. It is also equivalent to 
$(\tau^{\M,*}_b)^{r}\tau^{\M,*}_{a^nm}=(\tau^{\M,*}_a)^{n} \tau^{\M,*}_{b^rm}$ because $\theta^*|_{\HMp}$ is injective and respects the product. Then it is equivalent to $ \tau^{\M,*}_{b^ra^nm}= \tau^{\M,*}_{a^nb^rm}$
because the image of $a,b$ in $\W_\M$ have length $0$. This equality  is true because  $a$ and $b$ commute.

2. \ $\theta^{*+}$ respects the product: $\theta^{*+}(\tau^{\M,*}_m)\theta^{*+}(\tau^{\M,*}_{m'})=\theta^{*+}(\tau^{\M,*}_m \tau^{\M,*}_{m'})$ for $m,m'\in \M$.  We choose the same pair $(a,n)$ for $m$ and $m'$. Using  the  arguments given in 1. we show   that $(\tau^*_a)^{-n}$  and $\tau^*_{a^nm} $ commute:
$(\tau^*_a)^{-n} \tau^*_{a^nm}= \tau^*_{a^nm}(\tau^*_a)^{-n}\Leftrightarrow  (\tau^*_a)^{n}\tau^*_{a^nm}=\tau^*_{a^nm}(\tau^*_a)^{n} \Leftrightarrow  (\tau^{\M,*}_a)^{n}\tau^{\M,*}_{a^nm}=\tau^{\M,*}_{a^nm}(\tau^{\M,*}_a)^{n} \Leftrightarrow  
 \tau^{\M,*}_{a^{2n}m}=\tau^{\M,*}_{a^nm _a^n}$, which is true because $a$ and $m$  commute. So $(\tau^*_a)^{-n}\tau^*_{a^nm}(\tau^*_a)^{-n}\tau^*_{a^nm'}= (\tau^*_a)^{-2n}\tau^*_{a^nm}\tau^*_{a^nm'}=(\tau^*_a)^{-2n}\tau^*_{a^nm a^nm'}=(\tau^*_a)^{-2n}\tau^*_{a^{2n}m m'}$.
  \end{rema}
 
 The modulus  $\delta_\P$ of the parabolic group $\P=\M\N$  is   the generalized index   $\delta_P(x)= [x\I_\P x^{-1}: \I_\P]$ for $x\in \P$ and $\I_\P=\I \cap \P$ \cite[I.2.6]{Viglivre}.  
The reductive $p$-adic group $\M$ is unimodular \cite[I.2.7 a)]{Viglivre},  hence $$\delta_\P(x)= [x\I_\N x^{-1}: \I_\N]\quad (x\in \P)$$ where $\I_\N=\I \cap \N$.
When $p$ is invertible in $R$, the modulus of $\P$ over $R$ is the character $x\to  \delta_\P(x) 1_R $ of $\P$ with values in $R$.

\begin{lemma} \label{lemma:thetacompar} When $p$ is invertible in $R$, we have
 $\theta^{*+}(\tau^{\M}_m) =\theta^-(\tau^{\M}_m)\delta_{\P }(m)$ for all $m\in \M$.\end{lemma}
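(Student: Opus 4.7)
The plan is to apply the uniqueness of ring-homomorphism extensions in Remark~\ref{extension}. Since $\delta_\P:\M\to R^\times$ is a group character trivial on the Iwahori subgroup $\IM$ (as $\IM$ normalizes $\I_\N$), it is constant on each double coset $\IM m\IM$. The formula $\iota(\tau^\M_m):=\delta_\P(m)\tau^\M_m$ therefore defines an $R$-linear map $\H_\M\to\H_\M$, which is in fact an algebra automorphism: the product $\tau^\M_{m_1}\tau^\M_{m_2}$ is supported on $\IM m_1\IM m_2\IM$, where $\delta_\P$ takes the constant value $\delta_\P(m_1)\delta_\P(m_2)$. Consequently, $\theta^-\circ\iota:\H_\M\to\H$ is a ring homomorphism sending $\tau^\M_m\mapsto\delta_\P(m)\theta^-(\tau^\M_m)$, and the lemma asserts the equality $\theta^-\circ\iota=\theta^{*+}$.

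By the uniqueness statement in Remark~\ref{extension}, it suffices to verify that the restriction of $\theta^-\circ\iota$ to $\HMp$ agrees with $\theta^*|_{\HMp}$. Both are ring homomorphisms $\HMp\to\H$, so the identity need only be checked on a generating set; using the structure of $\WMp$ described in Remark~\ref{lemma:Mp}, natural generators are $\tau^\M_m$ with $m$ of length zero in $\W$, together with $\tau^\M_{n_s}$ for $s\in S_\M$ and $\tau^\M_a$ for $a\in C_\M$ a strictly $\M$-positive central element. In the first two cases $m$ is compact in $\G$, so $\delta_\P(m)=1$, and Remark~\ref{compadmissible} gives $c^\M_{n_s}=c_{n_s}$, yielding $\theta^{*+}(\tau^\M_{n_s})=\tau^*_{n_s}+c_{n_s}=\tau_{n_s}=\theta^-(\tau^\M_{n_s})$.

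The essential case is $m=a$. Since $a$ is central in $\M$, $\ell_\M(a)=0$, so $\tau^\M_a=\tau^{\M,*}_a$; the length-additivity $\ell(a^k)=k\,\ell(a)$ in $\W$ (which holds because $a$ is strictly $\M$-positive) then gives $\theta^{*+}(\tau^\M_a)=\tau^*_a$ and $\theta^-(\tau^\M_a)=\tau_{a^{-1}}^{-1}$, reducing the identity to
\[
\tau^*_a\,\tau_{a^{-1}}=\delta_\P(a).
\]
This is a consequence of the more general identity $\tau^*_w\tau_{w^{-1}}=q_w$ for $w\in\W(1)$, proved by induction on $\ell(w)$. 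The base case $w=n_s$ for $s\in\SS_{aff}$ follows from the quadratic relation: writing $n_s^{-1}=n_s^{-2}\cdot n_s$ (length-additive since $n_s^{-2}\in\Zf$) and using that $\tau_{n_s^{-2}}$ commutes with $\tau^*_{n_s}$ (as $n_s$ fixes $n_s^{-2}$ by conjugation), one computes $\tau^*_{n_s}\tau_{n_s^{-1}}=\tau_{n_s^{-2}}\tau^*_{n_s}\tau_{n_s}=\tau_{n_s^{-2}}\cdot q_s\tau_{n_s^2}=q_s$, using $\tau^*_{n_s}\tau_{n_s}=q_s\tau_{n_s^2}$ deduced from~\eqref{Q}. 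The inductive step follows from a reduced decomposition of $w$ combined with the braid relations. For $a\in C_\M$ strictly $\M$-positive, the Iwahori decomposition together with the fact that $a$ centralizes $\IM$ identifies $q_a=|\I a\I/\I|=[\I_\N:a\I_\N a^{-1}]=\delta_\P(a)$.

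The main technical obstacle is the inductive step in the proof of $\tau^*_w\tau_{w^{-1}}=q_w$, which requires careful tracking of admissible lifts in $\W(1)$ versus their images in $\W$ and of the $\W(1)$-conjugation action on $\Zf$ entering through the comparison between $c_{n_s}$ and $c_{n_s^{-1}}$ (Remark~\ref{Qgeneral}); once this identity is in hand, the rest of the argument is a formal application of uniqueness of extension.
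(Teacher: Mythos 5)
Your approach is genuinely different from the paper's. The paper computes $\theta^{*+}(\tau^\M_m)$ for a general $m\in\M$ by applying the inversion formula $(\tau^{*}_w)^{-1}=q_w^{-1}\tau_{w^{-1}}$ directly, reads off a character $\delta$ with $\delta(m)=q_{ma^{-n}}(q_{a^{-1}})^{-n}q_{\M,m}^{-1}$, and then separately identifies $\delta$ with $\delta_\P$ via a Bruhat–Iwahori decomposition argument. You instead package $\delta_\P$ as an automorphism $\iota$ of $\H_\M$, appeal to the uniqueness of the extension from $\H_{\M^+}$ to $\H_\M$, and try to verify the identity only on a generating set. The structural idea (two homomorphisms $\H_\M\to\H$ that agree on $\H_{\M^+}$ are equal, because $\H_\M$ is a localization of $\H_{\M^+}$ at an element whose image is invertible when $p$ is invertible) is correct and cleaner than the paper's direct manipulation.

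However there are two concrete gaps. First, the proposed generating set of $\H_{\M^+}$ — elements of length zero in $\W$, $\tau^\M_{n_s}$ for $s\in S_\M$, and $\tau^\M_a$ for a single strictly $\M$-positive central $a$ — is not shown to generate $\H_{\M^+}$, and in general it does not. The positive monoid $\W_{\M^+}(1)\cong\Wf_\M\ltimes\Lambda_{\M^+}(1)$ contains the full cone $\Lambda_{\M^+}$, which is not generated by its group of units together with a single central positive cocharacter once the semisimple corank of $\M$ in $\G$ exceeds $1$. You would need either to verify the identity on a genuine generating system of $\W_{\M^+}(1)$ (which reintroduces essentially all the computations you are trying to avoid), or to switch to a generating set of $\H_\M$ itself, i.e.\ $\tau^\M_t$ ($t\in\Zf$), $\tau^\M_{n_s}$ ($s\in S_{\M,\mathrm{aff}}$, not just $S_\M$) and $\tau^\M_u$ ($u\in\Omega_\M(1)$) — but those elements need not be $\M$-positive, so the ``compactness'' shortcut disappears and you are back to the general inversion formula, which is the paper's route.

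Second, the final index computation is inverted against the paper's convention: with $\delta_\P(x)=[x\,\I_\N\,x^{-1}:\I_\N]$ as a generalized index and $a$ strictly $\M$-positive, one has $a\I_\N a^{-1}\subsetneq\I_\N$, so $\delta_\P(a)=[\I_\N:a\I_\N a^{-1}]^{-1}=q_a^{-1}$, not $q_a$. Your identity $\tau^*_w\tau_{w^{-1}}=q_w$ is correct (and its proof is fine — in fact the inductive step is simpler than you suggest: $\tau^*_u\tau^*_v\tau_{v^{-1}}\tau_{u^{-1}}=q_vq_u$ needs no comparison of $c_{n_s}$ with $c_{n_s^{-1}}$), but it gives $\tau^*_a\tau_{a^{-1}}=q_a=\delta_\P(a)^{-1}$ rather than $\delta_\P(a)$. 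Be warned that the paper's own proof of this lemma writes $(\tau^*_w)^{-1}=\tau_{w^{-1}}q_w$, which has the same inversion issue; so there is likely a sign/convention problem in the lemma itself to be resolved before either argument can be considered fully correct. In any case, as written your chain $[\I_\N:a\I_\N a^{-1}]=\delta_\P(a)$ contradicts the paper's definition of $\delta_\P$ and needs to be fixed or reconciled explicitly.
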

\begin{proof}  As $\theta^{*+}$ respects the product, it  respects  the inverse:  
 $\theta^{*+}((\tau^{\M,*}_m)^{-1})=((\tau^*_a)^{-n}\tau^*_{a^nm})^{-1}=(\tau^*_{a^nm})^{-1} (\tau^*_a)^{n}$ for all $m\in \M $ and $a\in C_\M$  a strictly $\M$-positive  element such that $a^n m\in \Mp$. Let
  $\mu=m^{-1}$.
 As $(\tau^*_w)^{-1}= \tau_{w^{-1}} q_w$ and $q_w=q_{w^{-1}}$,  we obtain  
  $\theta^{*+}(\tau^{\M}_\mu)q_{M,\mu} = \tau_{\mu a^{-n}}q_{\mu a^{-n}} (\tau_{a^{-1}} q_{a^{-1}})^{-n}$.  This is valid for all $\mu \in \M$. We replace $\mu$ by $m$ to get   $\theta^{*+}(\tau^{\M}_m) = \tau_{m  a^{-n}}(\tau_{a^{-1}})^{-n} q_{m a^{-n}} ( q_{a^{-1}})^{-n}q_{\M,m}^{-1}$ 
  for all $m \in \M$  and   $m a^{-n}\in \Mm$ as 
  the inverse map sends $\M^+$ onto $\M^-$. We have $\tau^\M_m= \tau^\M_{m  a^{-n}}\tau^\M_{a^n}=\tau^\M_{m  a^{-n}}(\tau^\M_{a^{-1}})^{-n}$ and $\theta^-$ respects the product: $\theta^-(\tau^{\M}_m) =\theta^-(\tau^{\M}_{m  a^{-n}}) \theta^-(\tau^\M_{a^{-1}})^{-n}=\tau _{m  a^{-n}} (\tau _{a^{-1}})^{-n}$.  We deduce that $\theta^{*+}(\tau^{\M}_m) =\theta^-(\tau^{\M}_m)\delta( m)$ where 
  $ \delta(m)=q_{m a^{-n}} ( q_{a^{-1}})^{-n}q_{\M,m}^{-1}$ for $m\in \M, \: a\in C_\M $  strictly $\M$-positive, $m a^{-n}\in \Mm$.
 As the map $\theta^-$ is injective,  $\delta( m)$ is well defined. 

 It remains to show  $\delta =\delta_\P $ on $\M$.  It is   well known that $\delta_\P$ respects the product on $\M$  \cite[I.2.6]{Viglivre}.  The restriction of $\delta$ to the monoid $\M^-$ is $\delta(w)=q_w q_{M,w}^-1$. It  respects the product by applying \cite[Lemma 2.7]{Vig5}. Furthermore, we have   $\delta(m)=\delta(m a^{-n})\delta(a^{n})$ for $m\in \M$, $a\in C_\M $  strictly $\M$-positive  and $m a^{-n}\in \Mm$ (this is  because $\ell_\M(a)=1$ hence  $\delta(a^{n})= ( q_{a^{-1}})^{-n}$ and   $ q_{\M,m a^{-n}}= q_{\M,m  }$). Therefore $\delta$ respects the product on $\M$.
The two homomorphisms  $\delta$ and $\delta_\P$ from $\M$ to $R$ are  trivial on $\K\cap \M$;  as $\M=(\K\cap \M)\Zp (\K\cap \M)$ by the Bruhat decomposition and $\Zp=\cup_{n\in \mathbb N} (\Zp\cap \M^-)a^{n}$ for $a\in C_\M$ strictly $\M$-positive,   $ \delta$  and  $\delta_{\P} $  are equal on   $ \M$ if   they are equal on    $ \Zp\cap \M^-$.  Let  $z\in  \Zp\cap \M^-$. We have   $q_z=  [\I z \I:\I] =[\I z \I_\P: \I]=[  \I_\P z \I_\P:\I_\P]= [  \I_\M z \I_\M:\I_\M] [  \I_\N z \I_\N:\I_\N]= q_{\M,z} [  \I_\N:(\I_\N\cap z^{-1} \I_\N z)]= q_{\M,z} \delta_\P(z)$. Hence 
  $\delta(z)=q_z q_{\M,z}^{-1}  =\delta_\P(z)$ for $z\in  \Zp\cap \M^-$. Therefore $\delta =\delta_\P $ on   $ \M$.
  \end{proof}

 \subsubsection{Iwahori Hecke algebra\label{defiIH}}
 Considering the Iwahori subgroup $\Iw$  instead of its pro-$p$ radical $\I$ in \S\ref{prophecke}, one defines the Iwahori Hecke algebra $\Hh_\Bb$ of the $\G$-endomorphisms of the universal $R$-representation $\XX_\Iw  := \ind_\Iw^\Gp(1) $. 
  It is naturally isomorphic to the convolution algebra  $\k[\Iw \backslash \G/\Iw]$ of the $\Iw$-biinvariant functions with compact support in $\G$. For $g\in \G$, denote by $\uptau_g$ the characteristic function of $\Iw g \Iw$.  
 For $w\in \W$, denote by $\uptau_{w}$ the characteristic function of $\Iw \hat w \Iw$ which does not depend on the choice of a lift $\hat w\in \Np$ 
for $w$.  The set of all $\uptau_{w}$ for $w\in \W$ is a basis for $\Hh_\Iw$.
 The product in $\Hh_\Iw$ is given by the following braid and, respectively, quadratic relations:\begin{equation} \label{braidIw} \textrm{$\uptau_{ww'}=\uptau_w \uptau_{w'}$ for $w, w'\in \W$ satisfying $\ell(ww')=\ell(w)+ \ell(w')$.}\end{equation}
\begin{equation} \label{QIw}
\uptau_{s}^2= q_s\uptau_1+(s-1)\uptau_{n_s}\textrm{ for any $s\in S_{aff}$.}\end{equation}These relations are well known by \cite{IM}, \cite{Bo}, \cite{Lu} under certain hypotheses on $\mathbf G$. It has been checked by Vign\'eras \cite[Thm 2.1 and Rmk  2.6]{Vig1} that they hold for a general connected $ \F$-group $\mathbf G$. The key result for the quadratic relation is the following (\cite[Prop. 2.8]{IM},  \cite[5.2.12]{BTII}, \cite[Theorem 33]{Vig1}): for $s\in S_{aff}$, we have \begin{equation}\Iw s \Iw s\Iw=\Iw s \Iw\sqcup \Iw.\label{keyquad}\end{equation}
Note that conversely, once we admit the quadratic relations, this identity can be deduced from them using the definition of the convolution product on  $\k[\Iw \backslash \G/\Iw]$  (choose for example $\k=\mathbb C$). 
\begin{lemma} For $s\in S_{aff}$, we have $\Iw s \Iw s\Iw=\Iw s \Iw s\sqcup \Iw s.$
\label{keyquad'}
\end{lemma}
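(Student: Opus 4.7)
The plan is to deduce this lemma directly from \eqref{keyquad} by right-multiplying by an admissible lift $n_s \in \mathscr N_\G$ of $s$ (Notation \ref{qsnot}). Throughout, I take the symbol $s$ on the right of any $\Iw$-coset to refer consistently to this fixed lift $n_s$, so that $\Iw s := \Iw n_s$, $\Iw s \Iw s := (\Iw s \Iw)\, n_s$, and so on. Right-multiplying both sides of \eqref{keyquad} by $n_s$ gives
\[
(\Iw s \Iw s \Iw)\, n_s \;=\; (\Iw s \Iw)\, n_s \,\sqcup\, \Iw\, n_s \;=\; \Iw s \Iw s \,\sqcup\, \Iw s,
\]
so the identity of the lemma will follow once I establish the stability $(\Iw s \Iw s \Iw)\, n_s = \Iw s \Iw s \Iw$.

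For the inclusion $(\Iw s \Iw s \Iw)\, n_s \subseteq \Iw s \Iw s \Iw$, I use $n_s \in \Iw s \Iw$ to write
\[
(\Iw s \Iw s \Iw)\, n_s \;\subseteq\; (\Iw s \Iw s \Iw)(\Iw s \Iw) \;=\; \Iw s \Iw s \Iw s \Iw,
\]
and then collapse the right-hand side by two applications of \eqref{keyquad}:
\[
\Iw s \Iw s \Iw s \Iw \;=\; (\Iw s \Iw)(\Iw s \Iw s \Iw) \;=\; (\Iw s \Iw)(\Iw s \Iw \,\sqcup\, \Iw) \;=\; \Iw s \Iw s \Iw \,\cup\, \Iw s \Iw \;=\; \Iw s \Iw s \Iw.
\]
For the reverse inclusion I use that $n_s^2 \in \Zf'_s \subseteq \Zp^0 \subseteq \Iw$ (Notation \ref{qsnot}), so that $n_s^{-1} = n_s^{-2}\, n_s \in \Iw n_s \subseteq \Iw s \Iw$; the very same argument applied with $n_s^{-1}$ in place of $n_s$ yields $(\Iw s \Iw s \Iw)\, n_s^{-1} \subseteq \Iw s \Iw s \Iw$. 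Since right multiplication by $n_s$ is a bijection of $\G$, the two inclusions combine to give the desired equality.

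The proof is essentially formal manipulation from \eqref{keyquad}; the only point requiring care is the bookkeeping of which lift is meant by the symbol $s$ on the right of each $\Iw$-coset, as $\Iw s$ does depend on this choice. Choosing the admissible lift $n_s$ — for which $n_s^2 \in \Iw$ — is what makes the reverse inclusion step come out cleanly, and this is the only place where a nontrivial property of the lift is used.
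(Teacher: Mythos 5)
Your proof is correct, but it takes a genuinely different route from the paper's. The paper argues by a coset count: $\Iw s \Iw$ is the union of $q_s$ simple right cosets (\cite[Prop.\ 3.38]{Vig1}), so $\Iw s \Iw s$ is a union of $q_s$ right cosets inside $\Iw s \Iw s \Iw$, which by \eqref{keyquad} is a union of exactly $q_s+1$ right cosets; since $\Iw s$ is a further right coset disjoint from $\Iw s \Iw s$ (as $\Iw s \subseteq \Iw s \Iw s$ would force $\Iw \subseteq \Iw s \Iw$), the two sets together exhaust $\Iw s \Iw s \Iw$. Your argument instead establishes directly that the set $\Iw s \Iw s \Iw$ is stable under right translation by $n_s$ and by $n_s^{-1}$, hence is fixed by it, so that the lemma follows by right-multiplying \eqref{keyquad} by $n_s$. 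The absorption computations you carry out use only \eqref{keyquad} itself together with $n_s^{\pm 1}\in\Iw s \Iw$, which means your route bypasses the coset-count input altogether; the trade-off is a slightly longer formal manipulation. One small correction: the property you single out as the decisive feature of the admissible lift, namely $n_s^2\in\Iw$, in fact holds for any lift $\hat s\in\norm_\G$ of $s$, since $s^2=1$ in $\W=\norm_\G/\Zp^0$ gives $\hat s^2\in\Zp^0\subseteq\Iw$; admissibility plays no role in your argument.
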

 
\begin{proof} The double coset $\Iw s \Iw$ decomposes into $q_s$ simple right cosets \cite[Prop. 3.38]{Vig1}.
The set $\Iw s \Iw s$ is the disjoint union of $q_s$ right cosets contained in $\Iw s \Iw s \Iw$. It does not contain $\Iw s$ since $\Iw s \Iw$ and $\Iw$ are disjoint. Therefore,
and since $\Iw s \Iw s \Iw$ is the disjoint union of $q_s+1$ simple cosets (by \eqref{keyquad}), we have  
  $\Iw s \Iw s=\Iw s \Iw s \Iw -\Iw s$.

\end{proof}
\begin{rema}  \label{chartriv}For $w\in \W_{aff}$, denote by $v_s(w)$ the multiplicity of $s\in S_{aff}$ in a reduced decomposition of $w$ and let $$q_w:= \prod_{s\in S_{aff}} q_s^{v_s(w)}.$$ Set $q_w:= q_{w_{aff}}$ for any $w\in \Omega w_{aff} \Omega$ and $w_{aff}\in \W_{aff}$. 
 The element $q_w$ for $w\in \W$ is well defined and does not depend on a reduced expression of $w\in \W$ (\cite[Chap. 1]{Vign}).  The formulas   
 $${\rm Triv}_{\H_\Bb}(\uptau_w):=q_w. 1_R, \quad {\rm Sign}_{\H_\Bb}(\uptau_w):=(-1)^{\ell(w)}.1_R$$ define respectively  the trivial character  ${\rm Triv}_{\H_\Bb}$ and the sign  character ${\rm Sign}_{\H_\Bb}$ of $\Hh_\Bb$ with values in $R$.\end{rema}
\begin{rema}\begin{enumerate}
\item   The Iwahori Hecke $R$-algebra $\H_\Iw$  is a quotient of the pro-$p$ Iwahori Hecke $R$-algebra $\H$ by the augmentation map sending to $1_R$ the elements of the form $\tau_t$  for   $t\in \Zp^0/\Zp_1= \Zf$. In particular, ${\rm Triv}_{\H_\Bb}$ and  ${\rm Sign}_{\H_\Bb}$ extend to characters of $\H$ that we denote  respectively by ${\rm Triv}_{\H}$ and  ${\rm Sign}_{\H}$ and call the trivial and sign characters of $\H$ (compare with  \cite[Def. 2.7]{Vig3}).

\item Suppose that  
$\vert \Zp^0/\Zp^1\vert=\vert \Zf\vert$ is invertible in the commutative ring $R$, then one can define the following central idempotent of $\H$ (see \cite[\S 1.3]{Vigprop} in the split case):
$${\varepsilon}_{\bf 1}=\frac{1}{\vert \Zf\vert}\sum_{t\in \Zf}\tau_t .$$ We have $ \varepsilon_{\bf 1}\XX\cong \XX_\Iw$ and the algebra  $\varepsilon_{\bf 1}\H$ with unit $\varepsilon_{\bf 1}$ is isomorphic to $\H_\Iw$ (see \cite[\S 2.3.3]{Compa} in the split case, but the proof is general). 
\end{enumerate}
\label{rema:idempo}
\end{rema}
\subsubsection{Trivial representation of $\G$ and trivial character of the pro-$p$ Iwahori Hecke algebra}
 Let $R$ be a commutative ring. Consider the trivial character ${\rm Triv}_{\H}$ of $\H$  (Remark \ref{rema:idempo}(1)). It is defined by $\tau_{\tilde w}\longmapsto q_w. 1_R,$ for all $\tilde w\in \W(1)$ with projection $w\in \W$.
 Let $\Triv_\G:\G\rightarrow R^*$ denotes  the trivial character of $\G$. As a right $\H$-module, we have
 $(\Triv_\G)^\I\cong \Triv _\H$. This is because, for all $\tilde w\in \W(1)$  as above, we have   $q_w=[\Bb w \Bb, \Bb]=[\I \tilde w\I, \I]$ (\cite[Cor. 3.30]{Vig1}) (recall that for $X=\Bb$ or $\I$, we denote  by $[XwX,X]$   the number of left cosets mod $X$ in $XwX$. It is equal to the number of right cosets mod $X$  in $XwX$). 
 
 \begin{lemma}\label{lemmatriv} Let $R$ be a commutative ring with  $q_s+1$ invertible in $R$ for all $s\in S_{aff}$. We have an isomorphism of representations of 
 $\G$
\begin{equation}{\rm Triv}_\H\otimes_{\H} \XX\cong {\rm Triv}_\G .\label{isotriv}\end{equation}

\end{lemma}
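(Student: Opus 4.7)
The plan is to exhibit the natural augmentation map from $\XX$ onto $\Triv_\G$, check that it descends to $T:=\Triv_\H\otimes_\H\XX$, and then prove the resulting map is bijective by combining the adjunction between $(-)^\I$ and $-\otimes_\H\XX$ with an averaging argument that uses the invertibility of $q_s+1$.

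Define $\sigma\colon\XX\to R$ by $\sigma(f):=\sum_{g\in\I\backslash\G}f(g)$. Since $\G$ acts on $\XX$ by right translation, $\sigma$ is $\G$-equivariant for the trivial action on $R=\Triv_\G$. The convolution formula together with $\sum_{g\in\I\backslash\G}\tau_{\tilde w}(g)=[\I\hat w\I:\I]=q_w$ (recalled just before the statement) yields $\sigma(h\star f)=\Triv_\H(h)\sigma(f)$ for $h\in\H$, $f\in\XX$, so $\sigma$ factors through a plainly surjective $\G$-equivariant map $\bar\sigma\colon T\to\Triv_\G$. By Yoneda and the adjunction
\[
\Hom_\G(T,V)\;\cong\;\Hom_\H(\Triv_\H,V^\I)=\{v\in V^\I\mid v\cdot\tau_w=q_w v\ \forall\,w\in\W(1)\},
\]
showing that $\bar\sigma$ is an isomorphism reduces to checking that the right-hand set coincides with $V^\G$ for every smooth $V$. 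The containment $V^\G\subseteq$ is standard (each $\tau_w$ acts on $V^\G$ as a sum of $q_w$ copies of the identity).

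For the converse, fix $v\in V^\I$ with $v\cdot\tau_w=q_w v$ for every $w\in\W(1)$. Applied to $w=\tilde t$ with $t\in\Zp^0$: the double coset $\I t\I=\I t$ is a single right coset (since $\Zp^0\subset\Bb$ normalizes the characteristic pro-$p$ subgroup $\I$) and $q_{\tilde t}=1$, so $tv=v$; hence $v\in V^{\Zp^0\I}=V^\Bb$. Applied to a lift $\tilde s\in\W(1)$ of $s\in S_{aff}$: the natural map $\I\hat s\I/\I\to\Bb\hat s\Bb/\Bb$ between two sets of cardinality $q_s$ (the equality $[\I\hat s\I:\I]=q_s$ follows from the decomposition $\Bb\hat s\Bb=\bigsqcup_{\tilde s'\mapsto s}\I\hat{s'}\I$ over the $|\Zf|$ lifts of $s$) is a bijection; since $v$ is $\Bb$-fixed, summing $yv$ over either set yields the same value, so $v\cdot\tau_{\tilde s}=q_s v$ translates to the Iwahori-Hecke relation $v\cdot\uptau_s=q_s v$. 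Adding $v$ to both sides gives $\sum_{y\in\KK_s/\Bb}yv=(q_s+1)v$, and dividing by $q_s+1\in R^\times$ exhibits $v$ as its own $\KK_s$-average; thus $v\in V^{\KK_s}$ for every $s\in S_{aff}$.

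Finally, for $\omega\in\Omega$ any lift $\hat\omega$ normalizes $\I$, so $\I\hat\omega\I=\I\hat\omega$ and $v\cdot\tau_{\tilde\omega}=\hat\omega v=v$. Because $\G$ is generated by the parahoric subgroups $\KK_s$ ($s\in S_{aff}$) together with a system of lifts of $\Omega$, $v$ is $\G$-fixed, completing the proof. The main technical point---and the step most in need of care---is the coset bijection $\I\hat s\I/\I\simeq\Bb\hat s\Bb/\Bb$: it transports the given pro-$p$ Hecke relation on $V^\Bb$ into its Iwahori-Hecke avatar, which is precisely the form needed to invoke the $(q_s+1)$-averaging projection onto $V^{\KK_s}$.
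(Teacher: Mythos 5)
Your proof is correct, and it takes a genuinely different route from the paper's. The paper first passes to the Iwahori Hecke algebra and module, writing $\H_\Bb=R\otimes_{R[\Zf]}\H$ and $\XX_\Bb=R\otimes_{R[\Zf]}\XX$ as in Remark~\ref{rema:idempo}(1), and then proves injectivity of the natural surjection $\Triv_{\H_\Bb}\otimes_{\H_\Bb}\XX_\Bb\to\Triv_\G$ by establishing, via induction on the length of $w\in\W$ and Lemma~\ref{keyquad'}, the $R$-module decomposition $\XX_\Bb=R\,\mathbf 1_\Bb+\ker(\Triv_{\H_\Bb})\XX_\Bb$; the invertibility of $q_s+1$ enters at each inductive step. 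You instead argue representably: via the adjunction $\Hom_\G(\Triv_\H\otimes_\H\XX,V)\cong\Hom_\H(\Triv_\H,V^\I)$ and Yoneda, you reduce the lemma to the statement that $\{v\in V^\I\ :\ v\tau_w=q_wv\text{ for all }w\in\W(1)\}=V^\G$ for every smooth $V$, which you prove by first upgrading $\I$-invariance to $\Bb$-invariance (using $w=\tilde t$, $t\in\Zp^0$), then to $\KK_s$-invariance for each $s\in S_{aff}$ via the averaging identity $(q_s+1)v=\sum_{g\in\KK_s/\Bb}gv\in V^{\KK_s}$, and finally to $\G$-invariance since the $\KK_s$ together with lifts of $\Omega$ generate $\G$. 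Both proofs use $q_s+1\in R^\times$ for essentially the same projection idea, but your version isolates a cleaner representation-theoretic fact (the Hecke-eigenvalue characterization of $V^\G$ inside $V^\I$, valid for arbitrary smooth $V$), whereas the paper's never leaves the universal module and never needs to compare pro-$p$ Iwahori cosets with Iwahori cosets.

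That comparison, however, is where your argument leaves a gap. You assert that the natural map $\I\hat s\I/\I\to\Bb\hat s\Bb/\Bb$ is a bijection, but the parenthetical only explains why both sets have cardinality $q_s$; equal cardinality alone is not bijectivity. The claim is true and closes in one line: since $\hat s\in\norm_\G$ normalizes $\Zp^0$ and $\Bb=\Zp^0\I=\I\Zp^0$, one has $\Bb\hat s\Bb=\Zp^0\I\hat s\Zp^0\I=\Zp^0\I\hat s\I=\Bb\hat s\I$, so $\hat s\I$ (a fortiori $\I\hat s\I$) meets every right $\Bb$-coset in $\Bb\hat s\Bb$; surjectivity together with equal finite cardinality then gives the bijection. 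You rightly single this out as the delicate step, but it must actually be proved rather than asserted; the paper's prior reduction to $\H_\Bb$ and $\XX_\Bb$ is precisely what makes such a coset comparison unnecessary in its argument.
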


\begin{proof}
  By Remark \ref{rema:idempo}(1) we have $\H_\Bb= R \otimes_{R[\Zf]}\H$ and $\XX_\Bb= R \otimes_{R[\Zf]}\XX$, for the augmentation map $R[\Zf] \to R$ sending the elements $t\in \Zf$ to $1_R$
and for the linear map $R[\Zf]\to \H$ sending the elements $t\in \Zf$ to $\tau_t$. Therefore, 
we have a natural isomorphism of $\G$-representations ${\rm Triv}_\H\otimes_{\H}\XX\cong   {\rm Triv}_{\H_\Bb}\otimes_{\H_\Bb}\XX_\Bb$ (without any hypothesis on the $q_s $). We show here that ${\rm Triv}_{\H_\Bb}\otimes_{\H_\Bb} \XX_\Bb\cong {\rm Triv}_\G$ when is $q_s+1$ invertible in $R$ for all $s\in S_{aff}$. From the remark before the lemma, one gets immediately that 
the right $\H_\Iw$-module  $({\rm Triv}_{\G})^\Bb$  is  isomorphic to 
${\rm Triv}_{\H_\Bb}$ so by adjunction, ${\rm Triv}_{\G}$ is a quotient of the representation ${\rm Triv}_{\H_\Bb}\otimes_{\H_\Bb} \XX_\Bb$:  the map is given by $\lambda\otimes {\bf 1}_\Bb\mapsto \lambda$ where ${\bf 1}_\Bb$ is the characteristic function of $\Bb$. We want to check that this map is injective and it will follow from the fact that we have a  decomposition of $R$-modules \begin{equation}\label{decomtriv}\XX_\Bb= R{\bf 1}_{\Bb}+ \ker(\triv_{\H_\Bb})\XX_\Bb.\end{equation}
The $R$-module  $\XX_\Bb$ is free and decomposes as the direct sum of all $\XX_\Bb(w)$ for  $w\in \W$ where we denote by $\XX_\Bb(w)$ the $R$-module of all functions with support in $\Bb w \Bb$.

We show by induction on $\ell(w)$ that 
$\XX_\Bb(w)$ is contained in $R{\bf 1}_{\Bb}+ \ker(\triv_{\H_\Bb})\XX_\Bb$. If $\ell(w)=0$ then   ${\rm Triv}_{\H_\Bb}(\uptau_w)=1$ and  $\XX_{\Bb}(w)= R {\bf 1}_{\Bb w}$  is contained in $R{\bf 1}_{\Bb}+ \ker(\triv_{\H_\Bb})\XX_\Bb$ since the characteristic   function 
${\bf 1}_{\Bb w}$ satisfies ${\bf 1}_{\Bb w}= \uptau_{w} {\bf 1}_{\Bb}= {\bf 1}_{\Bb}+(\uptau_w-1) {\bf 1}_{\Bb}\in R{\bf 1}_{\Bb}+ \ker(\triv_{\H_\Bb})\XX_\Bb$. 
Now suppose $\ell(w)\geq 0$ and let  $s\in S_{aff}$ such that $\ell(s w)=\ell(w)+1$.   Note  that $\XX_\Bb(s w)$ is generated as a representation of $\Bb$ by the  characteristic function $f:={\bf 1}_{\Bb  n_s \hat w}=\hat w^{-1}{\bf 1}_{\Bb n_s}$, where $\hat w$ is a chosen  lift for $w$ in $\norm_\G$ and $n_s$ is defined as in Notation \ref{qsnot}. Therefore, to prove that 
$\XX_\Bb(s w)$ is contained in $R{\bf 1}_{\Bb}+ \ker(\triv_{\H_\Bb})\XX_\Bb$, 
it is enough to consider the case of   $f$.
 Since  $\uptau_{n_s}$ corresponds to the double coset $\Bb n_s \Bb$ and commutes to the action of $n_s^{-1}\in\G$, we have $\uptau_{n_s}  {\bf 1}_{\Bb {n_s}}={\bf 1}_{\Bb {n_s} \Bb n_s}$ so, by Lemma \ref{keyquad'}, we have
$  \uptau_{n_s}  {\bf 1}_{\Bb {n_s}}={\bf 1}_{\Bb {n_s} \Bb n_s \Bb}-{\bf 1}_{\Bb {n_s}}$. On the other hand,
$  \uptau_{n_s} {\bf 1}_{\Bb}= {\bf 1}_{\Bb n_s \Bb}=  {\bf 1}_{\Bb {n_s} \Bb n_s \Bb}- {\bf 1}_{\Bb}$ by \eqref{keyquad}. We deduce \begin{equation}\label{sstar}{\bf 1}_{\Bb n_s}-{\bf 1}_{\Bb}=\uptau_{n_s}( {\bf 1}_{\Bb}-{\bf 1}_{\Bb {n_s}}).\end{equation}
By translation by $\hat w^{-1}$ which commutes with the action of $\H$, we get:
\begin{align*} f-{\bf 1}_{\Bb\hat w}= 
\uptau_{n_s}({\bf 1}_{\Bb\hat w}- f)= (\uptau_{n_s}-q_{s})({\bf 1}_{\Bb\hat w}- f)- q_{s}( f-{\bf 1}_{\Bb\hat w}). \end{align*}
If  $1+q_{s}$ is invertible in $R$, this means that $f$ lies in ${\bf 1}_{\Bb\hat w}+\ker(\triv_{\H_\Bb})\XX_\Bb$. Conclude by induction.
This proves the decomposition \eqref{decomtriv} which is enough to prove that the natural surjective map ${\rm Triv}_{\H_\Bb}\otimes_{\H_\Bb} \XX_\Bb\rightarrow {\rm Triv}_{\G}$ is injective. Note in passing that from the surjectivity of this map we also get that the decomposition \eqref{decomtriv} is a direct decomposition.  \end{proof}

 \subsection{Finite Hecke algebras and universal representations\label{finiteHecke}} 
 
 \subsubsection{Definitions}
 Let $J\subseteq\Pi$. Let
 $\P$ be  the  associated standard parabolic subgroup of $\Gp$, and $\Pf$ with Levi decomposition $\Pf=\Mf \, \Nf$ the corresponding parabolic subgroup of $\Gf$.   
Let  $\Pp$ (resp. $\EuScript N$)  be the preimage in $\K$ of $\Pf$ (resp. $\Nf$). The group $\Pp$ is  a parahoric subgroup of $\Gp$ containing $\I$.  We have $J=\Pi$ if and only if $\Pp=\K$. Extending  by zero functions  on  $\Pp$ to $\K$, and respectively  on $\K$  to $\Gp$,  induces   embeddings which are respectively $\Pp$ and $\K$ equivariant:
\begin{equation}\label{incluniv}
\ind^{\Pp}_\I(1)\hookrightarrow \ind^{\K}_\I(1) \hookrightarrow   \XX \ .
\end{equation}   Passing to $\I$-invariants induces  naturally embeddings of the  convolutions algebras
\begin{equation}\label{inclhecke}
(\ind^{\Pp}_\I(1))^\I\hookrightarrow (\ind^{\K}_\I(1))^\I \hookrightarrow   \XX^\I \ .
\end{equation} 

Let $\Uf_\Mf:=\Mf\cap\Uf$ (it was previously denoted by $\Uf_J$ (see  \eqref{ssBNpJ}) . The universal $R$-representation  $$    \Xf_{\Mf} := \ind^{\Mf}_{\Uf_\Mf}(1), $$ of  $\Mf$  is the   induction 
of the trivial character of $\Uf_{\Mf}$ with values in $\k$.   The associated finite unipotent Hecke $R$-algebra is
$$ \Hf_\Mf := \End_{k[\Mf]}(\mathbf{\Xf}_\Mf)\cong [\ind^{\Mf}_{\Uf_\Mf}(1)]^{\Uf_\Mf}.$$ 
It has basis the set of all characteristic functions of the double cosets $\UMf n \UMf$ for $n\in \norm_{\Mf}$ (see the decomposition after  \eqref{ssBNpJ}).
When $\J=\Pi$ and  therefore $\Mf=\Gf$, we omit the subscript and simply write $\Xf$ and $\Hf$ instead of $\Xf_\Gf$ and $\Hf_\Gf$ respectively.
\begin{prop} 
 As a left (resp. right) $\HMf$-module, $\Hf$ is  free with basis the set of all  $\t_{\tilde d}$ (resp. $\t_{\tilde d^{-1}}$) for  $d\in \dist$, where $\tilde d$ denotes a lift for $d$ in $\norm_\Gf$. 
\label{liberte}
\end{prop}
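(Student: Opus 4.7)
The plan is to lift the coset decomposition $\Wf = \bigsqcup_{d\in\dist} \Wf_\Mf\, d$ provided by Lemma \ref{lemmaDJ} to the Hecke algebra level, exploiting the braid identity \eqref{goodaddfinite} whenever lengths add. Concretely, I would first establish that sending $\tau^\Mf_n \mapsto \tau_n$ for $n\in \norm_\Mf$ defines an injective algebra homomorphism $\HMf\hookrightarrow \Hf$. Injectivity is automatic since one maps basis elements to basis elements. The braid relations are preserved because $S_\Mf\subset S$ and the length on $\Wf_\Mf$ agrees with the restriction of $\ell$ from $\Wf$, so \eqref{goodaddfinite} applied to $\Mf$ is a consequence of the same identity for $\Gf$ via $\Uf_\Mf\subset \Uf$. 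The quadratic relations are preserved because for $s\in S_\Mf$ the parameters $q_s$ and $c_{n_s}$ are read off the parahoric group $\K_s$ and its reductive quotient $\Kf_s$, which are intrinsic to $s$ and coincide whether $s$ is considered in $\G$ or in $\M_J$ (compare Remark \ref{compadmissible}).

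Next, since $\norm_\Gf/\Zf = \Wf$ and $\norm_\Mf/\Zf = \Wf_\Mf$ with $\Zf\subset \norm_\Mf$, projecting modulo $\Zf$ identifies $\norm_\Gf/\norm_\Mf$ with $\Wf/\Wf_\Mf$. Thus picking a lift $\tilde d\in \norm_\Gf$ of each $d\in\dist$ yields a disjoint decomposition
\[
\norm_\Gf \;=\; \bigsqcup_{d\in\dist} \norm_\Mf\,\tilde d.
\]
For $n\in \norm_\Mf$ with image $\bar n\in \Wf_\Mf$ and $d\in \dist$, Lemma \ref{lemmaDJ} gives $\ell(\bar n\,d)=\ell(\bar n)+\ell(d)$, so \eqref{goodaddfinite} yields $\Uf n\Uf\tilde d\Uf = \Uf n\tilde d\Uf$ in $\Gf$. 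Translated into $\Hf$, this is the braid identity $\tau_n\tau_{\tilde d}=\tau_{n\tilde d}$.

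Combining these two facts, the $k$-basis $\{\tau_n\}_{n\in \norm_\Gf}$ of $\Hf$ regroups as $\{\tau_n\tau_{\tilde d}\}_{n\in\norm_\Mf,\,d\in\dist}$, which shows
\[
\Hf \;=\; \bigoplus_{d\in\dist} \HMf\cdot\tau_{\tilde d}
\]
as a left $\HMf$-module, giving the left assertion. For the right assertion, the map $d\mapsto d^{-1}$ identifies $\dist^{-1}$ as a set of representatives of $\Wf/\Wf_\Mf$, producing $\norm_\Gf = \bigsqcup_{d\in\dist}\tilde d^{-1}\norm_\Mf$; length is inverse-invariant, so $\ell(d^{-1}\bar n) = \ell(d)+\ell(\bar n)$ and \eqref{goodaddfinite} gives $\tau_{\tilde d^{-1}}\tau_n = \tau_{\tilde d^{-1}n}$, whence $\Hf=\bigoplus_{d\in\dist}\tau_{\tilde d^{-1}}\HMf$. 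The main (minor) obstacle I anticipate is verifying rigorously that $\HMf\to\Hf$ is an algebra map, i.e.\ checking that the $c_{n_s}$ coefficients agree in both algebras for $s\in S_\Mf$; once this is in place, the rest is a direct bookkeeping with distinguished coset representatives.
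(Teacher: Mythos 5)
Your argument is correct and follows essentially the same route as the paper's one-line proof, which cites Lemma \ref{lem:addlengths} (your Lemma \ref{lemmaDJ} is the finite-group version, equivalent here since $\Wf_\Mf\subset\WMp$) together with the braid relations \eqref{braid} to regroup the basis $\{\tau_n\}_{n\in\norm_\Gf}$ along the coset decomposition $\norm_\Gf=\bigsqcup_{d\in\dist}\norm_\Mf\,\tilde d$. The ``obstacle'' you flag at the end — checking that $\tau^\Mf_n\mapsto\tau_n$ defines an algebra embedding $\HMf\hookrightarrow\Hf$ — is already settled before the proposition is stated, via the chain of embeddings \eqref{inclhecke} and \eqref{incluhecke2} in \S\ref{finiteHecke}, so no further verification is needed.
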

\begin{proof} This is a consequence of  the relation $\ell(w)+\ell(d)=\ell(wd) $ for  $w\in \Wf_\M, d\in \dist$ (Lemma \ref{lem:addlengths})  and of the braid relations \eqref{braid}.
\end{proof}

 Since $\Pp/\I\simeq  \Pf/\Uf=\Mf/\UMf$,  $\I\backslash\Pp/\I\simeq  \UMf\backslash \Mf/\UMf$, and $\Pp/\EuScript N\simeq \Mf$,
the representation $\Xf_\Mf$  can also be seen as a representation of $\Pp$ trivial on $\EuScript N$ and as such it is isomorphic to $ \ind^{\Pp}_\I(1)$. Likewise $\Xf$ is isomorphic to $ \ind^{\K}_\I(1)$ when seen as a representation of $\K$ trivial on $\K^1$. Via these isomorphisms, \eqref{incluniv}  gives embedddings
 \begin{equation}\Xf_\Mf\hookrightarrow \Xf\hookrightarrow \XX,\end{equation} where the first one is the  $\Mf$-equivariant  morphism sending ${\bf 1}_\UMf$ onto ${\bf 1}_{\Uf}$.  The embeddings of algebras \eqref{inclhecke} can be reformulated  as
embeddings \begin{equation} \label{incluhecke2}\HMf\hookrightarrow \Hf\hookrightarrow \H .\end{equation}
The first one sends, for $m\in \Mf$, the characteristic function of  $\UMf m \UMf$ to $\Uf m \Uf$.  The second one  sends, for $g\in \K$, the characteristic function of  $\Uf (g{\textrm{ mod }} \K^1) \Uf$ to $\I g \I$.
The algebra $\HMf$ can therefore be seen as a subalgebra of $\H_\M$. When $w\in \norm_{\Mf} $ we will  consider $\tau_w^\M$ as an element of $\HMf$ or of $\HM$ depending on the context.  In particular, when $s\in S_J$ (defined in \S\ref{standparab}) and $\nf_s\in\norm_\Mf$ is the image of  an admissible lift $n_s\in \M\cap\K$ in $\Mf$ as   in Notation \ref{notnf} (see also  Remark \ref{compadmissible}) then $\t_{\nf_s}$  satisfies the same quadratic relation \eqref{Q} as $\t_{n_s}$.

Recall that $\M\cap\K$ is contained in the positive (resp. negative) submonoid $\Mp$ (resp. $\Mm$) of $\M$.  The embedding  \eqref{incluhecke2} of $\HMf$ into $\H$ is the restriction of   $\theta$ defined in \eqref{incluheckeposi} to $\HMf$, i.e. the map 
$$\HMf\longrightarrow \H,\:\quad \tau_w^\M,  \: w\in \norm_\Mf \longmapsto \tau_w$$ 
where we identify $\norm_\Mf$ as a subgroup of $\W(1)$ via the isomorphisms \eqref{isomorphismW}.

\medskip

We will often identify $\HMf$ with its image in $\Hf$ or in $\H$.

 \subsubsection{Trace maps and involutions on the finite Hecke algebras\label{sec:frob}}  
 The commutative ring $R$  is arbitrary unless otherwise mentioned. A $R$-linear form $\Hf_\Mf\rightarrow R$ is called non degenerate if its kernel  does not contain any non zero left or right ideal.  In the next proposition, we define such a non degenerate form $\delta_\Mf$ and associate to it an automorphism $\iota_\Mf$ of $\Hf_\Mf$  such that 
 $\delta_\Mf(ab)= \delta_\Mf(\iota_\Mf(b) a)$. 
  When $\Mf=\Gf$ we will omit the subscript and simply write $\delta$ and $\iota$.
 
 \begin{prop} 
\begin{enumerate} 

\item   Pick $\widetilde {{\longw }_{\Mf}}\in \norm_{\Mf}$ a lift for the longest element  ${\longw }_{\Mf} $ in $\Wf_\Mf\cong \norm_{\Mf}/\Zf$. 
Let $\delta_\Mf: \HMf\rightarrow \k$ be the linear map sending 
$\t_{\widetilde {{\longw }_{\M}}}$ to $1$ and
$\t_{n}$ to zero for all $n\in\norm_{\Mf}-\{\widetilde {{\longw }_{\M}}\}$. Then $\delta_\Mf$ is non degenerate. 
 The automorphism of $\HMf$ defined by
$$\iota_\Mf: \tau_{n}\mapsto  \tau_{\widetilde {{\longw }_{\Mf}} n \widetilde {{{\longw }_{\Mf}}} ^{-1}}\text{ for all $n\in \norm_\Mf$}$$ satisfies $\delta_\Mf(ab)= \delta_\Mf(\iota_\Mf(b) a)$ for any $a,b\in \HMf$.
\item Suppose that  $\k$  is a ring of  characteristic different from $p$. The linear $R$-form $\delta'_\Mf: \HMf\rightarrow \k$  sending 
$\t_1$ to $1$ and
$\t_{w}$ to zero for all $w\in\norm_\Mf-\{1\}$ is non degenerate and  symmetric: it satisfies $\delta'_\Mf(ab)=\delta'_\Mf(ba)$ for any $a,b\in \HMf$.  The corresponding  automorphism $\iota'_\Mf$ of $\HMf$ is the identity map.  

 \end{enumerate}
\label{tracemap}
\end{prop}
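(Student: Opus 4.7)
The plan is as follows. For Part (1), I first verify that $\iota_\Mf$ extends to a well-defined algebra automorphism of $\HMf$. Conjugation by $\longw_\Mf$ is a length-preserving involution of the Coxeter system $(\Wf_\Mf, S_\Mf)$, so it sends $S_\Mf$ to itself and preserves the braid relation \eqref{braid}. For the quadratic relation \eqref{Q}, I invoke Remark \ref{Qgeneral} with $w = \widetilde{\longw_\Mf}$: for each lift $\tilde s\in \norm_\Mf$ of a simple reflection, $c_{\widetilde{\longw_\Mf}\tilde s\widetilde{\longw_\Mf}^{-1}} = \widetilde{\longw_\Mf}\,c_{\tilde s}\,\widetilde{\longw_\Mf}^{-1}$, which is exactly the identity needed to transport \eqref{Q} through $\iota_\Mf$.

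Non-degeneracy of $\delta_\Mf$ rests on the standard fact that $\ell(n) + \ell(n^{-1}\widetilde{\longw_\Mf}) = \ell(\longw_\Mf)$ for every $n \in \norm_\Mf$, a property of the longest element in a finite Coxeter group, which via \eqref{braid} gives $\t_n \t_{n^{-1}\widetilde{\longw_\Mf}} = \t_{\widetilde{\longw_\Mf}}$. By induction on $\ell(m)$ using \eqref{Q}, the product $\t_m\t_n$ is an $R$-linear combination of $\t_x$ with $\ell(x) \leq \ell(m)+\ell(n)$, and a term of maximal length $\ell(m)+\ell(n)$ appears only when the product is length-additive, in which case the unique such term is $\t_{mn}$ with coefficient $1$. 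With respect to the length filtration on the basis $\{\t_n\}_{n\in\norm_\Mf}$, the Gram matrix $\big(\delta_\Mf(\t_m\t_n)\big)_{m,n}$ thus vanishes on blocks with $\ell(m)+\ell(n) < \ell(\longw_\Mf)$, and the antidiagonal block $\ell(m)+\ell(n) = \ell(\longw_\Mf)$ is a permutation matrix (by uniqueness of the longest element in $\Wf_\Mf$). Taking $b = \t_{n_0^{-1}\widetilde{\longw_\Mf}}$ for $n_0$ of maximal length in the support of any nonzero $h = \sum c_n \t_n$ yields $\delta_\Mf(hb) = c_{n_0}\neq 0$, proving right non-degeneracy; the left side is symmetric. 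For the trace identity $\delta_\Mf(\t_m\t_n) = \delta_\Mf(\t_{\widetilde{\longw_\Mf}n\widetilde{\longw_\Mf}^{-1}}\t_m)$, the same filtration handles the case $\ell(m)+\ell(n) \leq \ell(\longw_\Mf)$: below the antidiagonal both sides vanish, and on it both equal the indicator of the condition $m = \widetilde{\longw_\Mf}n^{-1}$ (the equation $\widetilde{\longw_\Mf}n\widetilde{\longw_\Mf}^{-1}\cdot m = \widetilde{\longw_\Mf}$ rearranges to the same condition). The main obstacle is the case $\ell(m)+\ell(n) > \ell(\longw_\Mf)$, where iterated applications of \eqref{Q} produce lower-length terms, including $\t_{\widetilde{\longw_\Mf}}$, whose coefficients must match on the two sides; my plan is to proceed by induction on $\ell(n)$, writing $n = \tilde s n'$ with $\ell(n') = \ell(n)-1$ and using $\t_n = \t_{\tilde s}\t_{n'}$ with associativity to reduce to the base case where $n$ is a lift of a simple reflection, in which case the identity follows from a direct computation invoking the symmetries $c_{n_s}(z) = c_{n_s}(z^{-1}) = c_{n_s}(zx s(x)^{-1})$ recalled in \S\ref{prophecke}.

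For Part (2), since $p$ is invertible in $R$ each $q_s$ is a unit, and I prove by induction on $\ell(m)$ the explicit formula $\delta'_\Mf(\t_m\t_n) = q_m\,\delta_{n,m^{-1}}$. The case $\ell(m) = 0$ is immediate from $\t_m\t_n = \t_{mn}$; for $\ell(m) = 1$, say $m = \tilde s$, expanding $\t_{\tilde s^{-1}} = \t_{\tilde s^{-2}}\t_{\tilde s}$ and applying \eqref{Q} yields $\t_{\tilde s}\t_{\tilde s^{-1}} = q_s\t_1 + (\text{positive-length terms})$; the inductive step uses $\t_m = \t_{\tilde s}\t_{m'}$ with $\ell(m') = \ell(m)-1$ and associativity. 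Symmetry $\delta'_\Mf(\t_m\t_n) = \delta'_\Mf(\t_n\t_m)$ then follows from the equality $q_m = q_{m^{-1}}$, and non-degeneracy from $\delta'_\Mf(\t_m\t_{m^{-1}}) = q_m$ being a unit. Symmetry forces $\iota'_\Mf = \mathrm{id}$.
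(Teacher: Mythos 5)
Your proposal tracks the paper's proof closely in structure: both reduce the trace identity in Part (1) to the case where the second argument is a lift of a simple reflection (the paper's $v = \nf_s$, your base case), and both rely on the conjugation formula $c_{\hat s_2} = w c_{\hat s_1} w^{-1}$ of Remark \ref{Qgeneral} together with $q_s = q_{s'}$ to verify that $\iota_\Mf$ is an algebra automorphism. The one genuinely different ingredient is your non-degeneracy argument for $\delta_\Mf$: the paper simply cites \cite[Lemma 2.2]{Saw} and \cite[Prop.\ 6.11]{CE}, whereas you give a self-contained triangularity argument via the observation that $\delta_\Mf(\t_m\t_n)$ vanishes when $\ell(m)+\ell(n)<\ell(\longw_\Mf)$ and on the antidiagonal block $\delta_\Mf(\t_m\t_n)=\delta_{mn,\widetilde{\longw_\Mf}}$, together with $\ell(n)+\ell(n^{-1}\widetilde{\longw_\Mf})=\ell(\longw_\Mf)$. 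That is a clean improvement. Your treatment of the crucial base case of the trace identity (the analogue of the paper's case 2b, using $c_{n_s}(z)=c_{n_s}(z^{-1})=c_{n_s}(zxs(x)^{-1})$) is only sketched, but the plan is sound and matches what the paper actually carries out.

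The one genuine gap is in Part (2). Your explicit formula $\delta'_\Mf(\t_m\t_n)=q_m\,\delta_{n,m^{-1}}$ is correct, but the induction on $\ell(m)$ does not close as stated. Writing $\t_m=\t_{\tilde s}\t_{m'}$ and using the $\ell=1$ case $\delta'_\Mf(\t_{\tilde s}\,a)=q_s\cdot\mathrm{coeff}_{\tilde s^{-1}}(a)$, you would need the coefficient of $\t_{\tilde s^{-1}}$ in $\t_{m'}\t_n$ (which is $q_{m'}\delta_{n,m^{-1}}$), whereas your inductive hypothesis only governs the coefficient of $\t_1$ in $\t_{m'}\t_n$ (which is $q_{m'}\delta_{n,(m')^{-1}}$). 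These are different statements, and the second does not imply the first. The cleanest repair is to bypass the induction entirely: since $\delta'_\Mf(\t_m\t_n)$ is just the value $(\tau_m\star\tau_n)(1)$ of the convolution product at the identity, a direct count gives $(\tau_m\star\tau_n)(1)=\bigl|\Uf\backslash(\Uf m^{-1}\Uf\cap\Uf n\Uf)\bigr|$, which is $0$ unless $n=m^{-1}$ and then equals $q_n=q_m$. Alternatively, one can strengthen the inductive hypothesis to track $\mathrm{coeff}_y$ for all $y$ of bounded length, or simply follow the paper's height argument for non-degeneracy and its length case analysis for symmetry. None of this threatens the correctness of the result, but as written the inductive step is not a proof.
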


We comment on this proposition and give its  proof in the case when $\Mf=\Gf$ so as to avoid the subscripts $\Mf$.

\begin{rema}  \begin{enumerate}
\item   For $t\in \Zf$ and $\tw_1\in  \norm_{\Gf}$ a lift of the longest element  $\longw$ in $\Wf$, 
 $\tw_2:=t \tw_1$ is another  lift of $\longw $. Denote  by $\iota_1$ and $\iota_2$ the corresponding  automorphisms of $\Hf$ as in Prop.\ref{tracemap}(1).  Then for any right $\Hf$-module $\m$,   denote by 
 $\m\iota_1$, resp.  $\m\iota_2$, 
 the space $\m$ endowed with the right action of $\Hf$ given by $(m,h)\mapsto 
m\iota_1(h)$ resp. $(m,h)\mapsto  m\iota_2(h)$. Define analogously, for 
a left $\Hf$-module $\m$,   the twisted modules
 $\iota_1\m$ and $\iota_2\m$. 
The 
map $m\mapsto m\tau_{t^{-1}}$ (resp. $m\mapsto \tau_{t}m$) yields an isomorphism of right $\Hf$-modules 
$\m\iota_1\overset{\simeq}\rightarrow \m\iota_2$ (resp. $\iota_1\m\overset{\simeq}\rightarrow\iota_2\m$).

\item The map $\iota$  of Prop. \ref{tracemap}(1) is an involution if and only if  the chosen  lift $\tw$ for $\longw$ is such that $\tw^2$ is central in $\norm_\Gf$.  Since $\longw^2=1$, we know that $\tw^2\in \Zf$. 
We pick  $\tw:=\nf_\longw$ as in  Notation \ref{notnf} and check that $\iota$ is then an involution. 
 Since $\Zf$ is commutative, it remains to show that $\nf_\longw^2$ commutes with $\nf_s$ for any $s\in S$, and this follows from the formula  \eqref{conjbynw}. 
\end{enumerate}

\label{rema:tracemap}
\end{rema}

\begin{proof}[Proof of Proposition \ref{tracemap}]
(1)  
If $\k$ is a field of characteristic $p$, it is proved in  \cite[Lemma 2.2]{Saw}  that the kernel of $\delta$  as defined in the lemma does not contain any non zero left or right ideal (see also \cite[Prop. 6.11]{CE}, the necessary hypotheses  being satisfied by  \eqref{ssBNpJ}).  One easily checks that the proofs therein work over a ring of arbitrary characteristic.

 The  $\k$-linear map
$\iota$ given in the lemma  is bijective. It remains to check that  $\iota$  respects the product in $\Hf$ and that $\delta(\tau_u\tau_v)=\delta(\iota(\tau_v)\tau_u)$ for all $u,v\in\norm_\Gf$.   First we show that we only  need to do it for one choice of lift in $\norm_\Gf$  for $\longw$.
If  $n$ and $n':=  tn$ with $t\in \Zf$ are two  such lifts, we denote respectively by 
$(\delta, \iota)$  and $(\delta', \iota')$ the pairs attached to them as in the proposition and we suppose that $(\delta, \iota)$  satisfies the properties above. 
The conjugation  by the invertible element $\tau_t$  is an automorphism of the algebra $\Hf$. We have $\delta'= \delta(\tau_{t^{-1}} -)$ and $\iota'=\iota(\tau_{t} -\tau_{t^{-1}})$. It implies that $\iota'$ also respects the product in $\Hf$. Furthermore, for 
$u,v\in\norm_\Gf$, we have $\delta'(\iota'(\tau_v)\tau_u)=\delta(\tau_{t^{-1}}\tau_{tn v {(tn)}^{-1}}\tau_u)= \delta(\iota(\tau_v)\tau_{t^{-1}u})=\delta(\tau_{t^{-1}u}\tau_v)=\delta'(\tau_u \tau_v)$. So the pair $(\delta', \iota')$ also satisfies the properties above.  Consequently, it suffices to write the proof for a specific choice of lift for $\longw$ and we choose    $\nf_{\longw}\in \norm_{\Gf}$ as defined in Notation \ref{notnf}.

With this choice, we first check that $\iota$  respects the product.  Let $u,v\in \norm_\Gf$ and suppose that $\ell(u)+\ell(v)=\ell(uv)$. 
Conjugating by $\nf_\longw$ preserves the length in $\norm_\Gf$ since  ${  {{\longw }}}(\Sigma^+)=\Sigma^-$. Therefore,  we also have  $\ell(\nf_\longw u\nf_\longw^{-1})+\ell(\nf_\longw v\nf_\longw^{-1})=\ell(\nf_\longw uv\nf_\longw^{-1})$ and $\iota(\tau_{u}\tau_v)=\iota(\tau_{uv})=\tau_{\nf_\longw uv\nf_\longw^{-1}}=\tau_{\nf_\longw u\nf_\longw^{-1}}\tau_{\nf_\longw v\nf_\longw^{-1}}=\iota(\tau_u)\iota(\tau_v)$.
Now let $s\in S$ and consider the quadratic relation 
$\t_{\nf_s}^2=q_s \t_{\nf_s^2}+ c_{{n_s}}\t_{\nf_s}$ satisfied by $\t_{\nf_s}$ (see \eqref{Q} and \S\ref{finiteHecke}).  First note that  in fact we have ${  {{\longw }}}(\Pi)=-\Pi$,  therefore
$s'= \longw s \longw^{-1}\in S$ and  $q_s=q_{s'}$ (\cite[\S 2.1]{Vig1}). Furthermore, $\nf_\longw \nf_{s} \nf_\longw^{-1}= \nf_{s'}$ by \eqref{conjbynw} and    $n_\longw \Uf_{s}n_\longw^{-1}  =\Uf_{s'}$, $n_\longw \Uf_{s}^{op} n_\longw^{-1} = \Uf_{s'}^{op}$, $n_\longw \Zf'_s n_\longw^{-1}=\Zf'_{s'}$.   
In the formula   \eqref{Q} we have $c_{n_s}(z)=| \nf_{s}\Uf_{s}   \nf_{s} \cap \Uf_{s} \nf_{s} z\Uf_{s}|$   for $z\in  \Zf'_s$ \cite[Proposition 4.4, (62)]{Vig1}, therefore  $c_{n_{s'}} (\nf_\longw \,.\,\nf_\longw ^{-1})= c_{n_s}(\, .\,)$  as in Remark \ref{Qgeneral}  hence $\iota(c_{n_{s}}) =c_{n_{s'}}$. We have 
$$(\iota(\tau_{\nf_s}))^2  = (\tau_{\nf_{s'}})^2=q_{s'} \t_{\nf_{s'}^2}+ c_{{n_{s'}}}\t_{\nf_{s'}}$$ 
while $$\iota(\t_{\nf_s}^2)=\iota(q_s \t_{\nf_s^2}+ c_{{n_s}}\t_{\nf_s})= q_s  \tau_{\nf_{s'} ^2}+\iota( c_{n_{s}})\iota( \t_{\nf_{s}})=
q_s  \tau_{\nf_{s'} ^2}+c_{n_{s'}} \t_{\nf_{s'}},$$ the second identity in the line above being true because $c_{n_s}$ is a linear combination of $\tau_t$ for $t\in \Zf$ and $\ell(t)=0$. We have proved that $(\iota(\tau_{\nf_s}))^2=\iota(\t_{\nf_s}^2).$ This concludes the proof of the fact that $\iota$ respects the product.\\
 
 Lastly, we show that $\delta(\tau_u\tau_v)=\delta(\iota(\tau_v)\tau_u)$ for all $u,v\in\norm_\Gf$. 
The property  is easy to check  when $\ell(v)=0$.
We now  show it when $v$ has length  $1$ and then proceed by induction on $\ell(v)$. When $v$ has length $1$,   it is enough to treat the case $v=\nf_s$. Then as above, there is $s'\in S$ such that $\nf_{s'}= \nf_\longw n_s  \nf_\longw^{-1}$. First suppose that $u \nf_s= \nf_\longw$.
Then necessarily $\ell(\nf_{\longw})=\ell(u)+1$ and $\tau_u\tau_{\nf_s}=\tau_{\nf_\longw}$ has image $1$ by $\delta $. On the other hand $\nf_{s'} u=\nf_\longw$ with $1+\ell(u)=\ell(\nf_\longw)$, so 
the image of $\tau_{\nf_{s'}}\tau_u=\iota (\tau_{\nf_s})\tau_u$ by $\delta $ is also $1$. \\
Now suppose that $u \nf_s\neq  \nf_\longw$ which implies  ${\nf_{s'}}u\neq \nf_\longw$. 
This means in particular that if $\ell(u\nf_s)=\ell(u)+1$ (resp.  $\ell({\nf_{s'}}u)=1+\ell(u)$), then
$\delta(\t_u\t_{\nf_{s}})=0$ (resp.  $\delta(\t_{{\nf_{s'}}}\t_u)=0)$.
\begin{quote}
 1/ If $\ell(u{\nf_{s}})=\ell(u)+1$ then the only case left to examine is 
 $\ell({\nf_{s'}}u)=\ell(u)-1$. The quadratic relation \eqref{Q} implies that $\tau_{{\nf_{s'}}}\tau_u$ is a $\k$-linear combination of the elements $\tau_w$ for $w\in \norm_\Gf$ with $\ell(w)\leq 	\ell(u)<\ell(u{\nf_{s}})$. Since none of these $w$  is  $ \nf_\longw$ we have $\delta(\tau_{{\nf_{s'}}}\tau_u)=0$. \\
 2/ If $\ell(u{\nf_{s}})=\ell(u)-1$, then  again by \eqref{Q} we know that $\tau_{u}\tau_{\nf_{s}}$ is a $\k$-linear combination of the elements $\tau_w$ for $w\in \norm_\Gf$ with $\ell(w)\leq 	\ell(u)$. 

 2a/ If $u$ does not have maximal length in $\norm_\Gf$, then $\delta(\tau_{u}\tau_{\nf_{s}})=0$ and the only case left to examine is  $\ell({\nf_{s'}}u)=\ell(u)-1$. 
 But then by the same argument about the decomposition of $\tau_{{\nf_{s'}}}\tau_u$, we get  $\delta(\tau_{{\nf_{s'}}}\tau_u)=0$.

 2b/  If  $u$ has maximal length, then $u= \nf_\longw t$ for $t\in \Zf$.

On the one hand, $\tau_u \tau_{\nf_{s}}=\tau_{\nf_\longw}\tau_t \tau_{\nf_s}=\tau_{\nf_\longw}\tau_{\nf_s}\t_{\nf_s^{-1} t\nf_s}= \tau_{\nf_\longw \nf_s^{-1}}\tau_{\nf_s}^2\t_{\nf_s^{-1} t\nf_s }$ is equal to 
$\tau_{\nf_\longw \nf_s^{-1}}(q_s \tau_{\nf_s ^2}+ \t_{\nf_s} c_{n_s}) \t_{\nf_s^{-1} t\nf_s }$ as  $ \t_{\nf_s}$ and $ c_{n_s}$ commute, hence  $\tau_u \tau_{\nf_{s}}=q_s \tau_{\nf_\longw   t\nf_s }+ \tau_{\nf_\longw}     c_{n_s}   \t_{\nf_s^{-1} t\nf_s }.$ So 
$$\delta(\tau_u \tau_{\nf_{s}})=c_{n_s} (\nf_s^{-1} t^{-1}\nf_s)=c_{n_s} (t^{-1})=c_{n_s} (t ) .$$   The first equality above comes from the fact that $c_{n_s}$ and $\tau_{\nf _s}$ commute and the second equality  is recalled after \eqref{Q}.

  On the other hand,     $\iota(\t_{\nf_{s}})\t_u= \t_{{\nf_{s'}}}\t_u=\t_{\nf_{s'}}\t_{ \nf_\longw}\t_{ t}=\t_{\nf_{s'}}^2\t_{\nf_{s'}^{-1} \nf_\longw}\t_{  t}$, so  $\iota(\t_{\nf_{s}})\t_u=(q_{s'} \tau_{\nf_{s'}^2}+ c_{n_{s'}}  \tau_{\nf_{s'}} )\t_{\nf_{s'}^{-1} \nf_\longw}\t_{ t}
 =  q_{s'}  \t_{ \nf_{s'}   \nf_\longw  t}
+   c_{n_{s'}}  \tau_{\nf_\longw } \tau_{  t}
 $. 
 So,   $$\delta(\iota(\t_{\nf_{s}})\t_u)  =c_{n_{s'}} (\nf_\longw t^{-1}  \nf_\longw ^{-1}) = c_{n_{s}}(t^{-1})=
 c_{n_{s}}(t).$$ 
because $\nf_\longw   c_{\nf_{s}} \nf_\longw^{-1}= c_{\nf_{s'}} $ (Remark \ref{Qgeneral}), in other terms $ \sum _{t\in \mathds Z}c_{\nf_{s}}(t) \nf_\longw  t \nf_\longw^{-1}=  \sum _{t\in \mathds Z}c_{\nf_{s'}} (t) t$, 
  and $ \nf_\longw ^2\in \Zf$.

This concludes the proof  of
 $\delta(\iota(\t_v)\t_u)=\delta(\t_u\t_v)$.
 \end{quote}

\medskip

 We now prove part (2) of Prop. \ref{tracemap}.  
Here  $\k$  is a ring of  characteristic different from $p$ and $\delta': \Hf\rightarrow \k$   is the map sending 
$\t_1$ to $1$ and
$\t_{w}$ to zero for all $w\in\norm_\Gf-\{1\}$.
It is proved in \cite[Lemma 2.3]{Saw} that $\delta'$ is symmetric and non degenerate
when $\k$ is a field of characteristic different from $p$. The arguments go through in  the case of
 a ring of characteristic different from $p$.  We recall first the argument of 
 the non degeneracy of $\delta'$.
 The support of an element in $\Hf$ of the form    $a=\sum_{w\in \norm_\Gf}\lambda_w \tau_w$, $\lambda_w\in\k$,
 is defined to be the set of all $w$ such that $\lambda_w\neq 0$ and its height, if $a\neq 0$,  the minimal  length of the elements in  its support.  
If $a$ has height $0$, then there is $t\in \Z$ in its support and $\delta'(a\tau_t^{-1})\neq 0$ so the right ideal generated by $a$ is not contained in the  kernel of $\delta$.
 If $a$ has height $k\geq 1$, then  there is $s\in S$ such that $a\tau_{\nf_s}$ has height $k-1$. 
This is because there is  $w_0\in\norm_\Gf$ in the support  of $a$ with length $k$ and we choose  $s\in  S$ such that   $\ell(w_0 \nf_s)=\ell(w_0)-1$. Then
$a\tau_{\nf_s}$ is the sum 
\begin{itemize} 
\item  of
$\sum_{w,  \ell(w)\geq k+1}\lambda_w \tau_{w}\tau_{\nf_s}$ which has height  $\geq k$  if it is not zero
(use \eqref{braid} and \eqref{Q}), 
\item  of $\sum_{w, \ell(w)=k, \ell(w\nf_s)=\ell(w)+1}\lambda_w\tau_{w\nf_s}$ 
which has height  $k+1$  if it is not zero,
\item and of $\sum_{w,  \ell(w)=k, \ell(w\nf_s)=k-1}\lambda_w(q_s \tau_{w\nf_s}+\tau_wc_{n_s} )$ which has height  $k-1$ since  $\lambda_{w_0}q_s\neq 0$ in $\k$.
\end{itemize}
By induction on the height of $a$ we obtain that the right ideal generated by $a$ is not contained in the kernel of $\delta'$. One would proceed the same way for left ideals. \\

It remains to check that $\delta'$ is symmetric that is to say that $\delta'(\tau_w\tau_{x})=\delta'(\tau_x\tau_w)$ for any $x,w\in\norm_\Gf$. The identity is clear when $x$  or $w$ has length zero.  Now suppose $x=\nf_s$ for some $s\in S$ and $w$ has length $\geq 1$.  By \eqref{Q} both $\tau_w\tau_{\nf_s}$ and  $\tau_{\nf_s}\tau_w$ are linear combinations of elements $\tau_y$ with $\ell(y)\geq \ell(w)-1$. So if $\ell(w)\geq 2$, we have
$\delta'(\tau_w\tau_{\nf_s})=\delta'(\tau_{\nf_s}\tau_w)=0$.
Now suppose $w= \nf_{s'}$ for some $s'\in S$: if $s\neq s'$, then $\delta'(\tau_w\tau_{\nf_s})=\delta'(\tau_{\nf_s}\tau_w)=0$; otherwise, $\tau_w\tau_{\nf_s}=\tau_{\nf_s}^2=\tau_{\nf_s}\tau_w$ so these elements have the same image by $\delta'$. We conclude  that  $\delta'(\tau_w\tau_{x})=\delta'(\tau_x\tau_w)$ for any $x,w\in\norm_\Gf$ by induction on the length of $x$. \\
Note that that $\delta'$ is symmetric for  any ring $R$  (in a  general context   \cite[Prop. I.3.6]{Viglivre}). It is only for its non degeneracy that we need $R$ to have  characteristic different from $p$.

\end{proof}

  \begin{notation}\label{tmod}  Let $A$, $B$, $C$ be $R$-algebras,
  $\alpha : A\rightarrow B$ and 
$\beta : B\rightarrow C$   morphisms of algebras.
For any left, resp. right, $B$-module $\mm$ we let $\alpha \mm$, resp.\ $\mm \alpha$, denote the space $\mm$ with  a left, resp. right,  $A$-action through the endomorphism $\alpha$ namely  $(a,m)\mapsto \alpha(a)m$, resp.    $(m, a)\mapsto m \alpha(a)$. Note that, for a right $C$-module $\mm$,  
$$(\mm \beta)\alpha\cong \mm\, \beta\circ \alpha \quad\textrm{as right $A$-modules}$$ and
for a left $C$-module $\mm$,  
$$\alpha ( \beta \mm )\cong  \beta \circ \alpha\,  \mm \quad\textrm{as left $A$-modules}.$$ 
If $D$ is an $R$-algebra and $\gamma: C\rightarrow D$ a morphism of algebras, let $\mm$ be a $(B, D)$-bimodule. Then we denote simply by $\alpha \mm \gamma$ the $(A,C)$-bimodule $\alpha(\mm\gamma)\cong (\alpha \mm) \gamma$.

  \end{notation}
  
   \textbf{We attach to $\Mf$ the choice of a pair $(\delta_\Mf, \iota_\Mf)$ as in Proposition \ref{tracemap}(1).}
 Let  $j=\iota_\Mf$ or $\iota_\Mf^{-1}$.
    Let $\mm$  be a left, resp. right, $\HMf$-module $\mm$;
by   Remark \ref{rema:tracemap}(1),
 the structure of left, resp. right, $\HMf$-module of $j\mm$, resp. $\mm j$,  does not depend on the choice of $(\delta_\Mf, \iota_\Mf)$.

\begin{prop}   There is an  isomorphism of  $(\Hf, \HMf)$-bimodules
\begin{equation}\label{RisobimoH}
\Hom_\HMf (\Hf, \HMf)\cong \Hf\iota^{-1} \iota_\Mf
\end{equation} where the right action of $\HMf$ on the right hand side is via the embedding $$\HMf\xrightarrow{ \iota^{-1}\circ \:\iota_\Mf}\Hf.$$

\label{RbimoH}
\end{prop}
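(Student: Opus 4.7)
The strategy is to construct the isomorphism explicitly via a relative Frobenius trace $T\colon \Hf\to\HMf$ associated to the non-degenerate forms $\delta$ and $\delta_\Mf$ of Proposition~\ref{tracemap}, and to identify the twist by the Nakayama automorphisms $\iota$ and $\iota_\Mf$ through the cyclic identities they satisfy.

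First I would build $T$. By Proposition~\ref{liberte}, $\Hf=\bigoplus_{d\in\dist}\tau_{\tilde d^{-1}}\HMf$ as a right $\HMf$-module. Let $d_*:=\longw_\Mf\longw$; this is the distinguished representative (of length $\ell(\longw)-\ell(\longw_\Mf)$) of the right coset $\Wf_\Mf\longw$ in $\dist$. For compatibly chosen lifts in $\norm_\Gf$ one has $\tw=\tilde d_*^{-1}\twm$, and hence $\tau_{\tw}=\tau_{\tilde d_*^{-1}}\tau^\M_{\twm}$. Define $T\colon \Hf\to\HMf$ to be the right-$\HMf$-linear projection onto the summand $\tau_{\tilde d_*^{-1}}\HMf$. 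A direct check on the basis $(\tau_{\tilde w})_{w\in\norm_\Gf}$ then yields the key intertwining identity $\delta_\Mf\circ T=\delta$.

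Next, define $\Phi\colon \Hf\to \Hom_{\HMf}(\Hf,\HMf)$ by $\Phi(a)(b)=T(ab)$. The right-$\HMf$-linearity of $\Phi(a)$ follows from that of $T$, while injectivity of $\Phi$ follows from non-degeneracy of $\delta$ in Proposition~\ref{tracemap}(1): if $\Phi(a)=0$, then $\delta(ab)=\delta_\Mf(T(ab))=0$ for every $b\in\Hf$, forcing $a=0$. Since both sides are free $\HMf$-modules of rank $|\dist|$ (the target via the basis dual to $(\tau_{\tilde d^{-1}})_{d\in\dist}$), $\Phi$ is bijective. The $\Hf$-action matches through $\Phi$ directly, as $\Phi(aa')(b)=T(aa'b)$. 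For the right $\HMf$-action, testing against $m'\in\HMf$ one computes, using right-$\HMf$-linearity of $T$ together with $\delta_\Mf\circ T=\delta$ and the two cyclic identities $\delta_\Mf(xy)=\delta_\Mf(\iota_\Mf(y)x)$ and $\delta(xy)=\delta(\iota(y)x)$, an identity of the form $\delta_\Mf(m\cdot T(ab)\cdot m')=\delta(\iota^{-1}(\iota_\Mf(m))\cdot ab\,m')$; non-degeneracy of $\delta_\Mf$ then identifies the $\HMf$-action on $\Hom_{\HMf}(\Hf,\HMf)$ under $\Phi$ with multiplication on $\Hf$ through the ring map $\HMf\xrightarrow{\iota^{-1}\circ\iota_\Mf}\Hf$ of the statement.

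The main obstacle, in my view, is the careful bookkeeping of the two Nakayama twists and their composition: $\iota_\Mf$ enters from the cyclic identity for $\delta_\Mf$ applied ``inside'' to the image of $T$, while $\iota^{-1}$ arises from the cyclic identity for $\delta$ applied ``outside'' in $\Hf$. Confirming that they combine precisely in the order $\iota^{-1}\circ\iota_\Mf$ stipulated by the proposition (rather than, say, $\iota\circ\iota_\Mf^{-1}$ or $\iota_\Mf\circ\iota^{-1}$) is dictated by the sequence in which the two cyclic manipulations are performed, and must be matched to the left/right conventions of the $(\Hf,\HMf)$-bimodule structure stated in the result; this is the delicate convention-tracking step of the proof.
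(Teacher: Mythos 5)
Your strategy — constructing a relative Frobenius trace $T\colon\Hf\to\HMf$ satisfying $\delta_\Mf\circ T=\delta$, using it to define the map directly, and reading off the twist from the cyclic identities — is genuinely different from the paper's argument, which instead goes through the intermediate $(\Hf,\HMf)$-bimodule $\Hom_\k(\iota_\Mf\Hf,\k)$ by two applications of the linear forms. Your route would be shorter and more transparent. However, as written it has two concrete gaps.

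The first is a chirality error. In this paper $\Hom_\HMf(\Hf,\HMf)$ denotes homomorphisms of \emph{left} $\HMf$-modules, with $(\Hf,\HMf)$-bimodule structure $(h\cdot f\cdot h_\Mf)(x)=f(xh)h_\Mf$ (stated at the start of the proof of Prop.~\ref{RbimoH}). With $T$ the \emph{right}-$\HMf$-linear projection onto $\tau_{\tilde d_*^{-1}}\HMf$ and $\Phi(a)(b)=T(ab)$, the map $\Phi(a)$ is right-$\HMf$-linear, not left; and $\Phi$ does not intertwine the left $\Hf$-actions, since $\Phi(ha)(b)=T(hab)$ while $(h\cdot\Phi(a))(b)=\Phi(a)(bh)=T(abh)$. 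The fix is to take $T$ to be the \emph{left}-$\HMf$-linear projection onto the summand $\HMf\tau_{\tilde d_*}$ in the decomposition $\Hf=\bigoplus_{d\in\dist}\HMf\tau_{\tilde d}$ from Prop.~\ref{liberte}, and to define $\Phi(a)(b)=T(ba)$. Then $\Phi(a)$ is left-$\HMf$-linear and $\Phi(ha)(b)=T(bha)=\Phi(a)(bh)=(h\cdot\Phi(a))(b)$. With this fix your twist computation comes out correctly: for $m',m_\Mf\in\HMf$,
$\delta_\Mf\bigl(m'\,T(ba)\,m_\Mf\bigr)=\delta_\Mf\bigl(T(\iota_\Mf(m_\Mf)m'ba)\bigr)=\delta\bigl(\iota_\Mf(m_\Mf)m'ba\bigr)$
and
$\delta_\Mf\bigl(m'\,T(ba\,\iota^{-1}\iota_\Mf(m_\Mf))\bigr)=\delta\bigl(m'ba\,\iota^{-1}\iota_\Mf(m_\Mf)\bigr)=\delta\bigl(\iota_\Mf(m_\Mf)m'ba\bigr),$
so non-degeneracy of $\delta_\Mf$ identifies the right $\HMf$-action through $\iota^{-1}\circ\iota_\Mf$ exactly as claimed.

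The second gap is bijectivity. The proposition is stated over an arbitrary commutative ring $R$, and an injective $\HMf$-linear map between free right $\HMf$-modules of the same finite rank need not be surjective over a general ring (already $R=\ZZ$, $\HMf=\ZZ$, $x\mapsto 2x$). Non-degeneracy of $\delta$ in Prop.~\ref{tracemap}(1) (no non-zero one-sided ideal in the kernel) gives injectivity but says nothing about surjectivity. The paper's proof gets surjectivity by a descending induction on the length in $\norm_\Gf$, exploiting that the map is unitriangular with respect to the length filtration; your proof needs an argument of this kind (for instance, showing that the ``matrix'' of $\Phi$ in the bases $\{\tau_{\tilde d}\}$ and the dual basis of $\Hom_\HMf(\Hf,\HMf)$ is unitriangular for a suitable order on $\dist$), not just a rank count.
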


\begin{proof}  First recall that given a  $(\HMf, \Hf)$-bimodule  $\mm$, there is a natural structure of $(\Hf, \HMf)$-bimodule on
\begin{itemize}
\item[-]  $\Hom_{\HMf}(\mm, \HMf)$ given by $((h, h_\Mf),f)\mapsto [ m\mapsto f(m.h)h_\Mf ]$,
\item[-]  $\Hom_{\Hf}(\mm, \Hf)$ given by $((h, h_\Mf),f)\mapsto [ m\mapsto  h\, f(h_\Mf .m)  ]$,
\item[-]  $\Hom_{\k}(\mm, \k)$ given by $((h, h_\Mf),f)\mapsto [ m\mapsto f(h_\Mf .m .h)  ]$.

\end{itemize}

We will check below  the following properties.
The map $f\mapsto \delta_\Mf\circ f$
   induces
   \begin{enumerate}
   \item  an isomorphism of right,  resp.\ left,  $\HMf$-modules between $\HMf\cong \Hom_\HMf(\HMf,\HMf)$  and $\Hom_\k(\iota_\Mf \HMf,\k)$  (resp. $\Hom_\k(\HMf\iota_\Mf^{-1},\k)$),
\item  an isomorphism of  $(\Hf, \HMf)$-bimodules between $\Hom_\HMf(\Hf,\HMf)$  and $\Hom_\k(\iota_\Mf\Hf,\k).$
\end{enumerate} and the map $f\mapsto \delta\circ f$ 
   induces
\begin{itemize}
\item[(3)]  an isomorphism of  $(\Hf, \HMf)$-bimodules between $\Hom_\Hf(\iota_\Mf\Hf\iota,\Hf)$  and $\Hom_\k(\iota_\Mf\Hf,\k)$. 

\end{itemize}

Combining (2) and (3) gives an isomorphism of  
$(\Hf, \HMf)$-bimodules $\Hom_\Hf(\iota_\Mf\Hf\iota,\Hf)\cong  \Hom_\HMf(\Hf,\HMf)$. 
Now consider   the natural bijection \begin{equation*}
\Hom_\Hf(\iota_\Mf \Hf \iota, \Hf)\longrightarrow \Hf \iota^{-1}\iota_\Mf, \: f\longmapsto f(1).
\end{equation*}
It is a morphism of  $(\Hf, \HMf)$-bimodules since 
$(hfh_\Mf)(1)= h(f(h_\Mf.1))=h(f(\iota_\Mf(h_\Mf)))=h(f(1.\iota^{-1}(\iota_\Mf(h_\Mf))))=hf(1)\iota^{-1} \circ \iota_\Mf(h_\Mf)$ for $(h, h_\Mf)\in \Hf\times \HMf$. This proves  the Proposition provided we verify (1), (2) and (3).

\medskip

We prove (1). 
 We write the proof in the case $\Mf=\Gf$ so as to avoid the multiple subscripts. 
The map  $ \Hf\rightarrow \Hom_\k(\iota \Hf,\k)$ that we are studying  is denoted  by $F$ in this proof. It sends  $h\in \Hf$ onto 
$x\mapsto \delta(hx)$.  It is easy to see that it is  right $\Hf$-equivariant.
It is injective since the kernel of $\delta$ does not contain any non zero left ideal
(Prop. \ref{tracemap}(1)).  We need to prove that it is surjective. 
For any $w\in\norm_\Gf$,  denote by  $e_w\in  \Hom_\k(\iota \Hf,\k)$ the  map sending $\tau_w$ onto $1\in\k$ and  $\tau_n$ onto $0$ for all $n\in \norm_\Gf-\{w\}$. These elements form a $\k$-basis of $\Hom_\k(\iota \Hf,\k)$ and we need to prove that they all lie in the image of $F$.
If  $w\in\norm_\Gf$ has maximal length, then there is $t\in \Zf$ such that $wt=\nf_\longw$  and $e_w=F(\tau_t)$. Now  let $k$ with $1\leq k \leq \ell(\longw)$ and suppose that $e_w$ lies in the image of $F$ for any $w\in \norm_\Gf$ such that $\ell(w)\geq k$. Let $v\in \norm_\Gf$ with length $k-1$ and $s\in S$ such that $\ell(\nf_sv)=\ell(v)+1=k$.   By definition, the image  of $e_{\nf_sv}$ under the right action of  $\tau_{\nf_s}$  is the $R$-linear form $e:= e_{\nf_sv}(\tau_{\nf_s}-)$ which lies in the image of $F$ by hypothesis and since $F$ is right $H$-equivariant. We have $e(\tau_{v})=1$ and we now prove  that  for 
 $x\in \norm_\Gf-\{v\}$, if  $\tau_x$ is  in the support of $e$  then $x$ has length $\geq k$. 
 Let  $x\in \norm_\Gf-\{v\}$ such that   $\tau_x$ is  in the support of $e$.
 First notice that  we have  $\ell(\nf_s x)=\ell(x)-1$.  Otherwise we would have 
  $\tau_{\nf_s}\tau_x=\tau_{\nf_s x}$ and $v=x$ since $x$ is in the support of $e$, contradiction. Therefore, $\tau_{\nf_s}\tau_x=\tau_{\nf_s}^2\tau_{ \nf_s^{-1}x}=(q_s \tau_{\nf_s^2}+  \sum_{z\in \Zf'_s} {c_{n_s}(z) }\tau_z \tau_{\nf_s}) \tau_{\nf_s^{-1}x}=q_s \tau_{\nf_s x}+  \sum_{z\in \Zf'_s} {c_{n_s}(z) } \tau_{zx}$. Since $\tau_{x}$ is in the support of $e$, we have either
 $\nf_s x=\nf_s v$ and  $x=v$  which is not true, or
 $zx= \nf_s v$ for some $z\in \Zf'_s$ and  $x$ has length $k$. It proves the claim. 
This shows   that $e$ is the sum of  $e_v$ and of $e_x$ for $x\in \norm_\Gf$ with length $\geq k$. By induction hypothesis and since $e$ lies in the image of $F$,  it implies that $e_v$ lies in the image of $F$. 
We have proved that $F$ is surjective. 
The proof of the isomorphism   between $\Hf$ and $\Hom_\k(\Hf\iota^{-1},\k)$ is similar.

We now prove (2).    
By (1), the map $f\mapsto \delta_\Mf\circ f$  gives a linear isomorphism  $\Hom_{\HMf}(\HMf, \HMf)\overset{\simeq}\longrightarrow \Hom_\k(\HMf, \k)$. Recall that $\Hf$ is a free left  $\HMf$-module (Prop. \ref{liberte}).   This implies that the map  $\phi\mapsto \delta_\Mf\circ \phi$  defines a linear  isomorphism   $\Hom_{\HMf}(\Hf, \HMf)\cong \Hom_\k(\Hf, \k)$.  It remains to verify that it induces a morphism of $(\Hf, \Hf_\Mf)$-bimodules  $\Hom_{\HMf}(\Hf, \HMf)\cong \Hom_\k(\iota_\Mf\Hf, \k)$ which is immediate.

To prove (3), first notice that the map $f\mapsto \delta\circ f$ induces a  morphism of  $(\Hf, \Hf_\Mf)$-bimodules from
$\Hom_\Hf(\iota_\Mf \Hf\iota , \Hf)\rightarrow \Hom_R(\iota_\Mf \Hf, R)$.
Using (1)  with $\Mf=\Gf$,  we know that for any element  $\varphi\in \Hom_R(\Hf, R)$ there is a unique $f\in \Hom_\Hf(\Hf, \Hf)$ such that $\varphi= \delta\circ f$.
But precomposing by $\iota^{-1}$ induces a linear isomorphism 
$ \Hom_\Hf(\Hf,\Hf)\overset{\simeq}\longrightarrow\Hom_\Hf(\iota_\Mf \Hf\iota , \Hf)$ so $\varphi=\delta\circ f\circ \iota^{-1}$  where $f\circ\iota^{-1}\in \Hom_\Hf(\iota_\Mf \Hf\iota , \Hf)$.

\end{proof}

\begin{rema}\label{RbimoHnonp}
If $\k$ is a ring of characteristic different from $p$, we may attach to $\Mf$ the 
non degenerate symmetric map $\delta'_\Mf:\HMf\rightarrow R$
as in Proposition \ref{tracemap}(2) and the corresponding automorphism $\iota'_\Mf$ of $\HMf$ is  the identity.  We then have an  isomorphism of  $(\Hf, \HMf)$-bimodules
$\Hom_\HMf (\Hf, \HMf)\cong \Hf$.
\medskip

The strategy to prove this is the same as the one in the proof of Prop. \ref{RbimoH} with $\delta'$,  $\delta_\Mf'$,  $\iota'$  and $\iota'_\Mf$ instead of, respectively, $\delta$,  $\delta_\Mf$,  $\iota$  and $\iota_\Mf$. 
All arguments go through immediately except for the analogous of point (1) of that proof: we verify here that the map  $F': \Hf\rightarrow \Hom_\k(\Hf,\k)$ sending  $h\in \Hf$ onto 
$x\mapsto \delta'(hx)$ is surjective. We need to prove that  $e_w$ (same notation as in the above mentioned proof) lies in the image of $F'$  for  any $w\in \norm_\Gf$. If $w$ has length zero, then $e_w=F'(\tau_{w^{-1}})$. Now suppose that $e_w$ lies in the image of $F'$ for all $w$ with $\ell(w)\leq k$. Let $v\in \norm_\Gf$ with length $k+1$ and $s\in S$ such that $\ell( \nf_sv)=\ell(v)-1$. The $R$-linear form $e:= e_{\nf_s v}.\tau_{\nf_s}=e_{\nf_s v}(\tau_{\nf_s}-)$  lies in the image of $F'$ by hypothesis and since $F'$ is right $\Hf$-equivariant. 
Because $R$ has characteristic different from $p$ and using \eqref{Q}, we have
 $e(\tau_{v})=q_s\neq 0$. In fact, still using \eqref{Q} one easily checks that $e= q_s e_v$. So $e_v$ lies in the image of $F'$.

\end{rema}
\begin{rema}
Suppose that $R$ is a field. Then the above results follow from the  theory of Frobenius algebras.
We  mentioned in \eqref{ssBNpJ} that $(\Mf, \Bf_{\Mf}, \norm_{\Mf}, S_J)$ together with the decomposition $\Bf_{\Mf}=\Zf\Uf_{\Mf}$   is a (strongly) split $BN$-pair of characteristic $p$  \cite[Def. 2.20]{CE}.  The results of \cite[Prop. 3.7]{Tin}
\cite[Thm. 2.4]{Saw},\cite[Prop. 6.11]{CE} apply  to the finite Hecke algebras  $\Hf_\Mf$. In particular, 
they are Frobenius.   
If $\k$ is a field, then the proof of Prop. \ref{RbimoH} simplifies greatly (by argument of dimension) and one can prove in fact more generally that 
for  $\m$ a $(\HMf, \Hf)$-bimodule there is an isomorphism of $(\Hf, \HMf)$-bimodules
$\Hom_\Hf(\iota_\Mf \m \iota, \Hf)\cong \Hom_\HMf(\m,\HMf)$.

\end{rema}

\begin{rema} \label{rema:tracemapfield}
 Prop. \ref{tracemap} (2) implies that if $R$ is a field of characteristic different from $p$, then  $\HMf$ is not only Frobenius but also  symmetric. 
If $\k$ is a field of characteristic $p$, then $\HMf$ is not symmetric in general \cite[Addendum to Theorem 24]{Saw}. 

\end{rema}

\section{Parabolic induction for the finite group and unipotent finite Hecke algebra}

Let $R$ be  a commutative ring. For $\Pf$ a standard parabolic subgroup of $\Gf$ with Levi decomposition $\Pf=\Mf \, \Nf$,  we consider the category $\Rep(\Mf)$ (resp. $\Rep(\Pf)$) of $\k$-representations of $\Mf$ (resp. $\Pf$), that is, the category of left $R[\Mf]$-modules (resp. $R[\Pf]$-modules ), and the category $\Mod(\Hf_\Mf)$ of  right $\Hf_\Mf$-modules.  

\subsection{Parabolic induction and restriction for the representations of  the finite group.\label{parabfinitegroup}}

For $\V\in \Rep(\Mf)$, we consider the representation  of $\Gf$ on the space $\Ind ^\Gf_\Pf(\V)$ of functions $f: \Gf\rightarrow \V$ such that $f(mng)=m.v$ for all $m\in \Mf$, $n\in \Nf$ and $g\in \Gf$. The action of $\Gf$ is by right translations, $(g,f)\mapsto f( \, .\,g)$. This define the functor  of parabolic induction 
$$\Ind_\Pf^\Gf: \Rep(\Mf)\longrightarrow \Rep(\Gf).$$

For $\V\in \Rep(\Gf)$, we consider the representation of $\Mf$ on the $\Nf$-invariant subspace $\V^\Nf$ which  defines a functor denoted by 
$$(-)^\Nf:\Rep(\Gf)\longrightarrow  \Rep(\Mf)$$ and the representation of $\Mf$ on the $\Nf$-coinvariant space $\V_\Nf$ which  defines a functor denoted by 
$$ (-)_\Nf:\Rep(\Gf)\longrightarrow  \Rep(\Mf).$$

\begin{prop} \label{finiteadjoints}The functor  $\ii_\Pf^\Gf$ is faithful, exact,  of  right adjoint $(-)^\Nf$ and left adjoint $(-)_\Nf$. 

\end{prop}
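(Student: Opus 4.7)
The plan is to factor $\ii_\Pf^\Gf$ as the composition
\[
\Rep(\Mf) \xrightarrow{\;\mathrm{infl}\;} \Rep(\Pf) \xrightarrow{\;\mathrm{Ind}_\Pf^\Gf\;} \Rep(\Gf),
\]
where $\mathrm{infl}$ is inflation along the quotient map $\Pf \twoheadrightarrow \Pf/\Nf \cong \Mf$ and $\mathrm{Ind}_\Pf^\Gf$ denotes the ordinary induction for the finite groups $\Pf \subset \Gf$. The natural isomorphism $\ii_\Pf^\Gf \cong \mathrm{Ind}_\Pf^\Gf \circ \mathrm{infl}$ is immediate from the definition: the requirement $f(mng) = m\cdot f(g)$ for all $(m,n) \in \Mf \times \Nf$ is exactly the condition that $f$ is $\Pf$-equivariant for the inflated $\Pf$-action on $V$.

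Once the factorization is set up, I would assemble the adjunctions componentwise. For inflation, the pairs $((-)_\Nf,\mathrm{infl})$ and $(\mathrm{infl},(-)^\Nf)$ are classical adjoint pairs between $\Rep(\Pf)$ and $\Rep(\Mf)$: for $W \in \Rep(\Pf)$ and $V \in \Rep(\Mf)$, any $\Pf$-equivariant map $W \to \mathrm{infl}(V)$ kills the submodule generated by $(n-1)W$, $n\in \Nf$, and hence factors uniquely through $W_\Nf$, and dually any $\Pf$-map $\mathrm{infl}(V) \to W$ lands in $W^\Nf$. For $\mathrm{Ind}_\Pf^\Gf$, the subgroup $\Pf$ has finite index in $\Gf$, so induction and coinduction coincide and $\mathrm{Res}^\Gf_\Pf$ serves as both a left and a right adjoint (Frobenius reciprocity). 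Composing adjunctions then produces the right adjoint $(-)^\Nf \circ \mathrm{Res}^\Gf_\Pf = (-)^\Nf$ and the left adjoint $(-)_\Nf \circ \mathrm{Res}^\Gf_\Pf = (-)_\Nf$ of $\ii_\Pf^\Gf$, where the implicit restriction along $\Pf \hookrightarrow \Gf$ is harmless because the output only retains the $\Mf$-action.

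Exactness is then automatic: $R[\Gf]$ is free as a right $R[\Pf]$-module on any system of left coset representatives of $\Gf/\Pf$, so $\mathrm{Ind}_\Pf^\Gf \cong R[\Gf] \otimes_{R[\Pf]} (-)$ is exact, and inflation is trivially exact. For faithfulness I would use the unit $V \to (\ii_\Pf^\Gf V)^\Nf$ of the right adjunction: explicitly, $v \in V$ maps to the function $f_v$ supported on $\Pf$ with $f_v(p) = p\cdot v$, and evaluation at $1 \in \Gf$ recovers $v$. This assignment is natural in $V$, so any morphism $\varphi\colon V \to V'$ with $\ii_\Pf^\Gf(\varphi) = 0$ must itself be zero.

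I do not expect a genuine obstacle: the argument is a routine assembly of standard finite-group facts. The only point to be careful about is bookkeeping of the $\Mf$- versus $\Pf$-action on the various $\Nf$-(co)invariant spaces, together with verifying that induction and coinduction really do agree for the finite-index inclusion $\Pf \subset \Gf$ over an arbitrary commutative base ring $R$, which is where one uses that the index is finite (so $R[\Gf]$ is finitely generated and free over $R[\Pf]$ on both sides).
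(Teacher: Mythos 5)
Your proof is correct and takes essentially the same route as the paper: both factor $\ii_\Pf^\Gf$ as ordinary induction (which equals coinduction for the finite-index subgroup $\Pf\subset\Gf$) composed with inflation, and then assemble the adjoints by composing the classical adjunctions $((-)_\Nf,\mathrm{infl},(-)^\Nf)$ and $(\mathrm{Ind},\mathrm{Res},\mathrm{Coind})$. The only cosmetic differences are that the paper deduces exactness from the abstract fact that a functor with both a left and a right adjoint is exact (where you invoke freeness of $R[\Gf]$ over $R[\Pf]$ directly), and for faithfulness the paper cites freeness of $R[\Gf]$ as a right $R[\Pf]$-module while you use split injectivity of the unit of adjunction — both perfectly adequate.
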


 \begin{rema}

The $\Nf$-invariant  and $\Nf$-coinvariants functors are more naturally defined on the category $\Rep(\Pf)$ of the $R$-representations of $\Pf$, where they are respectively the  right and left adjoint functors of the inflation $\Rep(\Mf)\to \Rep(\Pf)$.

\end{rema}

\begin{proof} Although this property is well known we do not know a reference and we give a proof.   By Frobenius reciprocity, the restriction $\Rep(\Gf)\to \Rep(\Pf)$ from $\Gf$ to $\Pf$ admits a left and right adjoint equal respectively to the induction $R[\Gf]\otimes_{R[\Pf]} -$ and to the coinduction $  \Hom_{R[\Pf]}(R[\Gf], -)$  \cite[2.8.2]{Benson}. Here, $R[\Gf]$ is seen as a right $R[\Pf]$-module for the induction and as a left $R[\Pf]$-module for the coinduction.  
It is well known that the coinduction and induction functors $\Rep(\Pf)\to \Rep(\Gf)$  coincide.


 The parabolic induction  $\ii_\Pf^\Gf$ is the composite of  the  inflation $\rm{inf}_\Pf:\Rep(\Mf)\to \Rep(\Pf)$ from $\Mf$ to $\Pf$ and of the 
coinduction $\Rep(\Pf)\to \Rep(\Gf)$. 
 As the $\Nf$-coinvariant functor $\Rep(\Pf)\to \Rep(\Mf)$  and the $\Nf$-invariant functor $\Rep(\Pf)\to \Rep(\Mf)$ are  respectively the left adjoint and the right adjoint of the inflation,
 the functor $(-)_\Nf$ is a left adjoint of  $\ii_\Pf^\Gf$ and the functor $(-)^\Nf$
is a right adjoint of $\ii_\Pf^\Gf$. As   a functor with a left and a right adjoint, $\ii_\Pf^\Gf$  is exact. 
\\
Lastly, it is easy to see that $\ii_\Pf^\Gf$ is faithful since $R[\Gf]$  is a free right $R[\Pf]$-module.
\end{proof}

\begin{rema}\label{rema:indcontra} Suppose that $R$ is a field and
 consider the   contravariant endofunctor $\V\mapsto \V^\vee$ of $\Rep(\Gf)$ attaching to a representation $\V$ its contragredient representation $\V^\vee=\Hom_R(\V, R)$. 
We check that it commutes with parabolic induction, \emph{i.e.} for any representation $\V$ of $\Mf$ we have a natural isomorphism of representations of $\Gf$:
$$\Ind_\Pf^\Gf(\V^\vee)\longrightarrow  \Ind_\Pf^\Gf(\V)^\vee.$$ 
Let $\langle \,.\, ,\,.\,\rangle: \V^\vee\times\V\rightarrow k$ denote the duality between $\V$ and $\V^\vee$. To an element 
$\xi\in  \Ind_\Pf^\Gf(\V^\vee)$ we associate the linear map 
$$\Ind_\Pf^\Gf(\V)\rightarrow R, \: f\mapsto \sum_{g\in \Pf\backslash \Gf} \langle\xi(g), f(g) \rangle.$$ This clearly defines an injective linear map $\Ind_\Pf^\Gf(\V^\vee)\rightarrow (\Ind_\Pf^\Gf(\V))^\vee$. 
It is surjective:  $\Phi\in
(\Ind_\Pf^\Gf(\V))^\vee$  is the image of the element $\xi\in \Ind_\Pf^\Gf(\V^\vee)$ such that $\langle \xi(g),v\rangle=\Phi( f_{\Pf g, v})$, where 
$f_{\Pf g, v}\in \Ind_\Pf^\Gf(\V) $ is the function with support in $\Pf g$ and value $v$ at $g$. 

Now  to a representation $\V\in \Rep(\Pf)$, we can 
also attach its contragredient representation $\V^\vee=\Hom_R(\V, R)$. 
The
 contragredient of the $\Nf$-coinvariants of $\V$ and the  $\Nf$-invariants of the contragredient of $\V$ are functorially isomorphic representations of $\Mf$: there is a natural isomorphism of  representations of $\Mf$:
 \begin{equation}\label{commcont}(\V_\Nf)^\vee\cong (\V^\vee)^\Nf.\end{equation}
 Denote by  $\V(\Nf)$   the subspace of $\V$ generated by $nv-v$ for all $n\in \Nf, v\in \V$.   By definition,  $\V_\Nf=\V/\V(\Nf)$. An element of $ \V^\vee$ is $\Nf$-invariant if and only it vanishes on  $\V(\Nf)$. Therefore $(\V_\Nf)^\vee \simeq (\V^\vee)^{\Nf}$. It is obvious that   the isomorphism is $\Mf$-equivariant and functorial.  
\end{rema}

\subsection{\label{indufinite}Induction, coinduction and restriction for finite Hecke modules.}
We recall  that $\Hf_\Mf$ is isomorphic  to the subalgebra of   $\Hf$  of basis $(\tau_w)_{w\in \norm_{\Mf}}$. We consider the natural restriction functor
$$\r_{\Hf_\Mf}^\Hf= -\otimes _\Hf \Hf: \Mod(\Hf)\longrightarrow \Mod(\Hf_\Mf)$$ where $\Hf$ is seen as a  $(\Hf,\Hf_\Mf)$-bimodule.
 For $\mm$ a right $\Hf_\Mf$-module, we consider the induced $\Hf$-module $\mm\otimes_\HMf \Hf$
 where $\Hf$ is seen as a  $(\HMf,\Hf)$-bimodule.
This defines the functor  of induction for Hecke modules, which we will sometimes call parabolic induction by analogy with the definitions in \S\ref{parabfinitegroup}:
$$\ii_{\HMf}^\Hf= -\otimes _\HMf \Hf: \Mod(\Hf_\Mf)\longrightarrow \Mod(\Hf).$$
It  is left adjoint to $\r_{\Hf_\Mf}^\Hf$.
The space $\Hom_\HMf(\Hf, \mm)$ has a structure of right $\Hf$-module given by $(f,h)\mapsto [f.h: x\mapsto f(hx)]$ and this defines the functor of coinduction  for Hecke modules
$$\cii_{\HMf}^\Hf= \Hom_\HMf(\Hf, -): \Mod(\Hf_\Mf)\longrightarrow \Mod(\Hf).$$
It  is right adjoint  to $\r_{\Hf_\Mf}$. 

\medskip

The functor $\ii^\Hf_{\HMf}$   has a left adjoint because it  commutes with small projective limits as the left $\HMf$-module $\Hf$ is free  (Prop. \ref{liberte}).  We denote it by $$\ll^\Hf_{\HMf}: \Mod(\Hf)\longrightarrow \Mod(\HMf)$$ and  describe it explicitly in the following proposition.

\medskip

Recall that we attach to $\Mf$ the choice of a pair $(\delta_\Mf, \iota_\Mf)$ as in Proposition \ref{tracemap}(1). We refer to the comments before Prop. \ref{RisobimoH}.

\begin{prop}\label{prop:compafunc}

Let $R$ be a ring. 
 Let $\Mf':= \longw \Mf  \longw^{-1}$.  \\ The isomorphism of algebras 
$\Hf_{\Mf'}\xrightarrow{\iota_{\Mf} ^{-1}\circ \iota} \Hf_{\Mf}$
 induces an equivalence of categories $- \, \iota_{\Mf} ^{-1}\iota: \Mod(\HMf)\rightarrow \Mod(\Hf_{\Mf'})$ with quasi-inverse  $- \, \iota^{-1}\iota_{\Mf}: \Mod(\Hf_{\Mf'})\rightarrow \Mod(\HMf) $.\\
 The functors  $$-\otimes _\HMf \Hf\textrm{ and }\Hom_{\Hf_{\Mf'}}(\Hf , - \, \iota_{\Mf} ^{-1}\iota) \::\: \Mod(\HMf)\rightarrow \Mod(\Hf)$$ are naturally isomorphic that is to say 
$$\ii_{\HMf}^\Hf\cong \cii_{\Hf_{\Mf'}}^\Hf  (-\, \iota_{\Mf} ^{-1}\iota).$$
The left adjoint of the functor $\ii_\HMf^\Hf$ is
$$\ll^\Hf_{\HMf}=(\r^\Hf_{\Hf_{\Mf'}}(-))\iota ^{-1}\iota_{\Mf}: \Mod(\Hf)\rightarrow \Mod(\HMf).$$
In particular, $\ll^\Hf_\HMf$ is an exact functor.
 \end{prop}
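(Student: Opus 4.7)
The plan is to prove all three assertions simultaneously by first computing $\ll^\Hf_\HMf$ explicitly and then deducing the statement about $\ii^\Hf_\HMf$ by passing to right adjoints. The two inputs beyond formal manipulation are Proposition~\ref{RbimoH}, which identifies the bimodule $\Hom_\HMf(\Hf,\HMf)$ with the twist $\Hf\,\iota^{-1}\iota_\Mf$, and Proposition~\ref{liberte}, which guarantees that $\Hf$ is free of finite rank as a left $\HMf$-module.

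First I check that $\iota$ restricts to an algebra isomorphism $\Hf_{\Mf'}\xrightarrow{\sim}\HMf$. Since $\widetilde{\longw}\in\norm_\Gf$ and $\Mf'=\longw\Mf\longw^{-1}$, conjugation by $\widetilde{\longw}$ sends $\Mf'$ to $\widetilde{\longw}^{2}\Mf\widetilde{\longw}^{-2}=\Mf$, using that $\widetilde{\longw}^{2}\in\Zf\subseteq\Mf$. Intersecting with $\norm_\Gf$ gives $\widetilde{\longw}\,\norm_{\Mf'}\widetilde{\longw}^{-1}=\norm_\Mf$, so $\iota$ carries $\tau_n$ for $n\in\norm_{\Mf'}$ to $\tau_{\widetilde{\longw}n\widetilde{\longw}^{-1}}\in\HMf$. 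Composing with the automorphism $\iota_\Mf^{-1}$ of $\HMf$ produces the algebra isomorphism $\iota_\Mf^{-1}\circ\iota$, and pullback of module structures along this isomorphism and its inverse $\iota^{-1}\iota_\Mf$ yields mutually quasi-inverse equivalences of module categories, which is the first assertion.

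To compute $\ll^\Hf_\HMf$, I exploit the fact that $\Hf$ is free of finite rank as a left $\HMf$-module. This gives a canonical dual-basis isomorphism of right $\Hf$-modules
\[
\mathfrak{n}\otimes_\HMf\Hf\ \xrightarrow{\ \sim\ }\ \Hom_\HMf\!\bigl(\Hom_\HMf(\Hf,\HMf),\,\mathfrak{n}\bigr),\qquad n\otimes h\mapsto\bigl[\phi\mapsto n\cdot\phi(h)\bigr],
\]
for every right $\HMf$-module $\mathfrak{n}$. Combined with tensor-hom adjunction this yields $\Hom_\Hf(\mm,\mathfrak{n}\otimes_\HMf\Hf)\cong \Hom_\HMf(\mm\otimes_\Hf\Hom_\HMf(\Hf,\HMf),\mathfrak{n})$ for $\mm\in\Mod(\Hf)$, whence $\ll^\Hf_\HMf(\mm)=\mm\otimes_\Hf\Hom_\HMf(\Hf,\HMf)$. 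Substituting the bimodule isomorphism of Proposition~\ref{RbimoH} collapses this to $\mm\otimes_\Hf\Hf\,\iota^{-1}\iota_\Mf$; since $\iota^{-1}\iota_\Mf$ identifies $\HMf$ with $\Hf_{\Mf'}\subseteq\Hf$, the tensor product is just $\mm$ equipped with the $\HMf$-action obtained by restricting the $\Hf$-action of $\mm$ to $\Hf_{\Mf'}$ and pulling back through $\iota^{-1}\iota_\Mf$, i.e.\ $(\r^\Hf_{\Hf_{\Mf'}}(\mm))\iota^{-1}\iota_\Mf$. Exactness of $\ll^\Hf_\HMf$ follows because restriction is exact and any equivalence of abelian categories is exact.

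Finally, the isomorphism $\ii^\Hf_\HMf\cong \cii^\Hf_{\Hf_{\Mf'}}(-\,\iota_\Mf^{-1}\iota)$ is a formal consequence of the previous step. Writing $\ll^\Hf_\HMf=(-\,\iota^{-1}\iota_\Mf)\circ\r^\Hf_{\Hf_{\Mf'}}$, its right adjoint is the composition of the individual right adjoints taken in reverse order: the right adjoint of $\r^\Hf_{\Hf_{\Mf'}}$ is coinduction $\cii^\Hf_{\Hf_{\Mf'}}=\Hom_{\Hf_{\Mf'}}(\Hf,-)$, and the right adjoint of the equivalence $-\,\iota^{-1}\iota_\Mf$ is its quasi-inverse $-\,\iota_\Mf^{-1}\iota$. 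Uniqueness of adjoints then gives $\ii^\Hf_\HMf\cong \cii^\Hf_{\Hf_{\Mf'}}\circ(-\,\iota_\Mf^{-1}\iota)$. The main substantive ingredient is the Nakayama-type identification in Proposition~\ref{RbimoH}; once it is available, the rest of the argument is routine adjunction bookkeeping together with the dual-basis formula for free modules.
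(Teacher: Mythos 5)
Your proof is correct and relies on the same two pillars as the paper's argument: freeness of $\Hf$ as a left $\HMf$-module (Proposition~\ref{liberte}) and the bimodule identification $\Hom_\HMf(\Hf,\HMf)\cong\Hf\,\iota^{-1}\iota_\Mf$ (Proposition~\ref{RbimoH}). The difference is merely one of ordering: you compute $\ll^\Hf_\HMf$ first via the dual-basis isomorphism and tensor--hom adjunction and then pass to right adjoints to obtain the twisted-coinduction description of $\ii^\Hf_\HMf$, whereas the paper establishes the isomorphism $-\otimes_\HMf\Hf\cong\Hom_{\Hf_{\Mf'}}(\Hf,-\,\iota_\Mf^{-1}\iota)$ directly by a chain of twist substitutions and then reads off $\ll^\Hf_\HMf$ as a formal consequence.
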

 
\begin{proof} There is a natural natural morphism of functors 
$$-\otimes _\HMf \Hf\longrightarrow \Hom_\HMf(\Hom_\HMf(\Hf, \HMf), -)$$ where $\Hf$ is seen as a $(\Hf_\Mf, \Hf)$-bimodule.
By Proposition \ref{liberte}, the  left $\HMf$-module $\Hf$ is free  (of finite rank)  and it easily follows that the natural transformation above is an isomorphism of functors:
$$-\otimes _\HMf \Hf\cong \Hom_\HMf(\Hom_\HMf(\Hf, \HMf), -): \Mod(\Hf_\Mf)\longrightarrow \Mod(\Hf).$$
Combining this with \eqref{RisobimoH},  we obtain an isomorphism of functors $\Mod(\HMf)\rightarrow \Mod(\Hf)$
$$-\otimes _\HMf \Hf\cong \Hom_\HMf(\Hf\iota^{-1} \iota_\Mf, -)$$ where we recall that the right action of $\HMf$ on $\Hf\iota^{-1} \iota_\Mf$ is via the  embedding
 $\HMf\xrightarrow{ \iota^{-1}\circ \:\iota_\Mf}\Hf.$
The automorphisms   $\iota$ and $\iota_\Mf$ are described explicitly in Prop. \ref{tracemap}(1).
Let  $\tw$ (resp. $\twm$)  be a lift in $\norm _\Gf$ (resp. $\norm _\Mf$) of the longest element $\longw$ of $\Wf$ (resp. $\longw_{\Mf}$ of $\Wf_\M$). 
We pick $\widetilde{\longw_{\Mf'}}:=\tw^{-1} \twm \tw$ as a lift in $\norm _{\Mf'}$ of the longest element $\longw_{\Mf'}$ of $\Wf_{\Mf'}$. The corresponding automorphism $\iota_{\Mf'}$ of $\Hf_{\Mf'}$ as defined in Prop. \ref{tracemap}(1) then coincides with the restriction to  $\Hf_{\Mf'}$  of $\iota^{-1}  \iota_\Mf    \iota$.

For any right $\HMf$-module $\m$,  have  the following isomorphisms of right $\Hf$-modules (see Notation \ref{tmod}):
\begin{align*}\Hom_\HMf(\Hf\iota^{-1}   \iota_\Mf, \mm)&\cong
  \Hom_{\iota^{-1} (\HMf)}(\Hf\iota^{-1}  \iota_\Mf    \iota  , \mm \iota  )= \Hom_{\Hf_{\Mf'}}(\Hf\iota^{-1}   \iota_\Mf   \iota  , \mm \iota )\cong \Hom_{\Hf_{\Mf'}}(\Hf\iota_{\Mf'} , \mm \iota )\\ &\cong \Hom_{\Hf_{\Mf'}}(\Hf , \mm \iota\iota_{\Mf'}^{-1})\cong \Hom_{\Hf_{\Mf'}}(\Hf , \mm \iota_{\Mf}^{-1}\iota ).\end{align*}
So the functors 
$-\otimes _\HMf \Hf\cong  \Hom_{\Hf_{\Mf'}}(\Hf , - \iota_{\Mf}^{-1}\iota )$ are isomorphic.
\end{proof}

Recalling that $\Hf$ is free as a left $\HMf$-module (Prop. \ref{liberte}), with the notations of  Prop. \ref{prop:compafunc}  we have:

\begin{coro}  \label{cor:exactadj} The induction $\ii_\HMf^\Hf$ is faithful, exact,  of exact right adjoint  $\r^\Hf_{\Hf_{\Mf}}$ and exact left adjoint $(\r^\Hf_{\Hf_{\Mf'}}(-))\iota ^{-1}\iota_{\Mf}$.
\end{coro}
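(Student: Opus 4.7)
The plan is to deduce all four claims directly from Proposition \ref{liberte} and Proposition \ref{prop:compafunc}, which have done the substantive work. No new computation should be needed.

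First I would treat the induction functor itself. By Proposition \ref{liberte}, $\Hf$ is free as a left $\HMf$-module with basis $(\tau_{\tilde d})_{d\in \dist}$. Viewing $\Hf$ as a $(\HMf,\Hf)$-bimodule, this means $\ii_\HMf^\Hf=-\otimes_\HMf\Hf$ is a direct sum of $|\dist|$ copies of the forgetful identity, i.e.\ there is a natural isomorphism of $R$-modules
\[
\mm\otimes_\HMf\Hf \;\cong\; \bigoplus_{d\in\dist} \mm\,\tau_{\tilde d}
\]
functorial in $\mm\in\Mod(\HMf)$. Exactness is immediate since free modules are flat, and faithfulness follows because the right-hand side vanishes only when $\mm=0$. (Alternatively, flatness of $\Hf$ as a left $\HMf$-module is enough for exactness, while faithfulness amounts to faithful flatness, which is automatic from freeness over a unital ring.)

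Next I would handle the right adjoint. The functor $\r^\Hf_{\HMf}$ is the restriction of scalars along the algebra embedding $\HMf\hookrightarrow \Hf$ of \eqref{incluhecke2}; it leaves the underlying $R$-module of any $\Hf$-module unchanged and only forgets part of the action. As such it carries every short exact sequence of $\Hf$-modules to the identical short exact sequence of abelian groups with a smaller algebra acting, and is therefore exact (and also faithful).

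Finally, for the left adjoint, Proposition \ref{prop:compafunc} gives the identification $\ll^\Hf_\HMf=(\r^\Hf_{\Hf_{\Mf'}}(-))\,\iota^{-1}\iota_\Mf$. The functor $\r^\Hf_{\Hf_{\Mf'}}$ is exact by the argument of the previous paragraph applied to $\Mf'=\longw\Mf\longw^{-1}$ in place of $\Mf$. The postcomposition $-\,\iota^{-1}\iota_\Mf$ is the equivalence of categories $\Mod(\Hf_{\Mf'})\xrightarrow{\sim}\Mod(\HMf)$ induced by the algebra isomorphism $\iota^{-1}\circ\iota_\Mf:\HMf\xrightarrow{\sim}\Hf_{\Mf'}$ (see Proposition \ref{prop:compafunc}), hence in particular exact. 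The composition of two exact functors is exact, giving the last claim.

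There is no genuine obstacle: once Propositions \ref{liberte} and \ref{prop:compafunc} are in hand, the corollary is a bookkeeping consequence of flatness (in fact freeness) of $\Hf$ over $\HMf$ on both sides and of the triviality of restriction and of twisting by an algebra isomorphism with respect to exactness. The only point requiring any care is to apply the freeness statement to $\Mf'$ as well as $\Mf$, which is legitimate because $\Mf'$ is another standard Levi (conjugate to $\Mf$ by the longest element) to which all the results of \S\ref{finiteHecke} apply verbatim.
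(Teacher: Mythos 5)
Your proposal is correct and follows essentially the same route the paper intends: faithfulness and exactness of $\ii_\HMf^\Hf$ from freeness of $\Hf$ over $\HMf$ (Prop.\ \ref{liberte}), exactness of the right adjoint because restriction of scalars is always exact, and exactness of the left adjoint from the identification in Prop.\ \ref{prop:compafunc} (which already records this), as a restriction composed with an equivalence of categories. The paper states the corollary without a separate proof precisely because it is this bookkeeping consequence, and your write-up fills in the details accurately.
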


 Compare with the parabolic  functor $\ii_\Pf^\Gf$ for the group (\S \ref{parabfinitegroup})  where  the right adjoint $(-)^\Nf$ and the left adjoint  $(-)_\Nf$   are not  always exact (see Prop. \ref{prop:notexact}).

\begin{rema}\label{L=res}
 If $\k$ is a ring of characteristic different from $p$, then 
 by Remark \ref{RbimoHnonp}, we have an isomorphism of $(\Hf, \HMf)$-bimodules
 $\Hom_\HMf (\Hf, \HMf)\cong \Hf$, so following the arguments of the proof of 
 Prop. \ref{prop:compafunc} we would prove in this case that 
 the functors $\ii^\Hf_{\HMf}$ and $\cii^\Hf_{\HMf}$ coincide. In particular, $\r^\Hf_{\HMf}$ is not only the right adjoint but also the left adjoint of $\ii^\Hf_{\HMf}$, and   we have $\ll^\Hf_{\HMf}\cong  \r^\Hf_{\HMf}$. This means that the pair $(\HMf, \Hf)$ is  a Frobenius pair. Note that by Corollary \ref{cor:exactadj}, we also have 
$\ll^\Hf_{\HMf}\cong  \r^\Hf_{\HMf}\cong (\r^\Hf_{\Hf_{\Mf'}}(-))\iota ^{-1}\iota_{\Mf}$.  
  \medskip

\end{rema}

 \begin{rema}  Let $\Mf$ and $\Mf'$ as in Prop. \ref{prop:compafunc}.  From the proposition, we also get the following isomorphism of functors $$\cii_{\HMf}^\Hf\cong \ii_{\Hf_{\Mf'}}^\Hf  (-\, \iota_{\Mf} ^{-1}\iota).$$
Denote by
  $\Triv_{\HMf}$, respectively $\Triv_{ \Hf_{\Mf'}}$, the restriction to $\Hf_{\Mf}$, respectively  $\Hf_{\Mf'}$,  of the trivial character of $\H$  as defined in Remark \ref{rema:idempo}.  Then one easily checks that 
     $\Triv_{\HMf}\, \iota_{\Mf} ^{-1}\iota\cong \Triv_{ \Hf_{\Mf'}}$ and therefore
  $$\cii_{\HMf}^\Hf(\Triv_{ \Hf_{\Mf}})\cong  \ii_{\Hf_{\Mf'}}^\Hf(\Triv_{ \Hf_{\Mf'}}).$$ Suppose that $R$ has characteristic $p$. Then one checks that, in general,  the right $\Hf$-modules
  $\ii_{\Hf_{\Mf'}}^\Hf(\Triv_{ \Hf_{\Mf'}})$ and   $\ii_{\Hf_{\Mf}}^\Hf(\Triv_{ \Hf_{\Mf}})$   are not isomorphic. (For example, when $\Gf={\rm GL}_3(\mathbb F_q)$  and $R$ is a field of characteristic $p$, we have $\Pi=\{\alpha_1, \alpha_2\}$. For $\{i, i'\}=\{1,2\}$  let $\Mf_i$ the standard Levi subgroup corresponding to  $\{\alpha_i\}$ and $s_i\in \Wf$ the corresponding reflection. 
  We have $\Mf_{i'}= \longw \Mf_i \longw^{-1}$.
  Then
   $\ii_{\Hf_{\Mf_i}}^\Hf(\Triv_{ \Hf_{\Mf_i}})$ 
   is a $3$-dimensional vector space. As a right $\Hf$-module, it has a $2$-dimensional socle and the quotient of   
 $\ii_{\Hf_{\Mf_i}}^\Hf(\Triv_{ \Hf_{\Mf_i}})$  by its socle is the character
     $\varepsilon_1\mapsto 1$,  $\varepsilon_1\tau_{n_{s_i}}\mapsto 0$, 
$\varepsilon_1\tau_{n_{s_{i'}}}\mapsto -1$  for $\Hf$.)
   So if $R$ has characteristic $p$, then the functors 
  $\ii_{\HMf}^\Hf$ and $\cii_{\HMf}^\Hf$
  are not isomorphic in general. \end{rema}

\subsection{Commutative diagrams: parabolic induction and unipotent-invariants functor}  
Let $\Pf$ be  a standard parabolic subgroup of $\Gf$ with Levi decomposition $\Pf=\Mf \, \Nf$.
\subsubsection{The $\Uf$-invariant functor and its left adjoint.\label{Uinv}}
Let $R$ be a commutative ring. We consider the universal module $\XMf= \ind_{ \Uf_\Mf}^\Mf(1)$. Recall from \ref{finiteHecke} that it is naturally  a left $\HMf$-module  and a  representation in $\Rep(\Mf)$. When $\Mf=\Gf$, we write simply $\Xf$ instead of $\Xf_\Gf$.
The  functor 
     \begin{align}\label{finite-tens} -\otimes_{\Hf} \Xf:\Mod(\Hf)\to \Rep(\Gf)  \end{align}
  is the left adjoint of the $\Uf$-invariant functor 
      \begin{align}\label{finite-inv} \Hom_\Gf(\Xf, -)=(-)^{\Uf}:\Rep(\Gf)
\to \Mod(\Hf).  \end{align}
It is therefore  right exact  and the  functor $(-)^{\Uf }$ is left exact.  The analogous definitions and statements hold for 
$(-)^{\Uf_\Mf}$ and 
$ -\otimes_\HMf\XMf$.

 The functor $-\otimes_{\Hf} \Xf$ is left exact if and only if the $\Hf$-module $\Xf$ is  flat.   
  In general, $\Xf$ is not flat over $\Hf$:
if  $\Gf={\rm GL}(n , \mathbb F_q)$ and $R$ is a field of characteristic $p$,  then $\Xf$ is a flat $\Hf$-module if $n=2$ and $q=p$ or if $n=3$ and $q=2$. It is not flat if  $n=2$ and $q\neq p$, if  $n=3$ and $q\neq 2$, and if $n\geq 4$ and $q\neq 2$ (\cite[Thm B]{OS}).

 \begin{rema} \label{rema:equiv}Let $R$ be a field  of characteristic different from $p$. Then, the $\Uf$-invariant functor and its left adjoint have  properties coming from the existence of  an  idempotent $e$ of $R[\Uf]$ such that $V^\Uf=eV$ for $V\in \Rep(\Gf)$.    In particular, the functor $(-)^\Uf$ is exact. Moreover, it admits a right adjoint because it commutes with small direct sums (\cite[Prop. 2.9]{Vigadjoint}). We have $\Xf  =eR[\Gf]$ and $\Hf=eR[\Gf]e$. The functor $ -\otimes_\Hf\Xf$   is fully faithful because it has a right adjoint and the map $\M\mapsto (\M\otimes_\Hf\Xf )^{\Uf}= \M\otimes_{eR[\Gf]e}eR[\Gf]e  $ is a functorial  isomorphism in  $\Mod(\Hf)$  (\cite[Prop. 1.5.6]{KS}).  If  $ -\otimes_\Hf\Xf$   admits a left adjoint, then it is exact. Let $\Rep(\Gf)^\dagger$ be the full subcategory of $\Rep(\Gf)$ of representations   generated by their $\Uf $-fixed vectors.
The  following properties are equivalent: \begin{enumerate}
 \item  $(-)^\Uf$ is fully faithful,
 \item  $(-)^{\Uf }: \Rep(\Gf)^\dagger\to \Mod(\Hf)$ is an equivalence of categories of inverse $ -\otimes_\Hf\Xf$,
 \item    $ -\otimes_\Hf\Xf: \Mod(\Hf)\to\Rep(\Gf)^\dagger$ is  an equivalence of categories of inverse $(-)^{\Uf }$,
 \item the category $\Rep(\Gf)^\dagger$ is abelian.
  \end{enumerate}
The equivalences between (1), (2), (3) follow from \cite[Prop. 2.4]{Vigadjoint},  the equivalence with (4)  follows from  \cite[\S I.6.6]{Vig1}. The properties (1), (2), (3) and (4)  are not satisfied already for $\Gf = {\rm GL}(2,\mathbb F_q)$ if $q^2-1=0$ in $R$, because the representation $ \Ind_{\Bf}^{\Gf}1$ generated by its $\Uf$-invariants  has length $3$  (James  Proc. London Math Soc 52(1986) 236-268) and   $\dim_R(\Ind_{\Bf}^{\Gf}1)^{\Uf }=2$.   When $R$ is an algebraically closed field of characteristic $p$, it is proved  \cite[Prop. 2.13]{OS}  (based on work by Cabanes  in \cite{Cabanes}) that the above conditions are equivalent to  $\X$ being flat over $\Hf$.
 \end{rema}

When $R$ is a field, let $\Rep^f(\Gf)$ denote the full subcategory of $\Rep(\Gf)$ of all finite dimensional representations.

\begin{rema} \label{rmk:proj} Suppose that $R$ is a field. We recall  that a finite dimensional representation of $\Rep(\Gf)$ is projective in $\Rep(\Gf)$ if and only if it is projective in $\Rep^f(\Gf)$.
\end{rema}
\begin{proof} This is a general and straightforward property. It is obvious that  a projective  objet of  $\Rep(\Gf)$ which is finite dimensional is     projective   in $\Rep^f(\Gf)$. 
  Conversely,  let $P$ be an object  in $\Rep^f(\Gf)$ and a surjective $R[\Gf]$-equivariant morphism  $\pi: R[\Gf]^s\twoheadrightarrow P$ for some $s\geq 0$.
 If $P$ is projective in $\Rep^f(\Gf)$, then this surjective morphisms splits and $P$ is a direct summand of  the free module $R[\Gf]^s$. So $P$ is projective in 
 $\Rep(\Gf)$.   
\end{proof}

 \begin{lemma} \label{Xnotproj}If $R$ is  a field of characteristic  $p$ and $\Uf\neq \{1\}$, then $\Xf$ is not  a projective object in $\Rep^f(\Gf)$. 

\end{lemma}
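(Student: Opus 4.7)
The plan is to argue by contradiction and exploit the fact that $\Uf$ is a $p$-Sylow subgroup of $\Gf$. Suppose $\Xf$ is projective in $\Rep^f(\Gf)$; by Remark~\ref{rmk:proj}, it is then projective in the ambient category $\Rep(\Gf)$, hence a direct summand of some finite free module $R[\Gf]^n$.

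The next step would be to restrict scalars along $R[\Uf]\hookrightarrow R[\Gf]$. Choosing a system of representatives of $\Uf\backslash \Gf$ exhibits $R[\Gf]$ as a free \emph{left} $R[\Uf]$-module of rank $[\Gf:\Uf]$, so restriction sends projective $R[\Gf]$-modules to projective $R[\Uf]$-modules. In particular $\Xf|_{\Uf}$ is projective over $R[\Uf]$.

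Now I would use that $\Uf$ is a (nontrivial) $p$-group and $R$ is a field of characteristic $p$. The group algebra $R[\Uf]$ is then local: its unique simple module is the trivial character and its augmentation ideal is the Jacobson radical. Consequently, every projective $R[\Uf]$-module is free, so its $R$-dimension is divisible by $|\Uf|$. Applied to $\Xf|_{\Uf}$ this forces $|\Uf|$ to divide $\dim_R \Xf = [\Gf:\Uf]$, the latter equality being immediate from $\Xf=\ind_{\Uf}^{\Gf}(1)$.

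The contradiction, and essentially the only nontrivial point, is that $[\Gf:\Uf]$ is coprime to $p$: the setup (Section~2.2) presents $\Gf$ as the $\mathbb F_q$-points of a connected reductive group with $\Uf$ the unipotent radical of a Borel, whose order $q^N$ is exactly the $p$-part of $|\Gf|$ (all other factors in the order formula are products of evaluations of cyclotomic polynomials at $q$, hence coprime to $p$). Therefore $[\Gf:\Uf]$ is coprime to $|\Uf|>1$, contradicting divisibility by $|\Uf|$. The only place where care is needed is this last arithmetic point; once $\Uf$ is known to be a $p$-Sylow of $\Gf$, the projectivity obstruction is purely a matter of dimensions modulo $|\Uf|$.
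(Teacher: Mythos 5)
Your proof is correct, but the endgame differs from the paper's. Both arguments hinge on the same modular-representation fact: $\Uf$ is a $p$-group, $R$ has characteristic $p$, so $R[\Uf]$ is local and its projectives are free, hence the restriction to $\Uf$ of a finite-dimensional projective $\Gf$-module is $R[\Uf]$-free. After that point you diverge. The paper's proof uses the freeness in the refined form $\dim_R P = |\Uf|\cdot\dim_R P^\Uf$ (citing \cite[Cor.~4.6]{Pas}), then computes $\dim_R\Xf = \sum_{w\in\norm_\Gf}|\Uf\backslash\Uf w\Uf|$ directly from the Bruhat decomposition and observes this is strictly less than $|\Uf|\,|\norm_\Gf| = |\Uf|\cdot\dim_R\Xf^\Uf$ because the $w=1$ term contributes only $1 < |\Uf|$. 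You instead invoke the coarser divisibility $|\Uf| \mid \dim_R\Xf = [\Gf:\Uf]$ and close the contradiction by noting that $\Uf$ is a $p$-Sylow of $\Gf$, so $[\Gf:\Uf]$ is prime to $p$. Your route is shorter and more conceptual — it sidesteps the Bruhat-cell bookkeeping entirely — at the cost of importing the $p$-Sylow property, which is standard for finite reductive groups but which the paper never needs to invoke. The one minor imprecision in your write-up is the remark that all non-$p$ factors in the order formula ``are products of cyclotomic polynomials evaluated at $q$''; this is irrelevant to the argument and is also the wrong justification — what matters (and suffices) is simply that $|\Uf|$ is the full $p$-part of $|\Gf|$, i.e.\ that $\Uf$ is a $p$-Sylow subgroup.
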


\begin{proof}
We use the results of  modular representation theory of finite groups  recalled in \cite[\S4]{Pas}.
We suppose that $R$ is a field of characteristic $p$.
Recall that $\Uf$ being a $p$-group, the trivial representation is its unique irreducible representation and  $R[\Uf]$ is the only principal indecomposable module in the category $\Rep(\Uf)$  of all $R[\Uf]$-modules. This implies that for any  finite dimensional projective object $P$ in  $\Rep (\Gf)$, the dimension of $P$   is equal to the product of $\vert \Uf\vert$ by the dimension of the $\Uf$-invariant subspace $P^\Uf$ (\cite[Cor. 4.6]{Pas}). If $\Xf$ were projective,  its dimension would be   $\vert \Uf\vert \,\vert \norm_\Gf\vert$. But the dimension of $\Xf$  is strictly smaller than $\vert \Uf\vert \,\vert \norm_\Gf\vert$ because it  is the sum over $w\in\norm_\Gf$ of all $\vert \Uf\backslash \Uf w\Uf\vert$,     we have   $\vert \Uf\backslash \Uf w\U \vert \leq  \vert \Uf \vert$ the integer $\vert \Uf\backslash \Uf w\U\vert $  is equal to $1$ when $w=1$, and  $\Uf\neq \{1\}$.  
\end{proof}

\begin{prop}  \label{prop:notexact}
   If $R$ be a field of characteristic  $p$ and $\Uf\neq \{1\}$,  then
\begin{itemize}\item[-] The  $ \Uf $-invariant functor  $(-)^\Uf$   (resp. the  restriction of $(-)^\Uf$    to  
 $\Rep^f(\Gf)$) is not right exact. 

\item[-] The  $ \Uf $-coinvariant functor $(-)_\Uf$ (resp. the  restriction of $(-)_\Uf$    to  
 $\Rep^f(\Gf)$) is not left exact. 

\end{itemize}
\end{prop}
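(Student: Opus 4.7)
The plan is to reduce both assertions to the non-projectivity of the universal module $\Xf$, which is exactly Lemma \ref{Xnotproj}.

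For the invariant functor $(-)^{\Uf}$: recall from \eqref{finite-inv} that $(-)^{\Uf}$ coincides with $\Hom_{\Gf}(\Xf, -)$. Such a Hom functor is always left exact, so on any abelian full subcategory of $\Rep(\Gf)$ it is right exact if and only if it is exact, if and only if $\Xf$ is projective in that subcategory. Now $\Xf$ is finite-dimensional, so by Remark \ref{rmk:proj} it is projective in $\Rep(\Gf)$ if and only if it is projective in $\Rep^f(\Gf)$; and by Lemma \ref{Xnotproj} (which applies precisely because $\Uf \neq \{1\}$ and $R$ has characteristic $p$), neither holds. Therefore $(-)^{\Uf}$ is not right exact on $\Rep^f(\Gf)$, and a fortiori not on $\Rep(\Gf)$.

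For the coinvariant functor $(-)_{\Uf}$: the standard description $V_{\Uf} = V / V(\Uf)$, where $V(\Uf)$ is the $R$-submodule generated by the $uv - v$, shows that $(-)_{\Uf}$ is always right exact. Consequently left exactness is equivalent to exactness. The idea is to transfer the failure obtained above via the contragredient duality, using the natural isomorphism $(V_{\Uf})^{\vee} \cong (V^{\vee})^{\Uf}$ furnished by Remark \ref{rema:indcontra} (applied with $\Pf = \Bf$, $\Nf = \Uf$). Since $V \mapsto V^{\vee}$ is an exact anti-equivalence of $\Rep^f(\Gf)$, applying it to a short exact sequence converts injectivity of the first arrow in $0 \to (V_1)_{\Uf} \to (V_2)_{\Uf} \to (V_3)_{\Uf} \to 0$ into surjectivity of the last arrow in $0 \to (V_3^{\vee})^{\Uf} \to (V_2^{\vee})^{\Uf} \to (V_1^{\vee})^{\Uf} \to 0$, and vice versa. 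Hence left exactness of $(-)_{\Uf}$ on $\Rep^f(\Gf)$ is equivalent to right exactness of $(-)^{\Uf}$ on $\Rep^f(\Gf)$, which has just been ruled out. Finally, if $(-)_{\Uf}$ were left exact on the full category $\Rep(\Gf)$, its restriction to the full subcategory $\Rep^f(\Gf)$ would still be left exact, contradicting what precedes.

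The only non-formal ingredient is Lemma \ref{Xnotproj}, whose content is the classical dimension obstruction: over a field of characteristic $p$ the dimension of a projective object of $\Rep(\Gf)$ is a multiple of $|\Uf|$, while $\dim \Xf = \sum_{w \in \norm_{\Gf}} |\Uf \backslash \Uf w \Uf| < |\Uf| \, |\norm_{\Gf}|$ whenever $\Uf \neq \{1\}$. Once this input is secured, the two halves of the proposition follow purely formally from the adjunction identity \eqref{finite-inv} and the contragredient duality of Remark \ref{rema:indcontra}; no further group-theoretic or Hecke-algebraic information is needed.
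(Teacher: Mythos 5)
Your proof is correct and follows essentially the same route as the paper: the first assertion is reduced via the adjunction $(-)^{\Uf}=\Hom_{\Gf}(\Xf,-)$ to the non-projectivity of $\Xf$ (Lemma \ref{Xnotproj}, using Remark \ref{rmk:proj}), and the second is transferred to the first by the contragredient duality of Remark \ref{rema:indcontra}. You actually spell out more carefully than the paper the duality step converting left exactness of $(-)_{\Uf}$ into right exactness of $(-)^{\Uf}$ (the paper's sentence there has an obvious typo, saying ``$\Uf$-coinvariant'' twice); your version is the correct reading.
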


\begin{proof} That the restriction to  $\Rep^f(\Gf)$ of the $ \Uf $-invariant functor  is not right exact follows immediately from Lemma \ref{Xnotproj}. This implies that  the $ \Uf $-invariant functor  is not right exact.
 For the second point, note first that 
the contragredient functor is a contravariant  involution of $\Rep^f(\Gf) $ exchanging the $ \Uf $-invariant functor   and the  $ \Uf $-coinvariant functor in the sense of \eqref{commcont}.  Therefore, the restriction of  the  $ \Uf $-coinvariant functor   to 
$\Rep^f(\Gf) $ is exact if and only if  the restriction of  the  $ \Uf $-coinvariant functor  to  $\Rep^f(\Gf) $
is exact. We deduce that  the restriction of  the  $ \Uf $-coinvariant functor  to $\Rep^f(\Gf) $ is not left exact which implies that of  the  $ \Uf $-coinvariant functor is not left exact.
\end{proof}

\begin{rema} 
By Prop. \ref{prop:notexact}, if $R$ is a field of characteristic $p$ and $\Uf\neq \{1\}$, 
then $(-)^{\Uf }$ does not have a right adjoint. 
  \end{rema}

\subsubsection{\label{equivUinv}On the $\Uf$-invariants functor in characteristic $p$}  In this paragraph, $R$ is a field of characteristic $p$ which we suppose algebraically closed. This ensures that 
 the simple $\Hf$-modules are $1$-dimensional (\cite[Lem. 3.13]{Tin}). 
By work of Carter and Lusztig \cite{CL}, the functor \eqref{finite-inv} induces a bijection between irreducible 
$R$-representations of $\Gf$ and simple $\Hf$-modules.

Since $R$ has characteristic $p$, every non zero representation $\V\in \Rep(\Gf)$ has a non zero $\Uf$-fixed vector. 
Furthermore, the irreducible representations of $\Gf$ are finite dimensional. Therefore, considering  $\Mod^f(\Hf)$ the category of finite dimensional $\Hf$-modules and $\Rep^{f,\Uf}(\Gf)$ the full subcategory  of $\Rep(\Gf)$ of the finite dimensional representations generated by their $\Uf$-invariant subspace,  it is  natural to ask if \eqref{finite-tens} and \eqref{finite-inv} are quasi-inverse equivalences of categories between $\Mod^f(\Hf)$ and  $\Rep^{f,\Uf}(\Gf)$.

The answer is 
no in general. 
However, Cabanes described in \cite{Cabanes} an additive subcategory $\Bb(\Gf)$ of $\Rep^f(\Gf)$ which is equivalent to $\Mod^f(\Hf)$ via the functor \eqref{finite-inv}. In \cite[Prop. 2.13]{OS} it is proved (in the setting of ${\rm GL}(n,\mathbb F_q)$ but the proof is general) that $\Bb(\Gf)$ and  $\Rep^{f,\Uf}(\Gf)$ coincide if and only if $\Xf$ is a flat $\Hf$-module.  Compare with Remark \ref{rema:equiv}. We recalled in \S\ref{Uinv} flatness conditions for $\Xf$ when   $\Gf={\rm GL}(n,\mathbb F_q)$.

\subsubsection{Commutative diagrams: questions} We  study the connection between  the functors  of induction for Hecke modules and for representations via the functors \eqref{finite-tens} and \eqref{finite-inv} and their analogs for $\Mf$.  We examine the following questions.

\begin{question}\label{qf1} Does the  $\Uf$-invariant  functor commute with  parabolic induction: is the following diagram commutative?
 \begin{equation}\begin{CD}\Rep(\Mf) @>{(-)^{\Uf_\Mf}}>>  \Mod(\Hf_\Mf)\cr
{\ii_\Pf^\Gf}@VVV  {\ii_{\Hf_\Mf} ^\Hf}@VVV \cr
 \Rep(\Gf) @>{(-)^{\Uf}}>>        \Mod(\Hf)\cr
\end{CD}\label{diag1}\end{equation}
Passing to left adjoints, this is equivalent to asking whether the following diagram is commutative:

 \begin{equation}\begin{CD}\Mod(\Hf) @>{-\otimes_\Hf \Xf}>>  \Rep(\Gf)\cr
{\ll_{\HMf}^\Hf}@VVV  {(-)_\Nf}@VVV \cr
 \Mod(\Hf_\Mf) @>{-\otimes_\HMf \XMf}>> \Rep(\Mf)\cr
\end{CD}\label{diag1'}\end{equation}

We answer this question positively in \S\ref{answers} (Proposition \ref{prop:Q1})  for an arbitrary  ring $R$. In passing, we  will 
consider the functor $\dagger: \Rep(\Mf)\rightarrow \Rep(\Mf)$ sending a representation $\V$ onto the subrepresentation generated by  the $\UMf$-fixed subspace $\V^\UMf$ (and likewise for $\dagger: \Rep(\Gf)\rightarrow \Rep(\Gf)$). We show (Proposition \ref{daggercommute}) that $\ii_\Pf^\Gf$ and $\dagger$ commute.\\
\end{question}
\begin{question}\label{qf2} Does the $\Uf$-invariant functor commute with  the right adjoints of parabolic induction:
is the following diagram commutative?

 \begin{equation}\begin{CD}\Rep(\Gf) @>{(-)^\Nf}>> \Rep(\Mf)\cr
{(-)^{\Uf}}@VVV  {(-)^{\Uf_\Mf}}@VVV \cr
 \Mod(\Hf) @>{\r_{\HMf}^\Hf}>>        \Mod(\Hf_\Mf)\cr
\end{CD}\label{diag2}\end{equation}
Passing to left adjoints, this is equivalent to asking if the following diagram is commutative:
 \begin{equation}\begin{CD}\Mod(\Hf_\Mf) @>{-\otimes_{\Hf_\Mf} \XMf}>>  \Rep(\Mf)\cr
{\ii_{\Hf_\Mf}^\Hf}@VVV  {\ii^\Gf_{\Pf }}@VVV \cr
 \Mod(\Hf) @>{-\otimes_{\Hf} \Xf}>> \Rep(\Gf)\cr
\end{CD}\label{diag2'}\end{equation}
We answer this question positively in \S\ref{answers} for an arbitrary ring  and describe the natural transformations corresponding to 
\eqref{diag2'} in \eqref{eq:Q2} and \eqref{eq:Q22}.\\

\end{question}

\begin{question}\label{qf3}
Does the $\Uf$-invariant functor commute with  the left adjoints of parabolic induction:
is the following diagram commutative?
 \begin{equation}\begin{CD}\Rep(\Gf) @>{(-)_\Nf}>> \Rep(\Mf)\cr
{(-)^{\Uf}}@VVV  {(-)^{\Uf_\Mf}}@VVV \cr
 \Mod(\Hf) @>{\ll_{\HMf}^{\Hf}}>>        \Mod(\Hf_\Mf)\cr
\end{CD}\label{diag3}\end{equation}

The answer is negative in general, but positive if $R$ is a field of  characteristic  different from $p$ (Proposition \ref{prop:Q3} and comments following it).

\end{question}
\subsubsection{\label{answers}Commutative diagrams: answers} 

\noindent The \textbf{answer  to Question} \ref{qf1} is positive  when $R$ is an arbitrary  ring: 
\begin{prop}\label{prop:Q1}  For any representation $\V\in \Rep(\Mf)$, there is a natural isomorphism of right $\Hf$-modules
$$\V^{\Uf_\Mf}\otimes_{\HMf} \Hf\cong (\Ind_{\Pf}^{\Gf}(\V))^\Uf.$$

\end{prop}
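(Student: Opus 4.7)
The plan is to construct mutually inverse natural $\Hf$-linear maps
\[
\Phi_\V \colon \V^{\UMf} \otimes_{\HMf} \Hf \longrightarrow (\Ind_\Pf^\Gf \V)^\Uf \quad \text{and} \quad \Psi_\V \colon (\Ind_\Pf^\Gf \V)^\Uf \longrightarrow \V^{\UMf} \otimes_{\HMf} \Hf,
\]
exploiting the double-coset decomposition $\Gf = \sqcup_{d \in \dist} \Pf \hat d \Uf$ from Lemma \ref{lemma:PGU}(1) on the right-hand side and the freeness of $\Hf$ as a left $\HMf$-module with basis $(\t_{\tilde d})_{d \in \dist}$ from Proposition \ref{liberte} on the left-hand side.

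For $\Phi_\V$, by Frobenius reciprocity for the adjunction $\ii_\HMf^\Hf \dashv \r_\HMf^\Hf$ it suffices to give an $\HMf$-linear map $\V^{\UMf} \to (\Ind_\Pf^\Gf \V)^\Uf$, namely $v \mapsto f_v$, where $f_v$ is supported on $\Pf \Uf$ and satisfies $f_v(pu) := pv$ for $p \in \Pf$ and $u \in \Uf$. The function $f_v$ is well defined because if $pu = p'u'$ then $p'^{-1}p \in \Pf \cap \Uf = \UMf \Nf$ (using $\Uf = \UMf \Nf$ from Lemma \ref{lemma:PGU}(1)(b) applied to $d=1$), and this element fixes $v$: its $\UMf$-factor by $v \in \V^{\UMf}$, its $\Nf$-factor because $\Nf$ acts trivially on $\V$ via the inflation $\Pf \twoheadrightarrow \Mf$. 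The $\Uf$-invariance of $f_v$ and the transformation rule $f_v(mnx) = m f_v(x)$ are then immediate, and the $\HMf$-equivariance of $v \mapsto f_v$ follows from the compatibility of the right Hecke actions via the embedding $\HMf \hookrightarrow \Hf$ sending $\t^\Mf_n$ to $\t_n$ for $n \in \norm_\Mf$.

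For $\Psi_\V$, set $\Psi_\V(f) := \sum_{d \in \dist} f(\hat d) \otimes \t_{\tilde d}$. That $f(\hat d) \in \V^{\UMf}$ is the content of Remark \ref{rema:incluDJ}(2), which gives $\hat d^{-1} \UMf \hat d \subset \Uf$: for $u \in \UMf$ one computes $u f(\hat d) = f(u \hat d) = f(\hat d \cdot (\hat d^{-1} u \hat d)) = f(\hat d)$ using the $\Uf$-invariance of $f$. Independence from the choice of lifts $\hat d \in \K$ and $\tilde d \in \norm_\Gf$ reduces to checking that any modification by an element $t$ of the finite torus is absorbed by the tensor-product relation $tv \otimes \t_t h = v \otimes h$ for $v \in \V^{\UMf}$ and $h \in \Hf$; this is a consequence of the identity $v \cdot \t^\Mf_t = t^{-1} v$ in $\V^{\UMf}$ (coming from the single coset $\UMf t \UMf = \UMf t$) combined with $\t^\Mf_t \mapsto \t_t$ under the embedding.

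To see that $\Phi_\V$ and $\Psi_\V$ are mutually inverse, I would use the compatible $R$-linear decompositions
\[
\V^{\UMf} \otimes_{\HMf} \Hf = \bigoplus_{d \in \dist} \V^{\UMf} \otimes \t_{\tilde d}, \qquad (\Ind_\Pf^\Gf \V)^\Uf = \bigoplus_{d \in \dist} V_d,
\]
where $V_d$ denotes the space of $\Uf$-invariant functions supported on $\Pf \hat d \Uf$. The evaluation map $V_d \to \V^{\UMf}$, $f \mapsto f(\hat d)$, is an $R$-linear isomorphism with inverse $v \mapsto f_{v,d}$ defined by $f_{v,d}(p \hat d u) := pv$, which is well defined by Lemma \ref{lemma:PGU}(1)(b) giving $\Pf \cap \hat d \Uf \hat d^{-1} \subset \UMf \Nf$. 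By construction $\Psi_\V$ visibly matches these decompositions, so the remaining step, and the main anticipated obstacle, is to verify that $\Phi_\V$ does too, i.e.\ $\Phi_\V(v \otimes \t_{\tilde d}) = f_{v,d}$. This reduces to a direct computation of $f_v \cdot \t_{\tilde d}$ using the explicit formula $(f \cdot \t_g)(x) = \sum_y f(xy^{-1})$ for the right $\Hf$-action with $y$ ranging over a system of representatives of $\Uf \backslash \Uf g \Uf$, showing that among the representatives of $\Uf \backslash \Uf \tilde d \Uf$ only a single class contributes to placing $\hat d y^{-1}$ in the support $\Pf \Uf$ of $f_v$, producing a function with support $\Pf \hat d \Uf$ and value $v$ at $\hat d$.
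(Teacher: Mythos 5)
Your proof is essentially the paper's own: $\Phi_\V(v\otimes h)=f_v\cdot h$ is exactly the paper's map \eqref{naturaliso1} (with $f_v=f_{\Pf,v}$ in the paper's notation, since $\Pf\Uf=\Pf$), your $f_{v,d}$ is the paper's $f_{\Pf\hat d\Uf,v}$, and the matching direct-sum decompositions you invoke (via Lemma \ref{lemma:PGU}(1) and Prop.\ \ref{liberte}) are precisely how the paper proves bijectivity, so exhibiting $\Psi_\V$ explicitly is just a presentational variant.

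One step is circular as written and should be firmed up. You say the $\HMf$-equivariance of $v\mapsto f_v$ ``follows from the compatibility of the right Hecke actions via the embedding $\HMf\hookrightarrow\Hf$'' — but that compatibility \emph{is} the statement that $f_{v\tau^\Mf_w}=f_v\cdot\tau_w$ for $w\in\norm_\Mf$, which is what has to be proved, not a given. The content (this is the paper's Lemma \ref{Ugene}(3)) is a coset computation: for $w\in\norm_\Mf$, $\Uf w\Uf=\Uf w\Nf\UMf=\Uf w\UMf$ since $w$ normalizes $\Nf$, and $(w^{-1}\Uf w)\cap\UMf=(w^{-1}\UMf w)\cap\UMf$, so a system of representatives of $(\UMf\cap w^{-1}\UMf w)\backslash\UMf$ also represents $(\Uf\cap w^{-1}\Uf w)\backslash\Uf$; hence the operator sums defining $\tau_w$ on $(\Ind_\Pf^\Gf\V)^\Uf$ and $\tau^\Mf_w$ on $\V^{\UMf}$ coincide on the relevant functions. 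Once that is spelled out, the rest of your sketch (including the ``main anticipated obstacle'' computation, which the paper resolves via Lemma \ref{lemma:PGU}(1)(b)) goes through.
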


\begin{coro}  
The diagrams \eqref{diag1} and \eqref{diag1'} commute.
\end{coro}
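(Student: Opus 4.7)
The commutativity of \eqref{diag1} is essentially immediate from Proposition \ref{prop:Q1}, but what needs to be verified is that the isomorphism $\V^{\UMf}\otimes_{\HMf}\Hf \overset{\simeq}\to (\Ind_\Pf^\Gf \V)^\Uf$ is natural in $\V\in\Rep(\Mf)$, so that it upgrades from a family of isomorphisms to a natural isomorphism of functors $\ii_{\HMf}^\Hf\circ (-)^{\UMf}\cong (-)^\Uf\circ \Ind_\Pf^\Gf$. Any $\Mf$-equivariant morphism $\V\to\V'$ restricts to an $\HMf$-equivariant morphism on $\UMf$-invariants, and functorially induces a $\Gf$-equivariant map $\Ind_\Pf^\Gf \V\to\Ind_\Pf^\Gf \V'$, which in turn restricts to an $\Hf$-equivariant map on $\Uf$-invariants. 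The isomorphism of Proposition \ref{prop:Q1} is constructed by explicit formulas compatible with these operations (this one checks as part of the proof of Proposition \ref{prop:Q1}), so naturality is automatic. This gives \eqref{diag1}.

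For \eqref{diag1'} I would invoke the standard mate correspondence for adjoints: given functors $F\colon \mathcal A\to\mathcal B$, $G\colon\mathcal B\to\mathcal C$, $H\colon\mathcal A\to\mathcal D$, $K\colon\mathcal D\to\mathcal C$ with respective left adjoints $F^L,G^L,H^L,K^L$, any natural isomorphism $\alpha\colon G\circ F\overset{\simeq}\Longrightarrow K\circ H$ yields, by transposition along the adjunctions, a natural isomorphism $\alpha^\flat\colon H^L\circ K^L\overset{\simeq}\Longrightarrow F^L\circ G^L$. Concretely, for $Y\in\mathcal C$ and $X\in\mathcal A$ one has the chain of natural bijections
\begin{align*}
\Hom_\mathcal{A}(H^L K^L Y,\,X) &\cong \Hom_\mathcal{D}(K^L Y,\,HX)\cong \Hom_\mathcal{C}(Y,\,KHX)\\
&\xrightarrow[\alpha]{\simeq} \Hom_\mathcal{C}(Y,\,GFX)\cong \Hom_\mathcal{B}(G^L Y,\,FX)\cong \Hom_\mathcal{A}(F^L G^L Y,\,X),
\end{align*}
and Yoneda produces the isomorphism $\alpha^\flat$, which is an iso because each bijection in the chain is natural.

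I would then apply this with $F=\Ind_\Pf^\Gf$, $G=(-)^\Uf$, $H=(-)^{\UMf}$, $K=\ii_{\HMf}^\Hf$, and $\alpha$ the natural isomorphism supplied by the previous paragraph. The relevant left adjoints are $F^L=(-)_\Nf$ (Proposition \ref{finiteadjoints}), $G^L=-\otimes_\Hf\Xf$ and $H^L=-\otimes_{\HMf}\XMf$ (both from \S\ref{Uinv}), and $K^L=\ll_{\HMf}^\Hf$ (by definition, see \S\ref{indufinite}). Reading off $\alpha^\flat$ gives a natural isomorphism
\[
(-\otimes_{\HMf}\XMf)\circ\ll_{\HMf}^\Hf\;\cong\;(-)_\Nf\circ(-\otimes_\Hf\Xf),
\]
which is exactly the commutativity asserted in \eqref{diag1'}. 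There is no real obstacle: the entire content of the corollary is the naturality in Proposition \ref{prop:Q1} together with the purely formal mate construction; one does not have to re-examine any of the representation-theoretic input.
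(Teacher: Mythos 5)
Your proposal is correct and follows essentially the same route as the paper: the commutativity of \eqref{diag1} is exactly the content of Proposition \ref{prop:Q1} (whose explicit formula $v\otimes h\mapsto f_{\Pf,v}h$ is visibly natural in $\V$), and the paper already records in the statement of Question \ref{qf1} that \eqref{diag1'} is obtained from \eqref{diag1} by passing to left adjoints, which is precisely your mate construction. Your identification of the four left adjoints is the correct one, so nothing further is needed.
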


 The isomorphism of Proposition \ref{prop:Q1} is given by the map \eqref{naturaliso1} below. We prove the proposition after the following lemma. 
\begin{lemma}\begin{enumerate}
\item The $R$-module  $(\Ind_\Pf^\Gf(\V))^\Uf$ is generated by all
$$f_{\Pf \hat d \Uf, v}$$ where $v$ (resp. $d$) ranges over  $\V^{ \UMf}$ (resp. over $\dist$) and $f_{\Pf \hat d \Uf, v}$ is the unique $\Uf$-invariant function in $\Ind_\Pf^\Gf(\V)$ with support $\Pf \hat d \Uf$ and value $v$ at  a chosen lift $\hat d$ of $d$ in $\norm_\Gf$.  
\item For $v\in \V^{ \UMf}$ and $d\in \dist$, we have $f_{\Pf \hat d \Uf, v}= f_{\Pf, v}\tau_{\hat d}$.\item For $v\in \V^{ \UMf}$ and $w\in \norm_\Mf$, we have $f_{\Pf, v}\tau_w= f_{\Pf, v\tau^{\Mf}_{w}}$. 
\end{enumerate}
\label{Ugene}
\end{lemma}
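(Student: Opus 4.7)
The plan is to verify parts~(1), (3) and~(2) in that order, postponing~(2) because its combinatorics is the most delicate. Throughout, the decomposition results of Lemma~\ref{lemma:PGU}(1) are used together with the formula
\[
(F \cdot \tau_g)(y) \;=\; \sum_{i} F\bigl(y\,x_i^{-1}\bigr), \qquad \Uf g \Uf \;=\; \bigsqcup_i \Uf x_i,
\]
for the right action of $\tau_g \in \Hf$ on $F \in (\Ind_\Pf^\Gf \V)^\Uf$, which comes from the identification $(\Ind_\Pf^\Gf \V)^\Uf \cong \Hom_\Gf(\Xf, \Ind_\Pf^\Gf \V)$ and the $\Gf$-action by right translations on both $\Xf$ and $\Ind_\Pf^\Gf \V$.

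Part~(1) is immediate from the disjoint decomposition $\Gf = \bigsqcup_{d \in \dist} \Pf \hat d \Uf$: any $F \in (\Ind_\Pf^\Gf \V)^\Uf$ splits as a sum of its restrictions to the individual double cosets, each still $\Uf$-invariant, and such a restriction is determined by its value $v := F(\hat d)$. Compatibility forces $m \cdot v = v$ whenever $m$ is the $\Mf$-component (in a Levi decomposition) of an element of $\Pf \cap \hat d \Uf \hat d^{-1}$; by Lemma~\ref{lemma:PGU}(1)(a) every such $m$ lies in $\UMf$, so requiring $v \in \V^\UMf$ suffices, and these $f_{\Pf \hat d \Uf, v}$ span $(\Ind_\Pf^\Gf \V)^\Uf$.

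For~(3), use $\Uf = \UMf \Nf$ and the fact that $w \in \Mf$ normalizes $\Nf$ to write $\Uf w \Uf = \UMf w \UMf \cdot \Nf$; from a decomposition $\UMf w \UMf = \bigsqcup_j \UMf y_j$ with $y_j \in \Mf$ one deduces $\Uf w \Uf = \bigsqcup_j \Uf y_j$ (disjointness uses $\Uf \cap \Mf = \UMf$). The displayed formula evaluated at $y = 1$ gives $(f_{\Pf,v}\cdot \tau_w)(1) = \sum_j y_j^{-1}\cdot v = v \cdot \tau^{\Mf}_w$, the last equality being the analogous formula for the right action of $\HMf$ on $\V^\UMf$. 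Since $w \in \Pf$ and $\Uf \subseteq \Pf$, the support of $f_{\Pf,v} \cdot \tau_w$ lies in $\Pf \cdot \Uf w \Uf = \Pf$, so $\Uf$-invariance inside $\Ind_\Pf^\Gf \V$ identifies the function as $f_{\Pf,\, v \tau^\Mf_w}$.

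Part~(2) is the heart of the argument. Write $\Uf \hat d \Uf = \bigsqcup_{[u]} \Uf \hat d u$ with $[u]$ running through $\Uf / (\Uf \cap \hat d^{-1}\Uf \hat d)$; evaluating the displayed formula at $y = \hat d$ yields
\[
(f_{\Pf,v}\cdot\tau_{\hat d})(\hat d) \;=\; \sum_{[u]} f_{\Pf,v}\bigl(\hat d\, u^{-1}\, \hat d^{-1}\bigr),
\]
in which a summand is nonzero exactly when $\hat d u^{-1} \hat d^{-1} \in \Pf$, and in that case Lemma~\ref{lemma:PGU}(1)(a) shows its $\Mf$-part lies in $\UMf$, so the summand equals $v$. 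The main obstacle is to verify that exactly one class $[u]$ contributes, which reduces to the inclusion $\hat d \Uf \hat d^{-1} \cap \Pf \subseteq \Uf$. This follows from Lemma~\ref{lemma:PGU}(1)(b): the identity $(\Pf \cap \hat d \Uf \hat d^{-1})\Nf = \Uf$ combined with $1 \in \Nf$ forces every element of $\Pf \cap \hat d \Uf \hat d^{-1}$ to already belong to $\Uf$. Combined with the inclusion $\mathrm{supp}(f_{\Pf,v}\cdot \tau_{\hat d}) \subseteq \Pf \hat d \Uf$ and the value $v$ at $\hat d$, the $\Uf$-invariance then identifies $f_{\Pf,v}\cdot\tau_{\hat d}$ with $f_{\Pf \hat d \Uf, v}$.
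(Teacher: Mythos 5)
Your proposal follows essentially the same route as the paper: all three parts rest on the decomposition $\Gf=\bigsqcup_{d\in\dist}\Pf\hat d\Uf$ and the structure results of Lemma~\ref{lemma:PGU}(1), and parts (2) and (3) match the paper's arguments (part (2) simply unpacks the Hecke action on double cosets more explicitly before arriving at the same inclusion $\hat d\Uf\hat d^{-1}\cap\Pf\subseteq\Uf$).

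There is, however, a small gap in part (1). You observe that the value $v=F(\hat d)$ of a $\Uf$-invariant function supported on $\Pf\hat d\Uf$ must satisfy $m\cdot v=v$ for every $m$ arising as the $\Mf$-component of an element of $\Pf\cap\hat d\Uf\hat d^{-1}$, and by Lemma~\ref{lemma:PGU}(1)(a) all such $m$ lie in $\UMf$. This establishes that $v\in\V^\UMf$ \emph{suffices} to build a well-defined $\Uf$-invariant function, but for the \emph{spanning} claim you need the converse: that the constraint actually forces $v\in\V^\UMf$, i.e.\ that every $m\in\UMf$ occurs as such a component. That follows from $\UMf\subseteq\hat d\Uf\hat d^{-1}$ (Remark~\ref{rema:incluDJ}(2)), so that $\UMf\subseteq\Pf\cap\hat d\Uf\hat d^{-1}$ with each $u\in\UMf$ being its own $\Mf$-component. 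The paper sidesteps this by appealing directly to Lemma~\ref{lemma:PGU}(1)(b): since $(\Pf\cap\hat d\Uf\hat d^{-1})\Nf=\Uf$ and $\Nf$ acts trivially on the inflation $\tilde\V$, one gets $\tilde\V^{\Pf\cap\hat d\Uf\hat d^{-1}}=\tilde\V^\Uf=\V^\UMf$ in one stroke, which gives both inclusions at once.
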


Recall that for   $W$ a $\Gf$-representation,  $f\in W^\Uf$, $w\in \norm_\Gf$, and $\Uf w\Uf=\sqcup_{x} \Uf w x$ a decomposition in simple cosets with $x$ ranging over $\Uf\cap w^{-1}\Uf w\backslash \Uf$, we have
$f \tau_ w=\sum_{x} (wx)^{-1} f$.

\begin{proof}  (1)  Let $\tilde \V$ denote in this proof the representation $\V$ trivially inflated to a representation of $\Pf$. By Lemma \ref{PI}(1), a function $(\Ind_\Pf^\Gf(\V))^\Uf$ is   a  unique linear combination of $\Uf$-invariant functions with support in $\Pf \hat d \Uf$, for $d\in \dist$.   A $\Uf$-invariant function  with support in $\Pf \hat d \Uf$
is determined by its value at $\hat d$ which is an element  in ${\tilde \V}^{{\Pf}\cap \hat d\Uf \hat d^{-1}}=\V^{\UMf}$ by Lemma \ref{PI}(1)(b).  \\ 
(2) For $v\in \V^{ \UMf}$ and $d\in \dist$, the function $f_{\Pf, v}\tau_{\hat d}$ is $\Uf$-invariant with support in $\Pf \hat d \Uf$ and value at $\hat d$ equal to $v$ because   $\hat d \Uf \hat d^{-1}\cap\Pf\subset \Uf$ by Lemma \ref{PI}(1)(b). \\
(3) Let $w\in \norm_\Mf$. Note that $\Uf  w \Uf=\Uf w \Nf \UMf= \Uf w \UMf$ since $ w\in \Mf$ normalizes $\Nf$. Let $\UMf  w \UMf =\sqcup_{u}\UMf w u$ be a disjoint decomposition in simple cosets with $u$ ranging over $\UMf\cap  w^{-1}\UMf w\backslash \UMf$. Then $\Uf w \Uf=\Uf w\UMf=\cup_{u}\Uf w u$ and the union is disjoint because  $ (w^{-1}\Uf w)\cap\UMf=(w^{-1}\UMf w w^{-1}\Nf w)\cap\UMf=(w^{-1}\UMf w \Nf )\cap\UMf=(w^{-1}\UMf w)\cap\UMf $.
It proves that for $v\in \V^{ \UMf}$, the  $\Uf$-invariant function $f_{\Pf, v}\tau_w$ has support in $\Pf$ and value at $1$ equal to $v\tau_w^{\Mf}$.\\ 
\end{proof}
\begin{proof}[Proof of Proposition \ref{prop:Q1}]  
The map
\begin{equation}\label{naturaliso1}\V^{\Uf_\Mf}\otimes_{\HMf} \Hf\rightarrow (\Ind_{\Pf}^{\Gf}(\V))^\Uf, v\otimes h\mapsto f_{\Pf, v} h \end{equation} is a well defined   morphism of right $\Hf$-modules.
By Proposition \ref{liberte}, the vector space $\V^{\Uf_\Mf}\otimes_{\HMf} \Hf$ decomposes as the direct sum of all 
$\V^{\Uf_\Mf}\otimes \tau_d$ for $d\in \dist$. For $v\in \V^{\Uf_\Mf}$ and $d\in \dist$, the image of $v\otimes\tau_{\hat d}$ is equal to $f_{\Pf \hat d \Uf, v}$ by Lemma \ref{Ugene}(2). The map is bijective by Lemma \ref{Ugene}(1) and its proof.
\end{proof}

\begin{prop} \label{daggercommute} For $\V\in \Rep(\Mf)$, there is a natural isomorphism of representations of $\Gf$:  $$(\Ind_\Pf^\Gf(\V))^\dagger\cong \Ind_\Pf^\Gf(\V^\dagger).$$
 
 \end{prop}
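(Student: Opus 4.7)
The plan is to realize $\Ind_\Pf^\Gf(\V^\dagger)$ and $(\Ind_\Pf^\Gf(\V))^\dagger$ as two subrepresentations of the common ambient representation $\Ind_\Pf^\Gf(\V)$ and verify they coincide. The first is the image of the injection $\Ind_\Pf^\Gf(\V^\dagger)\hookrightarrow \Ind_\Pf^\Gf(\V)$ produced by applying the exact functor $\Ind_\Pf^\Gf$ (Proposition \ref{finiteadjoints}) to $\V^\dagger \hookrightarrow \V$; the second is by definition the $\Gf$-subrepresentation of $\Ind_\Pf^\Gf(\V)$ generated by $(\Ind_\Pf^\Gf(\V))^\Uf$.

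For the inclusion $(\Ind_\Pf^\Gf(\V))^\dagger \subseteq \Ind_\Pf^\Gf(\V^\dagger)$, Lemma \ref{Ugene} describes $(\Ind_\Pf^\Gf(\V))^\Uf$ as the $R$-span of the functions $f_{\Pf \hat d \Uf, v}$ with $v \in \V^{\Uf_\Mf} \subseteq \V^\dagger$ and $d \in \dist$. By construction such a function takes values in the $\Pf$-orbit of $v$, hence in $\V^\dagger$ (recall $\Nf$ acts trivially on $\V$). Thus $(\Ind_\Pf^\Gf(\V))^\Uf$ is contained in the $\Gf$-subrepresentation $\Ind_\Pf^\Gf(\V^\dagger)$, and therefore so is the $\Gf$-subrepresentation it generates.

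The reverse inclusion is the substantive step: it reduces to showing that $\Ind_\Pf^\Gf(\V^\dagger)$ is itself $\Gf$-generated by its $\Uf$-invariants, so after replacing $\V$ by $\V^\dagger$ I must prove that if $\V = \V^\dagger$, then the subrepresentation of $\Ind_\Pf^\Gf(\V)$ generated by $\{f_{\Pf, v} : v \in \V^{\Uf_\Mf}\}$ is all of $\Ind_\Pf^\Gf(\V)$. Let $f_{\Pf g, w}$ denote, for arbitrary $w \in \V$ and $g \in \Gf$, the unique element of $\Ind_\Pf^\Gf(\V)$ supported on the coset $\Pf g$ with value $w$ at $g$; a direct computation in the right $\Gf$-action yields the two formulas $p \cdot f_{\Pf, v} = f_{\Pf, p.v}$ for $p \in \Pf$ and $h \cdot f_{\Pf, w} = f_{\Pf h^{-1}, w}$ for $h \in \Gf$. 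The first formula, together with $R$-linearity, shows that the $\Gf$-subrepresentation generated by the $f_{\Pf, v}$ with $v \in \V^{\Uf_\Mf}$ contains $f_{\Pf, w}$ for every $w$ in the $R$-span of $\Pf.\V^{\Uf_\Mf}$, which equals $\V^\dagger = \V$ by assumption; the second formula then produces $f_{\Pf g, w}$ for every $g \in \Gf$ and $w \in \V$. Since any element of $\Ind_\Pf^\Gf(\V)$ decomposes as an $R$-linear combination of functions supported on single cosets $\Pf g$, these exhaust $\Ind_\Pf^\Gf(\V)$, completing the equality. Naturality in $\V$ is automatic from the functoriality of the constructions; the main obstacle is the generation statement just handled, which the explicit $\Gf$-action formulas resolve.
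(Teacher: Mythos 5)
Your proof is correct and follows essentially the same route as the paper's: both realize the two sides as subrepresentations of $\Ind_\Pf^\Gf(\V)$, use Lemma \ref{Ugene} to identify $(\Ind_\Pf^\Gf(\V))^\Uf$, and reduce the substantive inclusion to the fact that $\Ind_\Pf^\Gf(\V^\dagger)$ is $\Gf$-generated by the functions $f_{\Pf,v}$, $v\in\V^{\UMf}$. The only difference is that the paper leaves this last step as ``it is easy to see'' while you spell out the two translation formulas $p\cdot f_{\Pf,v}=f_{\Pf,m.v}$ (for $p=mn\in\Pf$) and $h\cdot f_{\Pf,w}=f_{\Pf h^{-1},w}$ that make it explicit; this is a legitimate unpacking, not a different argument.
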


\begin{proof}   It is easy to see that the representation $\Ind_\Pf^\Gf(\V^\dagger)$ is generated by the functions of the form $f_{\Pf, v}$ for $v\in  \V^\UMf$ so in particular it is generated its $\Uf$-invariant subspace. The natural $\Mf$-equivariant  injection  $\V^\dagger\rightarrow \V$ induces, by exactness of the functor $\ii_\Pf^\Gf$, a $\Gf$-equivariant injection 
$\Ind_\Pf^\Gf(\V^\dagger)\rightarrow \Ind_\Pf^\Gf(\V)$ whose image is  contained in  $(\Ind_\Pf^\Gf(\V))^\dagger$. Its image is exactly $(\Ind_\Pf^\Gf(\V))^\dagger$ by points (1) and (2) of  Lemma \ref{Ugene}.
\end{proof}

\bigskip

 The \textbf{answer to Question} \ref{qf2} is positive  when $R$ is an arbitrary ring: \\

The composition 
$$\Rep(\Gf)\rightarrow \Rep(\Mf)\rightarrow \Mod(\HMf), \: \V\mapsto \V^{\Nf}\mapsto (\V^\Nf)^{\UMf}$$
coincides with
$$\Rep(\Gf)\rightarrow \Mod(\Hf)\rightarrow \Mod(\HMf), \: \V\mapsto \V^{\Uf}\mapsto \r_\HMf^\Hf(\V^{\Uf})$$ because
$\Uf$ is the semidirect product  $\UMf\ltimes \Nf$.  
Therefore, the diagram \eqref{diag2} commutes and so does \eqref{diag2'} by adjunction. 
We now describe a natural transformation between
$\ii_\HMf^\Hf(-)\otimes _\Hf\Xf$ and $\ii_\Pf^\Gf(-\otimes _\HMf\XMf)$. Let $\mm$ be a right $\HMf$-module.
There is a natural $\HMf$-equivariant map 
\begin{equation*}
\mm\longrightarrow (\mm\otimes_\HMf \XMf)^{\UMf}, \: m\mapsto m\otimes {\bf 1}_{\UMf}.
\end{equation*} where, for $Y$ a set,  ${\bf 1}_{Y}$ denotes the characteristic function of $Y$.
Using Proposition \ref{prop:Q1} and adjunction, we have a morphism of right $\Hf$-modules
\begin{equation*}
\mm\otimes _\HMf\Hf \longrightarrow (\mm\otimes_\HMf \XMf)^{\UMf}\otimes _\HMf\Hf\cong  (\Ind_{\Pf}^{\Gf}(\mm\otimes_\HMf \XMf))^\Uf
\end{equation*} and a morphism of representations of $\Gf$ 
\begin{equation}\label{eq:Q2}
\ii_\HMf^\Hf(\mm)\otimes _\Hf\Xf=\mm\otimes _\HMf\Xf \longrightarrow   \Ind_{\Pf}^{\Gf}(\mm\otimes_\HMf \XMf)
\end{equation}  sending $m\otimes {\bf 1}_\Uf$ onto the function $f_{\Pf, m\otimes  {\bf 1}_{\UMf}}$ with support in $\Pf$ and value $m\otimes  {\bf 1}_{\UMf}$ at $1$. 
On the other hand, the natural $\HMf$-equivariant map $$\mm\rightarrow\r^\Hf_\HMf((\mm\otimes _\HMf\Xf)^{\Uf})=((\mm\otimes _\HMf\Xf)^\Nf)^{\UMf}$$ induces a morphism of representations of $\Mf$ 
\begin{equation*}
\mm\otimes_\HMf \XMf
 \longrightarrow(\mm\otimes _\HMf\Xf) ^\Nf\end{equation*} which in turn by adjunction (Prop. \ref{finiteadjoints}) induces
 a morphism of representations of $\Gf$
\begin{equation}\label{eq:Q22}
\Ind_\Pf^{\Gf}(\mm\otimes_\HMf \XMf)
 \longrightarrow \mm\otimes _\HMf\Xf= \ii^\Hf_\HMf(\mm)\otimes _\Hf\Xf  \end{equation} 
  and one checks that 
 \eqref{eq:Q2} and  \eqref{eq:Q22} are inverse to each other.
  
\medskip

We study \textbf{Question} \ref{qf3}. We will show the following:
if $R$ is a field of characteristic different from  $p$, then the answer is positive.   (recall that in that case ${\ll_{\HMf}}=\r_{\HMf}^\Hf$ by Remark \ref{L=res});
if $R$ is a field of characteristic   $p$, then the answer is negative.  
\begin{prop}\label{prop:Q3}Suppose that $R$ is a field of characteristic different from  $p$.  Let $\V$ be a representation of $\Gf$. Then we have an isomorphism of $\HMf$-modules
$$(\V_\Nf)^{\UMf}\simeq \r^\Hf_\HMf(\V^\Uf).$$

\end{prop}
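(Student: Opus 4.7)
The strategy is to combine two ingredients. First, when $p$ is invertible in $R$, the natural comparison map $\V^\Nf\hookrightarrow \V\twoheadrightarrow \V_\Nf$ is an isomorphism of $\Mf$-representations. Second, by the positive answer to Question~\ref{qf2} (valid for an arbitrary ring), diagram~\eqref{diag2} commutes, so there is a natural isomorphism of $\HMf$-modules $(\V^\Nf)^{\UMf}\cong \r^\Hf_\HMf(\V^\Uf)$. Composing these two isomorphisms yields the statement.

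For the first ingredient, the unipotent radical $\Nf$ is a $p$-group, hence $|\Nf|$ is a power of $p$ and is invertible in $R$ by hypothesis. The averaging element
\begin{equation*}
e_\Nf \;:=\; |\Nf|^{-1}\sum_{n\in \Nf} n \;\in\; R[\Nf]
\end{equation*}
is therefore a well-defined idempotent. Because $\Mf$ normalizes $\Nf$, this idempotent commutes with the $\Mf$-action on $\V$. Standard averaging shows that $e_\Nf$ projects $\V$ onto $\V^\Nf$ with kernel the $R$-submodule generated by $\{nv-v : n\in\Nf,\, v\in\V\}$, whose quotient is $\V_\Nf$ by definition. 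Hence the composite $\V^\Nf\hookrightarrow\V\twoheadrightarrow\V_\Nf$ is $\Mf$-equivariant and bijective.

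Applying the $\UMf$-invariants functor gives an $R$-linear bijection $(\V^\Nf)^{\UMf}\xrightarrow{\simeq}(\V_\Nf)^{\UMf}$, which is automatically $\HMf$-equivariant because the $\HMf$-actions on both sides are induced from the $\Mf$-action, and the comparison map is $\Mf$-equivariant. Using the Iwahori factorization $\Uf=\UMf\,\Nf$ we have the equality $(\V^\Nf)^{\UMf}=\V^\Uf$ as $R$-subspaces of $\V$, and the naturality of the whole construction in $\V$ is clear. Combining with the $\HMf$-equivariant isomorphism $(\V^\Nf)^{\UMf}\cong \r^\Hf_\HMf(\V^\Uf)$ furnished by the commutativity of \eqref{diag2}, we obtain the natural isomorphism of $\HMf$-modules
\begin{equation*}
(\V_\Nf)^{\UMf}\;\cong\;(\V^\Nf)^{\UMf}\;\cong\;\r^\Hf_\HMf(\V^\Uf).
\end{equation*}

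\textbf{Main obstacle.} There is no serious difficulty here beyond the invertibility of $|\Nf|$: the semisimplicity argument in Step~1 is classical, and the $\HMf$-module identification of Step~2 is exactly what diagram~\eqref{diag2} provides. The only point requiring care is to check $\HMf$-equivariance of the intermediate map, but this is formal: all spaces involved acquire their $\HMf$-structure from the ambient $\Mf$-action on $\V$, and the three maps being composed are $\Mf$-equivariant and natural in $\V$. In particular, no direct computation with simple coset decompositions of $\Uf w\Uf$ versus $\UMf w \UMf$ is needed once Question~\ref{qf2} has been settled.
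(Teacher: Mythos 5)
Your proof is correct, and the route differs only mildly from the paper's. You average over $\Nf$ to obtain an $\Mf$-equivariant isomorphism $\V^\Nf\xrightarrow{\;\sim\;}\V_\Nf$ (the idempotent $e_\Nf$ splits off $\V(\Nf)$), apply $(-)^{\UMf}$, and then invoke the already-established commutativity of diagram~\eqref{diag2} as a black box to identify $(\V^\Nf)^{\UMf}$ with $\r^\Hf_\HMf(\V^\Uf)$. The paper instead averages over all of $\Uf$: the map $\pi(v)=|\Uf|^{-1}\sum_{u\in\Uf}u.v$ factors through $\V_\Nf$, yielding $\bar\pi:\V_\Nf\to\V^\Uf$, which is checked to be a two-sided inverse to the reduction map $\nu:\V^\Uf\to(\V_\Nf)^{\UMf}$; this makes the argument self-contained, with no appeal to Question~\ref{qf2}. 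Both proofs turn on the same point, the invertibility of a $p$-power in $R$; your version cleanly separates the two ingredients (semisimplicity over the $p$-group $\Nf$, and the coincidence of the two $\HMf$-module structures on $\V^\Uf$ recorded for Question~\ref{qf2}), at the cost of relying on the latter.
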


\begin{proof} The map  $\nu:\V^\Uf\to (\V_\Nf)^\UMf$, $v\mapsto v\mod \V_\Nf$ is well defined and right $\HMf$-equivariant.
Consider the map $\pi:\V\to \V^\Uf$, $v\mapsto\dfrac{1}{\vert \Uf\vert} \sum_{u\in \Uf} u.v$.
It factors through  $\bar\pi:\V_\Nf\to \V^\Uf$ since $\Nf$ is a subgroup of $\Uf$. Note that for any $v\in \V^\Uf$, we have $v=\bar\pi(v\mod \V_\Nf)$. This proves that the restriction of $\bar\pi$ to $(\V_\Nf)^\UMf$ 
is an inverse for $\nu$ which is therefore bijective.
\end{proof}

Suppose that $R$ is a field of  characteristic $p$.  
 When  $\Pf=\Bf$, we have $\Mf=\Zf$,  $\UMf=\{1\}$, $\Nf=\Uf$  and the Hecke algebra $\Hf_\Zf$ is naturally isomorphic to the group algebra $R[\Zf]$.  Via this identification, the second vertical map of the diagram \eqref{diag3} is the natural equivalence  between $\Rep(\Zf)$ and $\Mod(R[\Zf])$.
Since $\ll^\Hf_{\Hf_\Zf}$  and the $\Uf$-invariants functor are  left exact, the commutativity of the diagram \eqref{diag3} would imply the left exactness of $(-)_\Uf$. This contradicts  Prop. \ref{prop:notexact} when $\Uf\neq\{1\}$..

\section{Parabolic induction for the $p$-adic group and the pro-$p$ Iwahori Hecke algebra}
Let $R$ be a commutative ring. For $\P$ a standard parabolic subgroup of $\G$ with standard Levi decomposition $\P=\M \, \N$ and corresponding to  the subset $J\subseteq \Pi$,  we consider the category $\Mod(\Hh)$ (resp. $\Mod(\Hh_\M)$) of  right $\Hh$-modules (resp. $\Hh_\M$-modules) and the category
 $\Rep(\G)$ (resp. $\Rep(\M)$) of smooth $\k$-representations of $\G$  (resp. $\M$):
 a smooth $\k$-representation of $\G$ is a $\k$-module endowed with a $\k$-linear action of $\G$ such that the stabilizers of the points are open subgroups of $\G$.
\subsection{\label{sec:paraind-padic}Parabolic induction and restriction for the representations of  the $p$-adic group.}

For $\V\in \Rep(\M)$, we consider the representation  of $\G$ on the space of functions $f: \G\rightarrow \V$ such that $f(mng)=m.v$ for all $m\in \M$, $n\in \N$ and $g\in \G$, the action of $\G$  being  by right translation $(g,f)\mapsto f( \, .\,g)$. Its smooth part is denoted by  $\Ind_\P^\G(\V).$  This defines the functor  of \emph{parabolic induction} 
$$ \Ind_{\P}^\G: \Rep(\M)\longrightarrow \Rep(\G).$$
It is  exact and  faithful \cite[this follows from (5) and (6)]{Vigadjoint}. It has a left adjoint:  the $\N$-coinvariant functor
\begin{equation}\label{N-coinv} (-)_\N:\Rep(\G)\longrightarrow  \Rep(\M)\end{equation} which associates to $\V$ the representation of $\M$ on the coinvariant space $\V_\N$ \cite[2.1 Remarque]{Viglivre}. It has a right adjoint \cite[Prop 4.2]{Vigadjoint} which we denote by 
$$\RR^\G_\P:  \Rep(\M)\longrightarrow  \Rep(\G).$$

\subsection{Induction, coinduction and restriction for pro-$p$ Iwahori Hecke modules.\label{indcoind-prop}}

We identify $\Hh_{\M^+}$ and $\Hh_{\M^-}$  with their respective natural images in $\Hh$ via the linear map $\theta$ and $\theta^*$ respectively (\S\ref{LeviHecke}).   
 For $\mm$ a right $\HM$-module, we consider the induced $\H$-module $$\mm\otimes_{\HMpt}\H$$ where the notation 
 $\HMpt$ is a reminder that we identify  $\HMp$ with its image in $\Hh$ via $\theta$.
 We will also denote this module by  $\Ind_{\HM}^\H(\mm)$. 
This defines a functor called \emph{(parabolic) induction} for Hecke modules 
\begin{equation}\label{ind} \Ind_{\HM}^\H:= -\otimes _{\HMpt} \H: \Mod(\HM)\longrightarrow \Mod(\H).
\end{equation}
The space $$\Hom_{\HMmt}(\H, \mm),$$where the notation 
 $\HMmt$ is a reminder  that we identify  $\HMm$ with its image in $\Hh$ via $\theta^*$,
has a structure of right $\H$-module given by $(f,h)\mapsto [f.h: x\mapsto f(hx)]$.  We will also denote this module by  $\cII_{\HM}^\H(\mm)$. 
This defines the functor of \emph{(parabolic) coinduction}  for Hecke modules 
\begin{equation}\label{coind}\cII_{\HM}^\H:= \Hom_{\HMmt}(\H, -): \Mod(\HM)\longrightarrow \Mod(\H).
\end{equation}

 Let $\longw$ denote  as before the longest element of $\Wf$ and $\longw_\Mf$ the longest element in $\Wf_\Mf$. As in Proposition \ref{tracemap},  we pick lifts  $\tw$ and $\twm$ for these elements in $\norm_\Gf$ and $\norm_\Mf$ respectively. Recall (see \eqref{isomorphismW}) that $\norm_\Gf$ (resp. $\norm_\Mf$) identifies with a subgroup of $\W(1)$ (resp. $\W_\M(1)$) and we may therefore consider     $\tw$ (resp. $\twm$)  as a element in $\W(1)$ (resp. $\W_\M(1)\subseteq \W(1)$).
 We have automorphisms of $\k$-algebras
 $$\upiota: \H\rightarrow \H,\: \tau_w\mapsto \tau_{{\tw} w {\tw} ^{-1}} \:\: \textrm{ for }w\in \W(1)\quad\textrm{and } \upiota_\M: \H_\M\rightarrow \H_\M,\: \tau_w\mapsto \tau_{{\twm} w {\twm} ^{-1}} \:\: \textrm{ for }w\in \W_\M(1)$$ whose restrictions  to the finite Hecke algebras, $\Hf$ and $\Hf_\Mf$ respectively, coincide  with $\iota$ and $\iota_\Mf$ defined in Proposition \ref{tracemap}.
 Let $\M':=\longw \M\longw^{-1}$. It is the Levi subgroup of the standard parabolic subgroup $\P'$ corresponding to $\longw.J\subseteq \Pi$. 
Note that  ${\tw}^{-1}\tw$ is a lift for $\longw\longw_\M\in \W$ (the latter is denoted by $w_0^\M$ in \cite{Vig5} above Definition 1.7).
We have an $\k$-algebra isomorphism
$\upiota_\M^{-1} \circ \upiota:\H_{\M'}\overset{\simeq}\longrightarrow \H_{\M}$  (compare with \cite[Prop. 2.20]{Vig5}). It 
induces an equivalence of categories $- \, \upiota_{\M} ^{-1}\upiota: \Mod(\HM)\rightarrow \Mod(\H_{\M'})$ with quasi-inverse  $- \, \upiota ^{-1}\upiota_\M: \Mod(\H_{\M'})\rightarrow \Mod(\HM)$.
By \cite[Theorem 1.8]{Vig5}, we have  an  isomorphism of functors (compare with Prop. \ref{prop:compafunc}):
\begin{equation}\label{twistedind}\Ind _{\HM}^\H\cong \cII^\H_{\H_{\M'}}  (-\, \upiota_{\M} ^{-1}\upiota).\end{equation}
Using \eqref{twistedind} and the classical properties of induction and coinduction (\cite[\S3]{Vig5}), one obtains  (\cite[Thm 1.6 and 1.9]{Vig5}):

 \begin{enumerate}
 \item The induction functor $\Ind_{\HM}^\H$ 
is faithful and exact. It has an exact  left  adjoint  denoted by $\L_{\HM}^\H$
and the functor
\begin{equation}\label{rightadj}\R^\H_\HM:=\Hom_{\Hh}(\HM\otimes_{\HMpt} \H, -)
\end{equation} as a right adjoint. 
\item If $\m$ is a $\HM$-module which is finitely generated over $\k$, then  $\Ind^\H_{\HM}(\m)$ is finitely generated over $\k$.
\item Suppose that $\k$ is a field,  and  let  $\m$ be a $\H$-module which is finite dimensional over $\k$.
The image of  $\m$ by the  left  (resp.  right) adjoint to $\Ind^\H_{\HM}$  is  finite dimensional over $\k$.

\end{enumerate}

Note that the left adjoint of the induction   is exact  because it is given by a localization \cite[Theorem 1.9]{Vig5}. Analogous results about the coinduction functor  hold   (\cite[Cor. 1.10]{Vig5}). 

Let   $\m$ be a $\H$-module. We have $\R^\H_\HM(\m)=\Hom_{\HMpt}(\HM, \m)$ and we recall that $\HM=\HMp[\tau^{-1}]$ is a localization of $\HMp$ at a central element $\tau$ (\S \ref{LeviHecke}).   The map $f\mapsto f(1)$ is an  homomorphism from $\Hom_{\HMpt}(\HM, \m)$  to the submodule $\cap_{n\in \N} \m \,\theta(\tau)^{n}$ of infinitely $\theta(\tau)$-divisible elements of $\m$. When $\m$ a noetherian $R$-module,  this map is bijective (\cite[Lemma 4.11]{AHV}). Therefore  (3) for  the right adjoint  to $\Ind^\H_{\HM}$ is a particular case of:

\hspace{1mm} (4)  {\sl Let   $\m$ be a $\H$-module which is noetherian over $\k$. Then  $\R^\H_{\HM}(\m)$ is finitely generated over $R$.}

\begin{rema}\label{rema:decompind}
We have an isomorphism of $R$-modules  $$ \bigoplus _{d\in \dist} \m\cong  \Ind_\HM^\H(\m)$$ given by 
$(m_d)_{d\in \dist}\mapsto \sum_{d\in \dist} m_d\otimes \tau_{\tilde d}$  (\cite[Rmk. 3.7]{Vig5}, see also \cite[Prop. 5.2]{Oparab} when $\G={\rm GL}_n( \F)$).
\end{rema}

\begin{rema}\begin{enumerate}\item The functor $\L_\HM^\H$ is explicitly described in \cite[Thm. 1.9]{Vig5}: from  \eqref{coind} and \eqref{twistedind} we get an isomorphism 
 \begin{equation}\label{explicitleftadj1} \L_{\HM}^\H\cong  (- \otimes_{\H_{(\M')^-, \theta^*}} \H_{\M'})  \upiota ^{-1}\upiota_\M .\end{equation} 
 
\item 
Abe has recently proved  \cite[Prop. 4.13 (2)]{Abe2}  that the coinduction  functor $\cII_{\HM}^\H  $ is isomorphic to 
$\Hom_{\H_{(\M')^{+}, \theta^*}}( \H, - \upiota_{\M} ^{-1}\upiota )$. Applying \eqref{twistedind}, this means  that $\Hom_{\H_{\M^+, \theta^*}}( \H, -  )$  is isomorphic to the induction functor $\Ind^\H_{\HM}= -\otimes _{\HMpt} \H \simeq \Hom_{\H_{(\M')^{-}, \theta^*}}( \H, -\, \upiota_{\M} ^{-1}\upiota)$ where the notation 
 $\H_{\M^+, \theta^*}$ is a reminder  that we identify  $\HMp$ with its image in $\Hh$ via $\theta^*$.
 This implies that  
we have an isomorphism of functors
\begin{equation}\label{explicitleftadj2}\L_\HM^\H\cong -\otimes _{\H_{\M^+, \theta^*}} \H_\M.\end{equation} \end{enumerate} \label{rema:tensor-hom} 
\end{rema}

\subsection{Commutative diagrams: parabolic induction and the pro-$p$ Iwahori invariants functor}

\subsubsection{\label{prop-inv}The functor of $\I$-invariants  and its left adjoint}
We consider the universal module $\XXM=\ind_\IM^\M(1)$ introduced in \S\ref{prophecke}. It is naturally  a left $\HM$-module  and a  representation in $\Rep(\M)$. Recall that when $\M=\G$, we write simply $\XX=\ind_\I^\G(1)$ instead of $\XX_\G$.
The  functor 
     \begin{align}\label{tens} -\otimes_{\H} \XX:\Mod(\Hh)\to \Rep(\G)  \end{align}
  is the left adjoint of the $\I$-invariant functor 
      \begin{align}\label{inv} \Hom_\G(\XX, -)=(-)^{\I}:\Rep(\G)
\to \Mod(\H).  \end{align}
It is therefore  right exact  and the  functor $(-)^{\I}$ is left exact.

\subsubsection{On the functor of $\I$-invariants in characteristic $p$\label{onfunc}}  

Suppose that $R$ is an algebraic closure of the residue field of $\F$. The functor \eqref{inv} has been studied in the case of ${\bf G}={\rm GL}_2$ and in the case of $\G={\rm SL}_2$ (\cite{Oinv}, \cite{Koz}). It yields an equivalence between the category of representations of ${\bf G}={\rm GL}_2(\mathbb Q_p)$ (resp. $\G={\rm SL}_2(\mathbb Q_p)$) generated by their $\I$-fixed vectors and the category of $\H$-modules. This equivalence fails when $
\G={\rm GL}_2(\F)$ for $\F\neq \mathbb Q_p$. 
Although it has not been studied in general, it is expected to fail for groups of higher rank: for example, the functor \eqref{tens}  is not exact when ${\bf G}={\rm GL}_3$ and $p\neq 2$ (\cite{OSepreprint}).
 
\begin{rema}
  For $\pi\in \Rep(\G)$ admissible,
$\pi$ irreducible does not imply $\pi^\I$ irreducible. Counter examples are constructed in  \cite{BP} in the case of $\G={\rm GL}_2( \F)$ where $ \F$ is a strict  unramified extension of $\mathbb Q_p$.

\end{rema}

\subsubsection{Commutative diagrams: questions}  We study the connection between  the functors  of induction for Hecke modules and for representations via the functors \eqref{tens} and \eqref{inv} and their analogs for $\M$. We examine the following questions. 
 \begin{question}\label{qp1}Does the  $\I$-invariant  functor commute with  parabolic induction: is the following diagram commutative?
 \begin{equation}\begin{CD}\Rep(\M) @>{(-)^{\IM}}>>  \Mod(\HM)\cr
{\Ind_\P^\G}@VVV  {\Ind_\HM^\H }@VVV \cr
 \Rep(\G) @>{(-)^{\I}}>>        \Mod(\H)\cr
\end{CD}\label{diag1prop}\end{equation}
 We answer this question positively in \S\ref{answersprop} (Proposition \ref{prop:Q1prop}). \\
\end{question}
 \begin{question}\label{qp2}  Does the $\I$-invariant functor commute with  the right adjoints of parabolic induction:
is the following diagram commutative?

 \begin{equation}\begin{CD}\Rep(\G) @>{\hspace{13mm}\bf R_\P^\G\hspace{13mm}}>> \Rep(\M)\cr
{(-)^{\I}}@VVV {(-)^{\IM}}@VVV\cr
 \Mod(\H) @>{\R^\H_\HM=\Hom_{\Hh}(\HM\otimes_{\HMpt} \H, -)}>>        \Mod(\H_\M)\cr
\end{CD}\label{diag2prop}\end{equation}
Passing to left adjoints, this is equivalent to asking whether the following diagram is commutative:

 \begin{equation}\begin{CD}\Mod(\HM) @>{-\otimes_{\HM} \XXM}>>  \Rep(\M)\cr
{\Ind^\H_{\HM}}@VVV  {\Ind^\G_{\P }}@VVV \cr
 \Mod(\H) @>{\hspace{3mm}-\otimes_{\H} \XX\hspace{3mm}}>> \Rep(\G)\cr
\end{CD}\label{diag2'prop}\end{equation}

We answer this question positively in \S\ref{answersprop} (Corollary \ref{coro:Q2}).
In passing, we  will 
consider the functor $\dagger: \Rep(\M)\rightarrow \Rep(\M)$ sending a representation $\V$ onto the subrepresentation generated by  the $\IM$-fixed subspace $\V^\IM$ (and likewise $\dagger: \Rep(\G)\rightarrow \Rep(\G)$). We show (Corollary \ref{daggercommuteprop}) that $\Ind_\P^\G$ and $\dagger$ commute.\\

\end{question}

\begin{question}\label{qp3}  Does the $\I$-invariant functor commute with  the left adjoints of parabolic induction:
is the following diagram commutative?
 \begin{equation}\begin{CD}\Rep(\G) @>{\hspace{9mm}(-)_\N\hspace{9mm}}>> \Rep(\M)\cr
{(-)^{\I}}@VVV  {(-)^{\I_\M}}@VVV \cr
 \Mod(\H) @>{\L_\HM^\H }
>>        \Mod(\H_\M)\cr
\end{CD}\label{diag3}\end{equation}

 We recall that the left adjoint to $\Ind_\HM^\H$ is described in Remark \ref{rema:tensor-hom}.  The answer is negative in general as we show in Corollary \ref{contreex} where we work with $\G={\rm GL}_2(\mathbb Q_p)$, $\M=\T$ and $R$ is a field of characteristic $p$.  But when $R$ is a field of characteristic different from $p$,   we prove in Proposition \ref{prop:rem??}  that
 the answer is positive. \end{question}


\subsubsection{\label{answersprop}Commutative diagrams: answers}

The \textbf{answer to Question}  \ref{qp1} is positive.  We prove that the diagrams \eqref{diag1} and \eqref{diag2} commute by proving the following result. 

\begin{prop}\label{prop:Q1prop}  For any representation $\V\in \Rep(\M)$, there is a natural isomorphism of right $\H$-modules
$$\Ind_\HM^\H(\V^\IM)=\V^{\IM}\otimes_{\HMp} \H\cong (\Ind_\P^{\G}(\V))^\I.$$

\end{prop}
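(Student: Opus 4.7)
The plan is to construct an explicit natural transformation and verify bijectivity, following the strategy of the finite-case proof of Proposition \ref{prop:Q1}. The key $p$-adic tools are: Lemma \ref{lemma:PGU}(2), giving $\G = \sqcup_{d \in \dist} \P \hat d \I$ with lifts $\hat d \in \K$; Lemma \ref{lem:addlengths}, giving $\tau_{\tilde w}\tau_{\tilde d} = \tau_{\tilde w \tilde d}$ for $w \in \W_{\M^+}(1)$ and $d \in \dist$, so that $\H$ is free as a left $\HMpt$-module with basis $\{\tau_{\tilde d}\}_{d \in \dist}$, yielding via Remark \ref{rema:decompind} the $R$-module decomposition $\V^\IM \otimes_{\HMpt} \H \cong \bigoplus_{d \in \dist} \V^\IM \otimes \tau_{\tilde d}$; and Lemma \ref{lemma:cosets}(1) describing the double coset $\I \hat w \I$ for $\M$-positive $w$. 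For $v \in \V^\IM$, let $f_{\P, v} \in (\Ind_\P^\G \V)^\I$ denote the unique $\I$-invariant function supported on $\P \I$ with $f_{\P, v}(1) = v$; this is well defined because $\P \cap \I = \IM \cdot \I_\N$ projects onto $\IM$ in $\M$.

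By the universal property of $-\otimes_{\HMpt}\H$, the desired right $\H$-linear map $\phi_\V : v \otimes h \mapsto f_{\P, v} \cdot h$ exists once $v \mapsto f_{\P, v}$ is known to be right $\HMp$-equivariant, where $\HMp$ acts on the target via $\theta$. This amounts to the identity $f_{\P, v \cdot \tau_w^\M} = f_{\P, v} \cdot \tau_w$ for all $w \in \W_{\M^+}(1)$ with lift $\hat w \in \M^+$ and all $v \in \V^\IM$. Both functions are $\I$-invariant, $\P$-equivariant on the left, and supported in $\P \I$ (using $\P \I \hat w \I = \P \I$ from Lemma \ref{lemma:cosets}(1)), so it suffices to compare their values at $1$. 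The left side equals $v \cdot \tau_w^\M = \sum_y y^{-1} \hat w^{-1} v$ summed over representatives $y$ of $(\hat w^{-1} \IM \hat w \cap \IM) \backslash \IM$, while the right side equals $\sum_{y, u_y} f_{\P, v}(u_y^{-1} y^{-1} \hat w^{-1})$ via the decomposition $\I \hat w \I = \sqcup_{y, u_y} \I \hat w y u_y$ from Lemma \ref{lemma:cosets}(1). The condition $u_y^{-1} y^{-1} \hat w^{-1} \in \P \I$ rearranges, via right multiplication by $\hat w y \in \M \subseteq \P$, to $u_y^{-1} \in \P \I \hat w y$, which by Lemma \ref{lemma:cosets}(1) is equivalent to $u_y \in (\hat w y)^{-1} \I \hat w y \cap \I_\Nm$, i.e., $u_y$ represents the trivial coset. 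Hence only terms with $u_y = 1$ contribute, and each gives $f_{\P, v}(y^{-1} \hat w^{-1}) = y^{-1} \hat w^{-1} v$ by $\P$-equivariance of $f_{\P, v}$, matching the left-hand side.

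For bijectivity, I compare the $R$-module decompositions summand by summand. The target decomposes as $(\Ind_\P^\G \V)^\I = \bigoplus_{d \in \dist} (\Ind_\P^\G \V)^\I_d$, where $(\Ind_\P^\G \V)^\I_d$ is the space of $\I$-invariant functions with support in $\P \hat d \I$, identified with $\V^\IM$ by evaluation at $\hat d$ thanks to Lemma \ref{lemma:PGU}(2)(c). Under $\phi_\V$, the summand $\V^\IM \otimes \tau_{\tilde d}$ lands in $(\Ind_\P^\G \V)^\I_d$, and the value of $f_{\P, v} \cdot \tau_{\tilde d}$ at $\hat d$ equals $v$: decomposing $\I \hat d \I = \sqcup_z \I \hat d z$, a contribution $f_{\P, v}(\hat d z^{-1} \hat d^{-1})$ is nonzero only when $\hat d z^{-1} \hat d^{-1} \in \P \I \cap \hat d \I \hat d^{-1}$, which by Lemma \ref{lemma:PGU}(2)(b) equals $\hat d \I \hat d^{-1} \cap \I$; this forces $z$ to represent the trivial coset, leaving only the contribution $f_{\P, v}(1) = v$. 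Hence $\phi_\V$ is the identity on each summand under these canonical identifications, and is therefore an isomorphism.

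The main obstacle is the computation in step 2: establishing that $\M$-positivity of $w$ forces $u_y = 1$ among the simple cosets contributing to $(f_{\P, v} \cdot \tau_w)(1)$. This is precisely what makes the tensor product over $\HMpt$, via the embedding $\theta$ defined on $\HMp$, the correct formulation; without $\M$-positivity, extra terms from $u_y \neq 1$ would survive, and the analogous compatibility with the full $\HM$-action fails unless $p$ is invertible in $R$ (compare Remark \ref{extension}).
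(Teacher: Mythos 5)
Your proof is correct and follows essentially the same route as the paper's: you construct the same map $v\otimes h\mapsto f_{\P\I,v}\cdot h$, verify its well-definedness via the $\M$-positivity computation encoded in Lemma \ref{lemma:cosets}(1), and establish bijectivity by matching the basis $\{\tau_{\hat d}\}_{d\in\dist}$ of $\H$ over $\HMpt$ (Remark \ref{rema:decompind}) with the disjoint decomposition $\G=\sqcup_d\P\hat d\I$ and the identification $(\Ind_\P^\G\V)^\I_d\cong\V^\IM$ by evaluation at $\hat d$ via Lemma \ref{lemma:PGU}(2). The paper factors these verifications through Lemma \ref{Igene} (whose parts (2) and (3) package exactly the support and value-at-$1$ computations you carry out inline), but the underlying arguments — in particular the use of $\P\I\hat w\I=\P\I$ and of the criterion $u\in\P\I\hat wx\Leftrightarrow\I\hat wx=\I\hat wxu$ to kill the contributions with $u_y\neq 1$, and the use of $\hat d\I\hat d^{-1}\cap\P\I=\hat d\I\hat d^{-1}\cap\I$ for the evaluation at $\hat d$ — are identical.
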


 The isomorphism of Proposition \ref{prop:Q1prop} is given by the map \eqref{naturaliso1prop} below. We prove the proposition after the following lemma. 
 Here we consider $\dist$ as a subset of $\W$ and fix a lift $\hat d\in \norm_\G\cap\K$ for each $d\in\dist$.
 \begin{lemma}\label{Igene}Let $\V$ be a representation  of $\M$. \begin{enumerate}
\item The $R$-module  $(\Ind_\P^\G(\V))^\I$ is generated by all
$$f_{\P \hat d \I, v}$$ where $d$ (resp.  $v$) ranges over $\dist$ (resp. $\V^{ \IM}$) and
$f_{\P  \hat d \I, v}$ denotes the unique $\I$-invariant function in $\Ind_\P ^\G (\V)$ with support $\P  \hat d \I$ and value $v$ at $\hat d$.\item For $v\in \V^{ \IM}$ and $d\in \dist$, we have $f_{\P \hat d \I, v}= f_{\P\I, v}\tau_{\hat d}$.
\item For $v\in \V^{ \IM}$ and $w\in \W_\Mp(1)$, we have $f_{\P\I, v}\,\tau_w= f_{\P\I, v\tau^\M_{w}}$.\end{enumerate}
\end{lemma}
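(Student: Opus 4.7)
The plan is to mirror the proof of Lemma \ref{Ugene} in the $p$-adic setting, with the disjoint decomposition $\G=\sqcup_{d\in\dist}\P\hat d\I$ from Lemma \ref{PI}(2) playing the role of its finite counterpart, and with Lemma \ref{lemma:cosets} providing the needed simple-coset structure for $\I\hat w\I$. For part (1), any $f\in(\Ind_\P^\G\V)^\I$ has compact support modulo $\P$ (because $\P\backslash\G$ is compact and $f$ is smooth), hence is supported on finitely many $\P\hat d\I$ and decomposes as a finite sum of $\I$-invariant components, each supported on a single such coset. Such a component is determined by its value at $\hat d$; by left $\P$-equivariance (via the Levi quotient) and right $\I$-invariance, this value is fixed by the projection to $\M$ of $\P\cap\hat d\I\hat d^{-1}$, which by Lemma \ref{PI}(2)(c) equals $\IM$. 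Hence this value lies in $\V^\IM$, and conversely any $v\in\V^\IM$ yields a well-defined $f_{\P\hat d\I,v}$.

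Part (2): $f_{\P\I,v}\tau_{\hat d}$ is $\I$-invariant with support contained in $\P\I\hat d\I=\P\hat d\I$ (Lemma \ref{PI}(2)). Using the decomposition $\I\hat d\I=\sqcup_x\I\hat dx$ for $x\in\I\cap\hat d^{-1}\I\hat d\backslash\I$ and the standard formula $f\tau_{\hat d}=\sum_x(\hat dx)^{-1}.f$, one evaluates at $\hat d$ to obtain $\sum_x f_{\P\I,v}(\hat dx^{-1}\hat d^{-1})$. By Lemma \ref{PI}(2)(b), $\hat d\I\hat d^{-1}\cap\P\I=\hat d\I\hat d^{-1}\cap\I$, so only the trivial $x$-coset contributes, giving $v$; part (1) then forces $f_{\P\I,v}\tau_{\hat d}=f_{\P\hat d\I,v}$.

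For part (3), the $\M$-positivity of $w$ is essential. Lemma \ref{lemma:cosets}(3) decomposes $\I\hat w\I$ into simple cosets of the form $\I\hat wxu_x$, where $x$ parametrizes representatives for $\IM\hat w\IM$ and $u_x\in\I_\Nm$ refines each such piece; Lemma \ref{lemma:cosets}(1) gives $\P\I\hat w\I=\P\I$, so $f_{\P\I,v}\tau_w$ is supported in $\P\I$. Evaluating at $1$ yields $\sum_{x,u_x}f_{\P\I,v}(u_x^{-1}(\hat wx)^{-1})$; the second assertion of Lemma \ref{lemma:cosets}(1) characterizes $u_x^{-1}(\hat wx)^{-1}\in\P\I$ as equivalent to $u_x$ lying in the stabilizer $(\hat wx)^{-1}\I\hat wx\cap\I_\Nm$, i.e., representing the trivial $\I_\Nm$-coset. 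For trivial $u_x$, the element $(\hat wx)^{-1}\in\M\subset\P$ contributes $(\hat wx)^{-1}.v$; summing over $x$ gives $\sum_x(\hat wx)^{-1}.v=v\tau^\M_w=f_{\P\I,v\tau^\M_w}(1)$, and part (1) forces equality.

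The main obstacle is (3): the $\I_\Nm$-refinement of the $\IM$-coset decomposition has no analog in the finite case and must be handled with care. The $\M$-positivity of $w$, through the second part of Lemma \ref{lemma:cosets}(1), is precisely what forces this refinement to collapse modulo $\P\I$, so that the $\H$-action of $\tau_w$ on $(\Ind_\P^\G\V)^\I$ matches the $\HM$-action of $\tau^\M_w$ on $\V^\IM$ under the embedding $\theta\colon\HMp\hookrightarrow\H$.
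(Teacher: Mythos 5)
Your proof is correct and takes essentially the same approach as the paper: part (1) is the argument from Lemma \ref{PI}(2) combined with the observation that a left $\P$-equivariant, right $\I$-invariant function supported on $\P\hat d\I$ is determined by its value at $\hat d$ lying in $\tilde\V^{\P\cap\hat d\I\hat d^{-1}}=\V^{\IM}$; part (2) is the paper's one-line argument spelled out via the convolution formula; and part (3) is the calculation the paper leaves implicit when it says point (3) is a direct consequence of Lemma \ref{lemma:cosets}(1). One small nit: what you call the ``second assertion'' of Lemma \ref{lemma:cosets}(1) (the criterion $u\in\P\I\hat wx \Leftrightarrow \I\hat wx=\I\hat wxu$) is the third and final statement there, not the second; and the full decomposition $\I\hat w\I=\sqcup_{x,u_x}\I\hat wxu_x$ is the $d=1$ specialization of Lemma \ref{lemma:cosets}(3), a refinement of the $\I_{\N^{op}}$-grading given in (1), so citing both as you do is the right bookkeeping.
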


\begin{proof} These results are proved for $\G={\rm GL}(n, \F)$ in \cite[6A 1,2 and 3]{Oparab}  and  for $\G$ split in  \cite[ \S 5.5.1 and 5.5.2]{Compa}. We recall the arguments here and check that they are valid for general $\G$.
They are similar to the ones in the proof of Lemma \ref{Ugene}.

 (1)  By Lemma \ref{PI}(2), a function $(\Ind_\P^\G(\V))^\I$ is a unique  linear combination of $\I$-invariant functions with support in $\P \hat d \I$, for $d\in \dist$.  Denote in this proof by $\tilde\V$ the representation $\V$ trivially inflated to a representation of $\P$. A $\I$-invariant function  with support in $\P \hat d \I$
is determined by its value at $\hat d$ which is an element  in ${\tilde\V}^{\P\cap d\I d^{-1}}={\tilde\V}^{\IM\N}=\V^\IM$ by Lemma \ref{PI}(2)(c).\\\\
Now recall that for   $X$ a representation of $\G$,   and $f\in X^\I$, $w\in \W(1)$, if $\I \hat w\I=\sqcup_{x} \I\hat w x$ is a decomposition in simple cosets with $x$ ranging over $(\I\cap \hat w^{-1}\I\hat w)\backslash \I$, then we have
$f \tau_ w=\sum_{x} (\hat wx)^{-1} f$. Using this we check (2):
for $v\in \V^{ \IM}$ and $d\in \dist$, the function $f_{\P\I, v}\tau_{\hat d}$ is $\I$-invariant with support in $\P \hat d \I$ and value at $\hat d$ equal to $v$ because   $\hat d \I \hat d^{-1}\cap\P\I\subset \I$ by Lemma \ref{PI}(2)(b).\\
Point (3) is a  direct consequence of Lemma \ref{lemma:cosets}(1). \end{proof}

\begin{proof}[Proof of Proposition \ref{prop:Q1prop} ] Analogously to  the proof of Proposition \ref{prop:Q1}, consider 
the  morphism of right $\H$-modules
\begin{equation}\label{naturaliso1prop}\V^{\IM}\otimes_{\HMp} \H\rightarrow (\Ind_{\P}^{\G}(\V))^\I, \quad v\otimes h\mapsto f_{\P\I, v} \, h.
\end{equation}
It is well defined by Lemma  \ref{Igene}(3). By Remark \ref{rema:decompind}, the vector space $\V^{\IM}\otimes_{\HMp} \H$ decomposes as the direct sum of all 
$\V^{\IM}\otimes \tau_{\hat d}$ for $d\in \dist$. For $v\in \V^{\Uf_\Mf}$ and $d\in \dist$, the image of $v\otimes\tau_{\hat d}$ is equal to $f_{\P \hat d \U, v}$ by Lemma \ref{Igene}(2). This proves that the map is bijective by Lemma \ref{Igene}(1) and its proof.
\end{proof}
We now  \textbf{answer Question}  \ref{qp2} positively by proving that the diagram \eqref{diag2'prop} commutes. 
  Let $\m$ be a right  $\HM$-module. The  natural morphism of right $\HM$-modules $\m \rightarrow ( \m \otimes_{\HM} \XXM)^{\IM},$  $m\mapsto m\otimes {\bf 1}_{\IM}$ induces a morphism of right $\H$-modules
$$\Ind^\H_{\HM}(\m )=\m \otimes_{\HMpt}\H \rightarrow  (\m \otimes_\HM \XXM)^{\IM}\otimes _{\HMpt}\H\cong ({\Ind}_\P ^\G\:\:( \m \otimes_\HM \XXM))^{\I }.$$ 
(the isomorphism    comes from Proposition \ref{prop:Q1prop}).   By adjunction, this in turn  induces  a  $\G$-equivariant map
\begin{equation}\label{parabparah} 
\Ind^\H_{\HM}(\m )\otimes_\H\XX \longrightarrow {\Ind}_\P ^\G\:\:( \m \otimes_\HM \XXM)
\end{equation} sending $m\otimes {1}_{\I }\in \m \otimes_{\HMpt}\XX$  to $f_{\P \I ,m\otimes  {1}_{\IM}}$ with the notation in Lemma \ref{Igene}.  
Our goal is to prove that \eqref{parabparah} is an isomorphism which will prove that  \eqref{diag2'prop} commutes. \\

  In the case when $\m=\HM$, note  that the space  ${\Ind}_\P^\G\:\:( \XXM)$ is naturally endowed with a structure of  left $\HM$-module which commutes with the action of $\G$:  the image $hf$ of $f\in  {\Ind}_\P^\G\:\:( \XXM)$ by  $h\in \HM$   is given by $ (hf)(g)= hf(g)$ for $g\in \G$.  In this case, 
 \eqref{parabparah} is the  $\G$-equivariant and left $\HM$-equivariant map
\begin{equation}\label{parabparahuniv}\HM \otimes_{\HMpt}\XX\longrightarrow {\Ind}_\P ^\G\:\:(\XXM) \end{equation} 
sending $1\otimes {1}_{\I }\in \HM \otimes_{\HMpt}\XX$  to $f_{\P \I ,  {1}_{\IM}}$.

\begin{prop} \label{prop:Q2univ} The map \eqref{parabparahuniv}  is an isomorphism of representations of $\G$ and of left $\HM$-modules.

\end{prop}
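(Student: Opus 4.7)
The plan is to verify $\phi$ is an isomorphism in three stages.

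First, I check well-definedness. Since $\phi$ is prescribed on the generator $1\otimes 1_\I$ and extended by left $\HM$-linearity and right $\G$-equivariance, the only issue is $\HMpt$-balance. For $h_+=\tau_{\tilde w}^{\M}\in\HMpt$ with $\tilde w\in\W_{\M^+}(1)$, I compute $\phi(h_+\otimes 1_\I)=h_+\cdot f_{\P\I,1_{\IM}}=f_{\P\I,h_+\cdot 1_{\IM}}$ on one side, and $\phi(1\otimes\theta(h_+))=f_{\P\I,1_{\IM}}\cdot\theta(h_+)$ on the other. By Lemma \ref{Igene}(3) the latter equals $f_{\P\I,1_{\IM}\cdot\tau_{\tilde w}^{\M}}=f_{\P\I,h_+\cdot 1_{\IM}}$, so the two agree.

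Next, I take $\I$-invariants on both sides. For the target, Proposition \ref{prop:Q1prop} applied to $\V=\XXM$ gives $(\Ind_\P^\G(\XXM))^\I\cong\XXM^{\IM}\otimes_{\HMpt}\H=\HM\otimes_{\HMpt}\H$. For the source, I use that $\HM$ is a localization of $\HMp$ at a central element (see \S\ref{LeviHecke}), so $\HM\otimes_{\HMpt}\XX$ is a filtered colimit of copies of $\XX$ with transitions given by multiplication by this central element of $\H$ (via $\theta$); since that element commutes with the $\I$-action (being $\G$-equivariant), $\I$-invariants commute with the colimit, giving $(\HM\otimes_{\HMpt}\XX)^\I=\HM\otimes_{\HMpt}\XX^\I=\HM\otimes_{\HMpt}\H$. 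The induced $(\HM,\H)$-bimodule map $\phi^\I$ sends the distinguished generator $1\otimes 1$ to itself, hence is the identity.

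Finally, to lift the isomorphism from $\I$-invariants to the full iso, I would present both sides as cyclic $(\HM,R[\G])$-bimodules with the same defining relations: each is generated by a single element ($1\otimes 1_\I$ for the source, $f_{\P\I,1_{\IM}}$ for the target) that is right $\I$-invariant and satisfies the twisted compatibility $h_+\cdot e=e\cdot\theta(h_+)$ for all $h_+\in\HMpt$. The source is tautologically universal for these relations. For the target, the same universal property will follow from the Iwasawa decomposition $\G=\sqcup_{d\in\dist}\P\hat d\I$ (Lemma \ref{PI}) together with Lemma \ref{Igene}: these give a support-indexed $R$-module decomposition $\Ind_\P^\G(\XXM)=\bigoplus_d \Ind_\P^\G(\XXM)_d$ in which each summand is generated over $\HM$ and $R[\G]$ by the $\I$-invariant functions $f_{\P\hat d\I,v}$ with $v\in\XXM^{\IM}$, matching the analogous decomposition of the source. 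The main obstacle is this last verification---ruling out hidden relations in each $\Ind_\P^\G(\XXM)_d$---which reduces via Lemma \ref{PI}(2) to identifying each summand with the space of functions $\hat d\I\to\XXM$ subject only to the $\IM$-compatibility dictated by the fiber structure $\P\hat d\I\cong\P\times_{\P\cap\hat d\I\hat d^{-1}}\hat d\I$.
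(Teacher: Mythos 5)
Your first two steps are sound: well-definedness via Lemma \ref{Igene}(3) is correct, and the observation that $\phi^\I$ is an isomorphism (using Proposition \ref{prop:Q1prop} on the target side, and commutation of $\I$-invariants with the filtered colimit defining the localization $\HM\otimes_{\HMpt}\XX$ on the source side) is valid and is a genuinely different, rather elegant way to package part of the argument.

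The gap is in your third step, and it is essential. Knowing that $\phi^\I$ is an isomorphism does not give injectivity of $\phi$ over a general commutative ring $R$, which is the hypothesis of Proposition \ref{prop:Q2univ}. (If $R$ were a field of characteristic $p$, then a nonzero smooth $\G$-representation always has nonzero $\I$-invariants, so $(\ker\phi)^\I=0$ would force $\ker\phi=0$; but that argument breaks for general $R$.) Your proposed remedy --- presenting both sides as cyclic bimodules ``with the same defining relations'' and invoking universality --- is where the circularity enters. The source $\HM\otimes_{\HMpt}\XX$ does carry the universal property you describe (this follows from tensor-hom adjunction and $\Hom_\G(\XX,V)\cong V^\I$). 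But the target $\Ind_\P^\G(\XXM)$ being universal for the same data is exactly the assertion that $\phi$ has trivial kernel; you cannot assume it matches ``the analogous decomposition of the source'' without proving there are no extra relations in the target, which is the whole content of the injectivity claim. Your final sentence gestures at a fiber-structure analysis via Lemma \ref{PI}(2), but this is not carried out, and it would in any case require the combinatorics of the paper's Lemma \ref{lem1}: the disjoint decomposition $\I\M^+\K=\sqcup_{d\in\dist}\I\M^+\hat d\I$, the explicit injectivity of the map $F$ on the subspace $\mathbf Y$ of functions supported in $\I\M^+\K$ (using the cell decomposition from Lemma \ref{lemma:cosets}(3) and the computation \eqref{calc}), and crucially the reduction step that every $x\in\XX$ satisfies $\tau_{\mu_{\M^+}}^r x\in\mathbf Y$ for $r\gg 0$, which allows you to bootstrap injectivity from $\mathbf Y$ to all of $\HM\otimes_{\HMpt}\XX$ using invertibility of $\tau^\M_{\mu_{\M^+}}$ in $\HM$. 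None of these ingredients appear in your proposal, so the injectivity half of the statement remains unproved.
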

Before proving the proposition, we draw two corollaries. 
\begin{coro} For any $\m\in\Mod(\HM)$, the $\G$-equivariant map \eqref{parabparah} is an isomorphism.

\label{coro:Q2}
\end{coro}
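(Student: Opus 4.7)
The plan is to deduce the general case from Proposition \ref{prop:Q2univ} (which handles $\m = \HM$) by a standard right-exactness and colimit argument. Both source and target functors of the natural transformation \eqref{parabparah}, namely
\[
F_1:\m\longmapsto \Ind^\H_{\HM}(\m)\otimes_\H\XX \quad\text{and}\quad F_2:\m\longmapsto \Ind_\P^\G(\m\otimes_\HM \XXM),
\]
are right exact functors from $\Mod(\HM)$ to $\Rep(\G)$ that commute with arbitrary direct sums. For $F_1$ this is clear from the two tensor products. For $F_2$ this follows from the right exactness and commutation with direct sums of the $\M$-representation functor $-\otimes_\HM \XXM$ together with the \emph{exactness} of parabolic induction $\Ind_\P^\G$ recalled in \S\ref{sec:paraind-padic} and the fact that $\Ind_\P^\G$ commutes with arbitrary direct sums (its left adjoint $(-)_\N$ preserves finitely generated submodules, or equivalently, small filtered colimits of smooth representations go through parabolic induction).

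First I would verify naturality: for any morphism $\m\to\m'$ of right $\HM$-modules, the diagram obtained by applying $F_1$ and $F_2$ and the map \eqref{parabparah} commutes; this is immediate from the construction. Then I would pick a presentation of $\m$ by free $\HM$-modules,
\[
\HM^{(I)}\longrightarrow \HM^{(J)}\longrightarrow \m\longrightarrow 0.
\]
Applying $F_1$ and $F_2$ yields two right exact sequences with a commutative ladder between them whose first two vertical arrows are instances of \eqref{parabparah} on free modules $\HM^{(I)}$ and $\HM^{(J)}$.

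Since both $F_1$ and $F_2$ commute with direct sums, and by Proposition \ref{prop:Q2univ} the natural transformation \eqref{parabparah} is an isomorphism on $\HM$, it is also an isomorphism on every free module $\HM^{(K)}$. By the five-lemma (or just the standard cokernel argument for right exact functors), the vertical arrow on $\m$ is an isomorphism. The main point to be careful about is justifying that $F_2$ commutes with arbitrary direct sums: tensoring $-\otimes_\HM \XXM$ clearly does, and although $\Ind_\P^\G$ is defined via smooth functions, its exactness plus compatibility with filtered colimits in $\Rep(\M)$ (a consequence of the fact that smoothness is preserved under colimits and that $\P\backslash\G$ is covered by a countable family of compact-open subsets stabilizing finite-type data) ensures this; this is the only step I expect to require an explicit verification, and it is the reason one reduces to $\m = \HM$ rather than to an arbitrary finitely generated module.
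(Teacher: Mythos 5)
Your proof is correct, but it takes a genuinely different route from the paper's. The paper reduces to the universal case $\m = \HM$ in a single stroke: it quotes \cite[Lemma 7.7]{HVcomp} for the tensor identity asserting that the canonical map $\m \otimes_{\HM}\Ind_\P^\G(\XXM)\to \Ind_\P^\G(\m\otimes_\HM \XXM)$ is an isomorphism, and then simply tensors the \emph{left $\HM$-equivariant} isomorphism of Proposition \ref{prop:Q2univ} by $\m$ over $\HM$ (this is precisely why Proposition \ref{prop:Q2univ} records the left $\HM$-module compatibility) and composes. Your argument instead reproves the same reduction from scratch: naturality of \eqref{parabparah}, right exactness of both functors, compatibility with direct sums, Proposition \ref{prop:Q2univ} on free modules, five-lemma on a free presentation. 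Both arguments are valid; yours is more self-contained (it does not rely on the external tensor identity, and in effect reproves it), while the paper's is shorter given the cited lemma.

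One remark to tighten: your parenthetical justification for $\Ind_\P^\G$ commuting with arbitrary direct sums, namely ``its left adjoint $(-)_\N$ preserves finitely generated submodules,'' is not a correct reason -- the behaviour of a left adjoint says nothing about colimit preservation of the functor itself. The clean justifications are either (i) $\Ind_\P^\G$ has a right adjoint $\RR_\P^\G$ (recalled in \S\ref{sec:paraind-padic} from \cite{Vigadjoint}), so it preserves all small colimits; or (ii) the direct geometric argument you also sketch: a smooth function $f\in \Ind_\P^\G(\oplus_i \V_i)$ is fixed by a compact open $K_f$, and since $\P\backslash\G$ is compact, $f$ is determined by its finitely many values on representatives of $\P\backslash\G/K_f$, each of which lies in a finite sub-sum; hence $f$ already lies in $\Ind_\P^\G$ of a finite sub-sum. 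Either way the step goes through, and with that fix the proposal is complete.
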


\begin{proof}[Proof of Corollary \ref{coro:Q2}] By \cite[Lemma 7.7]{HVcomp}, the canonical $\G$-equivariant map
\begin{equation}\label{tensoparab}\m \otimes_{\HM}{\Ind}_\P ^\G\:\:(\XXM)\longrightarrow {\Ind}_\P ^\G\:(\m\otimes_\HM\XXM)
\end{equation}  is an isomorphism. 
Tensoring the isomorphism \eqref{parabparahuniv} by $\m$ and  composing with 
 \eqref{tensoparab} gives the composite  isomorphism
$$\m \otimes_{\HMpt}\XX\overset{\simeq}\longrightarrow \m\otimes _\HM{\Ind}_\P ^\G\:\:(\XXM)\overset{\simeq}\longrightarrow {\Ind}_\P ^\G\:\:(\m\otimes_\HM\XXM)$$ which coincides with \eqref{parabparah}.  
\end{proof}

The notation $\dagger$ was introduced after \eqref{diag2'prop}. We have:
\begin{coro} \label{daggercommuteprop} For $\V\in \Rep(\M)$, there is  an isomorphism of representations of $\G$:
 $$(\Ind_\P^\G(\V))^\dagger\cong \Ind_\P^\G(\V^\dagger).$$
The representation $\V\in \Rep(\M)$ is generated by its $\IM$-invariant subspace if and only if the representation $\Ind_\P^\G(\V)\in \Rep(\G)$ is generated by its $\I$-invariant subspace.
\end{coro}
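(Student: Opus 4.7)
The plan is to show the two subrepresentations $\Ind_\P^\G(\V^\dagger)$ and $(\Ind_\P^\G(\V))^\dagger$ of $\Ind_\P^\G(\V)$ coincide by checking they have the same $\I$-invariants and that each is generated by those invariants. First I would observe the immediate fact that $(\V^\dagger)^\IM=\V^\IM$, since $\V^\IM\subseteq \V^\dagger\subseteq\V$. By exactness of $\Ind_\P^\G$ (see \S\ref{sec:paraind-padic}), the inclusion $\V^\dagger\hookrightarrow \V$ produces an inclusion $\Ind_\P^\G(\V^\dagger)\hookrightarrow \Ind_\P^\G(\V)$, and Proposition \ref{prop:Q1prop} applied to $\V^\dagger$ and $\V$ gives, in a way compatible with this inclusion,
\[
(\Ind_\P^\G(\V^\dagger))^\I\;\cong\; (\V^\dagger)^\IM\otimes_{\HMp}\H \;=\; \V^\IM\otimes_{\HMp}\H \;\cong\; (\Ind_\P^\G(\V))^\I .
\]

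Having identified the $\I$-invariants, one inclusion is immediate: since $(\Ind_\P^\G(\V))^\dagger$ is by definition generated by $(\Ind_\P^\G(\V))^\I$, and the latter sits inside $\Ind_\P^\G(\V^\dagger)$ by the preceding step, we get $(\Ind_\P^\G(\V))^\dagger\subseteq \Ind_\P^\G(\V^\dagger)$. For the converse inclusion I would show that $\Ind_\P^\G(\V^\dagger)$ is itself generated by its $\I$-invariants. Consider the counit $(\Ind_\P^\G(\V^\dagger))^\I\otimes_\H\XX \to \Ind_\P^\G(\V^\dagger)$; its image is precisely $(\Ind_\P^\G(\V^\dagger))^\dagger$. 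Using Proposition \ref{prop:Q1prop} to rewrite the source as $\Ind_\HM^\H(\V^\IM)\otimes_\H\XX$, and then Corollary \ref{coro:Q2} (with $\m=\V^\IM$) to identify it with $\Ind_\P^\G(\V^\IM\otimes_\HM\XXM)$, the counit becomes exactly $\Ind_\P^\G$ of the $\M$-counit $\V^\IM\otimes_\HM\XXM\to\V^\dagger$. The latter map is surjective by definition of $\V^\dagger$; by exactness of $\Ind_\P^\G$ we conclude that the $\G$-counit is surjective, so $\Ind_\P^\G(\V^\dagger)=(\Ind_\P^\G(\V^\dagger))^\dagger\subseteq (\Ind_\P^\G(\V))^\dagger$.

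Combining the two inclusions gives the isomorphism $(\Ind_\P^\G(\V))^\dagger\cong \Ind_\P^\G(\V^\dagger)$ of the first assertion. The second assertion is then a formal consequence: $\V$ is generated by $\V^\IM$ means $\V=\V^\dagger$, and $\Ind_\P^\G(\V)$ is generated by its $\I$-invariants means $\Ind_\P^\G(\V)=(\Ind_\P^\G(\V))^\dagger=\Ind_\P^\G(\V^\dagger)$; this latter equality, by exactness and faithfulness of $\Ind_\P^\G$ (applied to $\V/\V^\dagger$), is equivalent to $\V=\V^\dagger$.

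The only nontrivial point is the identification in the third sentence of the second paragraph: one must verify that the counit of the adjunction $(-\otimes_\H\XX,(-)^\I)$ corresponds, under the natural isomorphisms of Proposition \ref{prop:Q1prop} and Corollary \ref{coro:Q2}, to $\Ind_\P^\G$ applied to the $\M$-counit. This is a naturality check that I would verify directly using the explicit formulas $m\otimes \mathbf{1}_\I \mapsto f_{\P\I,\,m\otimes\mathbf{1}_\IM}$ defining \eqref{parabparah} and $v\otimes \mathbf{1}_\IM \mapsto v$ defining the $\M$-counit; this is the main, but entirely routine, verification in the argument.
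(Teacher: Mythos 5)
Your proposal is correct and takes essentially the same approach as the paper. The paper realizes both $(\Ind_\P^\G(\V))^\dagger$ and $\Ind_\P^\G(\V^\dagger)$ directly as images of the same explicit map $\V^\IM\otimes_{\HMpt}\XX\to\Ind_\P^\G(\V)$, $v\otimes\mathbf{1}_\I\mapsto f_{\P\I,v}$, using Proposition~\ref{prop:Q1prop} and Corollary~\ref{coro:Q2} exactly as you do; the two-inclusion organization and the naturality check you flag at the end amount to the same explicit computation.
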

\begin{proof} 
Let $\V\in \Rep(\M)$.\\
$\bullet$ The  representation $(\Ind_\P^\G(\V))^\dagger$ is the image  of the  natural $\G$-equivariant morphism
\begin{equation}\label{m1}(\Ind_{\P}^{\G}(\V))^\I\otimes_{\H} \XX \to \Ind_\P^\G(\V),\quad  f\otimes {{\bf 1}_\I}\mapsto f,\end{equation} corresponding by adjunction to the identity map of $(\Ind_{\P}^{\G}(\V))^\I$.  By Proposition \ref{prop:Q1prop}, the $\H$-equivariant map $$\V^\IM\otimes_{\HMpt}\H\rightarrow (\Ind_{\P}^{\G}(\V))^\I$$ sending $v\otimes 1$ onto $f_{\P\I,v}$ is an isomorphism. It induces by adjunction  an isomorphism $\V^\IM\otimes_{\HMpt}\XX\rightarrow (\Ind_{\P}^{\G}(\V))^\I\otimes_{\H} \XX$. Composing the  latter with \eqref{m1} gives the $\G$-equivariant map 
\begin{equation}\label{m1'}\V^\IM\otimes_{\HMpt}\XX \to \Ind_\P^\G(\V),\quad  v\otimes {{\bf 1}_\I}\mapsto f_{\P\I, v}.\end{equation} which has image $(\Ind_\P^\G(\V))^\dagger$.
\\$\bullet$ The  representation $\V^\dagger$ is the image  of the  natural $\M$-equivariant morphism  
\begin{align}\label{m2}\varphi_\V: \V^{\IM}\otimes_{\HM} \XXM\to \V, \quad v\otimes {\bf 1}_\IM\mapsto v\end{align} 
corresponding by adjunction to the identity map of $\V^{\IM} $.  By right exactness of the functor $\Ind_\P^\G$, the map \eqref{m2} induces  a $\G$-equivariant map \begin{equation}\label{ind-m2} \Ind_\P^\G(\varphi_\V):\Ind_\P^\G(\V^{\IM}\otimes_{\HM} \XXM)\to \Ind_\P^\G(\V), \quad f \mapsto \varphi_\V \circ f.
\end{equation} which has image $\Ind_\P^\G(\V^\dagger)$. By  Corollary \ref{coro:Q2}, we have a $\G$-equivariant isomorphism
$\V^{\IM}\otimes_{\HMp,\theta} \XX \cong \Ind_\P^\G(\V^{\IM}\otimes_{\HM} \XXM)$ that sends $v\otimes {\bf 1}_\I$ onto 
$f_{\P\I, v\otimes {\bf 1}_\IM }$. Composing the latter with $\Ind_\P^\G(\varphi_\V)$ gives the  $\G$-equivariant map 
\begin{equation}\label{m2'}\V^{\IM}\otimes_{\HMpt} \XX \to \Ind_\P^\G(\V), v\otimes {\1_{\bf \I}}\mapsto f_{\P\I, v}.\end{equation} which has image  $\Ind_\P^\G(\V^\dagger)$.\\
$\bullet$ Since  \eqref{m1'} and \eqref{m2'} coincide, we proved that the two  subrepresentations $\Ind_\P^\G(\V^\dagger)$ and $(\Ind_\P^\G(\V))^\dagger$ of $\Ind_\P^\G(\V)$ coincide.
This will implies the second statement  $\Ind_\P^\G(\V^\dagger)=\Ind_\P^\G(\V)$ is equivalent to $\V^\dagger=\V$,  as
the functor $\Ind_\P^\G$ is exact and faithful \cite[this follows from (5) and (6)]{Vigadjoint}.  \end{proof}

We now prove Proposition \ref{prop:Q2univ} via the  following lemmas. First we prove the surjectivity of 
 \begin{equation} \tag{\ref{parabparahuniv}}\HM \otimes_{\HMpt}\XX\longrightarrow {\Ind}_\P ^\G\:\:(\XXM),   \ 1\otimes {1}_{\I } \mapsto f_{\P \I ,  {1}_{\IM}}\end{equation} in Lemma \ref{ppsurj}. Its injectivity is proved in  Lemma \ref{lem1}.
\begin{lemma}\label{ppsurj} The map \eqref{parabparahuniv}  is surjective.
\end{lemma}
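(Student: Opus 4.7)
I would prove this in two stages. First, I show that the image $V \subseteq \Ind_\P^\G(\XXM)$ of $\phi$ contains the entire $\I$-invariant subspace $(\Ind_\P^\G(\XXM))^\I$. Second, I argue that $\Ind_\P^\G(\XXM)$ is generated as a $\G$-representation by $(\Ind_\P^\G(\XXM))^\I$. Since $V$ is $\G$-stable, these two facts together yield $V = \Ind_\P^\G(\XXM)$.

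For the first stage, I compute $\phi(h \otimes \tau_{\hat d} \cdot {\bf 1}_\I)$ for $h \in \HM$ and $d \in \dist$ with lift $\hat d \in \K$. Here $\tau_{\hat d} \in \H$ acts on ${\bf 1}_\I \in \XX$ via the identification $\H \cong \XX^\I$, giving $\tau_{\hat d} \cdot {\bf 1}_\I = {\bf 1}_{\I \hat d \I} = \sum_y (\hat d y)^{-1} {\bf 1}_\I$ where $\I \hat d \I = \sqcup_y \I \hat d y$. Using the $\G$-equivariance of $\phi$ together with $\phi(h \otimes {\bf 1}_\I) = h \cdot f_{\P\I, {\bf 1}_\IM} = f_{\P\I, h \cdot {\bf 1}_\IM}$ (the latter holds because the left $\HM$-action commutes with the $\M$-action on $\XXM$), I obtain
$$\phi(h \otimes \tau_{\hat d} \cdot {\bf 1}_\I) = \sum_y (\hat d y)^{-1} f_{\P\I, h \cdot {\bf 1}_\IM} = f_{\P\I, h \cdot {\bf 1}_\IM} \cdot \tau_{\hat d} = f_{\P \hat d \I, h \cdot {\bf 1}_\IM}$$
by Lemma~\ref{Igene}(2). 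Since $(\XXM)^\IM = \HM \cdot {\bf 1}_\IM$, as $h$ varies in $\HM$ and $d$ in $\dist$ these elements span $(\Ind_\P^\G(\XXM))^\I$ over $R$ by Lemma~\ref{Igene}(1), so $V \supseteq (\Ind_\P^\G(\XXM))^\I$.

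The second stage is the main obstacle. For arbitrary $f \in \Ind_\P^\G(\XXM)$, smoothness provides a compact open subgroup $K$ of $\G$ under which $f$ is right-invariant, which after shrinking we may assume satisfies $K \subseteq \I$. The decomposition $\G = \sqcup_{d \in \dist} \P \hat d \I$ from Lemma~\ref{PI} reduces the problem to $f$ supported on a single $\P \hat d \I$, and the finite decomposition $\I = \sqcup_j i_j K$ further reduces it to $f$ supported on $\P \hat d i_j K$; each such $f$ is determined by a single value $v \in \XXM$ subject to appropriate invariance. Using that $\XXM = \M \cdot {\bf 1}_\IM$ as an $\M$-representation, $v$ expands as an $R$-linear combination of $\M$-translates of ${\bf 1}_\IM$, and I would construct $f$ as a finite combination of $\G$-translates of $\HM$-twists $f_{\P\I, h \cdot {\bf 1}_\IM}$ chosen to match the prescribed values. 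The technically delicate point is to cancel the excess support produced by these $\G$-translates; this would be carried out inductively on the index $[\I : K]$, with base case $K = \I$ being the first stage, using the left $\HM$-action to adjust values while $\G$-equivariance shifts supports.
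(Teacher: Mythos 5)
Your Stage 1 is correct and is in fact a slightly stronger statement than what the paper uses: it shows that the entire $\I$-invariant subspace $(\Ind_\P^\G(\XXM))^\I$ lies in the image of \eqref{parabparahuniv}, via the computation $\phi(h\otimes\tau_{\hat d}\cdot{\bf 1}_\I)=f_{\P\hat d\I,\,h\cdot{\bf 1}_\IM}$, the identification $(\XXM)^\IM=\HM\cdot{\bf 1}_\IM$, and Lemma \ref{Igene}(1).

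Your Stage 2 is the genuine gap. The assertion that $\Ind_\P^\G(\XXM)$ is generated as a $\G$-representation by its $\I$-invariant subspace is not a formal consequence of smoothness or of the coset decompositions; it is precisely the nontrivial content that the paper extracts from \cite[Lemma 6.3]{HVcomp}. That reference is proved using the structure of $(\G,\Iw,\norm_\G)$ as a generalized Tits system (Iwasawa decomposition $\G=\P\K$, the behavior of $\Iw z\Iw$ for $z\in\Zp$, etc.), the same kind of machinery that appears in part (3) of Lemma \ref{lem1}. Your sketch — inducting on $[\I:K]$ and ``cancelling excess support'' of $\G$-translates of the $f_{\P\I,\,h\cdot{\bf 1}_\IM}$ — is not carried out, and it is far from clear it can be: a translate $g^{-1}f_{\P\I,v}$ has support $\P\I g$, which in general meets several cosets $\P\hat d\I$ and cannot be confined to a single $\P\hat d i_j K$, so the bookkeeping is exactly the hard part, not a technicality. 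Note also that invoking the ``$\Ind$ commutes with $(-)^\dagger$'' statement to justify Stage 2 would be circular, since Corollary \ref{daggercommuteprop} is itself a consequence of Proposition \ref{prop:Q2univ}, which is what Lemma \ref{ppsurj} feeds into.

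The paper's argument sidesteps all of this by citing \cite[Lemma 6.3]{HVcomp} directly: that lemma produces a \emph{specific} family of $\I$-invariant generators $f_n$ with support $\P\I$ and value $t^{-n}{\bf 1}_\IM$, and one checks in one line that $f_n=(\tau^\M_{\mu_\Mp})^n f_{\P\I,{\bf 1}_\IM}$ lies in the image. Your proof would become correct and essentially parallel to the paper's if Stage 2 were replaced by an appeal to that reference (your Stage 1 in fact strictly contains what the paper needs from the image); as written, Stage 2 leaves unproven the single substantive fact.
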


\begin{proof}This is a direct consequence of  \cite[Lemma 6.3]{HVcomp} (which is valid for smooth representations over a commutative ring) as we explain now.  Let $t\in \T$ be a central element in $\M$ satisfying $-\alpha (\tilde \nu (t))>0$ for all $\alpha \in \Sigma^+-\Sigma_\M^+$ (see Remark \ref{lemma:Mp}). We denote  its by $\mu_\Mp$ its  image in $ \Lambda_{ \M^+}(1)\subset \W_{\M^+}(1)$.
 Then
${\Ind}_\P ^\G( \XXM)$ is generated as a representation of $\G$ by  the functions $f_{n}$, $n\in \mathbb Z$ with support  
$\P \I=\P \I_{\Nm}$ and  respective value $t^{-n}{\bf 1}_{\IM }$ on 
$ \I_{\Nm}$. Note that such a function is $\I$-invariant: for $k=k^+k_0k^-\in \I=\I_\N \IM \I_{\Nm}$, we have
$f_n(k)=k_0. t^{-n}{\bf 1}_{\IM }=t^{-n}{\bf 1}_{\IM }=f(1)$ since $t$ in central in $\M$. 
  Since $t^{-n}{\bf 1}_{\IM }=(\tau^\M_{\mu_{\Mp}})^{n}$ seen as an element of $\XXM$, we have in fact  $f_n= f_{\P \I, (\tau^\M_{\mu_{\Mp}})^{n}} =(\tau_{\mu_{\Mp}}^\M )^{n}f_{\P \I, {\bf 1}_\IM}$.  
By definition, this element is in the image of \eqref{parabparahuniv}. \end{proof}


\begin{lemma} 
\label{lem1}
\begin{enumerate}
\item The set $\I\Mp \K$ is the disjoint union of all $\I \Mp \hat d \I$ for $d\in \dist$.
\item 
Let $\mathbf Y$ be the set the functions in $\XX$ with support in  $\I\Mp \K$.  The restriction  to $\mathbf Y$ of the $\G$-equivariant map $$F:\XX\longrightarrow {\Ind}_\P ^\G(\XXM), \: {\bf 1}_\I\longmapsto f_{\P \I, {\bf 1}_\IM}$$ is injective. 
\item  For any element $x\in \XX$, there is $r\in \N$ such that $\tau_{\mu_\Mp}^r x\in \mathbf Y$, where $\mu_\Mp\in \W_{\M^+}(1)$ was defined in the proof of Lemma  \ref{ppsurj}.
\item The map \eqref{parabparahuniv}
 is injective.
\end{enumerate}
\end{lemma}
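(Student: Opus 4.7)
I would establish the four parts in order, with (4) a formal consequence of (2), (3), and the localization $\H_\M=\H_{\M^+,\theta}[\tau_{\mu_{\M^+}}^{-1}]$ recalled in \S\ref{LeviHecke}.

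For part (1), I would combine the Bruhat decomposition $\K=\sqcup_{w\in\Wf}\I \hat w\, \I$ with the distinguished-representative factorization $\Wf=\sqcup_{d\in\dist}\Wf_\Mf\, d$ from Lemma~\ref{lemmaDJ}, together with the length-additivity $\ell(vd)=\ell(v)+\ell(d)$ for $v\in\Wf_\Mf$, $d\in\dist$ (Lemma~\ref{lem:addlengths}) and \eqref{goodadd} of Remark~\ref{rema:incluDJ}. This yields $\K=\sqcup_{d\in\dist}\I\Wf_\Mf\hat d\,\I$. Since $\Wf_\Mf$ lifts inside $\K\cap\M\subset\Mp\cap\Mm$, and since $(\I\Mp\I)(\I\Mp\I)\subset\I\Mp\I$ (a direct consequence of the Iwahori decomposition \eqref{f:iwadecomp} combined with $\M$-positivity and the positive-braid identity $\IM m\IM\cdot\IM m'\IM=\IM mm'\IM$ for $m,m'\in\Mp$), we obtain $\I\Mp\K=\bigcup_{d\in\dist}\I\Mp\hat d\,\I$. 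Disjointness is then forced by $\I\Mp\hat d\,\I\subset\P\I\hat d\,\I$ combined with Lemma~\ref{lemma:PGU}(2).

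Part (2) is the main obstacle. Using (1), decompose $\mathbf Y=\bigoplus_{d\in\dist}\mathbf Y_d$ with $\mathbf Y_d$ spanned by the ${\mathbf 1}_{\I g}$ for $g\in\I\Mp\hat d\,\I$. Since $F({\mathbf 1}_{\I g})=g^{-1}\cdot f_{\P\I,{\mathbf 1}_\IM}$ is supported in $\P\I g\subset \P\hat d\,\I$ and the different $\P\hat d\,\I$ are disjoint by Lemma~\ref{lemma:PGU}(2), injectivity splits along $d$. Lemma~\ref{lemma:cosets}(3) gives a basis of $\mathbf Y_d$ indexed by quadruples $(w,x,u_x,y)$ with $w\in\W_{\M^+}(1)$, the corresponding $\I$-coset being $\I \hat w x u_x\hat d\, y$. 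I would evaluate $F({\mathbf 1}_{\I \hat w x u_x\hat d\, y})$ at points of the form $\hat d\, y'$ with $y'$ running over representatives of $(\I\cap\hat d^{-1}\I\hat d)\backslash\I$, and show (using Lemma~\ref{lemma:PGU}(2)(b) together with the $\M$-positivity of $\hat w x u_x$) that this value vanishes unless $y'$ lies in the same coset as $y$; this decouples the $y$-parameter. Once $y=y'$ is fixed, the value becomes an explicit element of $\XXM$ determined by $(w,x,u_x)$ via the adjoint $\M$-equivariant map $\XX_\N\to\XXM$, and a further triangulation using the pro-$p$ Iwahori decomposition of $\XXM$ separates the leading $(w,x)$ from the residual $u_x$, giving linear independence.

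For (3), any $x\in\XX$ has compact support, so $x=\sum_j a_j {\mathbf 1}_{\I g_j}$ for finitely many $g_j$. Writing $g_j=m_j n_j k_j$ by the Iwasawa decomposition $\G=\M\N\K$, and using that $\mu_{\M^+}$ is central in $\M$, one computes $\hat\mu_{\M^+}^r g_j=m_j(\hat\mu_{\M^+}^r n_j\hat\mu_{\M^+}^{-r})\hat\mu_{\M^+}^r k_j$. The strict $\M$-positivity hypothesis $-\alpha(\nu(\mu_{\M^+}))>0$ ensures both $\hat\mu_{\M^+}^r n_j\hat\mu_{\M^+}^{-r}\in\I_\N$ and $m_j\hat\mu_{\M^+}^r\in\Mp$ for $r$ sufficiently large; expanding $m_j$ along the Bruhat decomposition of $\M$ and absorbing the leading factor into $\Mp$ using strict positivity once more places $\hat\mu_{\M^+}^r g_j\in\I\Mp\K$, and the conclusion follows by taking the maximum of the required $r$'s over the finite index set $j$. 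Part (4) is then formal: by the localization, every element $z\in\H_\M\otimes_{\H_{\M^+,\theta}}\XX$ has the form $\tau_{\mu_{\M^+}}^{-r}\otimes y$, and by (3) we may write $z=\tau_{\mu_{\M^+}}^{-r-s}\otimes(\tau_{\mu_{\M^+}}^s y)$ with $\tau_{\mu_{\M^+}}^s y\in\mathbf Y$; if $z$ maps to $0$ under \eqref{parabparahuniv}, multiplying by the invertible element $\tau_{\mu_{\M^+}}^{r+s}\in\H_\M$ yields $F(\tau_{\mu_{\M^+}}^s y)=0$, whence $\tau_{\mu_{\M^+}}^s y=0$ by (2), and hence $z=0$.
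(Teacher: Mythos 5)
Your strategy for parts (1), (3), (4) tracks the paper closely (with an acceptable variation in (3), where you deduce the positive displacement from the Iwasawa decomposition $\G=\M\N\K$ plus strict contraction rather than the paper's $\G=\I\Zp\K$ and generalized Tits system argument). But your sketch for part (2), which you correctly identify as the main obstacle, has a genuine gap in the separation of the $u_x$-parameter.

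After decoupling $y$, you propose to evaluate $F(\mathbf 1_{\I\hat w x u_x\hat d y})$ only at the single point $\hat d y$ of the coset $\P\I\hat d y$ and then perform "a further triangulation" on the resulting element of $\XX_\M$. The difficulty is that $\P\I\hat d y = \P\I_{\Nm}\hat d y$ is a disjoint union of many $\P$-cosets $\P u\hat d y$ ($u\in\I_{\Nm}$), and the value of $F(\mathbf 1_{\I\hat w x u_x\hat d y})$ at $\hat d y$ is \emph{zero} whenever $u_x$ is not the trivial coset representative in $(\hat wx)^{-1}\I\hat wx\cap\I_{\Nm}\backslash\I_{\Nm}$: indeed $\hat d y\in\P\I\hat wxu_x\hat d y$ holds iff $1\in\P\I\hat wxu_x$, which by Lemma~\ref{lemma:cosets}(1) is equivalent to $\I\hat wxu_x=\I\hat wx$, i.e.\ $u_x$ trivial. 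So a single evaluation point sees only the basis functions with $u_x$ trivial, and there is nothing left in $\XX_\M$ to triangulate against the other $u_x$'s. The fix (and what the paper actually does) is to evaluate at $u\hat d y$ for \emph{all} $u\in\I_{\Nm}$: one then gets
$$[F(\mathbf 1_{\I\hat w x u_x\hat d y})](u\hat d y)=\mathbf 1_{\I_\M\hat w x}\ \text{if }\I\hat wxu_x=\I\hat wxu,\ \text{and}\ 0\ \text{otherwise},$$
and varying $u$ over $\I_{\Nm}$ cleanly separates the $u_x$-parameter while the linear independence of the $\mathbf 1_{\I_\M\hat w x}$ in $\XX_\M$ separates the pair $(w,x)$. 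Your proposal as written omits the variation in $u$, and without it the "further triangulation" has no information to work with.
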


  \begin{proof}[Proof of Lemma \ref{lem1}] Proof of (1).  Compare this proof with \cite[Lemma 2.4]{Oparab}. Recall that  $\W_0'(1)=\norm_\G\cap \K/\Zp^1\subset \W(1)$ and  we denote by $\W_{\M,0}'(1)$ the subgroup $(\norm_\M\cap \K)/\Zp^1$ of  $\W(1)$. 
We may see  $\dist$  defined in Lemma  \ref{lemmaDJ}  as a system of representatives of  the right cosets
  $\W_{\M,0}'(1)\backslash \W_0'(1)$.
  We have 
$ \I \Mp  \K= \I \W_\Mp(1)  \I \W_0'(1) \I =   \I \W_\Mp(1)  \I_\M  \W_0'(1) \I$.  The last equality   follows from 
 $\I=  \I _\N  \I _\M \I _\Nm$, from $k \I _\Nm k^{-1} \subset \I$ for $k\in \K$  and the fact that $\W_\Mp(1)$ normalizes $  \I _\N $.   Hence

 $\I \Mp \K=\cup_{d\in  \dist}   \I \W_\Mp(1)  \I_\M \W_{\M,0}'(1) \hat d \I   =  \cup_{d\in  \dist} \I \M^+\hat d \I $.

  To prove that the union is disjoint, we first write $\I \Mp \hat d\I=\I  \W_\Mp(1) \I \hat  d\I$   for  $d\in \dist$. By Lemma \ref{lem:addlengths}, the latter is equal to $\I   \W_\Mp(1)  \hat d\I$.   
  This proves that  $\I \Mp \hat d\I$ is the disjoint union of all $\I  w  \hat d\I$ for $w\in   \W_\Mp(1)$ and that the 
sets   $\I \Mp \hat d\I$  are disjoint when $d$ ranges over $\dist$ . \\\\ 
 Proof of (2). By (1), the space $\mathbf Y$ decomposes as the direct sum of the subspaces $\mathbf Y_d$ of the functions with support in $\I \Mp \hat d\I$, for $d\in \dist$. 
  The image of  $\mathbf Y_d$ under $F$ is contained in the subspace of the functions in $\Ind_\P^\G(\XXM)$ with support in $\P \I \Mp \hat d\I$. The set  $\P \I \Mp \hat d\I=\P \I  \W_\Mp(1) \I \hat d\I$ is $ \P \I  \hat d\I$ by  Lemma \ref{lemma:cosets}(1).  Therefore, $F(\mathbf Y_d)$ is contained in the subspace $ \Ind_\P^ {\P \hat d \I}(\XXM)$ of the functions in $\Ind_\P^\G(\XXM)$ with support in $\P \hat d \I$.  Using Lemma \ref{PI}(2), the restriction to $\mathbf Y$ of   $F$ is injective if and only if its restriction to each $\mathbf Y_d$ is injective.
 
   Let $d\in \dist$.  By     Lemma \ref{lemma:cosets}(3) (and  the proof  of point (1) above), the set  $\I \Mp \hat d\I$ is  the disjoint union $\sqcup_{w}  \I \hat  w\hat d  \I   =
  \sqcup_{w, x, u_{x}, y }  \I   {\hat w} x  u_x {\hat d } y$ for 
  \begin{equation}\label{ranges}w\in  \W_\Mp(1), \ x\in \hat w^{-1} \IM  \hat w\cap  \IM \backslash  \IM , \ u_{x}\in (\hat wx)^{-1} \I  \hat wx\cap \I_\Nm\backslash \I_\Nm, \ y\in \I\cap  {\hat d }^{-1}\I  {\hat d }\backslash \I.\end{equation} We may write an element $g\in \mathbf Y_d$ as a $R$-linear combination   $$g=\sum_{y\ } g_y, 
\quad\text{with} \  g_y= \sum_{w,x, u_{x}}\lambda_{w,x, u_{x}, y} {\bf 1}_{\I  \hat w x  u_x {\hat d } y}, \quad \lambda_{w,x, u_{x}, y} \in R$$ with $w,x,u_x$ as in \eqref{ranges}. 
  By Lemma \ref{lemma:cosets}(2), the set 
  $\P \hat d \I$ is the disjoint union  $\sqcup_{y} \P \I \hat d y$ and note that $F(g_y)$ is the component of $F(g)$ with support in  $\P \I \hat d y$. Suppose that $F(g)=0$. Then $F(g_y)=0$ for any $y $.   We admit for a moment that for $u\in \I_\Nm$ and $w\in \W_\Mp(1)$, $x\in \IM$, $u_x\in \I_\Nm$, $y\in\I$  we have  the following formula  \begin{equation}\label{calc}[F({\bf 1}_{\I  \hat w x  u_x {\hat d } y})](u\hat{d} y ) = \begin{cases} {\bf 1}_{\IM \hat wx}&\textrm{if $\I \hat wxu_x=\I \hat wxu$}\cr 0&\textrm{otherwise.} \end{cases}\end{equation}
  Then  for  $y$ as above and $u\in \I_\Nm$ we have
$[F(g_y)](u \hat d y)=\sum_{w,  x}\lambda_{w, x, u_x, y}{\bf 1}_{\IM \hat wx} $   for 
  $w\in  \W_\Mp(1)$, $x\in \hat w^{-1} \IM  \hat w\cap  \IM \backslash  \IM$ , and $u_{x}$ the only element in  $(\hat wx)^{-1} \I  \hat wx\cap \I_\Nm\backslash \I_\Nm$ such that $\I \hat  wxu_x=\I \hat wxu$.
This proves that  $\lambda_{w, x, u_x, y}=0$ for  all $w, x, u_x, y$.\\
{Proof of \eqref{calc}:}
Recall that  $f:=F({\bf 1}_{\I  \hat w x  u_x {\hat d } y})= f_{\P\I, {\bf 1}_\IM}( . (\hat w x  u_x {\hat d } y)^{-1})$
has support in $\P\I  \hat w x  u_x {\hat d }y \subseteq \P\I \hat d y$ by  Lemma \ref{lemma:cosets}(1), and we have $f(u\hat{d} y)= f_{\P\I, {\bf 1}_\IM}( u(\hat w x  u_x )^{-1})$. If   $\I \hat wxu_x=\I \hat wxu$, then  $\hat w x  u_x{u}^{-1}\in \I \hat w x$ and $f( u \hat dy)= f_{\P\I, {\bf 1}_\IM}((\hat w x  u_x{u}^{-1})^{-1})= f_{\P\I, {\bf 1}_\IM}((\hat w x)^{-1})=(\hat w x)^{-1}{\bf 1}_\IM={\bf 1}_{\IM \hat wx}.$ If $\I \hat wxu_x\neq \I \hat wxu$, then $u\notin \P \I \hat w x u_x$ by Lemma \ref{lemma:cosets}(1) and therefore $f(u\hat d y)=0$. This proves  \eqref{calc}.\\\\  Proof of (3).
The idea of the argument comes from  \cite[Lemma 12 p.80]{SS91} where $\G={\rm GL}(n,F)$. A version of it in the case when  $\G$ is split can be found in  \cite[Lemma 2.20]{Her} and in the general case in  \cite[Lemma 6.2]{HVcomp}. 
From the proof of the latter we get that $(\G, \Iw, \norm_\G)$   is a generalized Tits system. It follows that  $\G= \Iw \Zp \K=\I \Zp \K$ and it is enough to prove the statement for $x={\bf 1}_{\I z}$ for $z\in \Zp$.
It also follows that there is a finite subset set $A \subset \norm_\G$   such that  for any $z'\in \Zp$  we have 
$$z' \I z\subset z'\Iw z\subset \cup_{a\in A}\Iw z' a\Iw\subset  \cup_{a\in A}\Iw z' a \K.$$
Since $\K$ is special, for any $a\in \norm_\G$, there is $z_a\in \Zp$ such that $ a\K=z_a \K$. 
Pick $r\geq 1$ such that $t^r z_a\in \Mp$ for any $a\in A$, where $t\in \T\subseteq \Zp$ is a lift for $\mu_\Mp$ as in the proof of Lemma \ref{ppsurj}. Then  we have
$$t^r \I z\subset \cup_{a\in A}\Iw  t^r z_a\K\subset \Iw  \Mp\K=\I \Mp\K.$$
Now $\tau_{\mu_\Mp}^r {\bf 1}_{\I z} ={\bf 1}_{\I t^r \I z}$ so  $\tau_{\mu_\Mp}^r {\bf 1}_{\I z}\in \bf Y$.

Proof of (4). By definition, the element $\mu_\Mp$ is central in $\W_\M(1)$ and  $\tau_{\mu_\Mp}^\M$ is invertible in $\HM$.   Consider an element in $\HM\otimes _{\HMp,\theta}\XX$. By  (3) it can be written   $(\tau^\M_{\mu_\Mp})^{-r}\otimes y$ for $y\in \mathbf Y$  and $r\in \N$ large enough. 
Its image by  the left $\HM$-equivariant map \eqref{parabparahuniv} is  $(\tau^\M_{\mu_\Mp})^{-r} F(y)$. Suppose that this image is zero, then  $F(y)=0$ and by (2), we have $y=0$.  This concludes the proof of (4).
  \end{proof}

The \textbf{answer to Question} \ref{qp3} is  positive when $R$ is a field of characteristic different from $p$.
\begin{prop}\label{prop:rem??}(i) When  $p$ is invertible in $R$,  $$\L_\HM^\H\cong   - \otimes _{\H_{\M, \theta^-\delta_\P}}\H_\M,$$   where the notation  $\H_{\M, \theta^-\delta_\P}$ is a reminder that we identify $\H_\M$ with its image in $\H$ via    $\theta^-\delta_\P$. 

(ii) When $R$ is a field of characteristic different from $p$ and 
 $V\in \Rep (G)$  we have $(V_\N)^{\I_\M} \simeq V^{\I}  \otimes _{\H_{M, \theta^-\delta_\P}}\H_M$.
\end{prop}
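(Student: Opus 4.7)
The plan is to handle the two parts in sequence: part (i) provides an algebraic reformulation which, together with the techniques of \S\ref{answersprop}, lets part (ii) be read as the commutativity of diagram \eqref{diag3} when $R$ has characteristic different from $p$.

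For part (i), I combine Remark \ref{rema:tensor-hom}, which gives $\L_\HM^\H \cong -\otimes_{\H_{\M^+,\theta^*}}\H_\M$ over any commutative ring $R$, with Remark \ref{extension}, which when $p$ is invertible extends $\theta^*\vert_{\HMp}$ to a unique algebra embedding $\theta^{*+}\colon \H_\M\hookrightarrow\H$. Because $\H_\M$ is obtained from $\H_{\M^+}$ by localizing at a central element whose image in $\H$ is already invertible (Remark \ref{inverse}), for any right $\H$-module $N$ the canonical map $N\to N\otimes_{\H_{\M^+,\theta^*}}\H_\M$ is bijective and identifies the target, as a right $\H_\M$-module, with $N$ endowed with the action inherited from $\theta^{*+}$. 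Lemma \ref{lemma:thetacompar} then rewrites $\theta^{*+}$ as $\theta^-\delta_\P$, completing the argument.

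For part (ii), by (i) it suffices to construct a natural $\H_\M$-equivariant isomorphism $\L_\HM^\H(V^\I)\simeq (V_\N)^{\I_\M}$ when $R$ is a field of characteristic different from $p$. The approach is via the universal property of the left adjoint $\L_\HM^\H$: the counit $V\twoheadrightarrow V_\N$ yields, by Proposition \ref{prop:Q1prop} and adjunction, a natural $\H$-equivariant map $V^\I\to \Ind_\HM^\H((V_\N)^{\I_\M})$, whose adjoint is the desired map $\L_\HM^\H(V^\I)\to (V_\N)^{\I_\M}$. Concretely this map is induced by the composition $V^\I\hookrightarrow V^{\I_\M}\twoheadrightarrow (V_\N)^{\I_\M}$, the $\theta^-\delta_\P$-twist built into $\L_\HM^\H$ accounting for the factor $\delta_\P(m)=[\hat m\,\I_\N\hat m^{-1}:\I_\N]$ which arises when collapsing $\I$-cosets in $\I\hat m\I$ modulo $V(\N)$: by Lemma \ref{lemma:cosets}, coset representatives for $m\in\M^-$ can be chosen in $\I_\N$, and since $\I_\N\subseteq\N$ acts trivially modulo $V(\N)$, the sum yields $\delta_\P(m)$ identical copies. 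The inverse is constructed using averaging idempotents, available because $p$ is invertible in $R$: given $\bar v\in (V_\N)^{\I_\M}$, a lift $v\in V^{\I_\M}$ exists, and for $m\in\M^-$ deep enough so that $\hat m v\in V^{\I_\M\I_{\bar\N}}$, the $\I_\N$-average $e_{\I_\N}(\hat m v)\in V^\I$ projects to $\delta_\P(m)\,\hat m\cdot\bar v$ in $V_\N$; dividing by $\tau^\M_{\hat m}$ inside the localization defining $\L_\HM^\H(V^\I)$ yields the inverse.

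The main obstacle is reconciling the $\H_{\M^-}$-action on $V^\I$ inherited from $\H$ with the intrinsic $\H_\M$-action on $(V_\N)^{\I_\M}$: the precise discrepancy of $\delta_\P$ appears only after collapsing sums of $\I_\N$-translates modulo $V(\N)$, which is exactly why $\H_\M$ is embedded via $\theta^-\delta_\P$ rather than via $\theta^-$ alone. The ring-theoretic identity $\theta^{*+}=\theta^-\delta_\P$ of Lemma \ref{lemma:thetacompar} is the algebraic shadow of this geometric computation, and its application requires carefully controlling the Iwahori decomposition $\I=\I_\N\I_\M\I_{\bar\N}$ along with the positivity of $m\in\M^-$.
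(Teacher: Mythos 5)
Part (i) follows the same route as the paper: \eqref{explicitleftadj2} gives $\L_\HM^\H\cong -\otimes_{\H_{\M^+,\theta^*}}\H_\M$, and you correctly spell out the localization step (which the paper compresses) showing that when $p$ is invertible this tensor product is just restriction along the extended embedding $\theta^{*+}$, then conclude via Lemma \ref{lemma:thetacompar}. Nothing to add there.

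For part (ii) you take a genuinely different tack: the paper simply quotes \cite[II.10.1 3)]{Selecta}, whereas you sketch a self-contained proof. Your argument is essentially a reconstruction of the one in that reference, built from the unit $V\to\Ind_\P^\G(V_\N)$, Proposition \ref{prop:Q1prop}, and adjunction on the Hecke side, followed by the explicit $\I_\N$-averaging to invert the map; the appearance of $\delta_\P$ matches the book-keeping encoded in $\theta^-\delta_\P=\theta^{*+}$ from Lemma \ref{lemma:thetacompar}. The approach is sound and buys independence from the citation, which is pedagogically valuable. A few of your steps are stated more loosely than they should be and would need to be justified in a write-up: Lemma \ref{lemma:cosets}(1) is stated for $\M$-\emph{positive} elements and the $\I_\Nm$-cosets, so for $m\in\M^-$ you would need the mirror-image decomposition $\I\hat m\I=\sqcup_x \I_\N x\hat m\I$ (or pass through inverses), which is true but not literally in the lemma as cited; the surjectivity of $V^{\I_\M}\twoheadrightarrow(V_\N)^{\I_\M}$ that you use to pick a lift is itself a consequence of the averaging technique and deserves a line; and the injectivity of the composite $V^\I\to(V_\N)^{\I_\M}$ (i.e.\ that $V^\I\cap V(\N)=\{0\}$ when $p$ is invertible) is only implicit in your sketch. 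None of these is a gap in the sense of a missing idea — they are precisely the points that \cite[II.10.1]{Selecta} handles — but if you intend the argument to replace the citation you should make them explicit rather than fold them into the phrase "averaging idempotents."
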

\begin{proof}  When $p$ is invertible in $R$ we have $\theta^{*,+} =  \theta^-\delta_\P$ by Lemma \ref{lemma:thetacompar}  and $\L_\HM^\H\cong -\otimes _{\H_{\M, \theta^{*,+} }} \H_\M $ by \eqref{explicitleftadj2}. 
 When $R$ is a field of characteristic different from $p$, the isomorphism $(V_\N)^{\I_\M} \simeq V^{\I}  \otimes _{\H_{\M, \theta^-\delta_\P}}\H_\M$ is a particular case of \cite[II.10.1 3)]{Selecta}.
\end{proof}

 The \textbf{answer to Question} \ref{qp3} is  negative when $R$ is a field of characteristic $p$. The existence of a counter-example was suggested to us by Noriyuki Abe.
 
 \begin{prop}\label{cex}
   When $\G={\rm GL}(2, \mathbb Q_p)$ and $R$ is a field of characteristic $p$, there exists an extension $\pi$ of the trivial representation $\Triv_\G$ by the Steinberg representation $\St_\G$ which is not generated by its $\I$-invariants.     
 \end{prop}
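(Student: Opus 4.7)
The plan is to exhibit a specific non-split extension $\pi$ fitting in $0 \to \St_\G \to \pi \to \Triv_\G \to 0$ and to verify $\dim_R \pi^\I = 1$. Granting this, left-exactness of $(-)^\I$ combined with the one-dimensionality of $\St_\G^\I$ and $\Triv_\G^\I$ forces $\pi^\I = \St_\G^\I$. Since $\St_\G$ is generated by $\St_\G^\I$ while $\pi/\St_\G = \Triv_\G \neq 0$, the subrepresentation $\pi^\dagger$ of $\pi$ generated by $\pi^\I$ equals $\St_\G$, strictly smaller than $\pi$, so $\pi$ is not generated by its $\I$-invariants.

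The construction uses parabolic induction from a non-split self-extension of $\Triv_\T$. Pick a smooth additive character $\phi : \T \to R$ whose restriction to $\I_\T = \T^1$ is non-trivial; such $\phi$ exists because $\T^1 = (1+p\mathbb Z_p)^2$ is a nontrivial pro-$p$ group and so admits nontrivial continuous additive characters into the characteristic-$p$ field $R$ (factor $\phi$ through a suitable finite $p$-power quotient of $\T^1$, then extend to $\T$). Form the two-dimensional smooth $\T$-representation
\[
 E := \bigl(R^2,\ t \cdot (a,b) = (a + \phi(t)\,b,\ b)\bigr),
\]
a non-split extension $0 \to \Triv_\T \to E \to \Triv_\T \to 0$. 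Exactness of $\Ind_\B^\G$ yields $0 \to \Ind_\B^\G(\Triv_\T) \to V_4 \to \Ind_\B^\G(\Triv_\T) \to 0$ with $V_4 := \Ind_\B^\G(E)$. Using the standard filtration $0 \subset \Triv_\G \subset \Ind_\B^\G(\Triv_\T)$ of quotient $\St_\G$, set $V_2 := \Ind_\B^\G(\Triv_\T)$ (the submodule copy), $V_1 \cong \Triv_\G$ the trivial sub of $V_2$, and $V_3 \subset V_4$ the preimage of the $\Triv_\G$-sub of the quotient $V_4/V_2 = \Ind_\B^\G(\Triv_\T)$. Define $\pi := V_3/V_1$; then $\pi$ fits in $0 \to \St_\G \to \pi \to \Triv_\G \to 0$.

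The key computation is $\dim_R \pi^\I = 1$. Proposition~\ref{prop:Q1prop} identifies $V_4^\I \cong E^{\I_\T} \otimes_{\H_\T^+} \H$ as right $\H$-modules. The non-triviality of $\phi|_{\I_\T}$ forces $E^{\I_\T} = \Triv_\T$ (the sub, one-dimensional), so the inclusion $\Triv_\T \hookrightarrow E$ induces an $\H$-linear isomorphism $V_2^\I \overset{\sim}{\to} V_4^\I$. Viewed as subspaces of $V_4$ this gives $V_2^\I = V_4^\I$, hence also $V_3^\I = V_2^\I$. Applying $(-)^\I$ to $0 \to V_2/V_1 \to \pi \to V_3/V_2 \to 0$, one then shows that the connecting map $(V_3/V_2)^\I \to H^1(\I, V_2/V_1) = H^1(\I, \St_\G)$ is non-zero on the generator of the one-dimensional $(V_3/V_2)^\I = \Triv_\G^\I$; hence $\pi^\I \to (V_3/V_2)^\I$ vanishes and $\pi^\I = (V_2/V_1)^\I = \St_\G^\I$ is one-dimensional. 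Non-splitting of $\pi$ is then automatic, since a split extension would have two-dimensional $\I$-invariants.

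The main obstacle is the non-vanishing of this connecting map, equivalently the assertion that the extension class of $\pi$ has non-trivial image in $H^1(\I, \St_\G)$ under restriction. The choice $\phi|_{\I_\T} \neq 0$ is essential: with $\phi$ trivial on $\I_\T$, Proposition~\ref{prop:Q1prop} would give $V_4^\I \cong E \otimes_{\H_\T^+} \H$ of dimension four, forcing $\pi^\I$ to be two-dimensional and $\pi$ generated by its $\I$-invariants. The non-vanishing is verified by an explicit cocycle computation: choose a lift $w_0 \in V_3$ of the generator of $V_3/V_2 = \Triv_\G$, parameterize it on coset representatives via the Iwasawa decomposition $\G = \B \K$, and compute the $1$-cocycle $u \mapsto (u-1)w_0 \pmod{V_1}$ in $V_2/V_1 = \St_\G$ for $u \in \I$ directly in terms of $\phi$, checking that its cohomology class is governed by, and hence non-trivialized by, $\phi|_{\I_\T}$.
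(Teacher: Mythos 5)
Your approach is genuinely different from the paper's. The paper argues by an abstract dimension count: citing Colmez for $\dim_R\Ext^1_\G(\Triv_\G,\St_\G)>1$, computing $\dim_R\Ext^1_{\H}(\Triv_\H,\sign_\H)=1$, and concluding via the equivalence of $(-)^\I$ on the subcategory of representations generated by their $\I$-invariants that some extension must have one-dimensional $\I$-invariants. You instead propose to construct $\pi$ explicitly as a subquotient of $\Ind_\B^\G E$ for a suitable self-extension $E$ of $\Triv_\T$, compute $\dim_R\pi^\I$ directly via Proposition~\ref{prop:Q1prop}, and conclude. This is attractive: it avoids Colmez, and the final logical step ($\dim_R\pi^\I=1\Rightarrow\pi^\dagger=\St_\G\subsetneq\pi$) is sound.

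However, there is a genuine error at the crux, which is precisely the "explicit cocycle computation" you defer. The hypothesis that $\phi|_{\I_\T}$ is nontrivial does \emph{not} suffice. Concretely, take $\phi=\psi\circ\det|_\T$ for any nontrivial smooth homomorphism $\psi:\mathbb Q_p^\times\to R$; then $\phi|_{\I_\T}\neq 0$ since $\det(\I_\T)=1+p\mathbb Z_p$. Now $E$ is the restriction to $\T$ of the $\G$-representation $E_\G$ on $R^2$ given by $g\cdot(a,b)=(a+\psi(\det g)b,\,b)$, and by the projection formula $V_4=\Ind_\B^\G(E)\cong\Ind_\B^\G(\Triv_\T)\otimes E_\G$. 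Writing $e_0,e_1$ for the basis of $E_\G$, one checks $V_3=\Ind_\B^\G(\Triv_\T)\otimes Re_0+\Triv_\G\otimes Re_1$ and that $1\mapsto[1\otimes e_1]$ is a $\G$-equivariant splitting $\Triv_\G\to\pi=V_3/V_1$; so $\pi\cong\St_\G\oplus\Triv_\G$, which is generated by its $\I$-invariants, and your claim fails. What is actually needed is that $\phi|_{\I_\T}$ not be invariant under conjugation by a lift $\hat{\longw}\in\norm_\G\cap\K$ of the nontrivial Weyl element: writing $\phi(\mathrm{diag}(a,d))=\phi_1(a)+\phi_2(d)$, one must have $\phi_1\neq\phi_2$. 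Indeed, if some $\tilde v\in V_3\setminus V_2$ has $(u-1)\tilde v\in V_1$ for all $u\in\I$ (equivalently $\dim_R\pi^\I=2$), then for $u=t\in\I_\T$ the constancy of $g\mapsto((u-1)\tilde v)(g)$ at $g=1$ gives value $\phi(t)$ and at $g=\hat{\longw}$ gives $\phi(\hat{\longw}t\hat{\longw}^{-1})$, forcing $\phi_1=\phi_2$; the contrapositive yields $\dim_R\pi^\I=1$ whenever $\phi_1\neq\phi_2$. So the construction is salvageable (take e.g.\ $\phi_1\neq 0=\phi_2$), but as stated the sufficiency claim is false and the key verification is missing.
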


   \begin{coro}    \label{contreex} Let $\G={\rm GL}(2, \mathbb Q_p)$.  Suppose that $R$ is a field of characteristic $p$. There is no functor $\mathbf F:\Mod(\Hh)\rightarrow \Mod(\Hh_\T)$ such that   $\mathbf F(\V^\I)= (\V_\U)^\IM$ for any representation $\V\in \Rep(\G)$.
     \end{coro}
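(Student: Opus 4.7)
The plan is to exploit the extension $\pi$ provided by Proposition \ref{cex} together with the Steinberg representation itself: both will have the same $\I$-invariants as right $\H$-modules, yet their $\U$-coinvariants will produce non-isomorphic $\H_\T$-modules after passage to $\IM$-invariants, contradicting the existence of any functor $\mathbf F$ with the stated property.

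First I would identify $\pi^\dagger$. For $\G=\GL_2(\mathbb Q_p)$ in characteristic $p$, $\St_\G$ is irreducible and generated by its one-dimensional $\I$-fixed line, so $\St_\G\subseteq \pi^\dagger$. Since $\pi/\St_\G\cong \Triv_\G$ is simple and $\pi^\dagger\subsetneq \pi$ by Proposition \ref{cex}, this forces $\pi^\dagger=\St_\G$. In particular $\pi^\I\subseteq \pi^\dagger=\St_\G$, so the inclusion $\iota\colon\St_\G\hookrightarrow \pi$ induces an isomorphism $\iota^\I\colon\St_\G^\I\xrightarrow{\sim}\pi^\I$ of right $\H$-modules. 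Assuming for contradiction that a functor $\mathbf F\colon \Mod(\H)\to \Mod(\H_\T)$ with $\mathbf F(\V^\I)\cong (\V_\U)^\IM$ existed, I would apply $\mathbf F$ to the isomorphism $\iota^\I$ and deduce, since functors preserve isomorphisms, an isomorphism $(\St_{\G,\U})^\IM\xrightarrow{\sim}(\pi_\U)^\IM$ of right $\H_\T$-modules, and then derive a contradiction by computing the two sides directly.

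The key step is the vanishing $\St_{\G,\U}=0$. It would follow from Bernstein's adjunction $\Hom_\T(\St_{\G,\U},\chi)\cong \Hom_\G(\St_\G,\Ind_\B^\G\chi)$ combined with the well-known fact that the socle of every mod $p$ principal series of $\GL_2(\mathbb Q_p)$ is a character (never the Steinberg), which makes the right-hand side vanish for every character $\chi$ of $\T$. Since $\St_{\G,\U}$ is finitely generated as a smooth $\T$-module (by the images of $\K\cdot v$ for a generator $v\in\St_\G^\I$, and $\K v$ is finite since $\I$ has finite index in $\K$), and since every nonzero finitely generated smooth $\T$-module admits a simple quotient which is necessarily a character (use that $\T^0/\T^1$ has order prime to $p$ and $\T/\T^0$ is free abelian, so $\T^1$-invariants of such a module decompose under $\T/\T^1$ and yield a character quotient), the vanishing of all $\Hom_\T(\St_{\G,\U},\chi)$ forces $\St_{\G,\U}=0$. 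Applying the right-exact functor $(-)_\U$ to $0\to \St_\G\to \pi\to \Triv_\G\to 0$ together with $(\Triv_\G)_\U=\Triv_\T$ then yields $\pi_\U\cong \Triv_\T$, so $(\pi_\U)^\IM\cong \Triv_\T\neq 0=(\St_{\G,\U})^\IM$, giving the desired contradiction.

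The main technical obstacle is the vanishing $\St_{\G,\U}=0$, which is specific to $\GL_2(\mathbb Q_p)$ in characteristic $p$ and ultimately reflects the fact that $\End_\G(\Ind_\B^\G \Triv)$ is one-dimensional in this setting (so that $(\Ind_\B^\G \Triv)_\U$ collapses to a single copy of $\Triv_\T$, in contrast with the characteristic zero picture); this phenomenon does not extend naively to other groups or to unramified extensions of $\mathbb Q_p$.
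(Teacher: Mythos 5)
Your proof reaches the same conclusion as the paper, and the overall skeleton is identical: show $\pi^\I = \St_\G^\I$, show $(\St_\G)_\U = 0$ while $\pi_\U \neq 0$, and conclude that no such $\mathbf F$ can exist. The divergence is in the crucial step $(\St_\G)_\U = 0$. The paper invokes the explicit description $\St_\G|_\U \cong C_c^\infty(\U, R)$ (from \cite{VigSP}) and observes that $\U$ has no $R$-valued Haar measure when $R$ has characteristic $p$, so that $(C_c^\infty(\U,R))_\U$, whose $R$-dual is the space of invariant distributions, must vanish. You instead use Frobenius reciprocity $\Hom_\T((\St_\G)_\U, \chi) \cong \Hom_\G(\St_\G, \Ind_\B^\G\chi)$ together with the structure theory of mod $p$ principal series for ${\rm GL}(2,\mathbb Q_p)$. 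This is a legitimate alternative route, but it trades the paper's two-line observation for a heavier external input and, as written, contains imprecisions that would need to be repaired before the argument is complete.

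Concretely, there are three soft spots. First, the assertion that ``the socle of every mod $p$ principal series is a character'' is false when $\Ind_\B^\G\chi$ is irreducible (which is the generic case); what is actually true and needed is the weaker statement that $\St_\G$ is never a subrepresentation of $\Ind_\B^\G\chi$, which holds because the only reducible principal series are $\Ind_\B^\G(\eta\circ\det)$ and those have the character as socle and $\St\otimes(\eta\circ\det)$ as cosocle. Second, the step from the vanishing of all $\Hom_\T((\St_\G)_\U,\chi)$ to $(\St_\G)_\U=0$ goes through $\T^1$-\emph{coinvariants}, not $\T^1$-\emph{invariants}: a $\T$-equivariant map to a smooth character $\chi$ (which is necessarily trivial on $\T^1$) factors through $((\St_\G)_\U)_{\T^1}$. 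The invariants of a nonzero smooth module under a pro-$p$ group never vanish in characteristic $p$, but the coinvariants certainly can (e.g.\ $C^\infty(\Z_p,R)_{\Z_p}=0$). What saves you is that $(\St_\G)_\U$ is finitely $\T$-generated and $\T$ is abelian, so this module has \emph{finite $\T^1$-level}; over a finite $p$-group in characteristic $p$ a nonzero module does have nonzero coinvariants, and then a simple $R[\T/\T^1]$-quotient exists. You should spell this out, since as written the justification is not correct. Third, over a non-algebraically-closed $R$ the simple quotient of $((\St_\G)_\U)_{\T^1}$ need not be one-dimensional; one should base change to an algebraic closure $\bar R$ (coinvariants commute with flat base change and $\St_{\G,R}\otimes_R\bar R=\St_{\G,\bar R}$), where the statement about principal series and the character-quotient argument are available, and then descend the vanishing. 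Once these are patched, your argument is sound; the paper's route is shorter precisely because it avoids all of this by computing $\St_\G|_\U$ explicitly.
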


   \begin{proof}[Proof of the corollary] Here we work with the parabolic subgroup $\B= \T\U$.  
    Consider the  representation $\pi$ of the proposition. We  have 
 $\pi^{\I}= \St_\G^\I$ but $\pi_\U\neq \{0\}$ and $(\St_\G)_\U=\{0\}$. 
 Indeed, recall that $\St_\G$ is irreducible \cite[Thm. 4.7]{VigGL2} (and therefore generated by $\St^\I$), so the natural inclusion $\St_\G^\I\subset \pi^{\I}$ is an equality   as $ \pi^{\I}$ does not generate $\pi$ and $\pi/ \St$ is one dimensional. Now 
 $(\Triv_\G)_\U$ is one dimensional  and the $\U$-coinvariant functor is right exact so
 $\pi_\U \neq \{0\}$.  The restriction of $\St_\G$ to  $\U$ is  the space $C_c^\infty (\U,R)$ of locally constant functions  with compact support in $\U$ with the natural action of $\U$ \cite[Lemma 3]{VigSP}. The group $\U$ does not have a Haar measure with value in $R$ of characteristic $p$ (since $\U$ does not contain a compact open subgroup of  pro-order prime to $p$) hence $(\St_\G)_\U=\{0\}$.  
 \end{proof}

\begin{proof}[Proof of Proposition \ref{cex}]   Let $R$ be a field of characteristic $p$.
The
 one dimensional $\H$-modules  $\sign_\H$ and $\Triv_\H$  have been described in Remark \ref{rema:idempo}. They are respectively isomorphic to  $\St_\G^\I$ and $\Triv_\G^\I$  (\cite[4.4]{VigGL2}). 
 We have    $\sign_\H(\tau_w)=(-1)^{\ell(w)}$,   $\Triv_\H(\tau_w)=1$  if  $\ell(w)=0$ and  $\Triv_\H(\tau _w)=0$  if  $\ell(w)>0$  for any $w\in  \W(1) $  since $R$ has characteristic $p$,.
 
 First we suppose that   $R$ is  an algebraic closure of the residue field of $\F$.

The existence of a representation $\pi$ sitting in an exact sequence of the form 
$0\rightarrow \St_\G\rightarrow \pi\rightarrow  \Triv_\G\rightarrow 0$ and not generated by its   $\I$-invariants follows from the following facts:  

 \begin{enumerate}
  
  \item   The $\I$-invariant functor is an equivalence  from the  subcategory of representations of $\G$ generated by their $\I$-invariants to  $ \Mod(\H)$ with inverse $-\otimes \bf X$ (\S\ref{onfunc}).
  
\item 
 $\dim_R \Ext^1_{G}(\Triv_\G, \St_\G)>1 $  in the category  $\Rep(\G)$. 
    
\item    $\dim _R \Ext^1_{\mathcal H}(\Triv_\H, \sign_\H)=1$ in the category $\Mod(\H)$. 
\end{enumerate}

 The property  (2) follows from \cite[Thm. VII.4.18]{Colmez} (there is no symetry, $\dim_R \Ext^1_{G}(\St_\G,\Triv_\G )=1 $ by \cite[Prop.4.3.30(ii)]{Emerton}).  Colmez works over a finite subfield of $R$, but  $R$ being the union of its finite subfields,  the results of Colmez are valid over $R$. A smooth extension $E$ of $\Triv_\G$ by $\St_\G$ gives by (non smooth) duality an extension $E^\vee$ of  
  the (non-smooth) dual $\St_\G^\vee$ by $\Triv_\G$.   Let $A^+=\begin{pmatrix} \mathbb Q_p^\times&0\cr 0&1\end{pmatrix}$.  As $\dim_R(\St_\G^\vee)^{A^+}=1$  \cite[Lemma. VII.4.16]{Colmez}, the inverse image of 
 $(\St^\vee)^{A^+} $ in $E^\vee$ gives a representation of $A^+$ which is an extension of  $\Triv_{A^+}$ by $\Triv_{A^+}$. Such extensions are classified by the group $\Hom (\mathbb Q_p^\times, R)$ of smooth (i.e. locally constant) group homomorphisms $\mathbb Q_p^\times\rightarrow R$. This construction induces a  map
 $$res_{A^+}:\Ext^1_{\G}(\Triv_\G, \St_\G)\to \Hom (\mathbb  Q_p^*, R).$$
For each $\tau\in \Hom (\mathbb  Q_p^*, R)$, Colmez constructs  a smooth extension $E_\tau$ of  $\Triv_\G$ by $\St_\G$ with $res_{A^+}(E_\tau)=\tau$.  This implies  $\dim_R \Ext^1_{\G}(\Triv_\G, \St_\G)>1 $.  Note that Colmez shows that $E_\tau$ admits a central character and that $res_{A^+}$ is an isomorphism if we restrict ourselves to smooth representations of $\G$ with a central character. 

To prove (3), first we verify that as vector spaces we have
\begin{equation}\label{exti}\Ext^i_{\mathcal H}( \Triv_\H, \sign_\H)\cong
\Ext^i_{\mathcal H}( \sign_\H, \Triv_\H)
\end{equation}  for any $i\geq 0$ and then we show that $\dim _R \Ext^1_{\mathcal H}( \sign_\H, \Triv_\H)=1$.

 Consider the involutive automorphism  $\eta$ of the algebra $\Hh$ (\cite[Propositions 4.13 and  4.23]{Vig1}, \cite[(4)]{Vig3}, introduced in the split case  in  \cite[Cor. 2]{Vigprop} (see  also \cite[\S 4.7]{OSch} and note that in the last two references where $\tau_w^* $ is defined differently from the current article) satisfying 
\begin{equation}\eta(\tau_w)=(-1)^{\ell(w)}\tau_w^* \quad (w\in \W(1)). \end{equation}For $M$ a right $\Hh$-module, define $M\eta$ to be the right $\Hh$-module 
on the vector space $M$ with the action of $\Hh$ twisted by $\eta$, i.e. $(m, h)\mapsto m \eta(h)$. 
For example, using \eqref{simplify}, we obtain that the sign character $\sign_\H$ and   the trivial character $\Triv_{\H}$
(defined in Remark \ref{rema:idempo}) satisfy    $$\Triv_{\H}= \sign_\H \eta\quad\textrm{and}\quad \sign_{\H}= \Triv_\H \eta.$$ When $\ell(w)=0$ we have indeed $\tau_w^*=\tau_w$ and  when $\ell(w)>0$ we have
$ \sign_\H (\tau_w^*)=0 $
 because $\tau_{n_s}^*=\tau_{n_s}+\vert  \Zf'_s \vert^{-1} \sum_{z\in \Zf'_s} \tau_z$ and therefore
 $\sign_\H(\tau_{n_s}^*)=-1+1=0$ for any $s\in S_{aff}$
   (in the case when $\G$ is split, compare with \cite[\S 5.4.2]{Compa}).

The map $\Hh\rightarrow \Hh \eta$, $h\mapsto \eta(h)$ being an isomorphism of right $\Hh$-modules,  $M$ is  projective  if and only if $M\eta$ is  projective. Lastly, given $M$ and $N$ two right $\Hh$-modules, the identity map yields an isomorphism of vector spaces 
$\Hom_\Hh(M,N)\cong \Hom_\Hh(M\eta, N\eta)$. Therefore, if $P_\bullet=(P_i)_{i\geq 0}$ is a projective resolution of $ \sign_\H$, then $P\eta_\bullet:=(P_i\eta)_{i\geq 0}$ is naturally   a projective resolution of  $\Triv_\H$. We apply the functor $\Hom_{\Hh}(-,  \sign_\H)=\Hom_{\Hh}(-,  \Triv_\H \eta)$ to the complex $P\eta_\bullet \rightarrow \Triv_\H$ and obtain that  $\Ext^i_{\mathcal H}( \Triv_\H, \sign_\H)$
is the $i^{th}$ cohomology space of the complex 
$\Hom_{\Hh}(P\eta_\bullet,  \Triv_\H \eta)$ which is isomorphic to the 
 complex 
$\Hom_{\Hh}(P_\bullet,  \Triv_\H)$. This proves  \eqref{exti}.\\

We  now prove  $\dim _R \Ext^1_{\mathcal H}( \sign_\H, \Triv_\H)=1$.  In $\Mod(\H)$, the exact sequence \begin{equation}\label{ses}0\to \Triv_\H \to {\Triv_{\H_\T}\otimes_{\HTpt}\H} =\Ind_{\HT}^\H(\Triv_{\H_\T})\to \sign_\H \to 0\end{equation}does not split: by  Prop. \ref{prop:Q1prop}, we know that  $ \Ind_{\HT}^\H(\Triv_{\H_\T})$
is isomorphic to the $\I$-invariant subspace of the parabolic induction $\Ind^\G_\B(1)$. This $\I$-invariant subspace is described explicitly in \cite[Thm. 4.2]{VigGL2} and one easily checks that it sits indeed in the nonsplit exact sequence \eqref{ses}. 
Applying $\Hom_\H(\sign_\H, -)$,  it induces 
 an exact sequence  
 $$\{0\}\to   \Hom_\H(\sign_\H,\sign_\H )\to   \Ext^1_{\mathcal H}(\sign_\H, \Triv_\H) \to  \Ext^1_{\mathcal H}(\sign_\H, \Ind_\HTp^\H(\Triv_\HT)).$$
The exactness of the functor $\Ind_\HM^\H$ and of its left adjoint $\L^\H_\HT:\Mod(\H)\rightarrow\Mod(\H_\T)$   (\S\ref{indcoind-prop}, property (1)) imply that we have isomorphic functors (\cite[p. 163]{HS}) :
 $$\Ext^1_{\mathcal H}(-,\Ind_\HT^\H(-))\cong \Ext^1_{\mathcal H_\T}({\L^\H_\HT}(-), - ) .$$
 The claim in (3) then easily follows from ${\L^\H_\HT} (\sign_\H)=\{0\}$ which we prove here. In our context, the Hecke algebra $\H_\T$ identifies with  the group algebra $R[\T/\T^1]$. 

Using  \eqref{explicitleftadj1}, we have 
$${\L^\H_\HT}= (-\otimes_{\H_{\T^-, \theta^*}}\H_\T) \upiota^{-1}\upiota_\T.$$
We claim that $\sign_\H\otimes_{\H_{\T^-, \theta^*}}\H_\T=\{0\}$.  \\
Let $t\in \T$ be an element satisfying $-\alpha (\tilde \nu (t))<0$ for all $\alpha \in \Sigma^+$. We denote   by $\mu_{-}$ its  image in $ \Lambda_{ \T^-}(1)=\W_{\T^-}(1)$. Seen in $\W_\T(1)$, the element $\mu_{-}$   has length zero and $\tau^{*,\T}_{\mu^-}=\tau^{\T}_{\mu^-}$. Seen in 
$\W(1)$, it has non zero length and  therefore 
$$\sign_\H(\theta^*(\tau_{\mu_{-}}^\T))=\sign_\H(\theta^*(\tau_{\mu_{-}}^{*,\T}))=\sign_\H(\tau^*_{\mu_{-}})=\triv_\H(\eta(\tau^*_{\mu_{-}}))=(-1)^{\ell(\mu_{-})}\triv_\H(\tau_{\mu_{-}})= 0.$$
In  
 $\sign_\H\otimes_{\H_{\T^-, \theta^*}}\H_\T$ we have 
 $1\otimes 1=1\otimes  \tau^\T_{\mu_{-}}(\tau^\T_{\mu_{-}})^{-1}=\sign_\H(\theta^*(\tau_{\mu_{-}}^\T))\otimes (\tau^\T_{\mu_{-}})^{-1}=0$. This proves the claim and concludes the proof of $\dim _R \Ext^1_{\mathcal H}( \sign_\H, \Triv_\H)= 1$.

When $R$ is an algebraic closure of the residue field of $F$, we deduce from (1), (2) and (3) the existence of   an extension $E_\tau$  of $\Triv_\G$ by $\St_\G$ which is not generated by its $\I$-invariants. 

Now, let $R$ be an arbitrary field of characteristic $p$. We are not aware of a version of (1) over a non algebraically closed field, but we do not need it. We note that 
(3) and its proof are valid  for $R$, that the representation $E_\tau$  constructed by Colmez
is defined over the finite field generated by the values of $\tau$,  and that we  can choose  $\tau\in \Hom (\mathbb Q_p^*, \mathbb F_p)$. Therefore we can  choose the extension $E_\tau$  to be  defined on  the prime  field $\mathbb F_p$. By tensor product with $R$, we obtain an extension (defined over  $R$) of $\Triv_\G$ by $\St_\G$ which is not generated by its $\I$-invariants. \end{proof}

\section{\label{main:ss}Supersingularity for Hecke modules and representations}  

Unless otherwise mentioned,  $R$ will be  an algebraically closed field of characteristic $p$. 
The goal of this section is to prove Theorem \ref{theo:ss}.
\subsection{Supersingularity for Hecke modules and representations: definitions}\label{def:ss}
 
  The classification of all smooth irreducible admissible $R$-representations of $\G$ up to the supercuspidal representations is
described in \cite{AHHV}. This generalizes    \cite{Abeclassi} 
and \cite{Her} which treat respectively the case when  $\G$ split and the case when $\G={\rm GL}(n,\F)$. A  classification of all simple right $\H$-modules up to the supersingular  modules is given in \cite{Abe}. The classification of  all supersingular  right $\H$-modules  is described in \cite{Vig3}. The latter generalizes \cite{Compa} which treats the case when $\G$  is split.
 We recall the following definitions:

\begin{enumerate} 
\item An $R$-representation   of $\G$
  is \emph{supercuspidal} if  it  is  non zero admissible   irreducible,  and if furthermore it is not a subquotient of $\Ind_\P^\G \tau$ for some standard parabolic subgroup $\P$ of $\G$ with standard Levi decomposition $\P=\M\N$ where $\M\neq \G$  and $\tau \in \Rep(\M)$  is an admissible irreducible representation \cite[I.3]{AHHV}.  
\item  A smooth representation  $\pi$ of $\G$  is \emph{supersingular} if it is  non zero  admissible  irreducible and 
if, furthermore,  there is an irreducible smooth representation $\rho$ of $\K$ such that the space $$\Hom_\G(\ind_\K^\G\rho, \pi)=\Hom_\K(\rho, \pi\vert_\K)\neq\{0\}$$  is non zero and  contains  an eigenvector for the left action of the center of  $\mathcal H(\G,\K,\rho)=\End_\G(\ind_\K^\G(\rho))$ with a supersingular eigenvalue $\chi$. This means that given a  standard parabolic subgroup $\P$  with Levi decomposition $\P=\M\N$,
the character $\chi$  extends  to the center of  the spherical Hecke algebra $\mathcal H(\M,\K\cap \M,\rho_{\N\cap \K}) $   via the  Satake homomorphism  $\mathcal H(\G,\K,\rho)\hookrightarrow \mathcal H(\M,\K\cap \M,\rho_{\N\cap \K})$ only when $\P=\G$ (\cite[I.5]{AHHV}).
A smooth representation   of $\G$ which  is non zero  admissible  irreducible and not supersingular will be sometimes called \emph{nonsupersingular}.

\begin{rema}
 For $\pi\in \Rep(\G)$ irreducible admissible,  $\pi$ is supercuspidal if and only if $\pi$ is supersingular. This theorem is proved in \cite[Theorem 5]{AHHV} which generalizes   \cite{Her}  and \cite{Abeclassi}. 
\end{rema}

\item In  \cite[\S 1.2 and \S 1.3]{Vig3}, a  central  subalgebra $\mathcal Z_{\Tp}$ of the pro-$p$ Iwahori Hecke algebra $\H$ is  defined  (it was first introduced in the  split  case in \cite[Definition 5.10]{Compa} where it is denoted by $\mathcal Z^\circ(\H)$). 
There is an isomorphism  $\mathcal Z_{\Tp}\cong R[\X^+_*(\T)]$ (\cite[Prop. 2.10]{Compa},  \cite[Thm 1.4]{Vig3}). 
Denote by $\X^0_*(\T) \subset \X^+_*(\T)$ the maximal  subgroup of the monoid $\X^+_*(\T)$. Then  $ R[\X_*(\T)^+-  \X^0_*(\T) ]$ identifies with   a proper ideal 
$\mathcal Z_{\T,\ell >0} $ of $\mathcal Z_{\T}$.
 As a vector space, $\mathcal Z_{\T}$ is  isomorphic to the  direct sum $R[\X^0_*(\T) ]  \oplus \mathcal Z_{\T,\ell >0}$.
 
 A morphism of $R$-algebras $\mathcal Z_{\T}\rightarrow R$ is called \emph{supersingular} if its kernel contains $\mathcal Z_{\T,\ell >0} $.

 A non-zero element  $v$ of a right $\H$-module  $\ssmod$   is called \emph{supersingular}   if there is an integer $n\geq 1$ such that   $v\mathcal Z_{\T,\ell >0}^n=\{0\}$. The module $\ssmod$   is called \emph{supersingular} when all its  elements are supersingular (this is different from the definition \cite[Def 6.10]{Vig3} where we suppose $\ssmod \mathcal Z_{\T,\ell >0}^n=\{0\}$ for some integer $n\geq 1$).

 Note that a supersingular module as defined above  does not even necessarily have finite length. 
 
 A simple  right $\H$-module which  is not supersingular will be sometimes called \emph{nonsupersingular}.

 \begin{rema} \label{rema:supers}
 Recall that $R$ is an algebraically closed field of characteristic $p$. We recall the following facts which, in the case when $\G$ is split, are contained in \cite[\S5.3]{Compa}.
\begin{enumerate}

\item A  simple right $\mathcal H$-module is finite dimensional. This follows immediately from \cite[5.3]{Vigdurham} since $\mathcal H$ is finitely generated over its center $\mathcal Z(\mathcal H)$ and since  $\mathcal Z(\mathcal H)$ is an $R$-algebra of finite type \cite[Thm 1.2]{Vig2}.  These properties remain true when the center is replaced by the central subalgebra $ \mathcal Z_\T$ (\cite[Prop. 2.5]{Compa}, \cite[Thm 1.4]{Vig3}).

\item  The central subalgebra $ \mathcal Z_\T \subset \H$ acts on a simple (hence finite dimensional) $\H$-module $\m $ by a character. 
\item A  non zero  finite dimensional  $\H$-module $\m $ is supersingular if and only if all the subquotients of  $\m$ seen as a  $ \mathcal Z_\T$-module are supersingular characters of $ \mathcal Z_\T$.

\item A submodule and a quotient of a supersingular $\H$-module are supersingular.
\item A  non zero finite dimensional $\H$-module  is not supersingular if and only if it contains a nonsupersingular simple submodule.

\end{enumerate}
\end{rema}
\begin{quote}
Only the direct implication in Point (e) requires a justification. 
We first recall the following general facts: let $R$ be a field,  $A$ an $R$-algebra, and  $C$  a central subalgebra of  $A$, such that  $C$ is a finitely generated $R$-algebra  and   $A$ is a finitely generated $C$-module. The functors $\Ext_A^r (-,-)$ are considered in the abelian category $\Mod_A^{fd} $ of right $A$-modules which are finite dimensonal over $R$.  
Given an ideal $\mathcal J$ of $C$, a finite dimensional right $A$-module $\mathfrak m $ which is  killed by some positive power of $\mathcal J $ is called $\mathcal J$-torsion.   We have:  \\
 i.   The category $\Mod_A^{fd}$ decomposes into the direct sum, over the maximal ideals $\mathcal M$ of $C$, of the subcategory of  $\mathcal M$-torsion modules.\\
 ii. A finite dimensional $A$-module is $\mathcal M$-torsion if and only if, seen as a $C$-module, it admits a composition series where  all quotients  are isomorphic to $C/\mathcal M$.\\
 iii.  Given an ideal $\mathcal J$ of $C$, the  category  of finite dimensional  $\mathcal J$-torsion $A$-modules decomposes into the direct sum, over the maximal ideals $\mathcal M$ of $C$ containing $\mathcal J$, 
of the subcategory of  $\mathcal M$-torsion modules. \\
iv.  Let $\mathcal M$ and $\mathcal M'$ be two maximal ideals of $C$.
 If there exists a $\mathcal M$-torsion module $\mathfrak m\in \Mod_A^{fd} $ and  a $\mathcal M'$-torsion   module ${\mathfrak m}' \in \Mod_A^{fd} $ such that  $\Ext_A^r (\mathfrak m,{\mathfrak m}')\neq \{0\}$ for some integer $r\geq 0$, 
then $\mathcal M=\mathcal M'$.

  When $C=A$, the first three properties are \cite[Theorem 2.13 b, c]{Eisenbud}. 
In general, for  $\m\in \Mod_A^{fd}$,  the set of  $x\in \m$  which are killed by some positive power of $ \mathcal M$ is the component of 
$\m$  in the subcategory  of $\mathcal M$-torsion finite dimensional $C$-modules.
To prove iv.,  note that the $C$-module $\Ext_A^r (\m,\m')$  is killed by a positive power of   $\mathcal M$  and by a positive power of $\mathcal M'$.  The localization of $\Ext_A^r (\m,\m')$  at any  maximal ideal   of $C$ different from $\mathcal M$ or from $\mathcal M'$ is $0$. If  $\mathcal M\neq \mathcal M'$ then the localization of $\Ext_A^r (\m,\m')$  at any  maximal ideal of $C$ is $0$. This is equivalent to $\Ext_A^r (\m,\m')=\{0\}$ \cite[Lemma 2.8]{Eisenbud}.\\

These general statements apply  when $R$ is a field of characteristic $p$, $A=\H$,  $C = \mathcal Z_\T$ and $\mathcal J = \mathcal Z_{\T, \ell >0}$.
 When the field $R$  is furthermore algebraically closed,   we have $ \mathcal Z_\T/\mathcal M \simeq R$ for any maximal ideal $\mathcal M$ of $ \mathcal Z_\T$ and  a character $\chi : \mathcal Z_\T\to R$ is  supersingular if its kernel contains $\mathcal Z_{\T, \ell >0}$.
Statements i.-iv. in this context  have already been observed in \cite[\S 5.3]{Compa} (where $\G$ was supposed split).
The direct implication in Point (e) follows from them.

\end{quote}

 \end{enumerate}

\subsection{Main theorem}

\begin{theorem}\label{theo:ss}
Suppose that $R$ is an algebraically closed field of characteristic $p$. Let $\pi\in \Rep(\G)$  be an irreducible admissible representation. Then the following are equivalent:
\begin{enumerate}
\item
 $\pi$ is supersingular;
 
 \item   the finite dimensional $\H$-module $\pi^{\I}$ is supersingular.
 \item   the finite dimensional $\H$-module $\pi^\I$ admits a supersingular subquotient.
 \end{enumerate}
 \end{theorem}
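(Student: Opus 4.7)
The strategy is to prove the chain $(2)\Rightarrow(3)\Rightarrow(1)\Rightarrow(2)$. The implication $(2)\Rightarrow(3)$ is immediate since a supersingular module is its own supersingular subquotient, so the substance lies in the other two implications. Both will rest on four essential inputs: the identification of supersingularity and supercuspidality for irreducible admissible $R$-representations of $\G$ established in \cite{AHHV}; Abe's classification \cite{Abeclassi} of simple $\H$-modules as subquotients of modules parabolically induced from simple supersingular modules over proper Levi subquotients; the commutativity of Diagram \eqref{diag2'prop} proved in Theorem \ref{thm:padic}; and the explicit computation of $\m\otimes_\H \XX$ for a simple $\H$-module $\m$ (and dually of $V^\I$ for an irreducible admissible $V$) carried out in \cite{AHV}.

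For $(1)\Rightarrow(2)$, I would argue by contradiction: assume $\pi$ is supersingular (hence supercuspidal by \cite{AHHV}) but $\pi^\I$ is not supersingular. By Remark \ref{rema:supers}(e), $\pi^\I$ then contains a nonsupersingular simple $\H$-submodule $\m$. Adjunction yields a nonzero morphism $\m\otimes_\H \XX \to \pi$, which is surjective by irreducibility of $\pi$. By Abe's classification, $\m$ is a subquotient of $\Ind_\HM^\H(\m_\M)$ for some proper Levi $\M\neq \G$ and some simple supersingular $\HM$-module $\m_\M$. The commutative diagram \eqref{diag2'prop} then gives the identification $\Ind_\HM^\H(\m_\M)\otimes_\H \XX \cong \Ind_\P^\G(\m_\M\otimes_\HM \XXM)$, and the explicit description of the composition factors of $\m\otimes_\H \XX$ from \cite{AHV} (which provides an admissible irreducible $\tau\in\Rep(\M)$ such that $\pi$ is a subquotient of $\Ind_\P^\G(\tau)$) contradicts supercuspidality of $\pi$.

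For $(3)\Rightarrow(1)$, I would argue by contrapositive. Suppose $\pi$ is nonsupersingular, equivalently nonsupercuspidal. Then $\pi$ is a subquotient of $\Ind_\P^\G(\tau)$ for some proper standard parabolic $\P=\M\N$ and some admissible irreducible $\tau\in\Rep(\M)$. The computation of $V^\I$ in \cite{AHV} realises every subquotient of $\pi^\I$ as a subquotient of $(\Ind_\P^\G \tau)^\I$, which by the commutativity of Diagram \eqref{diag1prop} (Proposition \ref{prop:Q1prop}) is $\Ind_\HM^\H(\tau^\IM)$. By Abe's classification no such subquotient is supersingular, contradicting $(3)$. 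The principal obstacle in this plan is extracting from \cite{AHV} the precise matching of subquotients between $\pi$ and $\pi^\I$, since the $\I$-invariant functor is not exact in characteristic $p$ and the bookkeeping for nonsupersingular constituents, combined with Abe's description of how parabolic induction from supersingular data generates all nonsupersingular simples, is what makes the correspondence tight enough to yield the desired contradictions.
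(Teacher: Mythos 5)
Your chain $(2)\Rightarrow(3)\Rightarrow(1)\Rightarrow(2)$ and your inventory of inputs (the supercuspidal--supersingular equivalence from \cite{AHHV}, Abe's classification, the commutative diagrams of Theorem \ref{thm:padic}, and the computations of \cite{AHV}) match the paper's. But the obstacle you name at the end is genuine, and your outline does not get past it.

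For $(3)\Rightarrow(1)$, you propose to write $\pi$ as a subquotient of $\Ind_\P^\G(\tau)$ and then transfer the subquotient relation across $(-)^\I$ to place every subquotient of $\pi^\I$ inside $(\Ind_\P^\G\tau)^\I$. Since $(-)^\I$ is only left exact (cf.\ \S\ref{prop-inv}, \S\ref{onfunc}), this transfer simply fails: a subquotient of $\pi$ does not yield a subquotient of $\pi^\I$ inside the $\I$-invariants of the bigger representation. What actually works, and what the paper does, is to use the sharp form of the classification: every irreducible admissible $\pi$ \emph{is} $I_\G(\P,\sigma,\Q)$ for a supercuspidal standard triple, and Theorem~\ref{key}(i) gives a closed-form isomorphism $\pi^\I\cong I_\H(\P,\sigma^{\IM},\Q)$ without any exactness argument. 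Remark~\ref{rema:supersing}(iii) then reads off that $I_\H(\P,\sigma^{\IM},\Q)$ has no supersingular subquotient once $\P\neq\G$.

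For $(1)\Rightarrow(2)$ you have the analogous soft spot. You exhibit the nonsupersingular simple $\mm\subset\pi^\I$ as a \emph{subquotient} of $\Ind_\HM^\H(\mm_\M)$, but that is too weak to produce the map you need from the induced module into $\pi^\I$. The classification gives more: $\mm\cong I_\H(\P,\sigma,\Q)$ is, by construction \eqref{IPsQ}, a \emph{quotient} of $\Ind_{\H_{\M_\Q}}^\H(e_{\H_{\M_\Q}}(\sigma))$, so composing with $\mm\hookrightarrow\pi^\I$ gives a nonzero $\H$-map from the full induced module into $\pi^\I$, whence by adjunction and Diagram~\eqref{diag2'prop} a nonzero $\G$-map $\Ind_\Q^\G(e_{\H_{\M_\Q}}(\sigma)\otimes_{\H_{\M_\Q}}\XX_{\M_\Q})\to\pi$. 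You also elide the case split $\Q=\G$ versus $\Q\neq\G$: when $\Q\neq\G$ one invokes \cite[Prop.~7.9]{HVcomp}, but when $\Q=\G$ there is no proper parabolic in sight and one must instead use Theorem~\ref{key}(ii) to see that $\N$ acts trivially on a quotient of $e_\H(\sigma)\otimes_\H\XX$, which rules out supercuspidality via the vanishing of $\N$-coinvariants. Your sketch, as written, would not compile into a proof without these two fixes.
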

  
   When $\bf G=\bf Z $,   the theorem is trivial because all finite dimensional $\H$-modules are supersingular and all irreducible admissible representations of $\G$ are supersingular.  
  
  \medskip

It   is obvious that (2) implies (3).  In this article, we prove that   (3) $\Rightarrow $ (1) $\Rightarrow$ (2),   that is to say,
\begin{equation}\label{implications} 
\text{\it $\pi^\I$ admits a supersingular subquotient   $\Rightarrow \pi$ supersingular  $\Rightarrow \pi^\I$ supersingular.} 
\end{equation}
 
  
The proof of  \eqref{implications}  is given in \S \ref{prooftheoss}. It requires the results of  Remark \ref{21} and \S \ref{Standard triples}.
Note that several of these preliminary results hold not only when $R$  is an algebraically closed field of characteristic zero but also when it is  an arbitrary commutative ring  $R$.

\begin{rema}\label{21} That  (2) implies  (1) comes from the following arguments.
When $\pi$   is not supersingular, 
there is an irreducible representation $\rho$ of $\K$ such that $$\Hom_\G(\ind_\K^\G\rho, \pi)=\Hom_\K(\rho, \pi\vert_\K)\neq \{0\}$$  and this space  contains an eigenvector  with a nonsupersingular eigenvalue for the left action of  the center of $\mathcal H(\G,\rho)$ by definition of the supersingularity.   Then
 $\pi^\I$ contains a 
nonsupersingular submodule \cite[Prop 7.10, Rmk 7.11]{Vig3} therefore the  $\H$-module $\pi^\I$ is not  supersingular.  
When $\bf G $ is $ \F$-split,  $ \bf G_{der}=  \bf G_{sc}$ and the characteristic of $ \F$ is $0$, this was proved in \cite{Compa} (see Equation (58) and Lemma 5.25 therein).
\end{rema}

\subsection{\label{Standard triples} Standard triples }
Let $R$ be a commutative ring.

 \subsubsection{\label{class-reps}  Standard triples of $\G$}  
 A standard triple of $\G$ is a triple $(\P,\sigma,\Q)$  where
\begin{itemize}
\item[-] $\P$ is a standard parabolic subgroup of $\G$ with standard Levi decomposition $\P=\M\N$,
\item[-] $\sigma$ is a  $R$-representation of $\M$,
 
 \item[-]  $\Q$ is a  parabolic subgroup  such that $\P\subset \Q\subset \P(\sigma )$ where $\P(\sigma) $  is the  parabolic subgroup of $\G$ attached to $\Pi(\sigma)=\Pi_\M \sqcup \Pi_\sigma$ where $ \Pi_\sigma$ is the set of $ \alpha \in \Pi$ such that 
 $\Zp\cap \M'_\alpha $ acts trivially on $\sigma$. 
 \end{itemize}
Standard triples $(\P,\sigma,\Q)$  and $(\P,\sigma',\Q)$  of $\G$ where $\sigma\simeq \sigma'$ are called equivalent. 
A standard triple $(\P,\sigma,\Q)$ of $\G$ is called smooth (resp. supersingular, resp. supercuspidal) if  $ \sigma $  is smooth (resp. supersingular, resp. supercuspidal). \\\\
Given a parabolic  subgroup $\Q$ as above, we will often denote its  standard Levi decomposition by $\Q= \M_\Q\N_\Q$.

  \begin{rema} \label{5.13}\begin{enumerate}
  \item   For a standard triple $(\P,\sigma,\Q)$ of $\G$,  the representation $\sigma$ of $\M$ extends to a  unique representation  $e_{\Q}(\sigma)$ of $\Q$ which is trivial on  $\N$    \cite[II.7 (i)]{AHHV}.	\item   Let   $\M_\sigma\subset \G$ denote the standard Levi subgroup of $\G$ such that  $\Pi_{\M_\sigma}=\Pi _\sigma$.   The  subgroup $ \Zp\cap \M'_\sigma $ is generated by 
 all  $\Zp\cap \M'_\alpha$  for $\alpha \in \Pi _\sigma$  \cite[II.3, II.4]{AHHV}; its  image 
 ${}_1\Lambda_{\M'_\sigma}$ in the pro-$p$ Iwahori Weyl group $ \W(1)$  of $\G$ is 
 generated by the images ${}_1\Lambda_{\M'_\alpha}$ of $ \Zp\cap \M'_\alpha $  in $\W(1)$. If $\sigma$ is supercuspidal, the roots $\alpha \in \Pi _\sigma $ are orthogonal to  $\Pi_{\M}$. If  the roots $\alpha \in \Pi _\sigma $ are orthogonal to  $\Pi_{\M}$,  the group ${}_1\Lambda_{\M'_\sigma}$  is contained in the group of elements of length $0$ of the pro-$p$ Iwahori Weyl group $\W_{\M}(1)$ of $\M$.  We have   $\sigma^{\I _{\M }}(\tau^{\M ,*}_w)=\Id$  for $w \in {}_1\Lambda_{\M'_\sigma} $, as $v\tau^{\M, *}_w= v\tau^{\M}_w  ={\hat w^{-1}}v=v$ for $v\in \sigma^{\I _\M}, {\hat w}\in \Zp\cap \M'_\sigma$.\end{enumerate}
  \end{rema}
Let   $(\P,\sigma,\Q)$ be a standard smooth triple of $\G$.  For $\Q\subset  \Q'\subset \P(\sigma)$, the $R$-representation  $ \Ind^\G _{ \Q'}(e_{\Q'}(\sigma))$ of $G$  is smooth and embeds naturally in the smooth $R$-representation $\Ind^\G_{\Q} (e_\Q(\sigma))$  because the  representation $e_{\Q'}(\sigma)$ of $\Q'$ extending $\sigma$  trivially on  $N$ is equal to  $e_{\Q}(\sigma)$ on $\Q$. We define the following smooth $R$-representation of $\G$: 
$$I_G(\P,\sigma,\Q):=\Ind^\G_{\Q} (e_\Q(\sigma))/\sum_{\Q\subsetneq \Q'\subset \P(\sigma)} \Ind^\G _{ \Q'}(e_{\Q'}(\sigma)).$$

 \subsubsection{\label{class-repsC} {Classification of  the irreducible admissible representations of $\G$}}Assume  that $R$ is an algebraically closed field of characteristic $p$. 
 The main result of \cite[Thm.s. 1, 2, 3 and 5]{AHHV} says that the map $(\P,\sigma,\Q)\to I_G(\P,\sigma,\Q)$ is a bijection from the standard supercuspidal triples of $\G$ up to equivalence onto the irreducible admissible smooth $R$-representations  of $\G$ modulo isomorphism.

\begin{rema}\label{rema:finite-supersing} Let $(\P,\sigma,\Q)$ be a standard supercuspidal triple of $\G$.
 The representation $I_\G(\P,\sigma,\Q)$ is supersingular if and only if $\P=\G$  \cite[Thm. 5)]{AHHV}. It is finite dimensional if and only if  $\P=\B $ and $\Q=\G $ \cite[I.5 Rmk. 2)]{AHHV} in which case we have $I_\G(\B,\sigma,\G)=e_{\G}(\sigma)$. Any irreducible smooth  representation  of $\Zp$ is supercuspidal and supersingular.
 \end{rema}

 \begin{rema} \label{5.12} The pro-$p$ Iwahori group $\I$ acts trivially on the irreducible smooth finite dimensional $R$-representations   of $\G$. Such a representation   is indeed of the  form $ e_{\G}(\sigma)$ for an irreducible smooth representation $\sigma$ of $\Zp$. Then use the 
 Iwahori decomposition \eqref{f:iwadecomp}   which   $\I$ admits with respect to $\B$, and  $\U$, $\U^{op}$, and the fact that $\Zp^1$ acts trivially on  $\sigma$ hence on 
 $ e_{\G}(\sigma)$.  \end{rema}
 
 \subsubsection{\label{class-modH} Standard triples of $\mathcal H$} 
  
 A standard triple of the pro-$p$ Iwahori Hecke $R$-algebra $\mathcal H$ is a triple $(\P,\sigma, \Q)$ where

\begin{itemize}
\item[-] $\P$ is a standard parabolic subgroup of $\G$ with standard Levi decomposition $\P=\M\N$,
\item[-] $\sigma$ is   right $\H_\M$-module,
 
 \item[-]  $\Q$ is a  parabolic subgroup with  $\P\subset \Q\subset \P(\sigma )$ where
  $\P(\sigma)$   is the    standard parabolic subgroup corresponding to $\Pi(\sigma)=\Pi_\M \sqcup \Pi_\sigma  \subset \Pi$ where $\Pi _\sigma $ is the set of  roots $\alpha \in \Pi$ which are orthogonal to  $\Pi_{\M}$  and such that $\sigma( \tau^{M,*}_{w})=\Id$  if  $w\in  {}_1 \Lambda_{\M'_\alpha}$ (as defined in Remark \ref{5.13}).
\end{itemize}  
Standard triples  $(\P,\sigma,\Q)$ and $(\P,\sigma',\Q)$ of $\mathcal H$ with  $\sigma \simeq \sigma'$ are called equivalent. 
 When $\sigma$ is finite dimensional (resp. simple, resp. supersingular), the standard triple  $(\P,\sigma, \Q)$ of $\mathcal H$ is  called  finite dimensional  (resp. simple, resp. supersingular).   

\medskip 

By Remark \ref{5.13}, if   $(\P,\sigma, \Q)$ is a superscuspidal standard triple   of $\G$  then    $(\P,\sigma^{\I_\M}, \Q)$ is a standard triple of $\H$.

\medskip

Let  $(\P,\sigma, \Q)$  a standard triple   of $\H$.   There exists a unique 
 $\mathcal H_{\M_\Q}$-module  structure $e_{\mathcal H_{\M_\Q} }(\sigma)$ on $\sigma$ such that  $e_{\mathcal H_{\M_\Q} }(\sigma)( \tau^{\M_\Q,*}_{w})=\sigma(\tau^{\M,*}_{w})$ for $w\in  \W_\M(1)$ and  $e_{\mathcal H_{\M_\Q} }(\sigma)( \tau^{\M_\Q,*}_{w}) =\Id_{{\sigma}}$  for $w\in   \W_{\M'_{\Q,\sigma}}(1)$ where $\M_{\Q, \sigma}\subset \G$ is the standard Levi subgroup corresponding to  $\Pi_{\M_\Q} \cap \Pi_{\sigma} $. Let $\P\subset \Q\subset \Q_1\subset \P(\sigma)$; we have $\W_{\M_{\Q_1}}(1)\subset \W_{\M_{\Q_1}}(1)$, and ${}^{\M_{\Q }}\mathbb W_{\M_{\Q_1 }}$ the set of $w\in W_{\M_{\Q_1}}$ with minimal length in the coset $ W_{\M_{ \Q}}w$;
 it is proved in  \cite[formula (60)]{AHV} that  the map
$$x\otimes 1  \mapsto x\otimes (\sum_{d \in  {}^{\M_{\Q }}\mathbb W_{\M_{\Q_1 }}} T _{\tilde d})
  : e_{\mathcal H_{\M_{\Q_1}} }(\sigma) \otimes _{\mathcal H_{\M_{\Q_1}^+},\theta}\mathcal H \to  e_{\mathcal H_{\M_\Q} }(\sigma) \otimes _{\mathcal H_{\M_\Q^+},\theta}\mathcal H$$  
is a well defined injective  $\mathcal H$-equivariant morphism from $\Ind_{\mathcal H_{\M_{\Q_1}}}^{\mathcal H}(e_{\mathcal H_{\M_{\Q_1}} }(\sigma))$ into 
$\Ind_{\mathcal H_{\M_{\Q}}}^{\mathcal H}(e_{\mathcal H_{\M_{\Q}} }(\sigma))$;
 one  may then  define the right $\mathcal H$-module
 \begin{equation}\label{IPsQ}
I_{\mathcal H}(\P,\sigma,\Q):=  \Ind_{\mathcal H_{\M_\Q}}^{\mathcal H}(e_{\mathcal H_{\M_\Q} }(\sigma))/ \sum_{\Q\varsubsetneq \Q_1\subset \P(\sigma)} \Ind_{\mathcal H_{\M_{\Q_1}}}^{\mathcal H}(e_{\mathcal H_{\M_{\Q_1}} }(\sigma)).
\end{equation}

 The smooth parabolic induction $\Ind_{\P }^\G$   corresponds to the induction $ \Ind_{\mathcal H_{\M }}^{\mathcal H}$  via  the pro-$p$-Iwahori invariant functor  and its left adjoint. This is proved in the current paper in Prop \ref{prop:Q1prop} and Corollary \ref{coro:Q2} (see the commutative diagrams in Questions \ref{qp1} and \ref{qp2}). It is proved in   \cite[Thm. 1.3, Cor. 10.13]{AHV} that 
this implies: 

\begin{theorem} \label{key}(i) Let $(\P,\sigma,\Q)$ be a supercuspidal standard  triple of $\G$. Then  $(\P,\sigma^{\IM},\Q)$ is a standard $\H$-triple and  the $\mathcal H$-modules
$$  I_{G}(\P,\sigma,\Q)^{\I}\cong  I_{\mathcal H}(\P,\sigma^{\IM},\Q)$$
 are isomorphic. 

 (ii) Let $(\P,\sigma,\Q)$ be a standard  triple of $\mathcal H$. Then  $(\P, \sigma \otimes_{\H _\M}\XX_\M,\Q)$ is a standard smooth triple of $\G$ and the  smooth $R$-representations  of $\G$ $$ I_{\mathcal H}(\P,\sigma ,\Q)\otimes_\H \XX \cong  
I_G(\P, \sigma \otimes_{\H _\M}\XX_\M,\Q)$$
are isomorphic. 
In particular 
 $e_{\mathcal H}(\sigma)\otimes _{\mathcal H}\XX   \cong e_\G( \sigma \otimes _{\mathcal H_\M}\XX_\M ) $ when $\Q=\P(\sigma)=\G$.
\end{theorem}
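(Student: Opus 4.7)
\medskip

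The plan is to prove both parts by reducing the construction of $I_G$ and $I_{\mathcal H}$ to the parabolic induction functors, where we already know from Proposition \ref{prop:Q1prop} and Corollary \ref{coro:Q2} that the $\I$-invariants functor and its left adjoint intertwine the induction on the group and the Hecke sides. The main issue is not the top-level induction but rather the \emph{extensions} $e_\Q(\sigma)$ and $e_{\mathcal H_{\M_\Q}}(\sigma)$, and the quotient by the sum of subobjects indexed by $\Q\subsetneq \Q'\subset \P(\sigma)$.

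For part (i), I would first verify that $(\P,\sigma^{\IM},\Q)$ is a standard $\H$-triple: using Remark \ref{5.13}(2), the image in $\W(1)$ of $\Zp\cap \M'_\alpha$ for $\alpha\in\Pi_\sigma$ is contained in the length zero part of $\W_\M(1)$, hence acts trivially on $\sigma^{\IM}$ via both $\tau^{\M}_w$ and $\tau^{\M,\ast}_w$, so $\Pi_\sigma\subset \Pi_{\sigma^{\IM}}$ and $\Q\subset \P(\sigma)\subset \P(\sigma^{\IM})$. Next, the key compatibility to check is the pair of isomorphisms
\[
e_\Q(\sigma)^{\I_{\M_\Q}}\;\cong\; e_{\mathcal H_{\M_\Q}}(\sigma^{\IM}),\qquad (e_\Q(\sigma))^\I\;\cong\;\Ind^{\mathcal H}_{\mathcal H_{\M_\Q}}\bigl(e_{\mathcal H_{\M_\Q}}(\sigma^{\IM})\bigr)\text{ after induction,}
\]
which follows by unwinding the definition of $e_{\mathcal H_{\M_\Q}}$ and applying Proposition \ref{prop:Q1prop} to $\Ind^\G_\Q e_\Q(\sigma)$. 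Then, since $(-)^\I$ is left exact, applying it to the defining sequence
\[
\sum_{\Q\subsetneq \Q'\subset \P(\sigma)}\Ind^\G_{\Q'}e_{\Q'}(\sigma)\;\hookrightarrow\;\Ind^\G_\Q e_\Q(\sigma)\;\twoheadrightarrow\; I_G(\P,\sigma,\Q)
\]
produces an injection from $I_\H(\P,\sigma^{\IM},\Q)$ into $I_G(\P,\sigma,\Q)^\I$. The obstruction is showing surjectivity, i.e.\ that every $\I$-fixed vector of $I_G(\P,\sigma,\Q)$ lifts to one of $\Ind^\G_\Q e_\Q(\sigma)$; this is where Corollary \ref{daggercommuteprop} enters, combined with the fact that the sum of sub-inductions is itself generated by its $\I$-invariants, so its $\dagger$-closure coincides with itself and the quotient has no ``extra'' $\I$-fixed vectors.

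For part (ii), I would argue dually using that $-\otimes_{\H}\XX$ is right exact. Given a standard $\H$-triple $(\P,\sigma,\Q)$, I first claim that the $\M$-representation $V:=\sigma\otimes_{\H_\M}\XX_\M$ makes $(\P,V,\Q)$ a standard smooth triple of $\G$: this amounts to showing $\Pi_\sigma\subset \Pi_V$, i.e.\ that $\Zp\cap\M'_\alpha$ acts trivially on $V$ for $\alpha\in \Pi_\sigma$, which follows since its image ${}_1\Lambda_{\M'_\alpha}$ consists of length-zero elements on which the $\tau^{\M,\ast}$ act as the identity. The central compatibility to establish is
\begin{equation}\label{eq:keycompat}
e_{\mathcal H_{\M_\Q}}(\sigma)\otimes_{\mathcal H_{\M_\Q}}\XX_{\M_\Q}\;\cong\; e_\Q\bigl(\sigma\otimes_{\mathcal H_\M}\XX_\M\bigr)
\end{equation}
as representations of $\M_\Q$, obtained by an explicit presentation on both sides; combined with Corollary \ref{coro:Q2}, this gives
\[
\Ind^\H_{\mathcal H_{\M_\Q}}\bigl(e_{\mathcal H_{\M_\Q}}(\sigma)\bigr)\otimes_\H\XX\;\cong\;\Ind^\G_\Q\bigl(e_\Q(V)\bigr),
\]
and similarly for each $\Q'\supsetneq \Q$. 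Applying the right exact functor $-\otimes_\H \XX$ to the defining cokernel of $I_{\mathcal H}(\P,\sigma,\Q)$ and comparing with the definition of $I_G(\P,V,\Q)$ then yields the desired isomorphism; the special case $\Q=\P(\sigma)=\G$ reduces to \eqref{eq:keycompat} itself.

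The hardest step is the compatibility \eqref{eq:keycompat}: one must identify an $\M_\Q$-action on $\sigma\otimes_{\H_\M}\XX_\M$ extending the $\M$-action trivially on $\N\cap \M_\Q$, and match it with the Hecke-module structure coming from $e_{\mathcal H_{\M_\Q}}$. The input needed is that $\Pi_\sigma$ consists of roots orthogonal to $\Pi_\M$, so that the subgroups $\U_\alpha$, $\alpha\in\Pi_\sigma$, centralize $\M$ modulo parts that act trivially on $\sigma^{\IM}$; combined with the explicit description of $e_{\mathcal H_{\M_\Q}}$ as identity on the corresponding $\tau^{\M_\Q,\ast}_w$'s, one can construct the extended $\M_\Q$-action on $V$ by specifying it on $\U_\alpha$ for $\alpha\in \Pi_{\M_\Q}\setminus \Pi_\M$ and check compatibility with the Hecke relations. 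Once this is done, parts (i) and (ii) become formal consequences of the commutative diagrams in Questions \ref{qp1} and \ref{qp2}.
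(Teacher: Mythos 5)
You should first be aware that the paper does not actually prove Theorem \ref{key}: it is quoted from \cite[Thm. 1.3, Cor. 10.13]{AHV}, the present article contributing only the commutative diagrams of Proposition \ref{prop:Q1prop} and Corollary \ref{coro:Q2} which serve as input there. Your sketch reconstructs the architecture of that external argument --- reduce everything to the parabolic induction functors via the two commutative diagrams, and handle separately the extensions $e_\Q(\sigma)$, $e_{\H_{\M_\Q}}(\sigma)$ and the quotient by the sum over $\Q\subsetneq\Q'\subset\P(\sigma)$ --- and the outline of part (ii) is sound, since $-\otimes_\H\XX$ is right exact and so preserves a cokernel presentation, modulo the compatibility $e_{\H_{\M_\Q}}(\sigma)\otimes_{\H_{\M_\Q}}\XX_{\M_\Q}\cong e_\Q(\sigma\otimes_{\H_\M}\XXM)$ and the commutation of the explicit transition maps $x\otimes 1\mapsto x\otimes\sum_d\tau_{\tilde d}$ with the natural inclusions on the group side, which you flag as the hard step but do not carry out.

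Part (i), however, has a genuine gap. Writing $\Sigma=\sum_{\Q\subsetneq\Q'\subset\P(\sigma)}\Ind^\G_{\Q'}(e_{\Q'}(\sigma))$, left exactness of $(-)^\I$ gives an injection of $(\Ind^\G_\Q e_\Q(\sigma))^\I/\Sigma^\I$ into $I_G(\P,\sigma,\Q)^\I$; to identify the source with $I_\H(\P,\sigma^{\IM},\Q)$ you must show $\Sigma^\I=\sum_{\Q'}(\Ind^\G_{\Q'}e_{\Q'}(\sigma))^\I$ (the invariants of a sum of subobjects can a priori be strictly larger than the sum of the invariants), and to conclude you must show every $\I$-fixed vector of the quotient lifts. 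Your proposed mechanism for surjectivity --- that the sub-inductions are generated by their $\I$-invariants, via Corollary \ref{daggercommuteprop} --- does not deliver it: in an exact sequence $0\to A\to B\to C\to 0$ with $A$ and $C$ both generated by their $\I$-invariants, the map $B^\I\to C^\I$ can still fail to be onto. The paper's own Proposition \ref{cex} is exactly such an example ($A=\St_\G$, $C=\Triv_\G$, $B=\pi$ with $\pi^\I=\St_\G^\I$). Closing this requires the finer comparison of the two lattices of subobjects carried out in \cite{AHV}, not just the $\dagger$-commutation.
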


Note that in the case when $\P=\B$, $\M=\Zp$ and $\sigma$ is the trivial character of $\H_\Zp$,  the isomorphism  $e_{\mathcal H}(\sigma)\otimes _{\mathcal H}\XX   \simeq e_\G( \sigma \otimes _{\mathcal H_\Z}\XX_\Zp) $ is the isomorphism proved in Lemma \ref{lemmatriv}. 

 \subsubsection{ \label{class-mod} Classification of the simple $\H$-modules} 
 Assume that $R$ is an algebraically closed field of characteristic $p$.

\begin{theorem}\label{thm:class-mod}The map   $(\P,\sigma,\Q)\to I_{\mathcal H}(\P,\sigma,\Q)$  
  is a bijection from the set of standard simple supersingular triples  up to equivalence of  $\mathcal H$ onto the set of  simple $\mathcal H$-modules modulo isomorphism.
  \end{theorem}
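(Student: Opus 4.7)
The plan is to follow the strategy of Abe's classification \cite{Abe}, adapted to the generality of this paper by using the parabolic induction functor $\Ind_{\mathcal H_\M}^{\mathcal H}$ together with its left adjoint $\L^{\mathcal H}_{\mathcal H_\M}$ and right adjoint $\R^{\mathcal H}_{\mathcal H_\M}$ from \S\ref{indcoind-prop}, all three of which are exact. I would split the proof into the three classical steps: simplicity of $I_{\mathcal H}(\P,\sigma,\Q)$ when $(\P,\sigma,\Q)$ is simple supersingular; exhaustion, namely that every simple $\mathcal H$-module is isomorphic to some such $I_{\mathcal H}(\P,\sigma,\Q)$; and uniqueness of the triple up to equivalence.

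For simplicity, I would first note that when $\sigma$ is a simple supersingular $\mathcal H_\M$-module, the filtration of $\Ind_{\mathcal H_\M}^{\mathcal H}(e_{\mathcal H_\M}(\sigma))$ by the submodules $\Ind_{\mathcal H_{\M_{\Q'}}}^{\mathcal H}(e_{\mathcal H_{\M_{\Q'}}}(\sigma))$ indexed by $\P\subset \Q'\subset \P(\sigma)$ has graded pieces that are the various $I_{\mathcal H}(\P,\sigma,\Q)$. The key computation is that the image of $\mathcal Z_\T$ inside $\Ind_{\mathcal H_\M}^{\mathcal H}(e_{\mathcal H_\M}(\sigma))$ acts through the supersingular character on the $\sigma$-summand of the decomposition in Remark~\ref{rema:decompind}, so any non-zero $\mathcal H$-submodule of $I_{\mathcal H}(\P,\sigma,\Q)$ contains a vector annihilated by a power of $\mathcal Z_{\T,\ell>0}$; combined with Frobenius reciprocity and the control of $\Hom_{\mathcal H_\M}(\sigma,\R^{\mathcal H}_{\mathcal H_\M}(I_{\mathcal H}(\P,\sigma,\Q)))$ afforded by the identification of the induced modules in \eqref{IPsQ}, this forces $I_{\mathcal H}(\P,\sigma,\Q)$ to be simple.

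For exhaustion, given a simple $\mathcal H$-module $\mathfrak m$ (which is finite dimensional by Remark~\ref{rema:supers}(a)), I would choose a standard parabolic $\P=\M\N$ minimal among those such that the $\mathcal H_\M$-module $\L^{\mathcal H}_{\mathcal H_\M}(\mathfrak m)$ is non-zero, and take $\sigma$ to be a simple quotient of $\L^{\mathcal H}_{\mathcal H_\M}(\mathfrak m)$. Minimality of $\P$ and the transitivity of the left adjoints (which follows from the analogous transitivity for $\Ind_{\mathcal H_\M}^{\mathcal H}$) force $\sigma$ to be supersingular: otherwise $\sigma$ itself would embed into some parabolically induced $\mathcal H_\M$-module from a proper Levi of $\M$, contradicting minimality. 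By adjunction $\mathfrak m$ is a quotient of $\Ind_{\mathcal H_\M}^{\mathcal H}(\sigma)$, and by maximality of the kernel one reads off a unique $\Q$ with $\P\subset \Q\subset \P(\sigma)$ such that $\mathfrak m\cong I_{\mathcal H}(\P,\sigma,\Q)$.

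For uniqueness, I would show that from $I_{\mathcal H}(\P,\sigma,\Q)$ one can recover $\P$, the isomorphism class of $\sigma$, and $\Q$ intrinsically: $\P$ is the minimal standard Levi for which $\L^{\mathcal H}_{\mathcal H_\M}$ is non-zero; $\sigma$ is then recovered as a distinguished simple quotient of that left adjoint; and $\Q$ is pinned down by examining which of the sub-inductions $\Ind_{\mathcal H_{\M_{\Q'}}}^{\mathcal H}(e(\sigma))$ for $\P\subset \Q'\subset \P(\sigma)$ map onto $\mathfrak m$. The main obstacle — and the technically hardest step — is the exhaustion argument, specifically proving that the chosen $\sigma$ is supersingular and verifying that the condition $\Pi_\sigma\supset \Pi_{\M_\Q}\setminus \Pi_{\M}$ holds intrinsically so that $(\P,\sigma,\Q)$ is a bona fide standard triple; this rests on a careful analysis of how the element $c_{n_s}$ in the quadratic relation \eqref{Q} interacts with the supersingular character of $\mathcal Z_\T$, as worked out in \cite[\S 1.3]{Vig3} and the corresponding parts of \cite{Abe}.
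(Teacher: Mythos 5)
Your proposal takes a genuinely different route from the paper. The paper does not re-prove the classification from scratch: it simply cites \cite[Cor.~9.30]{AHV}, where the induction-version of the classification is \emph{deduced} from Abe's original coinduction-version \cite{Abe} by means of the twisting isomorphism $\Ind_{\HM}^{\H}\cong \cII^{\H}_{\H_{\M'}}(-\,\upiota_{\M}^{-1}\upiota)$ of \eqref{twistedind}, which intertwines induction from $\HM$ with coinduction from $\H_{\M'}$ up to an equivalence of module categories that permutes simple supersingular modules. The paper's ``proof'' is thus just this transfer; the explanation immediately after Proposition~\ref{prop:ind} (``follows from the comparison between induction and coinduction'') is precisely this mechanism. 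In contrast, you propose to re-run Abe's three-step argument (simplicity, exhaustion, uniqueness) directly in the induction formulation. That is a valid alternative strategy, and it has the virtue of being self-contained and of showing where the various exactness and adjunction properties enter; what it costs is that you must genuinely redo the hardest parts of \cite{Abe}, whereas the paper's approach imports them at no cost.

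A few points in your sketch need tightening if you were to carry it out. You assert that $\Ind_{\HM}^{\H}$, $\L^{\H}_{\HM}$ and $\R^{\H}_{\HM}$ are all exact; the paper (following \cite{Vig5}) only records exactness of $\Ind_{\HM}^{\H}$ and of $\L^{\H}_{\HM}$, not of $\R^{\H}_{\HM}$, and exactness of $\R^{\H}_{\HM}$ is not immediate from what is recalled. In the exhaustion step, the passage from ``$\sigma$ is a subquotient of a module induced from a proper Levi of $\M$'' (the negation of supersingularity, Remark~\ref{rema:supersing}(iv)) to ``$\L^{\H_\M}_{\H_{\M_1}}(\sigma)\neq 0$ for some $\M_1\subsetneq\M$'' implicitly uses that $\Ind_{\H_{\M_1}}^{\H_\M}$ is \emph{fully} faithful (so that the counit $\L\circ\Ind\to\Id$ is an isomorphism); the paper only records that it is faithful, and full faithfulness is a nontrivial input from \cite{Vig5}/\cite{Abe}. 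Finally, the ``careful analysis of how $c_{n_s}$ interacts with the supersingular character'' which you flag as the technical heart is precisely what is subcontracted to \cite{Vig3} and \cite{Abe}; if you intend a self-contained re-proof, this is where the actual work lies and your sketch does not yet supply it.
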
  

This  variant  of the classification of Abe \cite{Abe} (Abe's classification uses coinduction instead of induction) is proved in \cite[Cor. 9.30]{AHV}; we prefer the induction  because it is  compatible with the parabolic induction functor and with the $\I$-invariant functor (Theorem \ref{key}).

\begin{prop}\label{prop:ind} Let $\P=\M\N$ be a standard parabolic subgroup of $\G$ with its standard Levi decomposition and $\sigma$ a simple supersingular right  $\H_\M$-module. Then the composition factors of the induced module $\Ind_{\mathcal H_{\M }}^{\mathcal H}(\sigma)$ are  $I_\H(\P,\sigma,\Q)$ for $\P \subset \Q \subset \P(\sigma)$.
\end{prop}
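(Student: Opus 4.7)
The plan is to produce a filtration of $\Ind_{\H_\M}^\H(\sigma)$ whose graded pieces are the modules $I_\H(\P,\sigma,\Q)$ indexed by the standard parabolic subgroups $\Q$ with $\P \subseteq \Q \subseteq \P(\sigma)$, and then to invoke the classification Theorem \ref{thm:class-mod} to conclude. For each such $\Q$ I set $X_\Q := \Ind_{\H_{\M_\Q}}^\H(e_{\H_{\M_\Q}}(\sigma))$, regarded as an $\H$-submodule of $X_\P = \Ind_{\H_\M}^\H(\sigma)$ via the injection recalled in \S\ref{class-modH}; these injections are compatible in the sense that $X_{\Q'} \subseteq X_\Q$ whenever $\Q \subseteq \Q'$. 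By the defining formula \eqref{IPsQ}, $I_\H(\P,\sigma,\Q) = X_\Q\big/\sum_{\Q \subsetneq \Q' \subseteq \P(\sigma)} X_{\Q'}$, and Theorem \ref{thm:class-mod} asserts that each such quotient is a simple $\H$-module, with distinct standard triples giving pairwise non-isomorphic modules.

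Next, I would choose a linear ordering $\Q_1,\Q_2,\ldots,\Q_N$ of the standard parabolics between $\P$ and $\P(\sigma)$ refining reverse inclusion, so that $\Q_i \supsetneq \Q_j$ forces $i < j$ (thus $\Q_1 = \P(\sigma)$ and $\Q_N = \P$). Setting $F_k := \sum_{i \leq k} X_{\Q_i}$ produces an increasing filtration $0 = F_0 \subseteq F_1 \subseteq \cdots \subseteq F_N = X_\P$. By the choice of ordering, every $\Q'$ with $\Q_k \subsetneq \Q' \subseteq \P(\sigma)$ coincides with some $\Q_j$ for $j < k$, hence $\sum_{\Q_k \subsetneq \Q' \subseteq \P(\sigma)} X_{\Q'} \subseteq X_{\Q_k} \cap F_{k-1}$; the canonical map $X_{\Q_k} \to F_k/F_{k-1} \cong X_{\Q_k}/(X_{\Q_k} \cap F_{k-1})$ therefore factors through $I_\H(\P,\sigma,\Q_k)$, producing a surjection $I_\H(\P,\sigma,\Q_k) \twoheadrightarrow F_k/F_{k-1}$. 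Since the source is simple, each graded piece $F_k/F_{k-1}$ is either $0$ or isomorphic to $I_\H(\P,\sigma,\Q_k)$.

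It then remains to verify that each $I_\H(\P,\sigma,\Q)$ does appear as a composition factor of $X_\P$, and with multiplicity exactly one. From the filtration above every composition factor of $X_\P$ lies in $\{I_\H(\P,\sigma,\Q_k)\}_k$, and each label $\Q_k$ contributes at most once; conversely $I_\H(\P,\sigma,\Q)$ is by construction a nonzero subquotient of $X_\Q \subseteq X_\P$ (nonzero by Theorem \ref{thm:class-mod}), so each appears at least once, yielding multiplicity exactly one. The main obstacle here is really the input from Theorem \ref{thm:class-mod} itself: one must know that every $I_\H(\P,\sigma,\Q)$ is a nonzero simple $\H$-module and that the assignment $(\P,\sigma,\Q)\mapsto I_\H(\P,\sigma,\Q)$ is injective on equivalence classes of standard triples. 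This rests on the compatibility between $\Ind_{\H_\M}^\H$ and $(-)^\I$ proved earlier (Proposition \ref{prop:Q1prop} and Corollary \ref{coro:Q2}) together with the delicate analysis of \cite[Cor. 9.30]{AHV}; granted this classification input, the proposition reduces to the combinatorial bookkeeping with the lattice of standard parabolics lying between $\P$ and $\P(\sigma)$ outlined above.
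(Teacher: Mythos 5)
Your proof is correct, but it takes a genuinely different route from the paper's. The paper disposes of this proposition in a single remark: it cites Abe's result \cite[Cor.\ 4.26]{Abe} on the composition factors of the \emph{coinduced} module $\cII_{\H_\M}^\H(\sigma)$ and transports it to the induced module via the comparison isomorphism $\Ind_{\HM}^\H \cong \cII^\H_{\H_{\M'}}(-\,\upiota_{\M}^{-1}\upiota)$ of \eqref{twistedind}; both Theorem \ref{thm:class-mod} and this proposition are presented as downstream corollaries of that comparison. You instead take Theorem \ref{thm:class-mod} as a black box and argue directly: you order the parabolics $\Q$ between $\P$ and $\P(\sigma)$ so that reverse inclusion is refined, build the filtration $F_k = \sum_{i\le k} X_{\Q_i}$, observe that each graded piece $F_k/F_{k-1}$ receives a surjection from the simple module $I_\H(\P,\sigma,\Q_k)$ (hence is $0$ or simple), and pin down the multiplicities by noting that each $I_\H(\P,\sigma,\Q)$ is visibly a nonzero simple subquotient of $X_\Q\subseteq X_\P$. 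What this buys is more than the proposition asks for: an explicit composition series realizing each factor with multiplicity exactly one, whereas the paper's argument only identifies the multiset of composition factors by transport of structure. The one point you assert but should make explicit is the compatibility of the embeddings $X_{\Q''}\hookrightarrow X_{\Q'}\hookrightarrow X_\Q$ (so that the $X_\Q$ form a coherent lattice of submodules of $X_\P$ and your sums are well defined); this is implicit in the paper's formula \eqref{IPsQ} and is established in \cite[formula (60)]{AHV}, but it is the load-bearing structural input for your filtration argument and deserves at least a sentence of justification.
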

This is the  variant for an \emph{induced}  $\H$-module of a result of Abe \cite[Cor. 4.26]{Abe}  which considers the  right $\H$-module \emph{coinduced} from a simple supersingular right  $\H_\M$-module.  

\bigskip  Theorem \ref{thm:class-mod} and Proposition \ref{prop:ind} follows from the comparison between induction and coinduction.

\begin{rema}\label{rema:supersing} (i) The  $\H$-module $I_\H(\P,\sigma,\Q)$ is simple and  supersingular if and only if $P=Q=G$ and $\sigma$ is simple and supersingular (this is a part of the classification).  

(ii) Let  $\tau$ be a simple right $\H_\M$-module and  $(\P^\M,\sigma ,\Q^\M)$  a standard simple supersingular triple of $\H_\M$ such that 
 $\tau=I_{\HM}(\P^\M,\sigma,\Q^\M)$.  Let $\R\subset \P$ be the standard parabolic subgroup of $\G$ such that the standard Levi subgroup $\M_\R$ of $\R$ is equal to the standard Levi subgroup  of $\P^\M$. 
Then, the  composition factors of the  $\H$-module  $I_{\H}(\P,\tau,\Q)$   are  composition factors  of  $\Ind_{\mathcal H_{\M_\R}}^{\mathcal H}(\sigma)$ (apply Proposition \ref{prop:ind}  and the   transitivity of the parabolic induction 
\cite[Thm. 1.4]{Vig5}).  Hence they are of the form $I_\H(\R,\sigma, {\rm S})$ for some parabolic subgroups ${\rm S}$ such that $\R \subset {\rm S} \subset \R(\sigma)$ which can be explicitely determined, if necessary.  From (i), none of the simple subquotients of the $\H$-module  $I_{\H}(\P,\tau,\Q)$ is  supersingular if $\R\neq \G$. 
Otherwise $\P=\Q=\R=\G$ and $I_{\H}(\G,\tau,\G)=\tau$ is supersingular.

(iii)  When  $\tau$ is a finite dimensional right $\H_\M$-module and $\P\neq \G$, 
  the $\H$-module  $I_{\H}(\P,\tau,\Q)$ admits no  supersingular subquotient (Apply (ii) to the composition factors of the  $\H_\M$-module $\tau$).  
  
  (iv) A simple right $\H_\M$-module is supersingular if and only if it is not a subquotient of $\Ind_{\mathcal H_{\M }}^{\mathcal H}(\tau)$ for some standard Levi subgroup $\M\neq \G$  and some  simple $\mathcal H_{\M }$-module $\tau$. Compare with the definition of a supercuspidal representation of $\G$ in \S\ref{def:ss} (1).  \end{rema}

\subsection{\label{prooftheoss}Proof of (\ref{implications})}

    We suppose  that $\G\neq \Zp$. 
 Let  $\pi\in\Rep(\G)$ be a non zero irreducible admissible  representation.  Then $\pi^\I$ is a  non-zero finite dimensional  right $\H$-module. \\\\
 \textbf{A)}  We prove  the second implication of \eqref{implications} \emph{i.e.} that if $\pi$ is supersingular, then $\pi^\I$ is supersingular.    \\
   Suppose that the $\H$-module $\pi^\I$ is not supersingular.  
   By Remark \ref{rema:supers}(e), the $\H$-module  $\pi^\I$ contains  a simple nonsupersingular submodule.   
 By    \S\ref{class-mod} and Remark \ref{rema:supersing}, there is a standard simple supersingular triple $(\P, \ssmod, \Q)$ of $\H$ with $\P=\M\N\neq \G$ such that $I_\H(\P, \ssmod, \Q)$ is contained in $\pi^\I$.
 We prove that the representation $\pi$ of $\G$ is not supercuspidal.

\begin{itemize}
\item[a/] Suppose $\Q\neq \G$. We have by adjunction and by  the commutativity of the diagram \eqref{diag2'prop}:
\begin{align*}0&\neq \Hom_{\mathcal H}(\Ind^{\H}_{\mathcal H_{\M_\Q}}(e_{\H_{\M_\Q}}(\ssmod)), \pi^\I)\cong \Hom_{kG}(\Ind^{\H}_{\mathcal H_{\M_\Q}}(e_{\H_{\M_\Q}}(\ssmod) )\otimes_{\mathcal H}  \XX, \pi)\cr
&\cong  \Hom_{k\G}({\Ind_{\Q }^\G} (e_{\H_{\M_\Q}}(\ssmod)\otimes _{\mathcal H_{\M_\Q}}  \XX_{\M_\Q}), \pi).\end{align*} Since the simple $\H_{\M_\Q}$-module $ e_{\H_{\M_\Q}}(\ssmod)$ has a central character, the representation $e_{\H_{\M_\Q}}(\ssmod)\otimes _{\mathcal H_{\M_\Q}}  \XX_{\M_\Q}$ of $\M_\Q$ has a central character.   
We deduce that  $\pi$ is not supercuspidal by \cite[Prop. 7.9]{HVcomp}.
 
 \item[b/]  Suppose $ \Q= \G$. We have  $e_{\H}( \ssmod) \subset  \pi^\I$ and  by adjunction,   $\pi$ is a quotient of  $e_{\H}( \ssmod)\otimes _{\H} \XX$. But    $e_{\H}( \ssmod)\otimes _{\H} \XX \cong e_\G(\ssmod\otimes _{\H_\M} \XX_\M)$ (Theorem \ref{key}(ii)).  The group  $\N$ acts trivially on $\pi$ because  $e_\G(\ssmod\otimes _{\H_\M} \XX_\M)$ is the  representation of $\G$ trivial on $\N$ extending the representation $\ssmod\otimes _{\H_\M} \XX_\M$ of $\M$. The space of $\N$-coinvariants of a supercuspidal representation of $\G$  is trivial  (this follows immediately
from \cite[II.7 Prop.]{AHHV} and from the classification  recalled in \S \ref{class-repsC}), hence 
 $\pi$ is not supercuspidal. 
  \end{itemize}\textbf{B)}  We prove the first implication of \eqref{implications} \emph{i.e} that  if $\pi^\I$ admits a supersingular subquotient then $\pi$  is supersingular.

Suppose that $\pi $ is   nonsupersingular.  
Consider a supersingular  standard triple  $(\P,\sigma,\Q)$ of $\G$ such that $\pi\cong I_\G(\P,\sigma,\Q)$. As $\pi$ is not supersingular,  we have $\P\neq \G$ . By Theorem \ref{key}(i) we have
$$\pi^\I\cong I_\G(\P,\sigma,\Q)^{\I} = I_{\H}(\P,\sigma^{\I_\M},\Q).$$ 
By Remark  \ref{rema:supersing} (iii) applied to $I_{\H}(\P,\sigma^{\I_\M},\Q)$, $ \pi^\I$ admits no
  supersingular subquotient because $\P\neq \G$.
  
  \begin{rema}  \label{rema:sssc} When $\bf G$ is semisimple and simply connected, we do not need to refer to \cite{AHV} to prove the second implication of \eqref{implications}, that is to say, $\pi$ is supersingular $ \Rightarrow $ $\pi^\I$ is supersingular.  This reference appears only through Theorem \ref{key} in  the proof. When $\bf G$ is semisimple and simply connected, it suffices to refer    to Lemma \ref{lemmatriv} as we explain here:
$\bf G$ is then the direct product of its almost simple components and we can reduce  to an isotropic component. When $\bf G$ is almost simple, simply connected and isotropic, we have,   in  case \textbf{A)} b/  of the proof:  $\P\neq \Q=\G$ implies   $\P=\B, \M=\Zp$ and  $\sigma$  is the trivial module of  $\H_{\Zp}$. This is because $\Pi = \Pi_\M \sqcup \Pi_\sigma$ (\S \ref{class-modH}) and $\Pi$ irreducible implies $ \Pi_\M=\emptyset$. 
As $\H_{\Zp}=R[\Zp/\Zp^1]$,  the module $\sigma$ identifies  with an irreducible  representation $\sigma_{\Zp}$ of $\Zp$.  But  $\Pi= \Pi (\sigma)$  implies  $\Pi= \Pi(\sigma_{\Zp})$, meaning that  $\sigma_{\Zp}$ extends to a representation of $\G$ trivial on $\G'$. But $\G=\G'$ because $\bf G$ is almost simple, simply connected and isotropic \cite[II.4 Prop.]{AHHV}.  Hence $\sigma_\Zp$ is the trivial representation of $\Zp$.
 \end{rema}

  \begin{rema}   Let ${\bf G}^{sc} \to {\bf G}^{der}$ be the simply connected cover of the derived group  ${\bf G}^{der}$ of $\bf G$,  and let $\bf C$ be the connected center of $\bf G$. We have a natural homomorphism $\iota:\G^{sc}\to \G$. 
We can prove that  $\pi$ is supersingular $ \Rightarrow $ $\pi^\I$ is supersingular   using Remark \ref{rema:sssc} when:
   \begin{equation} \label{rfl}\text{ The restriction  to $\iota(\G^{sc})$ of any  supercuspidal $R$-representation $\pi$ of $\G$ has finite length}. 
   \end{equation}
 Property \eqref{rfl} is   true when the index of the normal subgroup $\iota(\G^{sc})C$ of $\G$ is finite (for instance when the  characteristic of $F$ is $0$) or when  $R$ is the field  of complex numbers (\cite{Sil})  We do not know whether  \eqref{rfl}  is true for any algebraically closed field $R$.
      \  \end{rema}

 \section{\label{table}Parabolic induction  functors for Hecke modules and their adjoints in the $p$-adic and the finite case:  summary and comparison table}
Let $R$ be a commutative ring. We consider $\P$ a standard parabolic subgroup of $\G$ with standard Levi decomposition $\P=\M \, \N$  and $\Pf$ 
the standard parabolic subgroup of $\Gf$ with standard Levi decomposition $\Pf=\Mf \, \Nf$ image of $\P\cap \K$ in $\Gf$. 
We pick lifts $\tw$ and $\twm$ in $\W(1)$ 
 for the longest elements $\longw$ and $\longw_\Mf$ of $\Wf$ and $\Wf_\Mf$ respectively as in \S\ref{indcoind-prop}. Let $\M'=\tw \M \tw^{-1}$. Recall that we defined in \S\ref{indcoind-prop} quasi-inverse equivalences of categories   $- \, \upiota_{\M} ^{-1}\upiota: \Mod(\HM)\rightarrow \Mod(\H_{\M'})$ and   $- \, \upiota ^{-1}\upiota_\M: \Mod(\H_{\M'})\rightarrow \Mod(\HM)$. Up to isomorphism, these functors are not affected by the  choice  of other lifts   for  $\longw$ and $\longw_\Mf$.  
 Therefore, the automorphism of $R$-algebras 
  \begin{equation}
\H_{\M'}\rightarrow \H_{\M}, \: \tau_x\mapsto \tau_{\twm\tw x(\twm\tw )^{-1}}\textrm{ and   its inverse }\H_{\M}\rightarrow \H_{\M'}
 \end{equation}
define   quasi-inverse equivalences of categories  \begin{equation}
 \upiota_{\M', \M}: \Mod(\HM)\rightarrow \Mod(\H_{\M'})\textrm{ and   }  \upiota_{\M, \M'}: \Mod(\H_{\M'})\rightarrow \Mod(\HM)
 \end{equation}
which are respectively isomorphic to   $- \, \upiota_{\M} ^{-1}\upiota$
 and   $- \, \upiota ^{-1}\upiota_\M$. They restrict to quasi-inverse equivalences of categories
  \begin{equation}
 \upiota_{\Mf', \Mf}: \Mod(\HMf)\rightarrow \Mod(\Hf_{\Mf'})\textrm{ and   }  \upiota_{\Mf, \Mf'}: \Mod(\Hf_{\Mf'})\rightarrow \Mod(\HMf).
 \end{equation}
which are respectively isomorphic to   $- \, \iota_{\Mf} ^{-1}\iota$
 and   $- \, \iota ^{-1}\iota_\Mf$ (defined in Proposition \ref{prop:compafunc}).

\medskip

 In \S\ref{LeviHecke}, we recalled the definition of the embeddings $\theta$ and $\theta^*$ of $\HMp$ and $\HMm$ into $\H$. In the table below, given an embedding of algebras 
 $\phi: \HM^\pm\rightarrow \H$, 
 we recall that the index $\H_{\M^\pm, \phi}$ means that we see $\H$ as
an $\H_{\M^\pm}$-module via $\phi$.    In Remark \ref{extension} where $p$ is invertible in $R$, the embeddings $\theta^+$, $\theta^{*+}$  and $\theta^-$ of $\HM$ into $\H$ are defined. Again, in the table below, 
given an embedding of algebras 
 $\phi: \HM\rightarrow \H$, 
 the index $\H_{\M, \phi}$ means that we see $\H$ as
an $\H_{\M}$-module via $\phi$.

\bigskip

When $R$ is an arbitrary ring and when  $p$ is invertible in $R$, 
the table below summarizes and compares the definitions of the parabolic induction functors $\Ind_\HM^\H: \Mod(\HM)\rightarrow  \Mod(\H)$ and 
$\Ind_\HMf^\Hf: \Mod(\HMf)\rightarrow  \Mod(\Hf)$ 
 and  of their respective adjoints. 

\bigskip

The top left quadrant of the table (finite case, arbitrary $R$) is justified by Proposition \ref{prop:compafunc}.
The lower left quadrant of the table (finite case, $p$ invertible $R$) is justified by Remark  \ref{L=res}.
 The lower right quadrant of the table ($p$-adic case, $p$ invertible in $R$)  is justified in Remark \ref{rema:tensor-hom}(2)   based on recent work by Abe \cite{Abe2} (see isomorphism \eqref{explicitleftadj2} for $\L_\HM^\H$ and recall that it is valid for an arbitrary $R$) and in Proposition \ref{prop:rem??}.
The top right quadrant of the table ($p$-adic case, arbitrary $R$) is justified in \S\ref{indcoind-prop}  (see there the references to \cite{Vig5}) as well as by
the isomorphism \eqref{explicitleftadj2}.

\bigskip

\begin{table}
\begin{tabular}{ |l|c|c| }
\hline
&Finite case & $p$-Adic case\\
\hline
&&\\
\multirow{4}{*}{Arbitrary $R$} & $\Ind_\HMf^\Hf=-\otimes_\HMf\Hf\cong \Hom_{\Hf_{\Mf'}}(\Hf, -)\circ \upiota_{\Mf', \Mf}$ & $\Ind_\HM^\H=-\otimes_{\HMpt}\H\cong \Hom_{  \H_{ \M'^-, \theta^*}}(\H, -)\circ \upiota_{\M', \M}$ \\ &&\\
 &  $\R_{\HMf}^\Hf\cong \Res_{\HMf}^\Hf $  & $\R^\H_\HM\cong \Hom_{\Hh}(\HM\otimes_{\HMpt} \H, -)$ \\&&\\
 &$\L_{\HMf}^\Hf\cong \upiota_{\Mf,\Mf' } \circ \Res_{\Hf_{\Mf'}}^\Hf $  & $\L_\HM^\H\cong \Res^\H_{\H_{\M, \theta^{*+}}}\cong  \upiota _{\M,\M'}\circ  (- \otimes_{\H_{\M'^-, \theta^*}} \H_{\M'})  $ \\&& \\\hline
\multirow{4}{*}{$p$ invertible in $R$}
&&\\
& $\Ind_\HMf^\Hf-=-\otimes_\HMf\Hf\cong \Hom_{\Hf_{\Mf}}(\Hf, -)$ & $\Ind_{\HM}^\H=-\otimes_{\HM, \theta^+}\H\cong  \Hom_{\H_{\M, \theta^{*+}}}( \H, -)$  \\ &&\\
 &  $\R_{\HMf}^\Hf\cong \Res_{\HMf}^\Hf $  & $\R_\HM^\H\cong \Res^\H_{\H_{\M, \theta^+}}$ \\&&\\
 &$\L_{\HMf}^\Hf\cong  \Res_{\Hf_{\Mf}}^\Hf $  & $\L_\HM^\H\cong \Res^\H_{\H_{\M, \theta^{*+}}} \cong \Res^\H_{\H_{\M, \theta^{-} \delta_P}}$\\&& \\\hline
\end{tabular}
\end{table}
\bigskip



\begin{thebibliography}{22}


\bibitem{Abeclassi} N. Abe -- On a classification of irreducible admissible modulo $p$ representations of a  $p$-adic split reductive group,  {Compositio Mathematica} {{149}} (12), p. \ 2139--2168 (2013).

\bibitem{Abe} N. Abe -- Modulo p parabolic induction of pro-p-Iwahori Hecke algebra, arXiv:1406.1003 (2015).

\bibitem{Abe2} N. Abe -- Some calculations for modules of pro-$p$-Iwahori Hecke algebras. In progress 2016.

\bibitem{AHHV} N. Abe, Noriyuki, G. Henniart, F. Herzig, M.-F. Vign\'eras --
 {A classification of admissible irreducible modulo $p$ representations of reductive $p$-adic groups}. To appear in Journal of the A.M.S. 2016.

\bibitem{AHV} N. Abe, Noriyuki, G. Henniart, M.-F. Vign\'eras -- Mod $p$ representations of reductive $p$-adic groups: functorial properties. In progress, 2016.

\bibitem{BF}
A.~Bell, R.~Farnsteiner -- On the theory of Frobenius extensions and its application to Lie superalgebras,  {Transact.\ of A.M.S} { {335}}(1), p.~407--424 (1993).

\bibitem{Benson}D.~J.~Benson-- Representations and 
    Cohomology I, Basic Representation Theory of Finite Groups and Associative 
    Algebras, Cambridge Studies in Advanced Mathematics, (1991). 

\bibitem{BZ}
J. ~Bernstein, A. Zelevinski -- Induced representations of $p$-adic groups I,  Ann. Sci. Ecole Norm. Sup. (4) 10  no. 4, p. 441--472  (1977).




\bibitem{Bo}A.~ Borel -- Admissible representations of a semisimple group with vectors fixed under anIwahori subgroup, Invent. Math.  {35} p. 233--259 (1976).

 \bibitem{Bki-A} N.~Bourbaki --  \'El\'ements de math\'ematiques.   Alg\`ebre, Chap.\ 10. Alg\`ebre homologique. Springer, Berlin-Heidelberg  (2006).

\bibitem{Bki-LA} N.~Bourbaki -- Elements of Mathematics. Lie Groups and Lie Algebras, Chap.\ 4-6. Springer, Berlin-Heidelberg-New York (2002).


\bibitem{Breuil} C.~Breuil --    Sur quelques repr\'esentations modulaires et p-adiques de ${\rm GL}_2(\mathbb Q_p)$ I, Compositio Math. 138,  p. 165--188 (2003).
    
    
\bibitem{BP} C.~Breuil,  V.~Paskunas
  -- Towards a modulo $p$ Langlands correspondence for GL(2).  
Memoirs of Amer. Math. Soc,  216 (2012).




\bibitem{BTI}F.~Bruhat, J.~Tits --  Groupes r\'eductifs sur un corps local. I. Donn\'ees radicielles valu\'ees,   {Publ.\ Math.\ IHES}  {41}, p.\  5--251  (1972).

\bibitem{BTII}  F.~Bruhat, J.~Tits --Groupes r\'eductifs sur un corps local. II. Sch\'emas en groupes. Existence d'une donn\'ee radicielle valu\'ee,  {Publ.\ Math.\ IHES}  {60}, p.\ 5--184  (1984).

\bibitem{BK} Bushnell, C.J.;  Kutzko P.C. -- Smooth representations of reductive p-adic groups~: structure theory via types. Proceedings of the London Mathematical Society,  {77}, p. 582--634  (1998).



\bibitem{Cab}M.~Cabanes -- Extension groups for modular Hecke algebras,  {J.\ Fac.\ Sci.\ Univ.\ Tokyo}  {36}(2), p.\ 347--362 (1989).

\bibitem{Cabanes} M.~Cabanes --
A criterion of complete reducibility and some applications, in Repr\'esentations lin\'eaires des groupes finis, (M. Cabanes ed.),  Ast\'erisque, {181-182}  p. 93--112 (1990).

\bibitem{CE} M.~Cabanes, M.~Enguehard -- Representation Theory of Finite Reductive Groups. Cambridge Univ.\ Press (2004).



\bibitem{Carter} R.~Carter -- Finite Groups of Lie Type. Wiley Interscience (1985).
\bibitem{CL} 
R.W.~Carter,  G. Lusztig -- 
 Modular representations of finite groups of Lie type, 
 Proc. London Math. Soc.   {32} p. 347--384 (1976).
 
 \bibitem{Colmez}P.~Colmez -- Representations de $GL_2(\mathbb Q_p)$ et $(\varphi, \Gamma)$-modules. Ast\'erisque 330 pp. 281--509 (2010).


 \bibitem[Dat]{Dat} J.-Fr.~Dat -- Finitude pour les représentations lisses des groupes p-adiques.  J. Inst. Math. Jussieu, 8 (1): 261--333 (2009).


\bibitem{Eisenbud} D.~Eisenbud -- Commutative Algebra with a View Toward Algebraic Geometry. Graduate Texts in Mathematics 150, Springer-Verlag (2008).

\bibitem{Emerton} M.~Emerton -- Ordinary parts of admissible representations of reductive $p$-adic groups II. Asterisque 331, p. 383--438 (2010).

\bibitem{HS} P.J.~Hilton, U.~Stammbach -- A course in homological algebra.  Graduate Texts in Mathematics 4, Springer-Verlag (1971).


\bibitem{HV1} G.~Henniart, M.-F.~Vign\'eras -- The  Satake isomorphism modulo $p$ with weight.  J. f\"ur die reine und angewandte Mathematik, vol. 701, p. 33--75, (2015).


\bibitem{HVcomp} G.~Henniart, M.-F.~Vign\'eras -- Comparison of compact induction with parabolic induction. Special issue to the memory of J. Rogawski. Pacific Journal of Mathematics, vol 260, No 2,   p. 457--495, (2012).

\bibitem{Her}  F.~Herzig -- The classification of admissible irreducible  modulo $p$ representations of a $p$-adic $GL_{n}$.   Invent. Math.  {186}  p. 373-434, (2011).

\bibitem{IM}N. ~ Iwahori; H. ~Matsumoto -- On some Bruhat decomposition and the structure of the Hecke rings of p-adic Chevalley groups.
Publ. Math., Inst. Hautes {\'E}tud. Sci.  {25} p. 5-48 (1965).

\bibitem{KS}M.~Kashiwara; P.~Shapira -- Categories and Sheaves. Grundlehren des mathematischen Wissenschaften, vol. 332, Springer Berlin-Heidelberg 2006. 

\bibitem{K}R.~Kottwitz -- Isocrystals with additional structure II, Compositio Mathematica 109, p. 255--309 (1997).

\bibitem{Koz}K.~Koziol -- 
Pro-$p$-Iwahori invariants for ${\rm SL}_2$ and $L$-packets of Hecke modules.
International Mathematics Research Notices, no. 4, p. 1090-1125  (2016).



\bibitem{Lu}G.~Lusztig -- Affine Hecke algebras and their graded version,  J.\ AMS  {2}(3), p.~599--635 (1989).

\bibitem{NT} H. Nagao;  Y. Tsushima -- Representations of finite groups, 
Academic Press, (1989).

\bibitem{Oinv}R.~Ollivier --  Le foncteur des invariants sous l'action du pro-$p$ Iwahori de ${\rm GL}_2(\mathbb{Q}_p)$, J.\ f\"ur dir reine und angewandte Mathematik, { 635},  p.~149--185 (2009).

\bibitem{Oparab}
R.~Ollivier -- Parabolic Induction and Hecke modules in characteristic $p$ for $p$-adic
 ${\rm GL}_n$.  ANT {4(6)} p. 701--742    (2010).

\bibitem{Compa} R.~Ollivier --  Compatibility between Satake and Bernstein isomorphisms in characteristic $p$.  ANT 8(5) p. 1071-1111 (2014) .










\bibitem{OSch} R.~Ollivier,  P.~Schneider --
 Iwahori Hecke algebras are Gorenstein. J.\ Inst.\ Math.\ Jussieu 13(4), 753--809 (2014).

 \bibitem{OS} R.~Ollivier, V.~S\'echerre -- 
Modules universels en caract\'eristique naturelle pour un groupe r\'eductif fini.  Ann. Inst. Fourier 65, no. 1, p.  397--430 (2015). 

\bibitem{OSepreprint} R.~Ollivier, V.~S\'echerre -- Modules universels de GL(3) sur un corps p-adique en caract\'eristique $p$. Preprint (2011). \url{www.math.ubc.ca/~ollivier}
\def\paskunas{Pa$\check{\textrm s}$k$\bar {\textrm{u}}$nas}

\bibitem{Pas}
V.~\paskunas -- Coefficient systems and supersingular 
representations of ${\rm GL}_2(F)$, Mém. Soc. Math. Fr. (NS)
 99 (2004).

\bibitem{Saw}
H.\ Sawada -- Endomorphism rings of split $(B,N)$-pairs,  {Tokyo J.\ Math.}  {1}(1), p.~139--148 (1978).


\bibitem{SS91}
P.~Schneider, U.~Stuhler -- The cohomology of $p$-adic symmetric spaces.  Invent. Math.   {105},  no. 1, p. 47--122  (1991).

\bibitem{SS}P.~Schneider, U.~ Stuhler -- Representation theory and sheaves on the Bruhat-Tits building,  {Publ.\ Math.\ IHES}  {85}, p.~97--191 (1997).


\bibitem{Sil} A. ~J.~ Silberger --  Isogeny restrictions of irreducible admissible representations are finite direct sums of irreducible admissible representations, {Proc. of the A.~M.~S.} {93} {No 2} p.~263--264 (1979).

\bibitem{Tin}
N.~B.~Tinberg -- Modular representations of finite groups with unsaturated split $(B,N)$-pairs, Canadian J.\ Math. {32}(3), p.~714--733 (1980).

\bibitem{Tit}J.~Tits -- Reductive groups over local fields. In Automorphic Forms,Representations, and $L$-Functions (eds. Borel, Casselmann).  {Proc.\ Symp.\ Pure Math.}\  {33} (1), p. 29--69. American Math. Soc. (1979).

\bibitem{Viglivre} M.-F. Vign\'eras -- Repr\'esentations $\ell$-modulaires d'un groupe r\'eductif fini $p$-adique avec $\ell \neq p$. Birkhauser Progress in math. 137 (1996).
\bibitem{Selecta} M.-F. Vign\'eras --  Induced representations of reductive p-adic groups in characteristic $\ell \neq p$. Selecta Mathematica  {4} p. 549--623  (1998).

\bibitem{VigGL2}M.-F. Vign\'eras -- Representations modulo p of the p-adic group GL(2, F ). Compositio Math. 140  pp. 333--358 (2004).
\bibitem{VigSP}M.-F. Vign\'eras --   S\'erie principale modulo $p$ de groupes r\'eductifs $p$-adiques. GAFA Geom. funct. anal. vol. 17 p.  2090--2112 (2007).
\bibitem{Vigprop} M.-F. Vign{\'e}ras --
Pro-$p$ Iwahori Hecke ring and  supersingular $\overline{\mathbb F}_{p}$-representations.  {Math.\ Annalen}  {331}, p.\ 523--556 (2005). Erratum vol.\
 {333}(3), p.\ 699-701.
\bibitem{Vigdurham}M.-F. Vign{\'e}ras -- Repr\'esentations irr\'eductibles de $GL(2,F)$ modulo $p$, in $L$-functions and Galois representations, ed. Burns, Buzzard, Nekovar,   {LMS Lecture Notes} 320 (2007).

\bibitem{Vign}
M.-F. Vign{\'e}ras -- Alg\`ebres de Hecke affines g\'en\'eriques,  {Representation Theory} {10}, p.\ 1--20 (2006). 

\bibitem{Vigadjoint} M.-F. Vign{\'e}ras -- The right adjoint of the parabolic induction.   {Birkhauser series Progress in Mathematics}
  Arbeitstagung Bonn 2013: In Memory of Friedrich Hirzebruch
ed. Werner Ballmann, Christian Blohmann, Gerd Faltings, Peter Teichner, and Don Zagier.

\bibitem{Vig1} M.-F. Vign{\'e}ras -- The pro-$p$ Iwahori Hecke algebra of a reductive $p$-adic group I,  {Compositio Mathematica}  {152}, p.\ 693---753 (2016).

\bibitem{Vig2} M.-F. Vign{\'e}ras -- The pro-$p$ Iwahori Hecke algebra of a reductive $p$-adic group II,  {Muenster J. of Math.} 7  p. 363Ð379 (2014).  

\bibitem{Vig3} M.-F. Vign{\'e}ras -- The pro-$p$-Iwahori Hecke algebra of a reductive $p$-adic group III (spherical Hecke algebras and supersingular modules),  {Journal of the Institute of Mathematics of Jussieu} published online (2015).

\bibitem{VigFunct} M.-F. Vign{\'e}ras -- The pro-$p$ Iwahori Hecke algebra of a reductive $p$-adic group IV (Levi subgroup and central extension). In preparation.

\bibitem{Vig5} M.-F. Vign{\'e}ras -- The pro-$p$ Iwahori Hecke algebra of a reductive $p$-adic group V (parabolic induction),  {Pacific Journal of Math.}  {279}, p.\ 499--529 (2015).

\bibitem{Zel} A. V. Zelevinsky  -- Induced representations of reductive p-adic groups. II. On irreducible representations of GL(n). {Ann. Sci. Ecole Norm. Sup.} (4), 13(2):165? 210, 1980.
\end{thebibliography}
\end{document}